\documentclass[12pt,final]{article}   

\usepackage{amsmath}
\usepackage{amsthm}
\usepackage{amsgen}
\usepackage[mathscr]{eucal}
\usepackage{mathrsfs}
\usepackage{enumerate}
\usepackage{newtxtext}
\usepackage[varvw]{newtxmath}
\usepackage{color}
\usepackage{fancybox}
\usepackage{graphicx}

\usepackage{showkeys}
\usepackage{here}
\usepackage{ulem}

\usepackage{bm}

\usepackage{braket}
\usepackage{tikz}
\usepackage{gnuplot-lua-tikz}

\newtheorem{lem}{Lemma}[section]
\newtheorem{thm}[lem]{Theorem}
\newtheorem{pro}[lem]{Proposition}

\newtheorem{rem}[lem]{Remark}
\newtheorem{asp}[lem]{Assumption}

\newcommand{\bqn}{\begin{equation}}
\newcommand{\eqn}{\end{equation}}
\newcommand{\beqx}{\begin{equation*}}
\newcommand{\eeqx}{\end{equation*}}
\newcommand{\barr}{\begin{array}}
\newcommand{\earr}{\end{array}}
\newcommand{\beqn}{\begin{eqnarray}}
\newcommand{\eeqn}{\end{eqnarray}}
\newcommand{\beqnx}{\begin{eqnarray*}}
\newcommand{\eeqnx}{\end{eqnarray*}}
\newcommand{\bmt}{\begin{multline}}
\newcommand{\emt}{\end{multline}}

\numberwithin{equation}{section}

\newcommand{\D}{\partial}

\def\det{\mathop{\rm det}\nolimits}

\def\dim{\mathop{\rm dim}\nolimits}

\newcommand{\sA}{\mathscr{A}}
\newcommand{\sB}{\mathscr{B}}

\newcommand{\sC}{\mathscr{C}}

\newcommand{\ve}{\varepsilon}

\newcommand{\vphi}{{\varphi}}

\newcommand{\er}{\eqref}
\newcommand{\lb}{\label}

\newcommand{\jump}[1]{\ensuremath{[\![#1]\!]} }

\def\disp{\displaystyle}

\newcommand{\hitem}{\addtocounter{enumi}{1}\quad\labelenumi\hspace{\labelsep}}

\title{Asymptotic stability of transonic steady nozzle flows}

\author{%
{\large\sc Masashi Ohnawa}${}^1$
\thanks{corresponding author} \
{\normalsize and}
{\large\sc Masahiro Suzuki}${}^2$
}

\date{%
\normalsize
${}^1$%
Department of Ocean Sciences,
\\
Tokyo University of Marine Science and Technology,
\\
Konan 4-5-7, Minato-ku, Tokyo, 108-8477, Japan
\\
ohnawa@m.kaiyodai.ac.jp
\\ [7pt]
${}^2$%
Department of Computer Science and Engineering, 
\\
Nagoya Institute of Technology,
\\
Gokiso-cho, Showa-ku, Nagoya, 466-8555, Japan
\\
masahiro@nitech.ac.jp
}

\begin{document}

\maketitle

\begin{abstract}
In the present article,  
we prove the nonlinear asymptotic stability of stationary solutions
to the quasi-one-dimensional barotropic Euler equations in a finite-length nozzle
with variable cross-sectional area $A(x)$.
We consider all stationary flow configurations arising from
subsonic inflow: flows that remain subsonic throughout the nozzle,
flows that pass continuously through a sonic state and remain
supersonic up to the exit, and flows that subsequently return to
the subsonic regime through a stationary shock.
We prove that all these stationary solutions are asymptotically
stable with respect to sufficiently small perturbations.
No smallness restrictions are imposed on the mass flux, 
the magnitude of $A_x$, or the prescribed exit pressure, 
provided that the corresponding stationary solution exists. 
The only geometrical assumptions are that $A_{xx}>0$ 
at a smooth subsonic--supersonic transition, 
and $A_x>0$ at a stationary supersonic--subsonic shock.
To the best of our knowledge, this gives the first nonlinear
asymptotic stability result for a stationary flow passing smoothly
from the subsonic to the supersonic regime in a finite-length nozzle.
The principal analytical difficulty is the degeneracy of the
characteristic structure at the sonic point.
We overcome this difficulty by coupling energy structures adapted
respectively to the subsonic and supersonic regimes.
This coupling provides the dissipation needed to control the smooth
sonic transition, and when combined with the dynamics of the shock,
yields a unified nonlinear stability framework for smooth and discontinuous transonic nozzle flows.
\end{abstract}

\begin{description}

\item[{\it Keywords:}]
quasi-one-dimensional compressible Euler equations; transonic flow; shock wave; nonlinear stability.

\item[{\it 2020 Mathematics Subject Classification:}]
35B35, 
35L65, 
35L67, 
35Q31, 
76H05, 
76N15 

\end{description}

\tableofcontents

\section{Introduction} \label{Sec01}

This study aims to establish the asymptotic stability of steady flows in nozzles
such as jet and rocket engines and steam turbines
that steadily accelerate gas stream above sound speeds 
through nozzles with variable cross-sectional area.
The nozzle is assumed to have finite length, and the flow is modeled 
by the quasi-one-dimensional compressible Euler equations.
The mathematical theory of steady compressible nozzle flows has been developed for more than seventy years since the classical works of Courant and Friedrichs \cite{CF1948}. 
Although the existence and qualitative classification of steady transonic flows are well understood, the nonlinear asymptotic stability of continuously accelerating transonic steady flows has remained open. 

Compressible flow fields are commonly classified into three distinct regimes-subsonic, transonic, and supersonic-depending on whether the local flow velocity at each spatial point is less than, equal to, or greater than the local speed of sound, respectively. Among these regimes, transonic flows are especially important, since they involve a mixture of subsonic and supersonic regions and often give rise to nonlinear phenomena such as shock formation, expansion waves, and complex wave interactions. These features make transonic flows particularly challenging to analyze from both mathematical and physical perspectives. 

Transonic flows manifest in a variety of natural and engineering contexts.
The technological importance of transonic flow became widely recognized in the late nineteenth century. A Swedish engineer, Gustav de Laval, invented the convergent-divergent nozzle, whose cross-sectional area first decreases and then increases, in order to enhance the efficiency of steam turbines. The operating principle of this device is based on the fact that a compressible gas accelerates in a converging section when the flow is subsonic and continues to accelerate in a diverging section once the flow becomes supersonic. This mechanism enables continuous acceleration from subsonic to supersonic speeds within a single device. Theoretical foundation of this phenomenon was described by Osborne Reynolds \cite{Reynolds1886} in 1886. 
The usefulness of this simple mechanism has been widely recognized, and it remains in broad use today in a variety of engineering systems, including rocket propulsion, supersonic wind tunnels, and high-performance turbine technologies.
Owing to its practical significance, Reynolds' theory has been refined over time, and the analysis of flow within nozzles is now systematically presented in standard textbooks on gas dynamics such as \cite{LR1957,ZH1976}. 
In particular, the long-time behavior of compressible flows in nozzles has attracted considerable attention because steady flows often correspond to physically realizable operating states in engineering devices. In particular, understanding whether such steady flows are dynamically stable is essential for predicting the performance and reliability of systems involving high-speed gas dynamics.

From a mathematical viewpoint, the quasi-one-dimensional compressible Euler equations provide a fundamental model for studying these phenomena. In this framework, variations in cross-sectional area are incorporated into the governing equations, while physical quantities such as density, velocity, and pressure are assumed to be uniform across each cross section along the flow direction. 
Furthermore, a barotropic equation of state, in which the pressure depends solely on the density, is often employed to reduce the complexity of thermodynamic relations.
These approximations greatly simplify the analysis while preserving the essential characters of compressible flow. 
Even such a simplified model provides valuable physical insight and enables evaluation of engineering performance. At the same time, it offers a mathematically tractable framework for investigating shock formation, choking, and stability of steady flows. 

Motivated by these considerations, this study focuses on the asymptotic stability of all types of stationary solutions in a finite-length nozzle including transonic flows. Establishing such stability results contributes to a deeper understanding of the long-time dynamics of compressible flows, and provides a rigorous mathematical foundation for engineering applications.

In a typical convergent-divergent nozzle, if the incoming flow is subsonic, three fundamentally different types of steady states may arise depending on boundary conditions. This will be illustrated in Figure \ref{Fig012} after precise formulation,
or refer to \cite{LR1957,ZH1976}:
\vspace{-2mm}
\begin{enumerate}[(A)]
\item
A purely subsonic flow throughout the nozzle.
\vspace{-2mm}
\item
A transonic flow that accelerates from subsonic to sonic at the throat (the point of minimal cross-sectional area), becomes supersonic downstream, and then undergoes a discontinuous transition (shock) back to a subsonic state before exiting.
\vspace{-2mm}
\item
A fully transonic flow that accelerates through the throat and remains supersonic all the way to the exit.
\end{enumerate}
\vspace{-2mm}
The existence and qualitative structure of these steady configurations, as well as their dependence on the exit pressure, were already described in the classical work of Courant-Friedrichs \cite{CF1948}.
However, despite their fundamental importance in both fluid mechanics and the theory of nonlinear hyperbolic equations, the asymptotic stability of such steady flows, in particular the transonic configurations of types (B) and (C), has remained a longstanding open problem.
From a mathematical perspective, the principal difficulty arises from 
the presence of the sonic point.
More precisely, standard energy estimates valid in the subsonic region \cite{RXX2013} lose coercivity near the sonic transition, 
whereas the weighted energy method used in the analysis for supersonic flows 
cannot be directly extended across the sonic point.
This situation creates serious analytical obstacles that have resisted a complete treatment for decades.

To overcome this difficulty, we investigate the detailed structure 
of the stationary flow around the sonic point,  and develop an energy framework
that couples the subsonic and supersonic regimes.
This coupling reveals a dissipative mechanism that enables us to control
the perturbation across the sonic transition.
As a result, we obtain a unified nonlinear stability theory for the steady nozzle  flows described above.
Moreover, the stability results do not require any smallness assumptions 
on the mass flux, the magnitude of $A_x$, or the prescribed exit pressure,
provided that the corresponding stationary solution exists.

\paragraph{Mathematical formulation.}
The quasi-one-dimensional model consists of the conservation laws of mass and momentum:
\vspace{-5mm}
\begin{subequations} \label{eq101}
\begin{gather} 
( A \rho )_t + ( A m )_x = 0,  \label{eq101a}
\\
( A m )_t + \left( A ( \dfrac{m^2}{\rho} + p(\rho) ) \right)_x = A_x p(\rho),  \label{eq101b}
\end{gather}
\end{subequations}
contained in a bounded interval $I := (x_1,x_2) \subset \mathbb{R}$ with $x_1 < 0 < x_2$, 
where $x=x_{1}$ and $x=x_{2}$ are the inlet and exit of the nozzle, respectively.
The unknown variables {$\rho=\rho(t,x)$ and $m=m(t,x)$} represent density and momentum of the gas respectively, 
and {$A=A(x)$} is the area of cross section of the nozzle at $x$.
In this paper, it is supposed that $\inf_{x \in \bar{I}}A(x)>0$.
The system \eqref{eq101} can be regarded as compressible Euler equations 
with source terms arising from the variation in the nozzle cross-sectional area.
For the equation of state, we assume 
\begin{subequations} \label{eq102}
\begin{gather} 
p \in C^5(0,\infty), \ \ p'(\rho) > 0, \ \ p''(\rho) \ge 0 \quad \text{for} \ \rho>0,
 \label{eq102b}
\\
\lim_{\rho \to +0} \rho^{2} \int^{\rho}_{\rho_\infty} \frac{p'(\zeta)}{\zeta} d \zeta  = 0,
\label{eq102c}
\end{gather}
\end{subequations}
where $\rho_\infty(>0)$ is an arbitrarily fixed reference density.
The familiar power law $p(\rho) \propto \rho^\gamma \ (\gamma \ge 1)$ meets these requirements.
In the stability analysis of type (B), we deal with flows that are piecewise smooth.
If a discontinuous point is located at $x=d(t)$ at time $t$,
the conservation laws \eqref{eq101} imply the Rankine-Hugoniot conditions of
\begin{subequations} \label{eq106}
\begin{gather} 
\dot{d}(t) \jump{ A \rho } (t,d(t)) = \jump{ A m } (t,d(t)), \label{eq106a}
\\
\dot{d}(t) \jump{ A m } (t,d(t)) = \Bigl[\!\Bigl[ A \Bigl( \dfrac{m^2}{\rho} + p(\rho) \Bigr) \Bigr]\!\Bigr] (t,d(t)), \label{eq106b}
\end{gather}
\end{subequations}
where the jump notation stands for $\disp \jump{f}(t,x) := f(t,x+0) - f(t,x-0)$.
{At the discontinuity,  we assume the Lax entropy condition}
\begin{equation} \label{eq107}
 u(t,d(t)-0) - a(t,d(t)-0) > \dot{d}(t) > u(t,d(t)+0) - a(t,d(t)+0),
\end{equation}
where the velocity $u$ and the speed of sound $a$ are defined by 
\begin{gather*} 
u = u(t,x):= \frac{m(t,x)}{\rho(t,x)}, \ \ \text{and} \ \ a=a(t,x) := \sqrt{p'(\rho(t,x))},
\end{gather*}
respectively.
The condition \eqref{eq107} means that the flow is supersonic on the upstream side, and subsonic on the downstream side.

Now we formulate the stationary problem associated with the quasi-one-dimensional model  \eqref{eq101}.
A stationary solution $(\bar{\rho}, \bar{m})=(\bar{\rho}, \bar{m})(x)$ to \eqref{eq101} satisfies 
\begin{subequations} \label{eq151}
\begin{gather} 
 ( A \bar{m} )_x = 0,
\label{eq151a}
\\
\left( A  \Bigl( \dfrac{\bar{m}^2}{\bar{\rho}} + p(\bar{\rho}) \Bigr) \right)_x = A_x p(\bar{\rho}).
\label{eq151b}
\end{gather}
\end{subequations}
Concerning the boundary conditions, 
one natural way is to fix the mass flux $A \bar{m}$ at the inlet by
\begin{gather} \label{eq153}
( A \bar{m} ) (x_1) = Q  > 0,
\end{gather}
where $Q$ is an arbitrary positive value. 
At the exit, the pressure is usually prescribed. 
This, in the barotropic model, means that the density is fixed: 
\begin{gather} \label{eq155}
\bar{\rho}(x_2) = \rho_+.
\end{gather}
If the stationary solution has {a discontinuous point  $x = \bar{d} \in I$ as in type (B)}, 
the Rankine-Hugoniot conditions are
\begin{subequations} \label{eq157}
\begin{gather} 
\jump{ A \bar{m} } (\bar{d}) = 0,
\label{eq157a}
\\
\Bigl[\!\Bigl[ A \Bigl( \dfrac{\bar{m}^2}{\bar{\rho}} + p(\bar{\rho}) \Bigr) \Bigr]\!\Bigr] (\bar{d}) = 0,
\label{eq157b}
\end{gather}
\end{subequations}
where $\disp \jump{f}(x) := f(x+0) - f(x-0)$, and the Lax entropy condition of \eqref{eq107} reads
\begin{gather} \label{eq159*}
 \bar{u}(\bar{d}-0) - \bar{a}(\bar{d}-0) > 0 > \bar{u}(\bar{d}+0) - \bar{a}(\bar{d}+0),
\end{gather}
where $\bar{u}$ and $\bar{a}$ are defined by
\begin{equation*} 
\bar{u}(x) := \dfrac{\bar{m}(x)}{\bar{\rho}(x)}
\ \ \text{and} \ \ 
\bar{a}(x) := \sqrt{p'(\bar{\rho}(x))},
\end{equation*}
respectively. The condition \eqref{eq159*} is equivalent to 
\begin{gather} \label{eq159}
\jump{ \bar{u} } (\bar{d}) < 0
\end{gather}
under \eqref{eq157}.
Here we give a remark on the boundary conditions \eqref{eq153} and \eqref{eq155} for the stationary problem.
In the present work, we consider the asymptotic stability of steady flows 
in which the fluid enters subsonically, and is continuously accelerated to supersonic speeds. 
In the time-dependent problem, only one boundary condition can be imposed at the inlet. In view of this, we give one boundary condition \eqref{eq153} at the inlet for the steady problem.
However, solutions of types (B) and (C) coincide up to at least the throat, 
and are indistinguishable based solely on the inlet conditions.
So for the steady problem we impose another boundary condition \eqref{eq155}
at the exit, thereby allowing us to observe the emergence of three types of stationary solutions (A), (B), and (C) distinguished by the differences in the exit pressure.

Now we observe that steady flows in a convergent-divergent nozzle 
change their types from (A) to (B) and then to (C) 
as the pressure at the exit $p(\rho_{+})$ decreases.
The graphs in Figure \ref{Fig012} illustrate schematic profiles of density 
and the Mach number for each type, which is defined by
\begin{equation*}
\bar{M} = \bar{M}(x) := \frac{|\bar{u}(x)|}{\bar{a}(x)}.
\end{equation*}
The flow of stationary solutions is subsonic if $\bar{M}(x)<1$, is transonic 
if $\bar{M}(x)=1$, and is supersonic if $\bar{M}(x)>1$.
The results on the existence of stationary solutions for all the states will be summarized in Theorem \ref{thm2.1}.
\\
\begin{minipage}{1.0\hsize}
\begin{figure}[H] 
\centering
\includegraphics[scale=0.75]{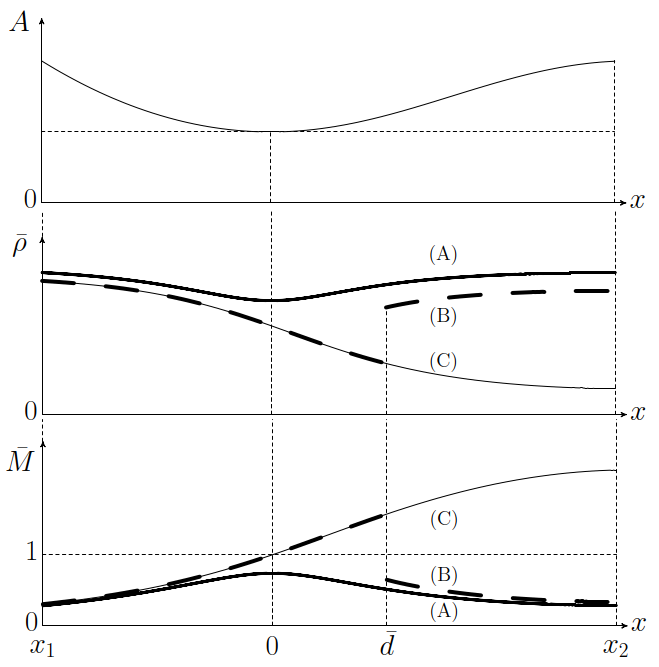} 
\caption{Graphs of the sectional area $A$ (upper) for a typical convergent-divergent nozzle, 
the corresponding density $\bar{\rho}$ (middle), and the Mach number $\bar{M}$ (lower) 
as functions of $x$. 
The case when $A(x)$ takes its minimum value only at $x=0$ is considered 
as an example in the top panel.
In the lower two panels, stationary solutions of types (A), (B), and (C) 
which appear according to the prescribed density at the exit are illustrated.
Thick solid lines, dashed lines, and thin solid lines represent 
stationary solutions of types (A), (B), and (C), respectively for a fixed $Q$.}
\label{Fig012}
\end{figure}
\end{minipage}

\bigskip

To investigate the asymptotic stability of stationary solutions,
we formulate the initial--boundary value (or mixed) problem to the quasi-one-dimensional model  \eqref{eq101}.
Consistent with each state of stationary solutions, we impose 
boundary conditions for the time-dependent problem \eqref{eq101} as follows.
For all types (A), (B), and (C), 
the boundary condition at the inlet $x=x_{1}$ is given for the mass flux $A m$ by
\begin{gather} \label{eq104}
A m(t, x_1) = Q.
\end{gather}
For types (A) and (B), where the flow is subsonic at the exit, 
the density is prescribed at the exit $x=x_{2}$ by 
\begin{gather} \label{eq105}
\rho(t, x_2) = \rho_+.
\end{gather}
While for type (C), where the flow is supersonic at the exit, 
no boundary condition is needed at the exit.

At a discontinuity, both of the Rankine-Hugoniot conditions \eqref{eq106a} and \eqref{eq106b} provide expressions for $\dot{d}$, and the condition under which they are compatible is obtained by eliminating $\dot{d}$ as
\begin{equation} \label{eq110}
\jump{ \rho } \Bigl[\!\Bigl[ \dfrac{m^2}{\rho} + p(\rho) \Bigr]\!\Bigr] (t, d(t))
= \jump{ m } (t, d(t))^2.
\end{equation}
This equation serves as a boundary condition.
Indeed, under the entropy condition \eqref{eq107}, 
there are two characteristic curves reaching $x=d(t)-0$, 
and one reaching $x=d(t)+0$,
and the remaining necessary information is given by \eqref{eq110}.
Then $\dot{d}$ can be determined from \eqref{eq106a} alone, 
without contradicting \eqref{eq106b}.
Hereafer we employ \eqref{eq106a} and \eqref{eq110} 
instead of \eqref{eq106a} and \eqref{eq106b},
and regard \eqref{eq106a} as an evolution equation for $d(t)$, 
and \eqref{eq110} as a boundary condition at $x=d(t)$, respectively.

The initial value of \eqref{eq101} is prescribed by
\begin{gather} \label{eq108}
(\rho, m)(0,x) = (\rho^0, m^0)(x).
\end{gather}
%
{
For the analysis of type (B) only, the initial location of the discontinuity $x=d(t)$ is given by
}
\begin{gather} \label{eq109}
d(0) = d^0.
\end{gather}
We require that initial values are compatible with boundary conditions
\eqref{eq104}, \eqref{eq105}, and  \eqref{eq110}.
The precise conditions following \cite{RM1974} will be given in Sections 3, 4, and 5.

The main assertions {of asymptotic stability} for each state are briefly stated in Theorems \ref{Thm11} (type (A)), \ref{Thm13} (type (C)), and \ref{Thm15} (type (B)), with reference to the structure of the corresponding stationary solutions. 
Precise formulations, including the function spaces for initial values and solutions, are provided in Sections 3, 4, and 5, respectively.

\begin{thm}[Type (A): entirely subsonic stationary solutions] \label{Thm11}
Assume $A\in {C^3(\bar{I}) }$, 
and that the stationary solution $(\bar{\rho}, \bar{m})  \in {C^3(\bar{I}) \times C^3(\bar{I})} $ to (\ref{eq151})--(\ref{eq155}) exists and satisfies
\begin{gather*}
\inf_{x \in \bar{I}} \bar{\rho}(x) >0, \quad
\bar{M}(x) < 1 \quad \text{for} \ x \in \bar{I}.
\end{gather*}
Then the stationary solution is asymptotically stable to small perturbations
$\disp ( \rho^0 - \bar{\rho}, m^0 - \bar{m} ) \in H^2(I) \times H^2(I)$.
\end{thm}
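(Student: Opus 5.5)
We establish Theorem~\ref{Thm11} by the standard combination of local well-posedness and uniform-in-time a priori estimates, closed by a continuity argument. Write the perturbation as $(\phi,\psi):=(\rho-\bar\rho,\,m-\bar m)$, or, better adapted to the source term, as $(A\phi, A\psi)$. Subtracting \eqref{eq151} from \eqref{eq101} gives a quasilinear symmetrizable hyperbolic system
\[
U_t + \mathcal{A}(x,U)\,U_x + \mathcal{B}(x,U)\,U = 0, \qquad U=(\phi,\psi)^{\top},
\]
with $\mathcal{B}$ built from $A_x/A$ and $\bar\rho_x,\bar m_x$, together with the homogeneous boundary conditions $\psi(t,x_1)=0$ and $\phi(t,x_2)=0$. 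Since $\bar M<1$ on $\bar I$, the eigenvalues $\bar u\pm\bar a$ have opposite signs at both ends, so exactly one condition is imposed at each end. This is the maximally dissipative, uniformly Kreiss-type setting, and local existence in $C([0,T];H^2)$ for compatible data follows from the theory of \cite{RM1974}. It therefore suffices to prove
\[
\|U(t)\|_{H^2}^2+\int_0^t\|U(s)\|_{H^2}^2\,ds\le C\|U(0)\|_{H^2}^2
\]
as long as $\sup_t\|U(t)\|_{H^2}$ is small.

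\textbf{Basic estimate.} For the zeroth-order estimate I will use the relative physical energy
\[
\mathcal{E}=A\Bigl(\tfrac{\rho}{2}|u-\bar u|^2+\rho\!\int_{\bar\rho}^{\rho}\!\tfrac{p(s)-p(\bar\rho)}{s^2}\,ds\Bigr),
\]
whose quadratic part is $\sim A(\phi^2+\psi^2)$. Computing $\mathcal{E}_t+\mathcal{F}_x$ and using \eqref{eq151}, the cubic remainder is controlled by $\|U\|_{H^2}\|U\|^2$. However, the quadratic remainder involves $\bar u_x$ and need not have a sign. I will therefore follow the idea of \cite{RXX2013} and multiply by a weight $w(x)>0$ chosen to solve an ODE making the quadratic form coercive:
\[
-\tfrac{d}{dt}\int w\,\mathcal{E} \ \ge\ c\int(\phi^2+\psi^2)\,dx - (\text{boundary terms}) - (\text{cubic}).
\]
The boundary flux at $x_1$ with $\psi=0$, and at $x_2$ with $\phi=0$, has the favorable sign precisely because $|\bar u|<\bar a$. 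Indeed, at $x_1$ the flux reduces to $w(\bar u^2-\bar a^2)\phi\,\psi$-type terms that vanish or are signed, and similarly at $x_2$. The weight is the step I expect to be the main obstacle, because no smallness of $A_x$ or $Q$ is assumed. I would first try diagonalizing in Riemann invariants $w_\pm$ of the linearization, for which the system reads
\[
(w_\pm)_t+\lambda_\pm(w_\pm)_x=\alpha_\pm w_+ +\beta_\pm w_-,
\]
and use weights $e^{\pm K\int^x 1/\lambda_\pm}$. Since $\lambda_+>0>\lambda_-$ uniformly on the compact interval $\bar I$, choosing $K$ large absorbs all the bounded zeroth-order coupling terms. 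With this choice the weighted $L^2$ norm of $(w_+,w_-)$ decays, and the boundary terms come out with the good sign given the reflection conditions at $x_1$ and $x_2$.

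\textbf{Higher derivatives.} For the derivative estimates, differentiating in $x$ breaks the boundary conditions, so I differentiate in $t$ instead. The functions $\partial_t^kU$, $k=1,2$, satisfy the same linearized system, up to commutators quadratic in $U$, with the same homogeneous boundary conditions. Applying the same weighted estimate gives dissipation of $\|\partial_t^kU\|^2$. The equations themselves then express $U_x$ in terms of $U_t$ and $U$, because the matrix $\mathcal{A}$ is invertible when $\bar M<1$, so $|\bar u|\ne\bar a$. Iterating this recovers the full $H^2$ norm, including $U_{xx}$ and $U_{xt}$.

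\textbf{Conclusion.} Summing the estimates yields a Lyapunov functional equivalent to $\|U\|_{H^2}^2$ with $\tfrac{d}{dt}\mathcal{L}+c\mathcal{L}\le C\|U\|_{H^2}\mathcal{L}$. Under the smallness of the data this gives global existence by continuation, and also exponential decay
\[
\|U(t)\|_{H^2}\le Ce^{-ct}\|U(0)\|_{H^2},
\]
which is the asserted asymptotic stability.
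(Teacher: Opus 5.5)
\textbf{Verdict: the proposal has a genuine gap in the basic $L^2$ estimate.} None of the mechanisms you propose produces interior dissipation unless the coupling is small, and the theorem assumes no smallness of $A_x$ or $Q$.

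\emph{Scalar weight.} A scalar weight $w(x)$ on an energy density only adds $w_x$ times its flux density. That flux form is indefinite, because $\bar u\pm\bar a$ have opposite signs. Wherever the unweighted remainder vanishes (for the relative entropy, wherever $A_x=0$), the weight therefore gives no coercivity at all. You also do not say which ODE for $w$ would help elsewhere.

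\emph{Exponential weights on characteristic variables.} This is your concrete device, and it fails for counter-propagating families. Take $r_\pm=(\pm\bar a-\bar u)\phi+\omega$ with $\omega=m-\bar m$. The weight $\mu_+$ on $r_+^2$ must decrease and $\mu_-$ on $r_-^2$ must increase. With exponentials of rate $K$, the ratio $\mu_+/\mu_-$ therefore changes by the factor $e^{K\tau}$ across the nozzle, where $\tau=\int_{x_1}^{x_2}(\lambda_+^{-1}+|\lambda_-|^{-1})\,dx$.
\begin{itemize}
\item The cross terms $r_+r_-$ carry $\sqrt{\mu_+/\mu_-}$ or its inverse. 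This is generically exponentially large in $K$ at one end, against a gain of only order $K$.
\item The boundary conditions cap $K$. At $x_2$, $\phi(t,x_2)=0$ gives $r_+=r_-$. At $x_1$, $\omega(t,x_1)=0$ gives $(\bar a+\bar u)r_+=-(\bar a-\bar u)r_-$. Requiring the flux $\mu_+\lambda_+r_+^2+\mu_-\lambda_-r_-^2$ to have the favorable sign at both ends forces
\[
e^{K\tau}\le\frac{1+\bar M}{1-\bar M}(x_1)\,\frac{1+\bar M}{1-\bar M}(x_2).
\]
\item Meanwhile the diagonal zeroth-order terms alone, with coefficients of order $|A_x|/A$, already need $K$ at least that large.
\end{itemize}
So your scheme closes only under a smallness condition on $A_x$ relative to the nozzle length. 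Large-$K$ exponential weights are what the paper uses in the supersonic part of Section~4, where $A^1_p$ is positive definite and both families move the same way. They do not survive reflection between two counter-propagating families. Your higher-order step (time derivatives, then $x$-derivatives from the invertibility of $\hat A^1$) is the paper's, but it inherits this gap.

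\textbf{The missing idea is an exactly conserved quadratic energy.} In the variables $f=(\phi,\omega)^\top$ take $A^0=\frac{A}{\bar\rho}\mathrm{diag}(S,1)$, $A^1=A^0\hat A^1$ and $B=A^0\hat B$.
\begin{itemize}
\item This is not your relative entropy: at quadratic order, $f^\top A^0f$ differs from twice it by $2A\bar u\,\phi\,(u-\bar u)$. That is why you saw a $\bar u_x$-remainder.
\item For this choice the stationary equations make the quadratic part of $R=-\partial_tA^0-\partial_xA^1+B+B^\top$ vanish identically, so $|R|\le CN(T)$.
\item The flux $f^\top A^1f$ vanishes at $x_1$ and equals $\frac{2A}{\bar\rho}u\omega^2$ at $x_2$.
\end{itemize}
Hence the only dissipation is the boundary trace $\int|f|^2(\tau,x_2)\,d\tau$, and one must still show that this trace controls the whole energy.

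The paper does this following Rauch and Taylor. It uses an $e^{kx}$-weighted ``sideways'' identity $\partial_t(f^\top M^0f)+\partial_x(f^\top M^1f)=f^\top M^2f$, where $M^1=\mathrm{diag}(S,1)$ is positive definite and $M^0=M^1(\hat A^1)^{-1}$. This identity is integrated over characteristic trapezoids and combined with transport estimates for $r_\pm$. The result is
\[
(1+c)E(t)\le E(\tilde t_0)+CN(T)\int_{\tilde t_0}^tE\qquad\text{for } t\ge\tilde t_0+T_0.
\]
Decay then follows by iterating over time windows of length $T_0$, not from a pointwise inequality $\frac{d}{dt}\mathcal L+c\mathcal L\le\cdots$.

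A single Lyapunov functional in the spirit of your weights can be built, but only on top of this structure: the weight has to go on the sideways form, not on the energy. Consider $\int f^\top(CA^0-e^{kx}M^0)f\,dx$, choosing $k$ large first and then $C$ large. One can check the following.
\begin{itemize}
\item It is equivalent to $\|f\|_{L^2}^2$.
\item Its interior production is at most $-ck\int e^{kx}|f|^2+CN(T)\|f\|_{L^2}^2$.
\item Its boundary fluxes are $(2CAu/\bar\rho-e^{kx_2})\omega^2\ge0$ at $x_2$ and $-e^{kx_1}S\phi^2\le0$ at $x_1$, both favorable.
\end{itemize}
This again rests entirely on the cancellation in $R$.
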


\begin{thm}[Type (C): transonic stationary solutions without discontinuities] \label{Thm13} 
Assume $A \in  C^4(\bar{I})$ with $A_{x}(0) = 0$ and $A_{xx}(0) > 0$, 
and that the stationary solution $(\bar{\rho}, \bar{m})  \in C^3(\bar{I}) \times C^4(\bar{I})$ to (\ref{eq151})--(\ref{eq155}) exists and satisfies
\begin{gather*}
\inf_{x \in \bar{I}} \bar{\rho}(x) >0,  \quad
\begin{cases}
\bar{M}(x) < 1 & \text{for} \ x \in [x_1, 0),
\\
\bar{M}(x) = 1 & \text{for} \ x = 0,
\\
\bar{M}(x) > 1 & \text{for} \ x \in (0, x_2].
\end{cases}
\end{gather*}
Then the stationary solution is asymptotically stable to small perturbations
$\disp ( \rho^0 - \bar{\rho}, m^0 - \bar{m} ) \in H^2(I) \times H^2(I)$.
\end{thm}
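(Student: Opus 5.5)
We will prove Theorem \ref{Thm13} by the standard continuation argument: local existence in $H^2$ combined with a uniform a priori estimate for the perturbation. We write $\phi=\rho-\bar\rho$ and $\psi=m-\bar m$; it is convenient to work with $\psi$ as the perturbation of the flux $Am$. Local existence for the mixed problem with the single inlet condition \eqref{eq104} and no condition at the supersonic exit follows from the theory of \cite{RM1974}, given compatible data. So everything reduces to the following estimate. If $N(T):=\sup_{0\le t\le T}\|(\phi,\psi)(t)\|_{H^2}$ is small, then
\begin{equation*}
\|(\phi,\psi)(t)\|_{H^2}^2+c\int_0^t\|(\phi,\psi)(s)\|_{H^2}^2\,ds\le C\|(\phi,\psi)(0)\|_{H^2}^2 .
\end{equation*}
This yields global existence and decay. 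Indeed, $\|(\phi,\psi)(t)\|_{H^2}$ is integrable in time, and its derivative is bounded, so it tends to $0$; in fact Gronwall gives exponential decay.

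\textbf{Energy estimates in the two regimes.} We first rewrite the system in terms of $(\phi,\psi)$, with a linear part governed by the characteristic speeds $\bar u\pm\bar a$ plus quadratic remainders. On the subsonic part $[x_1,0)$ we use the relative energy (entropy) $\eta(\rho,m)-\eta(\bar\rho,\bar m)-\nabla\eta(\bar\rho,\bar m)\cdot(\phi,\psi)$, weighted by $A$. This form is coercive, and the inlet condition $\psi(t,x_1)=0$ makes the boundary flux nonnegative, as in \cite{RXX2013}.

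Near and beyond the sonic point, however, the interior source terms involving $A_x$ and $\bar u_x$ have no sign. We therefore multiply by a weight $w(x)$ chosen so that the interior quadratic form $-\tfrac12(w\,\mathcal{F})_x+\dots$ becomes positive definite. Here $\mathcal F$ is the flux matrix, and the mechanism is that $w_x$ paired against the characteristic speeds produces dissipation.

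In the supersonic region $(0,x_2]$ both speeds are positive. Hence an exponentially decaying weight $w=e^{-\lambda x}$ with $\lambda$ large gives dissipation, and the boundary term at $x_2$ has the good sign. This is why no exit condition is needed.

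\textbf{The sonic point (main obstacle).} At $x=0$ the speed $\bar u-\bar a$ vanishes, so the weight cannot generate dissipation in that characteristic direction. The plan is to use the structure of the stationary solution there. Since $A_x(0)=0$ and $A_{xx}(0)>0$, the sonic-point analysis of \eqref{eq151} (the choice of the smooth accelerating branch) gives $\bar M_x(0)>0$, so that $(\bar u-\bar a)_x(0)>0$. The term $-\tfrac12 w(\bar u-\bar a)_x|\,\cdot\,|^2$ in the energy identity has the wrong sign. It is the combination $w\,(\bar u-\bar a)$ with $w$ chosen so that $(w(\bar u-\bar a))_x<0$ that must dominate.

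Concretely, we first try diagonalizing into Riemann-type variables $z_\pm$. The plan is then to choose one weight for $z_+$, which is decreasing everywhere, and a second weight for $z_-$ that has a singular-free profile with $(w_-(\bar u-\bar a))_x<0$ across $0$. For instance we take $w_-=(\bar a-\bar u)/\!$something on the subsonic side, and glue the two weights smoothly. The coupling terms between $z_+$ and $z_-$ come from $A_x$ and $\bar\rho_x$. At $x=0$ these are proportional to $A_x(0)=0$, hence small in a neighbourhood, and can be absorbed. This gives the coupled subsonic/supersonic energy with dissipation $c\int(|z_+|^2+|z_-|^2)$, up to boundary terms with good signs.

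\textbf{Higher derivatives and closing.} For the derivatives we differentiate the system in $t$ rather than in $x$, to respect the boundary conditions. The same weighted estimates then apply to $\partial_t(\phi,\psi)$ and $\partial_t^2(\phi,\psi)$, with commutators that are controlled by $N(T)$ times the dissipation. Spatial derivatives are recovered from the equations, wherever the matrix relating $\partial_x$ to $\partial_t$ is invertible. It degenerates exactly at the sonic point, so near $x=0$ we would instead use the weighted $\partial_x$ estimates directly; this is where the $C^4$ regularity of $A$ and $\bar m$ is consumed. Summing all the estimates, with nonlinear terms bounded by $CN(T)\int\|\cdot\|_{H^2}^2$, closes the estimate for small data.

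I expect the sonic-point weight construction, together with its $H^2$-level counterpart, to be the decisive difficulty.
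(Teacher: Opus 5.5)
Your plan fails at the sonic point, for two reasons. First, the coupling terms are not small near $x=0$. The zero-order coefficients involve $\bar\rho_x$ and $\bar u_x$, not only $A_x$. Moreover $\bar\rho_x=\frac{\bar M^2}{1-\bar M^2}\frac{A_x}{A}\bar\rho$ is a $0/0$ form at the throat with nonzero limit $\bar\rho_x(0)=-\bar\rho\sqrt{A_{xx}/(A(K(\bar\rho)+2))}(0)$ (Theorem~\ref{thm2.1}), and likewise $\bar u_x(0)>0$. So the $z_\pm$ couplings are $O(1)$ there and cannot be absorbed.

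Second, no positive weight gives the sign you ask for. Since $\bar u-\bar a$ vanishes at $0$, $(w_-(\bar u-\bar a))_x(0)=w_-(0)(\bar u-\bar a)_x(0)>0$ whenever $w_-(0)>0$. A weight like $w_-\propto\bar a-\bar u$ instead vanishes at the throat and destroys coercivity exactly where control is needed.

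The dissipation at the throat comes from the zero-order terms you proposed to absorb. In your characteristic variables, the diagonal zero-order coefficient $\beta_{--}$ of the $z_-$ equation satisfies $\beta_{--}(0)=(\bar u-\bar a)_x(0)$. Hence the net coefficient $2w_-\beta_{--}-(w_-(\bar u-\bar a))_x$ equals $w_-(0)(\bar u-\bar a)_x(0)>0$ for any positive weight. The paper encodes this with the variables $(\rho-\bar\rho,u-\bar u)$ and the symmetrizer $A^0_p=\mathrm{diag}(Ap'/\rho,\,A\rho)$. The leading part of $R_p$ is then $\mathrm{diag}(-A\bar u p''(\bar\rho)\bar\rho_x/\bar\rho,\,2A\bar\rho\bar u_x)$. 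Its quadratic form at $(\bar\rho,-\bar a)$, which spans the kernel of $A^1_p(0)$, equals $2A\bar\rho\bar a^2(\bar u-\bar a)_x(0)>0$. So a single decreasing weight $W=e^{-k_*(x-x_*)}$ with $k_*$ large makes $R_p+k_*A^1_p$ positive definite on $[0,x_2]$, and by continuity on $[x_*,x_2]$ for some $x_*<0$ (Proposition~\ref{Pro51}). The same positivity must be checked for the $\partial_x$-differentiated matrices $R^j_p$. Their commutators $\mathcal{G}^j_p$ have $O(1)$ coefficients and are handled by the $\mu^{j+1}$ hierarchy, not by smallness of $N(T)$.

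You also give no source of dissipation in the strictly subsonic part, which your integrated bound $c\int_0^t\|(\phi,\psi)\|_{H^2}^2$ requires. For the energy of Section~\ref{Sec_sub}, the interior matrix is only $O(N(T))$ and the inlet flux vanishes, so that identity gives conservation, not decay. Its density $\frac{A}{\bar\rho}(S\phi^2+\psi^2)$ (your $\psi=m-\bar m$) also degenerates at $x=0$ because $\bar S(0)=0$, so it cannot be run up to the throat.

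The paper instead places an artificial interface at $x_*<0$. There the outgoing flux of the subsonic energy exceeds the incoming flux of the weighted energy by $c|f_b|^2(t,x_*)$, because $\bar S(x_*)>0$; this flux gap also vanishes at the throat. The interface term is converted into decay on $[x_1,x_*]$ by tracing characteristics over trapezoidal regions (Rauch--Taylor), and into decay on $[x_*,x_2]$ by an $e^{qt}$-weighted estimate.
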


To state the asymptotic stability of a stationary solution which is smooth 
except for a single discontinuity at $x = \bar{d}$,
we note that for a certain positive constant $\varepsilon_0$,
the stationary solution of type (B) above, 
when restricted on $[x_1, \bar{d})$, can be extended smoothly to 
$[x_1, \bar{d}+\varepsilon_0)$ which we call $(\bar{\rho}_1, \bar{m}_1)$.
Note that the extended solution satisfies \eqref{eq151}, but ignores the boundary condition 
at $x=x_2$.
Here we mention that it is because $\lim_{x \to \bar{d}-0} \bar{M}(x) \ne 1$,
and the equation \eqref{eq221} is solvable in a certain neiborhood of $x=\bar{d}$.
Similarly, when it is restricted on $(\bar{d}, x_2]$, it can be extended smoothly to 
$(\bar{d}-\varepsilon_0, x_2]$ which we call $(\bar{\rho}_2, \bar{m}_2)$.

\begin{thm}[Type (B): transonic stationary solutions with a discontinuity] \label{Thm15} 
Assume $A \in  C^4(\bar{I})$ with $A_{x}(0) = 0$ and $A_{xx}(0) > 0$.
Suppose that there exists a constant $\bar{d} \in (0, x_2)$ such that 
$A_x(\bar{d}) > 0$ holds, and that a stationary solution 
$(\bar{\rho}, \bar{m})  \in C^3(\bar{I} \setminus \{ \bar{d} \} ) \times C^4(\bar{I})$ 
to (\ref{eq151})--(\ref{eq159*}) exists and satisfies
\begin{gather*}
\inf_{x \in \bar{I}} \bar{\rho}(x) >0,  \quad
\begin{cases}
\bar{M}(x) < 1 & \text{for} \ x \in [x_1, 0),
\\
\bar{M}(x) = 1 & \text{for} \ x = 0,
\\
\bar{M}(x) > 1 & \text{for} \ x \in (0, \bar{d}),
\\
\bar{M}(x) < 1 & \text{for} \ x \in (\bar{d}, x_2].
\end{cases}
\end{gather*}
Note that $\bar{u} \in C^3(\bar{I} \setminus \{ \bar{d} \} )$ holds under these assumptions.
If $\disp ( \rho^0, m^0 )$ is smooth except for a single point
$x=d^0 \in (\bar{d}-\varepsilon_0, \bar{d}+\varepsilon_0)$,
and
the restriction of $\disp \rho^0 - \bar{\rho}_1, m^0 - \bar{m}_1$ on $\disp I_1^0 := (x_1, d^0)$,
and that of $\disp \rho^0 - \bar{\rho}_2, m^0 - \bar{m}_2$ on $\disp I_2^0 := (d^0, x_2)$
belong to $H^2(I_1^0)$ and $H^2(I_2^0)$ respectively 
with the sum of their norms plus $|d^0-\bar{d}|$ sufficiently small,
then a global solution exists, and it converges to the stationary solution.
\end{thm}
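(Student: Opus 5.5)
The plan is to reduce Theorem \ref{Thm15} to a free boundary problem on two fixed intervals and then combine a priori estimates with local existence. First I straighten the shock. Using $y$ with $x = \bar{d} + (d(t)-\bar{d})\chi(y)$-type coordinates, or simply the affine maps of $(x_1,d(t))$ and $(d(t),x_2)$ onto $(x_1,\bar{d})$ and $(\bar{d},x_2)$, I write the perturbations $(\phi_i,\psi_i) := (\rho-\bar{\rho}_i, m-\bar{m}_i)$ on $I_i$ for $i=1,2$, together with $\eta(t):=d(t)-\bar{d}$. The extensions $(\bar{\rho}_i,\bar{m}_i)$ make this comparison meaningful when $d(t)\neq\bar{d}$. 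The unknowns then satisfy:
\begin{itemize}
\item[] the linearized Euler system plus quadratic remainders on each piece;
\item[] the inlet condition \eqref{eq104} and the exit condition \eqref{eq105};
\item[] at $y=\bar{d}$, the linearized version of \eqref{eq110} as a boundary condition for the downstream state;
\item[] the linearized version of \eqref{eq106a} as an ODE for $\eta$.
\end{itemize}
The linearized shock ODE reads
\[
\dot\eta \jump{A\bar\rho}(\bar d) = \jump{A\psi}-\bar\sigma\,\eta+\dots,
\]
where $\bar\sigma$ comes from the derivative of $\jump{A\bar m}$ and of \eqref{eq151} across $\bar d$. The sign of the $\eta$-coefficient is where $A_x(\bar d)>0$ enters: it produces a damping term $-c\,\eta^2$ with $c>0$ in $\frac{d}{dt}\eta^2$. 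Local existence in $H^2$ with compatibility conditions follows the Li–Yu/Majda type theory for a single Lax shock, as in \cite{RM1974}.

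The core is a uniform a priori estimate for
\[
N(T)=\sup_{t\le T}\bigl(\|(\phi_1,\psi_1)\|_{H^2(I_1)}+\|(\phi_2,\psi_2)\|_{H^2(I_2)}+|\eta|\bigr)
\]
together with time-integrated dissipation. I split the domain into three parts.
\begin{itemize}
\item[] On the purely subsonic downstream piece $(\bar d,x_2)$, I use the energy method of Theorem \ref{Thm11}, which is the symmetrizer/entropy estimate of \cite{RXX2013}. Boundary terms at $x_2$ vanish by \eqref{eq105}, and at $\bar d$ they are expressed through \eqref{eq110} in terms of the incoming supersonic traces and $\eta$.
\item[] On $(x_1,\bar d)$, the estimates are exactly those used for Theorem \ref{Thm13}. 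They couple the subsonic energy on $(x_1,0)$ with the weighted supersonic energy on $(0,\bar d)$. With $A_{xx}(0)>0$ this yields dissipation across the sonic point, and it controls the outgoing traces at $x=\bar d-0$ with a good sign, since both characteristics exit there.
\end{itemize}
Weighting the downstream energy by a small constant $\delta$ and the $\eta^2$ term by a suitable constant, the trace terms at the shock produced downstream are absorbed by the upstream boundary dissipation, and the $\eta\cdot$trace cross terms are absorbed by the $c\eta^2$ damping. Higher derivatives are handled by differentiating in $t$, using the equations to recover $x$-derivatives. Time derivatives commute with the boundary conditions, and $\ddot\eta$ and $\dot\eta$ are controlled through the shock ODE.

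The main obstacle is the shock-interface coupling. The boundary condition \eqref{eq110} ties the single incoming downstream characteristic to two upstream traces and to $\eta$. One must verify a uniform Lopatinskii-type inequality in energy form: that the downstream boundary flux at $\bar d+0$ is bounded by $C(|\text{upstream traces}|^2+\eta^2)$, with a coefficient for $\eta$ that does not destroy the damping from $A_x(\bar d)>0$. I would first compute the linearization of \eqref{eq110} explicitly using \eqref{eq157} and the Lax inequalities \eqref{eq159*}. From it I would solve for the downstream Riemann invariant and then choose the weights. The sonic part I take as given from the type (C) estimates.

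With $N(T)$ small, the combined estimate gives
\[
E(t)+\int_0^t D \le C\,E(0).
\]
A continuation argument then yields global existence. Integrability of $\frac{d}{dt}E$ together with $\int D<\infty$ gives $E(t)\to0$, hence $d(t)\to\bar d$ and convergence of $(\rho,m)$ to the stationary solution.
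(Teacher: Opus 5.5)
Your architecture matches the paper's: affine straightening of the shock, the type (C) estimates on $(x_1,\bar d)$ with both characteristics leaving at $\bar d-0$, subsonic estimates on $(\bar d,x_2)$, \eqref{eq110} as a boundary condition, and a small weight on the downstream energy. The gap is in how you control $\eta=d-\bar d$ (your $\psi_i$ is the paper's $\omega_i$).

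\textbf{The shock ODE has no $\eta$-term.} Both extensions satisfy $A\bar m_1\equiv A\bar m_2\equiv Q$. Hence $[\![A\bar m]\!]$ and its derivative vanish identically, and \eqref{eq106a} is exactly $\dot\eta\,[\![\rho]\!]=[\![\psi]\!]$ at the shock; this is \eqref{eq706a}. The $\eta$-dependence enters only through \eqref{eq110}, whose linearization at $\bar d$ is $\bar S_2\phi_2+2\bar u_2\psi_2=\frac{QA_x}{A^2}[\![\bar u]\!]\,\eta$ up to upstream traces (cf. \eqref{eq732}).

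\textbf{The damping from this relation is cancelled.} If you use that relation to eliminate the incoming downstream invariant, you do get $\dot\eta=-c\eta+\dots$ with $c>0$ iff $A_x(\bar d)>0$. But it comes with a cross term $\eta\,r_-$ in the outgoing downstream trace, and that trace is not dissipated. With the symmetrizer $A^0$ of Section~\ref{Sec_sub}, the downstream energy's boundary term at $\bar d$ is $\frac{2A\bar u_2}{\bar\rho_2}\psi_2^2-\frac{2QA_x}{A\bar\rho_2}[\![\bar u]\!]\,\eta\,\psi_2$. At the same time $\frac{d}{dt}\eta^2=\frac{2}{[\![\bar\rho]\!]}\eta(\psi_2-\psi_1)$.

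Hence, for any weights $\delta$ on the downstream energy and $\beta$ on $\eta^2$, the total boundary form in $(\psi_2(\bar d),\eta)$, modulo upstream traces, is $\delta c_1\psi_2^2+\gamma\,\eta\psi_2$. It has no $\eta^2$ term. Passing to $(r_-,\eta)$ is an invertible linear change of variables and cannot make it coercive. The apparent damping is exactly offset by the $\eta$-part of the incoming characteristic. So the step ``the $\eta\cdot$trace cross terms are absorbed by the $c\eta^2$ damping'' fails for every choice of weights.

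\textbf{What the paper does instead.} The only admissible choice is $\gamma=0$, i.e.\ $\beta=\delta\,A_x\bar\rho_1[\![\bar u]\!]^2(\bar d)$. This is the modified energy $E_2^j$ in \eqref{EF7*}. So the role of $A_x(\bar d)>0$ is to make this energy coercive in $\eta$, not to damp $\eta$. Lemma~\ref{Lem73} then dissipates only $\dot\eta$, through $\psi_2\approx[\![\bar\rho]\!]\dot\eta$.

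Decay of $\eta$ itself needs a separate observability argument, Lemma~\ref{Lem74}. It has three steps:
\begin{enumerate}
\item A weighted estimate on the trapezoid $\Omega_2$ transports the exit dissipation $\int|f_2(x_2)|^2$ to $\int|f_2(\bar d)|^2$.
\item The linearized Rankine--Hugoniot relation, whose $\eta$-coefficient $\frac{QA_x}{A^2\bar S_2}[\![\bar u]\!](\bar d)$ is nonzero, turns this into a bound on $\int\eta^2$.
\item $\eta^2(s)\le C\int(\eta^2+\dot\eta^2)$ over a window of fixed length gives the pointwise value.
\end{enumerate}

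\textbf{Your concluding step also needs more.} $E(t)+\int_0^tD\le CE(0)$, with $D$ made only of traces, does not give $E\to0$. You need the time-window inequality $(1+c)E(t)\le E(\tilde s_0)+\dots$ for $t\ge\tilde s_0+T_0$ (Lemmas~\ref{Lem79} and \ref{Lem77}), obtained by characteristic tracing from the dissipative boundaries. Note also that under \eqref{eq105} the exit term does not vanish. It equals $\frac{2Au}{\bar\rho}\psi_2^2(x_2)\ge0$, and it is exactly the dissipation that drives the downstream decay.
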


To the best of our knowledge, Theorems \ref{Thm13} and \ref{Thm15} provide the first mathematical results establishing the asymptotic stability of stationary solutions that exhibit a {\it continuous} transition from subsonic to supersonic flow in nozzles of finite length.
Furthermore, we impose no restrictions on the magnitude of the mass flux $Q$, the derivative of the cross-sectional area $A_x(x)$, or the exit pressure, provided that a stationary solution exists.

\paragraph{Literature review.}
%
The study of steady gas flows through ducts of varying cross-sectional area has a long history. As early as 1886, Reynolds \cite{Reynolds1886} investigated the flow of compressible gases through ducts and identified the fundamental relation between changes in the cross-sectional area and the acceleration or deceleration of the flow. These early observations provided a basis for the subsequent quasi-one-dimensional theory of nozzle flows. A systematic description was later given by Courant and Friedrichs \cite{CF1948}. Within the quasi-one-dimensional approximation, they described the principal steady-flow patterns in a convergent-divergent nozzle and, in particular, explained how varying the receiver pressure gives rise to 
the three flow patterns (A), (B), and (C) as shown in Figure \ref{Fig012}. 

By the 1950s, the mathematical theory of subsonic and transonic gas dynamics had developed far beyond the quasi-one-dimensional setting. A systematic account of this theory was given by Bers \cite{Bers1958}, who treated two-dimensional potential flows using, among other tools, the hodograph transformation and the theory of equations of mixed elliptic-hyperbolic type. In this framework, the sonic state is no longer merely a critical point of an ordinary differential equation, as in the quasi-one-dimensional nozzle model, but forms part of the boundary between regions in which the governing equation has different types. This mixed-type character constitutes one of the fundamental mathematical difficulties of multidimensional transonic flow.
A particularly striking manifestation of this difficulty was discovered by Morawetz. Beginning in the 1950s, Morawetz (\cite{Mrwt1956} and subsequent papers) showed that shock-free continuous transonic flows past a two-dimensional airfoil are structurally unstable with respect to arbitrarily small perturbations of the profile. Her work revealed the delicate nature of smooth multidimensional transonic flows and motivated the subsequent study 
of physically admissible weak solutions containing shocks.

Since then, substantial progress has been made in the mathematical theory of multidimensional steady transonic flows.
The existence, uniqueness, and structural stability of multidimensional transonic shocks, which separate a strictly supersonic upstream flow from a subsonic downstream flow without a continuous passage through a sonic point, have been extensively studied; see, for example, \cite{CF2003,CF2022,XY2005,XYY2009} and the references therein.
In contrast, important progress has also been made for shock-free continuous transonic flows, including the analysis of smooth transonic flows of Meyer type in two-dimensional de Laval nozzles by Wang and Xin \cite{WangXin2019} and the existence, uniqueness, and structural stability theory developed by Weng and Xin \cite{WengXin2024} for quasi-one- and quasi-two-dimensional steady Euler models.

For the time-dependent problems, 
we first review studies on global weak solutions
for {\it infinitely long} nozzles, i.e. $I=\mathbb R$.
A major difficulty in these studies arises from the possibility that spatial derivatives of solutions may blow up in finite time, see e. g. \cite{CPZ2017,Lax1964}.
This inspired the construction of solution in the weak sense.
In the case of constant cross-sectional area, DiPerna \cite{DiPerna1983} employed the vanishing viscosity method together with compensated compactness to construct global entropy solutions for compressible gas dynamics with a special power-law equation of state.
%
When the cross-sectional area varies, Liu \cite{Liu1979} constructed global solutions to general quasilinear hyperbolic systems, including \eqref{eq101}, 
using Glimm's scheme, and analyzed their asymptotic behavior.
However, this analysis excluded cases involving sonic transition points, and the total variation of $A(x)$ was assumed to be small.
Recently, Tsuge \cite{Tsuge2012} established the global-in-time existence of entropy weak solutions that may include sonic points by constructing an invariant region, assuming that $A_{xx}(x)$ vanishes at the throat.
Subsequent works \cite{Tsuge2013,Tsuge2017} extended these results to various nozzle configurations under smallness assumptions on the integral of $|A_x(x)|$.

The dynamical and asymptotic behavior of one-dimensional transonic flows was investigated by Liu in a series of works. In \cite{Liu1982b}, he analyzed the asymptotic wave patterns for the Euler equations in ducts whose cross-sectional area varies over a bounded interval and is constant outside it. In the transonic regime, he showed that the admissible asymptotic states may undergo bifurcation and need not depend uniquely and smoothly on the far-field states. A particularly important consequence of this analysis is the geometric distinction between contracting and expanding portions of the duct: standing shocks occurring in an expanding portion belong to a stable branch of asymptotic states, whereas those occurring in a contracting portion are associated with an unstable branch. This led to the principle that a stable standing shock can occur only in an expanding portion of a convergent-divergent nozzle. Liu subsequently developed this observation in a genuinely dynamical direction in \cite{Liu1982a}, where he studied the nonlinear stability and instability of stationary discontinuous transonic configurations and established the dynamical stability of standing shocks located in an expanding portion of the nozzle, in contrast to the instability of those located in a contracting portion. These results identified the nozzle geometry as a decisive mechanism for the dynamical selection of stationary transonic shocks. The relevance of such a selection principle was further emphasized by Embid, Goodman and Majda \cite{EGM1984}, who showed that the quasi-one-dimensional model may admit multiple stationary solutions in convergent-divergent nozzles, in particular when a supersonic inflow is connected discontinuously to a subsonic downstream state through the Rankine-Hugoniot conditions. Their analysis demonstrates that existence alone does not single out a physically relevant steady configuration and hence underlines the role of dynamical stability in selecting asymptotic states among multiple admissible transonic states.

However, the above studies did not address the stability of stationary solutions that undergo a continuous transition from subsonic to supersonic flow through a sonic point. For nozzles of finite length, Rauch, Xie and Xin \cite{RXX2013} established the global nonlinear asymptotic stability of stationary transonic shocks for the quasi-one-dimensional Euler equations under suitable boundary conditions. In their setting, the flow enters the nozzle supersonically, undergoes a discontinuous transition to a subsonic state through a shock, and exits subsonically. Their proof combines energy estimates with spectral theory. An important feature of their result is that no smallness assumption is imposed on the variation of the cross-sectional area or on the strength of the stationary shock. On the other hand, since the upstream flow is already supersonic, the stationary flow considered there does not undergo a continuous transition through a sonic point.

Related stability problems have also been studied for the one-dimensional compressible Navier--Stokes equations in a half space. Kawashima, Nishibata and Zhu \cite{KNZ2003} studied stationary outflow solutions with supersonic or transonic far-field states and proved their asymptotic stability by an $L^2$-energy method combined with a Poincar\'{e}-type inequality. Nakamura, Nishibata and Yuge \cite{NNY2007} subsequently investigated the convergence rates toward these stationary solutions by means of a weighted energy method. We emphasize that the transonic stationary solutions considered in these works do not connect a subsonic state to a supersonic state through an interior sonic point. Thus, despite the occurrence of a transonic far-field state, the degeneracy arising in these problems is different from the continuous sonic transition considered in the present paper.

A stability problem involving a continuous sonic transition was studied by Hong, Hsu, Lin and Liu \cite{HHLL2013}. They considered a stationary solution in an infinitely long convergent-divergent nozzle that passes continuously from the subsonic to the supersonic regime. Under additional assumptions, including that the Mach number is greater than $1/\sqrt{2}$ everywhere and that $A_{xx}>0$ at the throat, they proved that solutions to the linearized problem decay to zero in $L^\infty$ on every compact spatial interval. Their argument is based on energy estimates for perturbations up to the $H^1$-level. This provides an important stability result for continuous transonic stationary flows, but the analysis is restricted to the linearized problem in an infinitely long nozzle.

Consequently, nonlinear asymptotic stability of a stationary flow that passes continuously from the subsonic to the supersonic regime through an interior sonic point has remained largely open, particularly for nozzles of finite length. 
The finite-length setting is also natural from the classical viewpoint of Courant and Friedrichs \cite{CF1948}, where they classified steady flows in a convergent-divergent nozzle in terms of the receiver pressure at the exit, whose variation selects among flow patterns (A), (B), and (C) mentioned earlier.

In the present paper, we establish a nonlinear asymptotic stability theory for steady flows in a finite convergent-divergent nozzle, including those that undergo a continuous transition from subsonic to supersonic flow. The main analytical difficulty lies in controlling perturbations across the sonic point, where one of the characteristic speeds degenerates. Exploiting the precise structure of the stationary solution near the transonic point, we first extend the weighted energy analysis from the supersonic side upstream across the sonic point, so that the resulting estimate is valid on a region containing the transonic point in its interior. The complementary region is then strictly subsonic, where we combine an energy argument closely related to that of \cite{RXX2013} with a characteristic method. 
A crucial feature of this decomposition is that the two energy estimates generate different fluxes at their common interface. The resulting gap between these interface fluxes produces a coercive dissipative term, which in turn yields dissipation of the perturbation energy over the entire nozzle by applying the characteristic method directly and time-weighted energy method. Combining these mechanisms, we obtain the desired nonlinear asymptotic stability without imposing smallness assumptions on the mass flux or the variation of the nozzle cross-sectional area, nor additional restrictions on the exit pressure once the corresponding stationary solution exists.

\paragraph{Main ideas of the proof.}
We briefly describe the main analytical mechanisms 
underlying our stability results.  
A fundamental difficulty in the transonic problem is the change 
of characteristic regime across the sonic point.  
In the strictly subsonic region, the system admits a coercive energy structure 
for which the boundaries provide the dissipation needed to obtain asymptotic stability.  
This mechanism, however, ceases to be uniform 
as the flow approaches the sonic state, where one of the characteristic speeds vanishes.  
On the other hand, the supersonic region admits a different, 
spatially weighted energy estimate.  
Thus, no single energy estimate used in the strictly subsonic or supersonic regime alone is sufficient to control a perturbation across the entire transonic flow.

Our approach is to couple these two energy structures 
rather than to seek a single energy functional that remains uniformly coercive across the sonic point.  
We apply the subsonic energy estimate on a region terminating slightly upstream of the sonic point, while the weighted estimate is applied to a region that contains the sonic point and extends into the supersonic regime.  
The two estimates are coupled across an artificial interface 
located slightly upstream of the sonic point.
At the artificial interface separating the corresponding energy estimates, 
the outgoing flux from the subsonic estimate and the incoming flux 
from the weighted estimate do not cancel.  
The resulting flux gap has a favorable sign and supplies the dissipation 
required to close the global nonlinear energy estimate.  
In this sense, the degeneracy at the sonic point is controlled not by a uniformly coercive energy across the transition, but by coupling energy structures adapted to the two different characteristic regimes.  
The local structure of the stationary transonic solution near the sonic point, 
and in particular the explicit information on its first derivatives obtained in Section~2, is essential for establishing the required sign.

A second dissipative mechanism appears when the stationary transonic flow contains a shock.  
The region upstream of the shock is controlled by the transonic framework 
just described, whereas the downstream subsonic region is treated 
by the subsonic energy method.  
These estimates are coupled through the Rankine--Hugoniot conditions 
and the evolution equation for the shock location.  
The geometric condition that the stationary shock lie in an expanding portion 
of the nozzle, namely $A_x(\bar d)>0$, 
yields a favorable contribution controlling perturbations of the shock position.  

There is also an important difference between the present argument 
and earlier approaches to stability in finite-length nozzles.  
In the subsonic region, we combine the energy method with a characteristic analysis to identify directly the dissipation generated by the physical boundaries.  
This avoids the spectral contradiction argument used in previous work and allows substantially lower regularity of the initial perturbations.  
More importantly, the same viewpoint makes it possible to pass from the purely subsonic problem to the transonic problem: physical boundary dissipation in the subsonic problem is replaced, at the level of the coupled estimates, by the favorable flux gap at an artificial interface. 
The resulting framework therefore treats boundary dissipation, sonic transition, and shock motion within a common nonlinear energy argument.

\paragraph{Organization of the article.}
In Section~2, 
we classify the stationary solutions in a convergent--divergent nozzle 
according to the prescribed exit density $\rho_+$.  
We identify the threshold values separating the three types of stationary flows described above: purely subsonic solutions of type (A), 
transonic solutions of type (B) containing a stationary shock 
in the divergent part of the nozzle,
and smooth transonic solutions of type (C) that pass through the sonic state 
at the throat and remain supersonic downstream.
We also investigate the regularity of the transonic stationary solutions 
at the sonic point, and explicitly determine their first derivatives there, 
which are used in the subsequent stability analysis.

In Section~3, we establish the nonlinear asymptotic stability of
the purely subsonic stationary solutions of type (A).

In Section~4, we prove the nonlinear asymptotic stability 
of the smooth transonic stationary solutions of type (C), 
which pass continuously from the subsonic to the supersonic regime 
through the sonic state. 
This section contains the main analysis required 
to overcome the degeneracy at the sonic point.

Finally, in Section~5, we establish the nonlinear asymptotic stability 
of the transonic stationary solutions of type (B)
containing a single stationary shock in the divergent part of the nozzle.  
The proof combines the transonic analysis of Section~4
with the subsonic stability framework of Section~3, 
and the dynamics of the shock location.

\paragraph{Notation.}
$c$ and $C$ denote generic positive values which may differ from line to line.
Letters with subscritps such as $c_i$ and $C_i$ $(i=0,1,2,\cdots)$ play the same roles.
LHS and RHS stand for left hand side and right hand side respectively. 
Finally, $\mathbb{R}_+:= (0,\infty)$ is used.

\section{Construction of stationary solutions} \label{Sec02}

The goal of this section is to construct stationary solutions
for convergent-divergent nozzles which are widely used in industries. 
The results are summarized in Theorem \ref{thm2.1} below, where
the stationary solutions corresponding to the types (A)--(C) in Section \ref{Sec01} appear depending on the values of pressure at the exit.

To state the theorem, we introduce notation.
First we note that due to the equations \eqref{eq151a} and \eqref{eq157a} and the boundary condition \eqref{eq104}, the stationary solution satisfies
\begin{equation} \label{eq209}
A \bar{m} (x) = Q \ \text{for an arbitrary} \ x \in I 
\end{equation}
irrespectively of the presence of discontinuities.
We define
\begin{equation} \label{eq215}
F(\rho, A) := \frac{Q^2}{2A^2 {\rho}^2} + \int_{\rho_\infty}^{\rho} \frac{p'(\zeta)}{\zeta} d \zeta,
\end{equation}
where $\rho_\infty$ is the reference density in \eqref{eq102c}.
The important thing is that if $A(x)$ and $\bar{\rho}(x)>0$ are smooth and 
satisfies $F(\bar{\rho}(x), A(x))=c$ for a certain constant $c$, 
then $(\bar{\rho},\bar{m})=(\bar{\rho},Q/A)$ solves the equations \eqref{eq151}.
Obviously, $F(\rho, A)$ increases as $A$ decreases, and it holds that
\begin{equation} \label{eq217}
\frac{\partial}{\partial {\rho}} F({\rho}, A) 
= \frac{1}{{\rho}^3} \left( {\rho}^2 p'({\rho}) - \frac{Q^2}{A^2} \right).
\end{equation}
For each fixed $A>0$, the function $F(\cdot, A)$ attains its minimum at $\disp \rho = \rho_T(A) >0$, where 
$\rho_T(A)$ is defined by
\begin{equation} \label{eq219}
\rho_T(A):= h^{-1}(Q^2/A^2)
\ \ \text{with} \ \
h(\rho):=\rho^2 p'(\rho).
\end{equation}
Here $h(\rho)$ is a monotonically increasing function whose range is $[0,\infty)$ due to \eqref{eq102}, 
and thus $h^{-1}$ is well-defined.
It is seen that $F(\rho_T(A), A)$ increases as $A$ decreases.
Indeed, if $0<A_{1}<A_{2}$, then $F(\rho_{T}(A_{1}), A_{1})>F(\rho_{T}(A_{1}), A_{2})>F(\rho_{T}(A_{2}), A_{2})$.
Furthermore, $\disp \bar{\rho} \lesseqgtr \rho_T(A)$ and \eqref{eq209} imply
$\disp \bar{\rho}^2 p'(\bar{\rho}) \lesseqgtr Q^2 / A^2 = \bar{\rho}^2 \bar{u}^2 $,
and hence $\disp p'(\bar{\rho}) \lesseqgtr \bar{u}^2$.
In particular, the flow is transonic if and only if $\disp \bar{\rho} = \rho_T(A)$ holds.
Conditions \eqref{eq102} imply
\begin{gather}\label{defF2}
\lim_{\rho \to +0} F(\rho, A) = \lim_{\rho \to \infty} F(\rho, A) = \infty.
\end{gather}
Therefore, for any fixed $A$ and $\disp c > F(\rho_T(A), A)$, the equation
$\disp F(\bar{\rho}, A) = c$ for $\bar{\rho}$ has one subsonic root and one supersonic root.
If $\disp c = F(\rho_T(A), A)$, then $\disp F(\bar{\rho}, A) = c$ has a unique transonic root $\bar{\rho} = \rho_T(A)$.

We also define
\begin{gather}\label{defG1}
G(\rho, A):= \dfrac{Q^2}{A^2 \rho} + p(\rho).
\end{gather}
It is noteworthy that the Rankine-Hugoniot condition \eqref{eq157b} can be rewritten as $\jump{ G(\bar{\rho}, A) } (\bar{d}) = 0$ by using \eqref{eq209} and the continuity of $A$.
From \eqref{eq102}, it holds that 
\begin{equation*}
\frac{\partial}{\partial \rho} G(\rho, A)
= - \dfrac{Q^2}{A^2 \rho^2} + p'(\rho),
\quad
\lim_{\rho \to +0} G(\rho, A) = \lim_{\rho \to +\infty} G(\rho, A)= + \infty.
\end{equation*}
We note that $G(\cdot, A)$ also attains its minimum at $\disp \rho = \rho_T(A)$.
For an arbitrary $A>0$ and a density $\disp \rho_L (\ne \rho_T(A))$, 
there exists a unique density $\disp \rho_R = \rho_R(\rho_L, A)$
such that 
\begin{equation} \label{eq227}
G(\rho_{L}, A)=G(\rho_{R}, A)
\ \text{with} \ \rho_L \ne \rho_R. 
\end{equation}
The entropy condition \eqref{eq159} and the condition \eqref{eq209} require 
\begin{equation} \label{eq229}
\rho_L < \rho_T(A) < \rho_R.
\end{equation}

Now we set
\begin{gather*}
\disp A_- := A(x_1), \quad \disp A_+ := A(x_2), \quad \disp A_0 := \min_{x \in \bar{I}} A(x), \quad \disp F_0 := F(\rho_T(A_0), A_0).
\end{gather*}
Furthermore, we denote by $\rho_1,\rho_2$ the roots of $\rho$ to $F(\rho, A_+) = F_0$ 
with $\rho_1 > \rho_2$, and set $\disp \rho_3 := \rho_R(\rho_2, A_+)$,
where $\rho_R$ is defined in \eqref{eq227}.
Figure \ref{Fig01} below will help readers understand the definitions.

\begin{minipage}{0.9\hsize}
\begin{figure}[H] 
\centering
\includegraphics[scale=0.6]{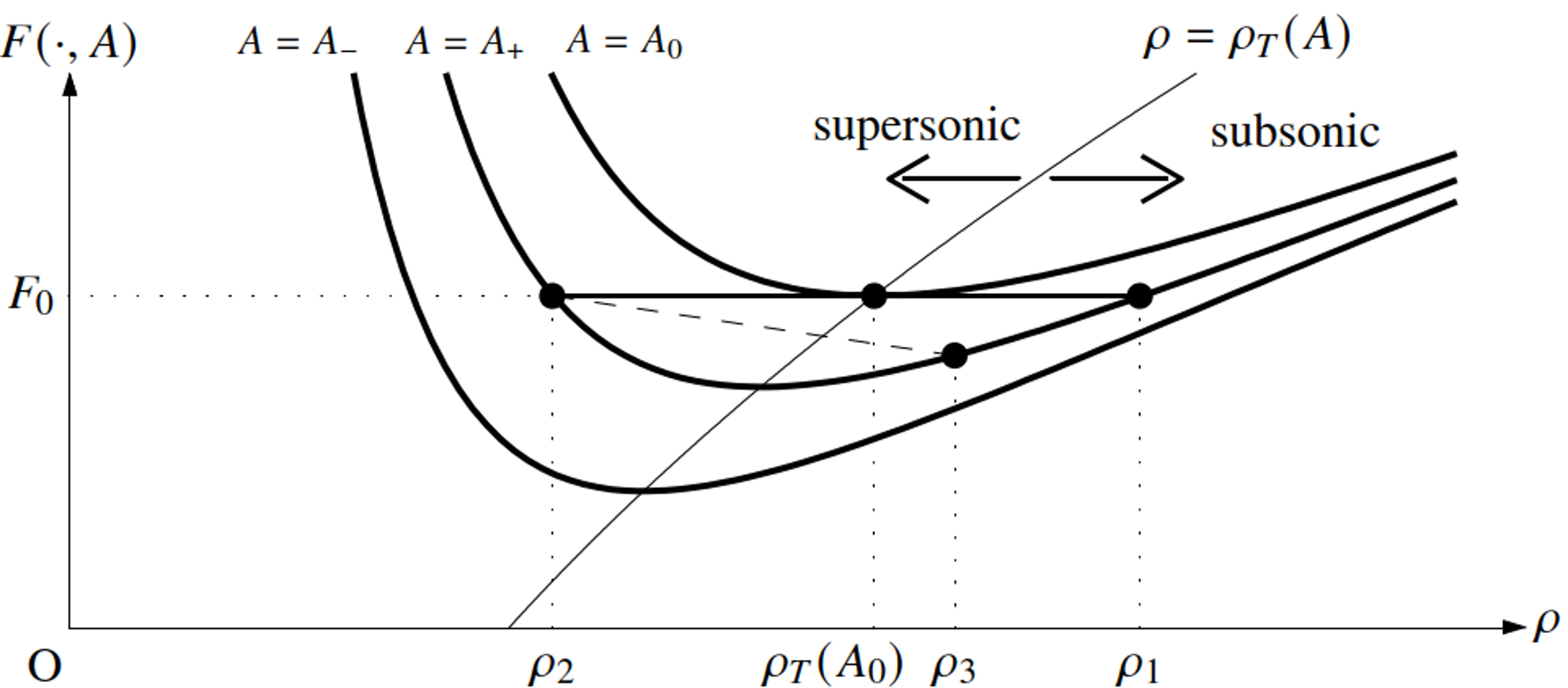} 
\vspace{-5mm}
\caption{Graphs of $F(\cdot,A)$ for representative cross-sectional areas $A=A(x)$.
See the text for the definitions of $\rho_1$, $\rho_2$, $\rho_3$, and $F_0$.}
\label{Fig01}
\end{figure}
\end{minipage}

\begin{thm} \label{thm2.1}
Suppose a nozzle configuration satisfies 
\begin{gather} \label{eq231}
A \in C^4(\bar{I}), \quad 
A_{x}(x) \lessgtr 0 \ \text{for} \ x \lessgtr 0, \quad
A_{xx}(0)>0.
\end{gather}
For an arbitrary positive $Q$,
there exists a unique piecewise smooth solution $(\bar{\rho},\bar{m})$ 
to \eqref{eq151}--\eqref{eq159} 
that satisfies $\inf_{x \in \bar{I}} \bar{\rho}(x) >0$ and $\bar{M}(x_{1})<1$
if and only if the boundary value $\rho_ +$ meets either one of the case (i)--(iii) below.
\begin{enumerate}[(i)]
\item
If $\rho_+ > \rho_1$, the solution is subsonic entirely in $\bar{I}$.
It belongs to $C^4(\bar{I}) \times C^4(\bar{I})$.

\item
If $\rho_+ \in (\rho_3, \rho_1)$, the solution is subsonic in $x<0$, exactly transonic at $x=0$, 
supersonic in $(0, \bar{d})$ for a certain  $\bar{d} \in (0, x_2)$ uniquely determined by $\rho_+$, and subsonic again in $(\bar{d}, x_2]$.
It belongs to $C^3(\bar{I} \setminus \{ \bar{d} \} ) \times C^4(\bar{I})$.

\item
If $\rho_+ = \rho_2$, the solution is subsonic in $x<0$, exactly transonic at $x=0$, 
and supersonic in $x>0$. It belongs to $C^3(\bar{I}) \times C^4(\bar{I})$.
\end{enumerate}
Furthermore, in the cases (ii) and (iii), there holds that
\begin{gather} \label{eq237}
\bar{\rho}_x(0)
=
- \bar{\rho} \sqrt{ \frac{A_{xx}}{A ( K(\bar{\rho}) + 2 )} } (0) < 0,
\quad
\bar{u}_x(0)
=
\bar{u} \sqrt{ \frac{A_{xx}}{A ( K(\bar{\rho}) + 2 )} } (0) > 0,
\end{gather}
where $K(\bar{\rho}) := {\bar{\rho} p''(\bar{\rho})} /{p'(\bar{\rho})}$ and $\bar{\rho}(0)=\rho_{T}(A(0))$.
\end{thm}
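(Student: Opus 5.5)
The plan is to reduce the stationary problem to the algebraic relation $F(\bar\rho(x),A(x))=c$ on each smooth piece, which is equivalent to \eqref{eq151} by \eqref{eq209}. Then I classify the admissible constants $c$ according to the sonic constraint at the throat, and finally match the exit condition \eqref{eq155} and the jump conditions \eqref{eq157}, \eqref{eq229}.

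\textbf{Step 1 (smooth branches).} On a smooth piece, $F(\bar\rho,A)=c$ must have a root at every $x$, so $c\ge F(\rho_T(A(x)),A(x))$ for all $x$. Since $F(\rho_T(A),A)$ is decreasing in $A$, this means $c\ge F_0$. Starting subsonic at $x_1$, I follow the subsonic root. By \eqref{eq217}, $\partial_\rho F\neq0$ away from $\rho_T$, so the implicit function theorem makes this root $C^4$ as long as it does not touch $\rho_T(A)$.

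If $c>F_0$, the flow never touches $\rho_T$ and stays subsonic on $\bar I$. If $c=F_0$, the root touches $\rho_T$ exactly at $x=0$. The unique minimum of $A$ at $0$ follows from \eqref{eq231}. At that point the solution may either stay subsonic or switch to the supersonic root.

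\textbf{Step 2 (the sonic point, the main obstacle).} At $x=0$ both $\partial_\rho F$ and $\partial_A F$ vanish, so the implicit function theorem fails and the smoothness of the switching branch, together with \eqref{eq237}, has to be shown by hand. I will Taylor expand $\Phi(\rho,x):=F(\rho,A(x))-F_0$ at $(\rho_T(A_0),0)$:
\[
\Phi=\tfrac12\Phi_{\rho\rho}\,\delta\rho^2+\Phi_{\rho x}\,\delta\rho\, x+\tfrac12\Phi_{xx}\,x^2+O(|\delta\rho|^3+|x|^3).
\]
The coefficients work out as follows. Using $h(\rho_T)=Q^2/A^2$, I get $\Phi_{\rho\rho}=(h'(\rho)/\rho^3)|_{\rho_T}=p'(2+K)/\rho^2$. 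Next, $\Phi_{\rho x}=0$, because $\partial_\rho\partial_A F=2Q^2/(A^3\rho^3)$ is multiplied by $A_x(0)=0$. Finally, $\Phi_{xx}=-Q^2A_{xx}/(A^3\rho^2)=-p'A_{xx}/A$.

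The zero set is therefore two transversal curves through the origin with slopes $\delta\rho/x=\pm\rho\sqrt{A_{xx}/(A(K+2))}$. I will make this rigorous by writing $\delta\rho=x\,s$ and dividing $\Phi$ by $x^2$. The resulting equation in $(s,x)$ has nondegenerate roots $s_\pm$ at $x=0$, so the implicit function theorem applies. Because of the division, this loses one derivative ($A\in C^4$ gives $\bar\rho\in C^3$), which matches the stated regularity; $\bar m=Q/A$ stays $C^4$.

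The branch that is subsonic for $x<0$ and supersonic for $x>0$ is the one with negative slope. This holds because $\rho_T(A(x))$ has zero derivative at $0$, and subsonic means $\bar\rho>\rho_T$. That gives the formula for $\bar\rho_x(0)$. The formula for $\bar u_x(0)$ follows from $\bar u=Q/(A\bar\rho)$ and $A_x(0)=0$.

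\textbf{Step 3 (matching the exit).} With $c>F_0$ and a subsonic solution, $\rho_+$ is the subsonic root of $F(\cdot,A_+)=c$. As $c$ ranges over $(F_0,\infty)$, this root ranges monotonically over $(\rho_1,\infty)$, which gives case (i) and uniqueness there. The value $c=F_0$ with the subsonic branch gives $\rho_+=\rho_1$, a boundary case that is excluded by the strict inequality in (i). The value $c=F_0$ with the supersonic branch all the way to $x_2$ gives $\rho_+=\rho_2$, which is case (iii).

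For case (ii), place a shock at $\bar d\in(0,x_2)$. Take $\rho_L(\bar d)$ to be the supersonic root at level $F_0$, set $\rho_R=\rho_R(\rho_L,A(\bar d))$ from \eqref{eq227}, and continue on the subsonic branch at the new level $c(\bar d)=F(\rho_R,A(\bar d))$. This produces an exit density $\rho_+(\bar d)$. I will show that $\bar d\mapsto\rho_+(\bar d)$ is continuous and strictly monotone, with limits $\rho_1$ as $\bar d\to0$ (weak shock, $\rho_L\to\rho_T$) and $\rho_3$ as $\bar d\to x_2$. For monotonicity I would show $dc/d\bar d<0$ using the Rankine--Hugoniot relation $G$-equality, the sign $A_x>0$ for $\bar d>0$, and \eqref{eq229}.

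\textbf{Step 4 (only if, and uniqueness).} For the converse, a shock cannot sit in $x<0$, since subsonic flow there cannot satisfy \eqref{eq229}. At most one shock is possible, since after a shock the flow is subsonic with $c>F_0$ and cannot reach sonic again. Combined with the strict monotonicity in Step 3, this shows that values of $\rho_+$ outside the listed sets admit no solution and that each listed value gives exactly one solution.

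I expect Step 2's degenerate implicit function argument and the monotonicity of $\rho_+(\bar d)$ in Step 3 to carry most of the work.
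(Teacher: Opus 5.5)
Your overall architecture matches the paper. You reduce to $F(\bar\rho,A)=c$ on smooth pieces, classify by the level $c$, and parametrize case (ii) by the shock position with $dc/d\bar d<0$ and limits $\rho_1,\rho_3$. For $C^1$, the two crossing branches and \eqref{eq237}, your blow-up at the throat is cleaner than the paper's l'H\^opital argument on $A_x/\bar S$. But it does not give the claimed $C^3$ regularity. Because $\Phi$ vanishes to second order, dividing by $x^2$ costs two derivatives, not one. The function $\Psi(s,x)=\Phi(\rho_T(A_0)+xs,x)/x^2$ contains the term $\partial_AF\,(A(x)-A_0)/x^2=\partial_AF\int_0^1(1-t)A_{xx}(tx)\,dt$, which is only $C^2$ for $A\in C^4$. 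Hence the implicit function theorem yields only $s\in C^2$, so you only get $\bar\rho=\rho_T(A_0)+xs\in C^2$. This loss is real for $s$. Solving $F(\rho,A)=F_0$ for $A$ (possible since $\partial_AF\neq0$ at the sonic point) shows that $s\in C^3$ would force $(A(x)-A_0)/x^2\in C^3$. That fails, for example, for $A=A_0+x^2+|x|^{9/2}$ near $0$, even though $\bar\rho$ is still $C^3$ there. To get $\bar\rho\in C^3$ you need an extra step. For instance, show that $\bar\rho_{xxx}$, computed for $x\neq0$ where $\bar\rho\in C^4$, has a limit as $x\to0$; equivalently $x\,s'''(x)\to0$, which rests on $x\,\partial_x^3\int_0^1(1-t)A_{xx}(tx)\,dt\to0$. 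This is exactly what the paper's computation in Lemma~\ref{Pro22} does, so you have postponed the hard part rather than avoided it.

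Second, the bound $c\ge F_0$ from Step 1 holds only for the smooth piece containing $x=0$. Step 4 then gets the post-shock level backwards: in fact $c=F(\rho_R,A(\bar d))<F(\rho_L,A(\bar d))=F_0$. This is the paper's Lemma~\ref{Lem23}, and it also follows from your own Step 3, since $c$ decreases from the limit $F_0$. If $c>F_0$ held, case (ii) would give $\rho_+>\rho_1$ and overlap case (i), breaking uniqueness. The correct reason there is no return to sonic and no second shock is that $A_x>0$ on $[\bar d,x_2]$. Then $F(\rho_T(A(x)),A(x))$ decreases, so $c>F(\rho_T(A(\bar d)),A(\bar d))\ge F(\rho_T(A(x)),A(x))$, and the subsonic root persists to $x_2$. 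Finally, discarding $\rho_+=\rho_1$ ``by the strict inequality in (i)'' is circular: you must show that no admissible solution has that exit value. Your Step 2 supplies the argument. A level-$F_0$ solution that stays subsonic on both sides of the throat must switch from the branch of slope $s_-<0$ to the branch of slope $s_+>0$. It therefore has a corner at $x=0$, and is excluded once smooth pieces are required to be $C^1$.
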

\begin{rem}
The stationary solutions in (i)--(iii) correspond to the flows of types (A)--(C) in Section \ref{Sec01}, respectively.
Representative profiles of $\bar{\rho}$ classified as case (i), (ii), and (iii) 
depending on the value of $\rho_+$ are illustrated in Figure \ref{Fig212}.

\begin{minipage}{0.9\hsize}
\begin{figure}[H] 
\centering
\includegraphics[scale=0.8]{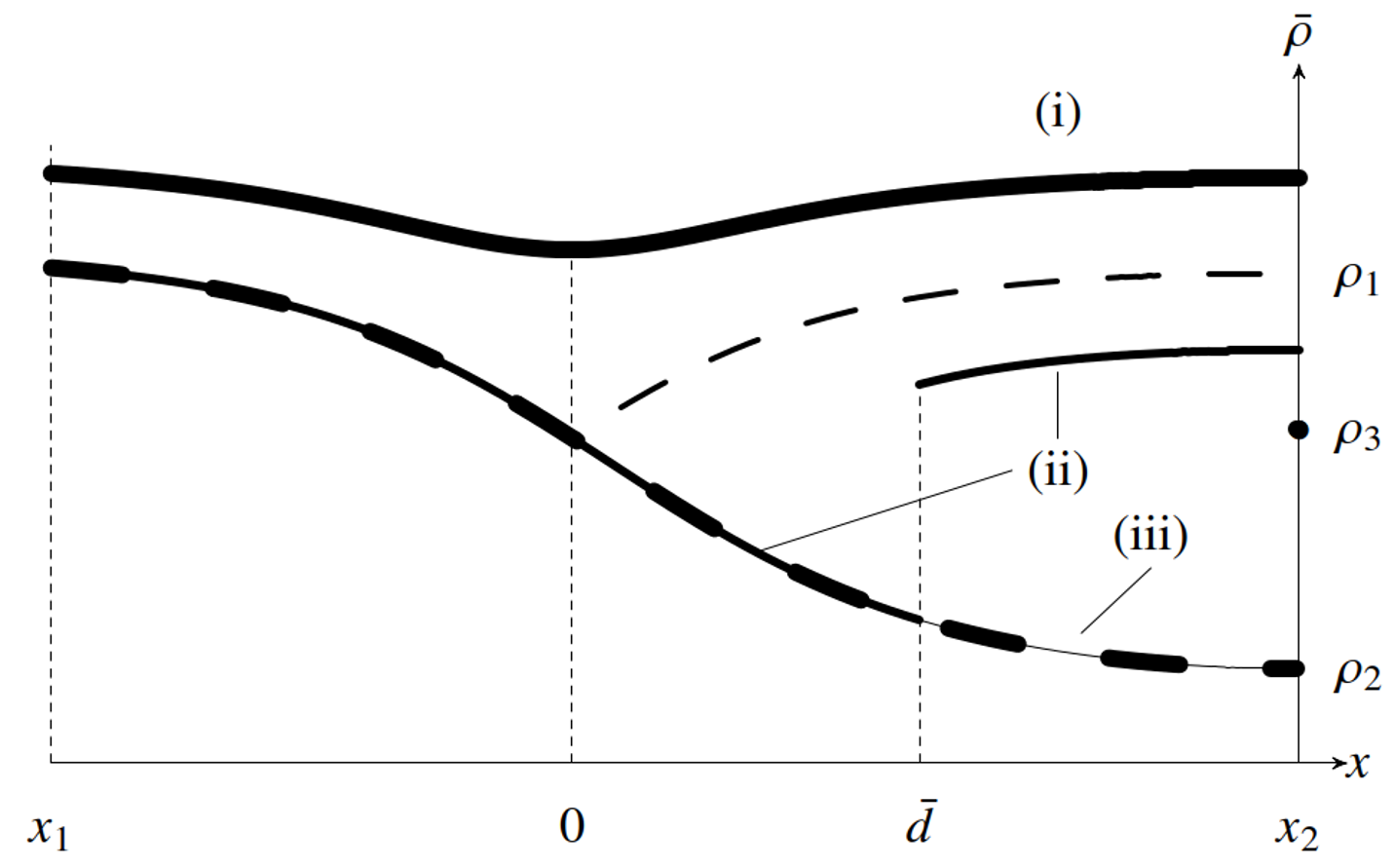} 
\vspace{-5mm}
\caption{Graphs of $\bar{\rho}(x)$ for different values of $\rho_+$.
Typical solutions for cases (i), (ii), and (iii) are depicted by thick, medium (discontinuous at $x=\bar{d}$), 
and thin solid lines, respectively.
In the figure, $\rho_1$ and $\rho_3$ are the infimums of $\rho_+$ of stationary solutions 
classified as case (i) and (ii), respectively.
The thin dash-dotted line shows the limiting profile of $\bar{\rho}$ as $\rho_+ \to \rho_1$,
whereas the thick dashed line shows the limiting profile of $\bar{\rho}$ as $\rho_+ \to \rho_3+0$.
In the latter limit, the shock location $\bar{d}$ converges to $x_2-0$.}
\label{Fig212}
\end{figure}
\end{minipage}

\end{rem}

\begin{rem}
If we take into account stationary solutions which are supersonic at the entrance,
other types of stationary solutions appear, and the uniqueness is violated \cite{EGM1984}.
Since nozzles are used to accelerate subsonic flows into supernonic state in industries, 
we only deal with flows that are subsonic at the entrance. 
\end{rem}


For the stationary solution $(\bar{\rho},\bar{m})$ in Theorem \ref{thm2.1}, the following holds:
\begin{gather}\label{DF0}
\begin{cases}
\disp \frac{d}{dx} F(\bar{\rho}(x), A(x)) = 0 \quad \text{for $x \in \bar{I}$} & \text{in Cases (i), (iii)},
\\[10pt]
\disp \frac{d}{dx} F(\bar{\rho}(x), A(x)) = 0 \quad \text{for $x \in \bar{I} \backslash \{\bar{d}\} $} & \text{in Case (ii)}.
\end{cases}
\end{gather}
This means that $F(\bar{\rho}(x), A(x))$ is piecewise constant on $\bar{I}$.
We draw the behavior of $\bar{\rho}(x)$ in the $\rho$-$F$ plane in Figures \ref{Fig02a}--\ref{Fig02c} below,
where the value of $\bar{\rho}$ varies according to $A(x)$ along the line (curve) with arrows.

First we outline the proof.  Recall $\bar{m}$ must be equal to $Q/A$ due to \eqref{eq209}.
In order to construct the stationary solution $(\bar{\rho},\bar{m})$,
it is sufficient to determine $\bar{\rho}=\bar{\rho}(x)$ by choosing a proper root of the equation $F(\bar{\rho},A(x))=c$ with suitable constants $c$ according to Cases (i)--(iii).
In Case (i), we determine $\bar{\rho}(x)$ by solving the equation $F(\bar{\rho},A(x))=F(\rho_{+},A_{+})$ with $\bar{\rho}> \rho_{T}(A(x))$. The root $\bar{\rho}(x)$ behaves as in Figure \ref{Fig02a}. 


\begin{minipage}{0.9\hsize}
\begin{figure}[H] 
\centering
\includegraphics[scale=0.6]{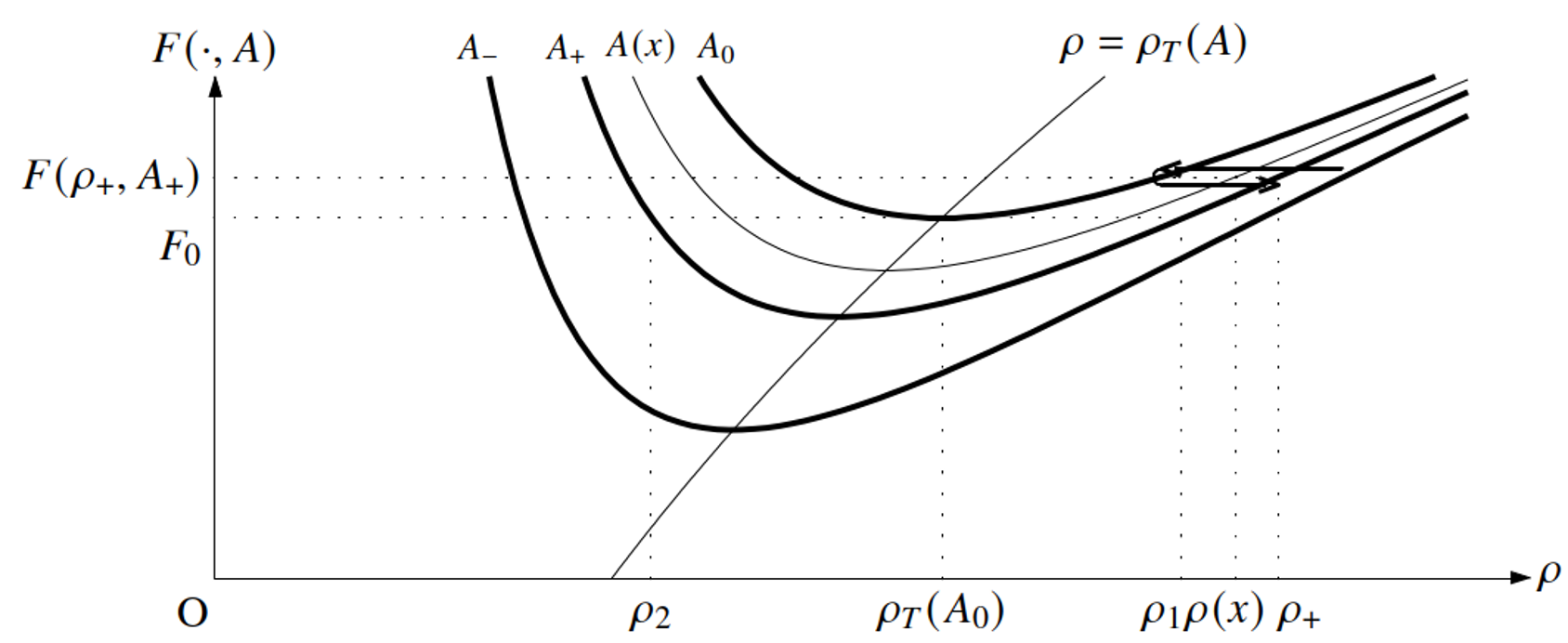} 
\caption{Case (i) in Theorem \ref{thm2.1}, i.e. $\rho_+ > \rho_1$.}
\label{Fig02a}
\end{figure}
\end{minipage}

\bigskip

In Case (ii), we can uniquely find $x_{3} \in\left(0, x_{2}\right)$ such that $F\left(\rho_{T}\left(A\left(x_{3}\right)\right), A\left(x_{3}\right)\right)=F\left(\rho_{+}, A_{+}\right)$ holds.
For an arbitrary $\bar{d} \in\left(x_{3}, x_{2}\right)$, we define $\bar{\rho}(x)$ by solving
$$
\left\{\begin{array}{l}
F(\bar{\rho}(x), A(x))=F_{0} \ \text { with } \ \bar{\rho}(x) \geq \rho_{T}(A(x)) \text { for } x \in\left[x_{1}, 0\right], \\
F(\bar{\rho}(x), A(x))=F_{0} \ \text { with } \ \bar{\rho}(x)<\rho_{T}(A(x)) \text { for } x \in(0, \bar{d}), \\
F(\bar{\rho}(x), A(x))=F(\rho_{+}, A_{+}) \ \text { with } \ \bar{\rho}(x) \geq \rho_{T}(A(x)) \text { for } x \in\left(\bar{d}, x_{2}\right].
\end{array}\right.
$$
Then $\bar{d}$ can be determined uniquely so that the Rankine-Hugoniot conditions \eqref{eq157} are satisfied at $x=\bar{d}$.
In Figure \ref{Fig02b} below, the discontinuity is illustrated by the dashed line.

\begin{minipage}{0.9\hsize}
\begin{figure}[H] 
\centering
\includegraphics[scale=0.5]{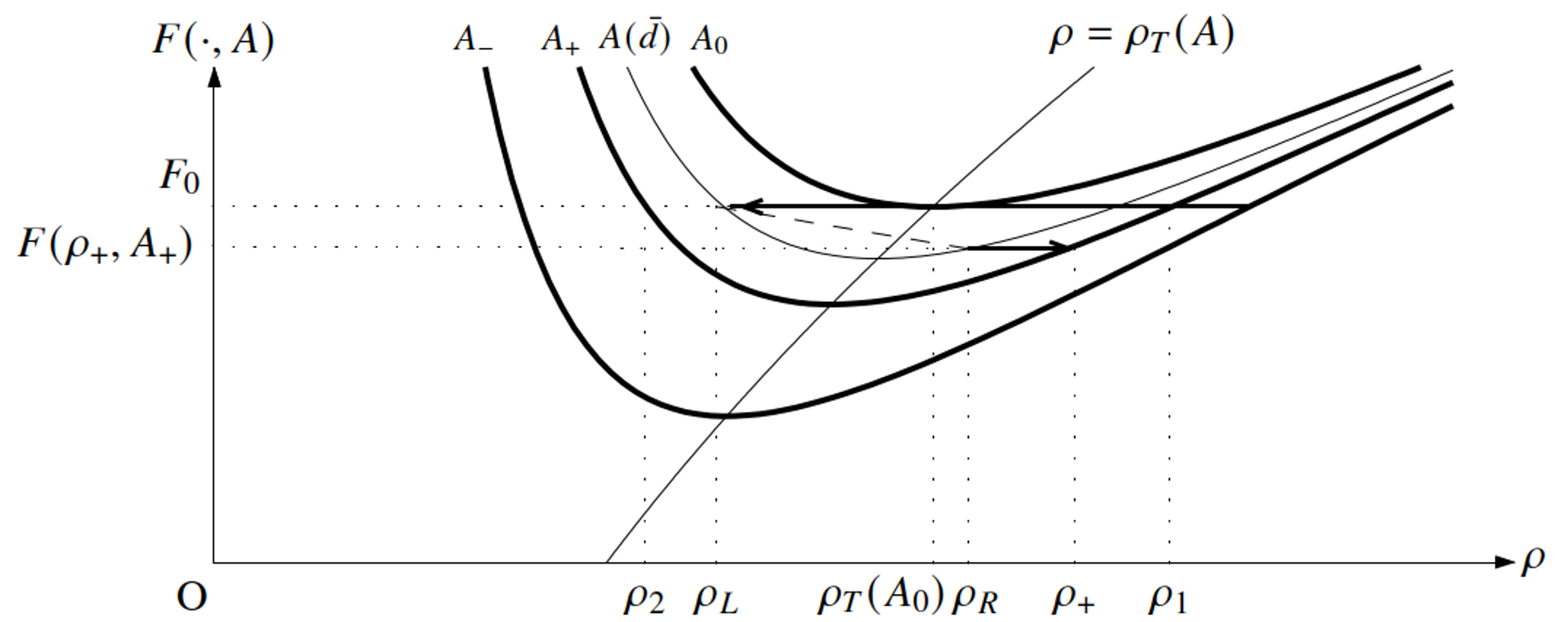}
\caption{Case  (ii) in Theorem \ref{thm2.1}, i.e. $\rho_+ \in (\rho_3, \rho_1)$.}
\label{Fig02b}
\end{figure}
\end{minipage}

\bigskip

In Case (iii), the state $\bar{\rho}(x)$ for $x \in [x_1,0]$ is prescribed 
by solving the equation $F(\bar{\rho},A(x))=F_{0}$ 
with $\bar{\rho} \ge \rho_T(A(x))$, while for $x \in (0,x_2]$, 
the same equation is solved with $\bar{\rho} < \rho_T(A(x))$.

\begin{minipage}{0.9\hsize}
\begin{figure}[H] 
\centering
\includegraphics[scale=0.6]{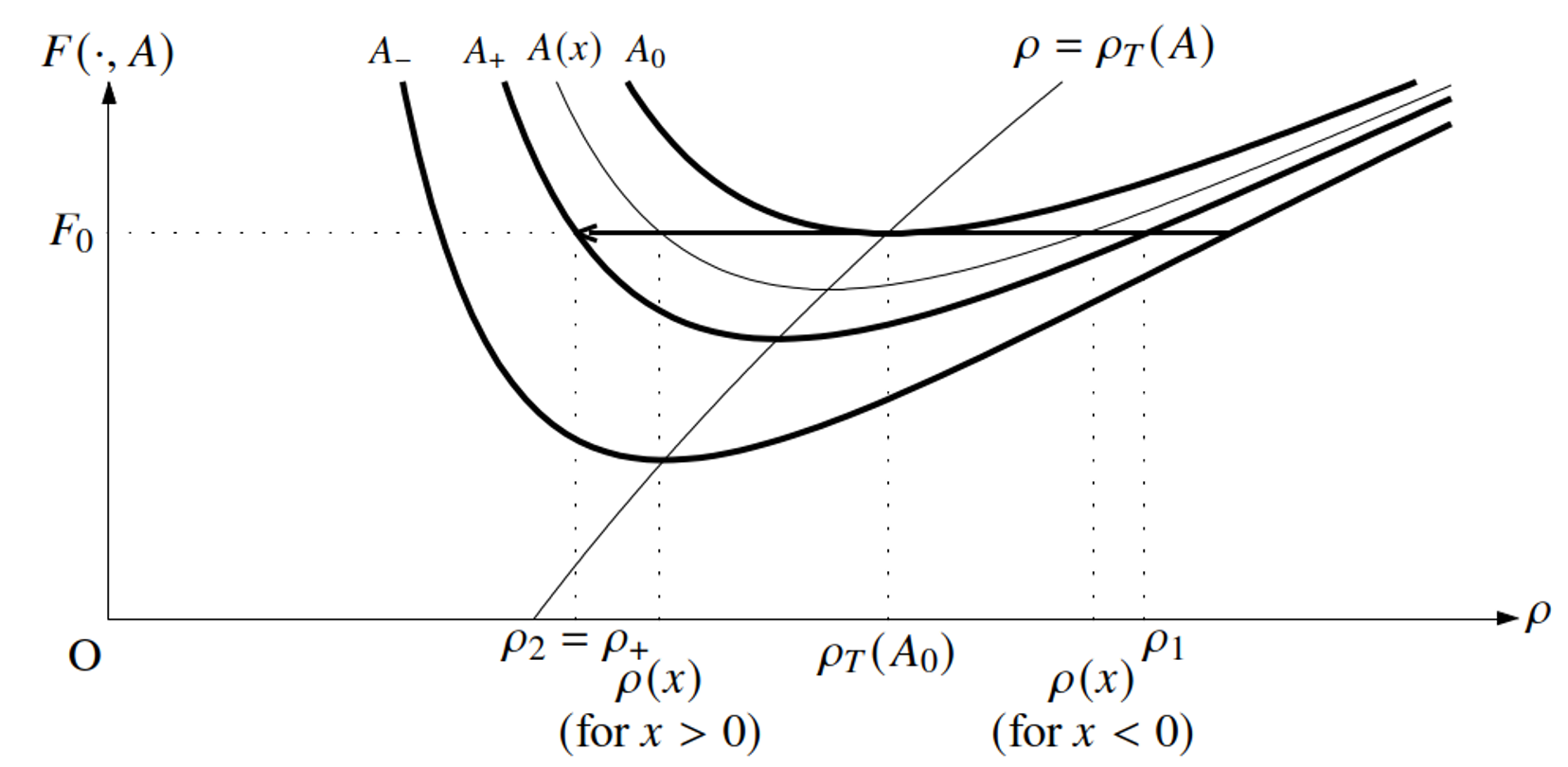}
\caption{Case  (iii) in Theorem \ref{thm2.1}, i.e.  $\rho_+ = \rho_2$.}
\label{Fig02c}
\end{figure}
\end{minipage}

\bigskip

To complete the proof of Theorem \ref{thm2.1}, we begin with deriving several useful equations.
From \eqref{eq151}, we have
\begin{subequations} \label{eq221}
\begin{gather}
\bar{\rho}_x = \frac{\bar{M}^2}{1-\bar{M}^2} \frac{A_x}{A} {\bar{\rho}} = \frac{Q^2 \bar{\rho}}{A^2 \bar{\rho}^2 p'(\bar{\rho}) - Q^2} \frac{A_x}{A},
\label{eq221a}\\
\bar{u}_x = - \frac{1}{1-\bar{M}^2} \frac{A_x}{A} {\bar{u}},
\\
\bar{m}_x = - \frac{A_x}{A} \bar{m},
\end{gather}
\end{subequations}
where $\bar{M}$ is the Mach number of the steady flow. 
%
By a direct computation, it is also seen that 
\begin{gather}  \label{eq225}
\bar{M}_x = - \frac{2 + K(\bar{\rho})}{2} \frac{\bar{M}}{1-\bar{M}^2} \frac{A_x}{A}.
\end{gather}
This equation implies that
the stationary solution cannot undergo {\it continuous} transition 
from a subsonic state ($\bar{M} < 1$) to a supersonic state ($\bar{M} > 1$) or vice versa
except where $A_x$ vanishes.

Next we show Lemmas \ref{Lem20}--\ref{Lem24} below before proving Theorem \ref{thm2.1} in the end of this section. We use Lemma \ref{Lem20} to construct the stationary solution $(\bar{\rho}, \bar{m})$ as in Theorem \ref{thm2.1} (i); we use Lemmas \ref{Pro21}--\ref{Pro22} to construct $(\bar{\rho}, \bar{m})$ as in Theorem \ref{thm2.1} (iii); we use all Lemmas \ref{Lem20}--\ref{Lem24} to construct $(\bar{\rho}, \bar{m})$ as in Theorem \ref{thm2.1} (ii).

\begin{lem} \label{Lem20}
Let $n=2,3,4$ and $A \in C^n(\bar{I})$.
Set $J:=(x_-,x_+) \subset \bar{I}$, $A_m := \inf_{x \in J} A(x)$, and $F_{m}:=F(\rho_T(A_m),A_m)$.
For any $\disp c > F_{m}$, the equation ${\cal F}(\bar{\rho}, x):=  F(\bar{\rho}, A(x)) = c$ has a unique subsonic (resp. supersonic) and positive root $\bar{\rho} \in C^n(\bar{J})$ 
with $\bar{\rho}(x) > \rho_T(A(x))$ (resp. $0<\bar{\rho}(x) < \rho_T(A(x))$) for any $x \in J$. 
Furthermore, the pair $(\bar{\rho},\bar{m})(x):=(\bar{\rho},Q/A)(x)$ solves \eqref{eq151} for $x \in J$.

\end{lem}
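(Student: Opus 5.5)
The plan is a pointwise construction via the strict monotonicity of $F(\cdot,A)$ on each side of $\rho_T(A)$, followed by the implicit function theorem for regularity. The final claim, that $(\bar\rho,Q/A)$ solves \eqref{eq151}, is a short differentiation of the identity $F(\bar\rho(x),A(x))=c$.

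Fix $x\in\bar J$ and set $A=A(x)\ge A_m$. Since $F(\rho_T(A),A)$ increases as $A$ decreases, we get $F(\rho_T(A(x)),A(x))\le F_m<c$. (If $A_m$ is attained only at an endpoint, the inequality is still strict: it holds on the open interval, and by continuity of $A$, $\rho_T(A(\cdot))$ and the preceding monotonicity it extends to $\bar J$.) By \eqref{eq217}, $\partial_\rho F<0$ for $\rho<\rho_T(A)$ and $\partial_\rho F>0$ for $\rho>\rho_T(A)$, since $h$ is strictly increasing. Combined with \eqref{defF2}, the intermediate value theorem gives exactly one root in $(0,\rho_T(A))$ and exactly one in $(\rho_T(A),\infty)$. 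This defines the two functions $\bar\rho(x)$ on $\bar J$, and uniqueness is automatic.

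For regularity, note that $F$ is $C^n$ jointly in $(\rho,A)$ on $\rho>0$, $A>0$. Its $A$-dependence is through $Q^2/(2A^2\rho^2)$, and its $\rho$-dependence uses $p'\in C^4$, so $\partial_\rho F$ is $C^4$ and $F$ is at least $C^5$ in $\rho$. Hence $\mathcal F(\rho,x)=F(\rho,A(x))$ is $C^n$. At a root we have $\bar\rho\ne\rho_T(A(x))$, so $\partial_\rho\mathcal F\neq0$, and the implicit function theorem gives a local $C^n$ solution branch. By the uniqueness of the root on the chosen side, these local branches coincide with $\bar\rho$, so $\bar\rho\in C^n(\bar J)$; at the endpoints we use a $C^n$ extension of $A$ or a one-sided IFT. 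Continuity also ensures the root stays on the chosen side.

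To verify \eqref{eq151}, observe that \eqref{eq151a} is immediate since $A\bar m=Q$. For \eqref{eq151b}, differentiate $F(\bar\rho,A)=c$ to get
\begin{equation*}
-\frac{Q^2}{A^3\bar\rho^2}A_x-\frac{Q^2}{A^2\bar\rho^3}\bar\rho_x+\frac{p'(\bar\rho)}{\bar\rho}\bar\rho_x=0 .
\end{equation*}
Multiplying by $A\bar\rho$ gives $-\frac{Q^2}{A^2\bar\rho}A_x-\frac{Q^2}{A\bar\rho^2}\bar\rho_x+Ap'(\bar\rho)\bar\rho_x=0$. On the other hand, $(A(\bar m^2/\bar\rho+p))_x-A_xp=(Q^2/(A\bar\rho))_x+Ap'\bar\rho_x$, and this expands to the same expression. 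So \eqref{eq151b} holds.

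The only delicate step is keeping $\partial_\rho\mathcal F$ bounded away from zero uniformly on $\bar J$, which needs $c>F_m$ strictly. This is exactly why the lemma excludes $c=F_m$, where the root would touch $\rho_T$ at the point of minimal area.
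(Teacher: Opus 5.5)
Your proposal is correct and follows essentially the same route as the paper: pointwise existence and uniqueness from $F(\rho_T(A(x)),A(x))\le F_m<c$, the sign structure in \eqref{eq217} and the limits \eqref{defF2}, then the implicit function theorem using $\partial_\rho\mathcal{F}\neq 0$ away from $\rho_T$. You are somewhat more careful than the paper at the endpoints of $\bar J$, and you explicitly check \eqref{eq151b}, which the paper simply calls clear.
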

\begin{proof}
As mentioned just after \eqref{defF2}, $F_{m} \geq F(\rho_{T}(A(x)),A(x))$ holds, and if $c>F_{m}$, 
then for any $x \in J$, the equation ${\cal F}(\bar{\rho}, x)=c$ has a unique subsonic (resp. supersonic) root $\bar{\rho}=\bar{\rho}(x)$ with $\bar{\rho} > \rho_T(A(x))$ (resp. $0<\bar{\rho} < \rho_T(A(x))$).
Next we  show the regularity $\bar{\rho} \in C^n(\bar{J})$. It follows from $A \in C^n$ and \eqref{eq102} that $\cal F$ is a $C^{n}$ function of $\bar{\rho}$ and $x$. Furthermore, $({\partial {\cal F}}/{\partial {\rho}}) (\bar{\rho}(x), x) \neq 0$ holds for $x \in J$ owing to \eqref{eq217} and \eqref{eq219}. Now the application of the implicit function theorem concludes $\bar{\rho} \in C^n(\bar{J})$. Finally, it is clear that $(\bar{\rho}(x),\bar{m}(x))=(\bar{\rho}(x),Q/A(x))$ solves \eqref{eq151} for $x \in J$.
\end{proof}

\begin{lem} \label{Pro21}
Set $J:=(x_-,x_+) \subset \bar{I}$, $A_m := \inf_{x \in J} A(x)$, and $F_{m}:=F(\rho_T(A_m),A_m)$.
Suppose $A \in C^2(\bar{I})$, and 
there exists a certain $x_m \in J$ such that
$A_{x}(x) < 0$ for $x \in (x_-, x_m)$, $A_{x}(x) > 0$ for $x \in (x_m, x_+)$, and $A_{xx}(x_{m}) > 0$ hold.
Then the equation ${\cal F}(\bar{\rho}, x):=  F(\bar{\rho}, A(x)) = F_{m}$ has a unique positive root $\bar{\rho} \in C^1(\bar{J})$ with 
\begin{gather}\label{STS1}
\begin{cases}
\bar{\rho}(x) > \rho_T(A(x)) \ \ \text{for} \ \ x_- < x < x_m,
\\
\bar{\rho}(x) = \rho_T(A(x)) \ \ \text{for} \ \ x = x_m,
\\
\bar{\rho}(x) < \rho_T(A(x)) \ \ \text{for} \ \ x_m < x < x_+.
\end{cases}
\end{gather}
Furthermore, $(\bar{\rho},\bar{m})(x):=(\bar{\rho},Q/A)(x)$ solves \eqref{eq151} for $x \in J$, and also satisfies
\begin{gather} 
\bar{\rho}_x(x_m)
=
- \bar{\rho} \sqrt{ \frac{A_{xx}}{A ( K(\bar{\rho}) + 2 )} } (x_m) < 0,
\quad
\bar{u}_x(x_m)
=
\bar{u} \sqrt{ \frac{A_{xx}}{A ( K(\bar{\rho}) + 2 )} } (x_m)  > 0,
\label{eq237'}
\end{gather}
where $K(\bar{\rho})$ is defined in Theorem \ref{thm2.1}.
\end{lem}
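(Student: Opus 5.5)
Since $A$ attains its minimum over $\bar J$ at $x_m$ (it decreases on $(x_-,x_m)$ and increases on $(x_m,x_+)$), we have $A_m=A(x_m)$ and $F_m=F(\rho_T(A(x_m)),A(x_m))$. The plan is to build the root pointwise, then handle regularity away from and at $x_m$ separately.

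\emph{Construction.} For $x\neq x_m$ we have $A(x)>A_m$. Since $F(\rho_T(A),A)$ increases as $A$ decreases, $F(\rho_T(A(x)),A(x))<F_m$. By the discussion after \eqref{defF2}, $F(\cdot,A(x))=F_m$ then has exactly one subsonic root and one supersonic root. At $x=x_m$ the only root is $\rho_T(A(x_m))$. Define $\bar\rho(x)$ as the subsonic root for $x<x_m$, the supersonic root for $x>x_m$, and $\rho_T(A(x_m))$ at $x_m$; this gives \eqref{STS1}. On any compact subinterval away from $x_m$, $\partial_\rho F\ne0$ by \eqref{eq217}, so the implicit function theorem (as in Lemma \ref{Lem20}) gives smoothness there. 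Differentiating $F(\bar\rho,A)=F_m$ shows that $(\bar\rho,Q/A)$ solves \eqref{eq151} there.

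\emph{Behaviour at $x_m$ (main obstacle).} Here $\partial_\rho F=0$, so the implicit function theorem fails and both continuity and differentiability must be shown directly. I would set $\sigma:=\rho-\rho_T(A(x_m))$ and $\alpha:=A(x)-A(x_m)=\tfrac12A_{xx}(x_m)(x-x_m)^2+o((x-x_m)^2)$, and Taylor expand $\Phi(\sigma,x):=F(\rho,A(x))-F_m$. Let $\rho_T:=\rho_T(A(x_m))$, $A:=A(x_m)$, $\bar u:=Q/(A\rho_T)$, and use $h(\rho_T)=Q^2/A^2$. Then
\[
\partial_\rho^2F(\rho_T,A)=\frac{h'(\rho_T)}{\rho_T^3}=\frac{2p'+\rho_Tp''}{\rho_T^2}=\frac{p'(\rho_T)(2+K)}{\rho_T^2},\qquad \partial_AF(\rho_T,A)=-\frac{Q^2}{A^3\rho_T^2}=-\frac{\bar u^2}{A}.
\]
The mixed term $\partial_\rho\partial_AF\,\sigma\alpha$ is $O(|\sigma|(x-x_m)^2)$, which is of higher order. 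The equation $\Phi=0$ therefore reads
\[
\frac{p'(2+K)}{2\rho_T^2}\sigma^2=\frac{\bar u^2}{A}\cdot\frac{A_{xx}}{2}(x-x_m)^2+o\bigl(\sigma^2+(x-x_m)^2\bigr).
\]
Using $p'=\bar u^2$ at the sonic point, this gives $\sigma=\pm\rho_T\sqrt{A_{xx}/(A(K+2))}\,(x-x_m)+o(|x-x_m|)$.

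\emph{Selecting the branch and upgrading to $C^1$.} The sign is fixed by \eqref{STS1}. Since $\rho_T(A(x))=\rho_T+O((x-x_m)^2)$, $\bar\rho>\rho_T(A(x))$ on the left and $\bar\rho<\rho_T(A(x))$ on the right force the minus sign. This yields $\bar\rho(x_m)$ continuity and $\bar\rho_x(x_m)=-\bar\rho\sqrt{A_{xx}/(A(K+2))}$.

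To get $\bar\rho\in C^1$ rather than mere differentiability at $x_m$, I would show $\lim_{x\to x_m}\bar\rho_x(x)$ equals this value using \eqref{eq221a}. Near $x_m$, $1-\bar M^2\approx-(2+K)\sigma/\rho_T$, from $\bar M^2=Q^2/(A^2\rho^2p')$, and $A_x/A\approx A_{xx}(x-x_m)/A$. Substituting the asymptotics of $\sigma$ gives the same limit. The mean value theorem then yields $C^1$ across $x_m$.

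Finally, $\bar u=Q/(A\bar\rho)$ gives $\bar u_x=-\bar u(\bar\rho_x/\bar\rho+A_x/A)$; at $x_m$, $A_x=0$, which yields the formula for $\bar u_x(x_m)$. The equations \eqref{eq151} hold at $x_m$ by continuity of derivatives.

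Uniqueness follows because any $C^1$ root satisfying \eqref{STS1} must coincide pointwise with the chosen roots.
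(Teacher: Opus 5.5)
Your argument is correct in outline but takes a different route from the paper.

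\textbf{The paper's route.} The paper never expands $F$ in two variables. After obtaining $\bar\rho\in C^0(\bar J)\cap C^2(\bar J\setminus\{x_m\})$, it rewrites \eqref{eq221a} as $\bar\rho_x=\frac{Q^2}{A^3\bar\rho}\frac{A_x}{\bar S}$ with $\bar S=p'(\bar\rho)-\bar u^2$. It then derives $\bar S_x=\frac{A_x}{\bar S}H$ with $H>0$ continuous, so that $(A_x^2)_x/(\bar S^2)_x=A_{xx}/H$. L'H\^opital then gives $\lim_{x\to x_m}(A_x/\bar S)^2$. The sign of $A_x/\bar S$, negative on both sides by the choice of branches, fixes the root. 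Continuity of $\bar\rho$ together with existence of $\lim\bar\rho_x$ yields $C^1$.

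\textbf{Your route.} You do a second-order (Morse-type) expansion of $F$ at the degenerate critical point. This gives the asymptotics $\bar\rho-\rho_T=-c(x-x_m)+o(|x-x_m|)$ of the function itself, so differentiability at $x_m$ follows directly, with the branch read off from \eqref{STS1}. A second computation through \eqref{eq221a} then gives continuity of the derivative.

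\textbf{Comparison.} Your version makes the transversal crossing of the subsonic and supersonic branches transparent. The paper's version needs only one limit instead of two. More importantly, it sets up the quantity $A_x/\bar S$ and the l'H\^opital scheme that Lemma \ref{Pro22} iterates to reach $C^3$ regularity, which your expansion would not deliver as easily.

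\textbf{Two points need repair.}

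First, your Taylor expansion is valid only for $(\sigma,x)$ near $(0,x_m)$. It therefore cannot be what ``yields'' continuity of $\bar\rho$ at $x_m$; that step is circular. You must show $\bar\rho(x)\to\rho_T(A_m)$ beforehand, as the paper does before its l'H\^opital step. For instance: for $A(x)$ near $A_m$, the roots of $F(\cdot,A(x))=F_m$ stay in a fixed compact subset of $(0,\infty)$ by \eqref{defF2}. Every limit point solves $F(\cdot,A_m)=F_m$, hence equals $\rho_T(A_m)$.

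Second, the sign in your $C^1$ step is wrong. With $\rho_T:=\rho_T(A_m)$ and $h$ as in \eqref{eq219}, one has $1-\bar M^2=\frac{h(\bar\rho)-h(\rho_T(A(x)))}{h(\bar\rho)}=\frac{2+K}{\rho_T}\bigl(\sigma+O((x-x_m)^2)\bigr)(1+o(1))$. This is positive on the subsonic side $\sigma>0$. With the minus sign you wrote, substituting $\sigma\approx-c(x-x_m)$ would return $+\rho_T\sqrt{A_{xx}/(A(K+2))}$, contradicting your previous step. Only the corrected sign gives the claimed limit. Note also that the $O((x-x_m)^2)$ correction is negligible precisely because you have already shown $|\sigma|\sim c|x-x_m|$.
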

\begin{proof}
Recall that $\disp F(\bar{\rho}, A) = c$ has a unique transonic root $\bar{\rho} = \rho_T(A)$ if $\disp c = F(\rho_T(A), A)$.
Therefore, in the same way as in the proof of Lemma \ref{Lem20}, we see that 
the equation ${\cal F}(\bar{\rho}, x)= F_m$ is solved uniquely so that {the root} satisfies $\bar{\rho} \in C^0(\bar{J}) \cap C^2(\bar{J}\backslash \{x_{m}\})$ with \eqref{STS1}.
Then $(\bar{\rho},\bar{m})(x)=(\bar{\rho},Q/A)(x)$ solves \eqref{eq151} for $x \in J\backslash \{x_{m}\}$.
Hence, \eqref{eq221}--\eqref{eq225} also hold on $J\backslash \{x_{m}\}$.

We show the regularity of $\bar{\rho}$ at $x=x_{m}$ and the equalities \eqref{eq237'}
to complete the proof.
To this end, it suffices to show 
\begin{gather}\label{1Drho}
\lim_{x\to x_{m}}\bar{\rho}_{x}(x)= - \bar{\rho} \sqrt{ \frac{A_{xx}}{A ( K(\bar{\rho}) + 2 )} } (x_m) < 0.
\end{gather}
Let us define $\bar{S}(x)$ by
\begin{gather} \label{eq239}
\bar{S}(x) := \disp p'(\bar{\rho}(x)) - \bar{u}(x)^2 = p'(\bar{\rho}(x)) - \frac{Q^2}{A(x)^2 \bar{\rho}(x)^2},
\end{gather}
so that \eqref{eq221a} is simplified to
\begin{gather} \label{eq241}
\disp \bar{\rho}_x 
= \frac{Q^2}{A^3 \bar{\rho}} \frac{A_x}{\bar{S}}.
\end{gather}
The RHS is an indeterminate form due to $A_x(x_m) = \bar{S}(x_m) = 0$.
To know the limit of the RHS, we begin with computing $\disp \lim_{x \to x_m} \left( A_x / \bar{S} \right)^2 (x) $ by applying the l'H\^{o}pital rule.
From \eqref{eq221a}, we have
\begin{gather} \label{eq242}
\bar{S}_x 
= p''(\bar{\rho}) \bar{\rho}_x 
  + \frac{2Q^2}{A^2 \bar{\rho}^2} \left( \frac{A_x}{A} + \frac{\bar{\rho}_x}{\bar{\rho}} \right)
= \frac{A_x}{\bar{S}} H,
\end{gather}
where $H$ is a positive $C^0$-function defined by
\begin{gather*}
H(x) 
:= \left( \bar{\rho} p''(\bar{\rho}) + 2 p'(\bar{\rho}) \right) \frac{Q^2}{A^3\bar{\rho}^2} (x).
\end{gather*}
We recall the assumption $A_{xx}(x_m) > 0$, 
and note that $\bar{\rho}(x_m)$ satisfies $A_m^2 \, \bar{\rho}(x_m)^2 p'(\bar{\rho}(x_m)) = Q^2$ equivalent to $\bar{\rho}(x_{m}) = \rho_T(A(x_{m})) $ to see 
\begin{align*}
 \lim_{x \to x_m} \frac{(A_x^2)_{x}}{(\bar{S}^2)_{x}}(x)
=
\lim_{x \to x_m} \frac{ A_x A_{xx} } { \bar{S} \bar{S}_x } (x)
&=
\frac{A_{xx}} {H} (x_m)
\\
&=
\frac{A_{xx} A^3\bar{\rho}^2} { Q^2 ( \bar{\rho} p''(\bar{\rho}) + 2 p'(\bar{\rho}) ) } (x_m)
=
\frac{A_{xx} A^5 \bar{\rho}^4} { Q^4 ( K(\bar{\rho}) + 2 ) } (x_m),
\end{align*}
where the rightmost is a finite positive value and $K(\bar{\rho})$ is defined in Theorem \ref{thm2.1}.
Applying the l'H\^{o}pital rule, we have
\begin{gather*}
\lim_{x \to x_m} \left( \frac{A_x(x)}{\bar{S}(x)} \right)^2
=
\frac{A_{xx} A^5 \bar{\rho}^4} { Q^4 ( K(\bar{\rho}) + 2 ) } (x_m).
\end{gather*}
By the choice of the root branch, it holds $A_x(x) < 0$ and $\bar{S}(x) > 0$ for $x_- < x < x_m$,
while $A_x(x) > 0$ and $\bar{S}(x) < 0$ for $x_m < x < x_+$.
Thus we have
\begin{gather} \label{eq243}
\lim_{x \to x_m} \frac{A_x} {\bar{S}} (x)
=
- \sqrt{ \frac{A^5 A_{xx} \bar{\rho}^4} { Q^4 ( K(\bar{\rho}) + 2 ) } } (x_m)  < 0. 
\end{gather}
Substituting \eqref{eq243} into \eqref{eq241} concludes \eqref{1Drho}. 
The proof is complete.
%
\end{proof}

\begin{lem} \label{Pro22}
Under the same assumptions as in Proposition \ref{Pro21} but with $A \in C^2(\bar{I})$ replaced by $A \in C^4(\bar{I})$, the pair $(\bar{\rho},\bar{m})$ in Lemma \ref{Pro21} satisfies 
$(\bar{\rho},\bar{m}) \in C^3(\bar{J}) \times C^4(\bar{J})$.
\end{lem}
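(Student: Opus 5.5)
The plan is to treat $\bar m$ first, since it is immediate. By \eqref{eq209} we have $\bar m = Q/A$ with $A\in C^4(\bar J)$ and $A\ge A_m>0$, so $\bar m\in C^4(\bar J)$. For $\bar\rho$, Lemma \ref{Pro21} already gives $\bar\rho\in C^1(\bar J)$. Away from $x_m$, the argument of Lemma \ref{Lem20} (implicit function theorem with $\partial_\rho{\cal F}\neq0$) gives $C^4$ regularity. So the whole task is to show $\bar\rho\in C^3$ in a neighbourhood of $x_m$. I would do this in two stages: first a blow-up argument giving $C^2$, then a bootstrap through the ODE \eqref{eq241} gaining one more derivative.

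\emph{Stage 1 ($C^2$ by blow-up).} Put $\rho_m:=\rho_T(A(x_m))$, $y:=x-x_m$ and $\Phi(\rho,x):={\cal F}(\rho,x)-F_m$. Then $\Phi$ is $C^4$ near $(\rho_m,x_m)$ because $p\in C^5$ and $A\in C^4$. Moreover $\Phi(\rho_m,x_m)=0$ and $\nabla\Phi(\rho_m,x_m)=0$, using \eqref{eq217} and $A_x(x_m)=0$. For each $s$, define $g(s,y):=\Phi(\rho_m+sy,x_m+y)$. Then $g(s,0)=\partial_yg(s,0)=0$, so
\[
\Psi(s,y):=y^{-2}g(s,y)=\int_0^1(1-t)\,\partial_y^2g(s,ty)\,dt,
\]
which is jointly $C^2$. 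At $y=0$ we get $\Psi(s,0)=\tfrac12\bigl(\Phi_{\rho\rho}s^2+2\Phi_{\rho x}s+\Phi_{xx}\bigr)(\rho_m,x_m)$. Here $\Phi_{\rho\rho}>0$, $\Phi_{\rho x}=\partial_A\partial_\rho F\cdot A_x=0$, and $\Phi_{xx}=\partial_AF\cdot A_{xx}<0$, since $\partial_AF<0$ and $A_{xx}(x_m)>0$. Hence $\Psi(\cdot,0)$ has two simple roots $\pm s_*$, and the implicit function theorem yields $C^2$ branches $s(y)$ with $\Psi(s(y),y)=0$. The branch with $s(0)=-s_*<0$ has $\rho_m+ys(y)$ on the subsonic side for $y<0$ and the supersonic side for $y>0$. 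By the uniqueness part of Lemma \ref{Pro21} it coincides with $\bar\rho$; this is consistent with \eqref{eq237'}. In particular $s=(\bar\rho-\rho_m)/y\in C^2$ and $\bar\rho\in C^2$ near $x_m$.

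\emph{Stage 2 (bootstrap to $C^3$).} I would rewrite \eqref{eq241} as
\[
\bar\rho_x=\frac{Q^2}{A^3\bar\rho}\,\frac{\alpha}{\sigma(\bar\rho,x)\,\beta},
\]
with the three factors defined as follows.
\begin{itemize}
\item $A_x=y\,\alpha(x)$, with $\alpha\in C^2$ by Hadamard's lemma applied to the $C^3$ function $A_x$, and $\alpha(x_m)=A_{xx}(x_m)\neq0$.
\item $S(\rho,x):=p'(\rho)-Q^2/(A^2\rho^2)$ is a $C^3$ function vanishing exactly on $\rho=\rho_T(A(x))$ with $\partial_\rho S\ne0$ there. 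Hence $S=(\rho-\rho_T(A(x)))\,\sigma(\rho,x)$ with $\sigma\in C^2$ and $\sigma\neq0$.
\item $\bar\rho-\rho_T(A(x))=y\,\beta(x)$, where $\beta=s-(\rho_T(A(x))-\rho_m)/y$.
\end{itemize}
The function $\rho_T\circ A$ is $C^4$ because $h^{-1}$ is $C^4$. It also satisfies $\rho_T(A(x))-\rho_m=O(y^2)$ since $A_x(x_m)=0$. So the second term in $\beta$ is $C^3$, and together with Stage 1 this gives $\beta\in C^2$. Finally, $\beta(x_m)=s(0)-0\neq0$. Substituting these factorizations into \eqref{eq241}, the factors of $y$ cancel and every factor on the right is $C^2$ and nonvanishing. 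Thus $\bar\rho_x\in C^2$, that is, $\bar\rho\in C^3$ near $x_m$, and combined with the regularity away from $x_m$ this gives $(\bar\rho,\bar m)\in C^3(\bar J)\times C^4(\bar J)$.

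The main obstacle is the regularity loss caused by dividing by $y$. A direct bootstrap from $C^1$ gains nothing, because $\beta$ would only be $C^{k-1}$ when $\bar\rho\in C^k$. I expect the key point to be Stage 1. The blow-up variable $s$ inherits $C^2$ regularity from the \emph{jointly} smooth $\Psi$, rather than from dividing $\bar\rho-\rho_m$ by $y$. One must also check carefully that the subtracted term $(\rho_T(A)-\rho_m)/y$ loses no regularity, which is where $A\in C^4$ and $p\in C^5$ are used.
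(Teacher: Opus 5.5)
Your proof is correct, and it takes a genuinely different route from the paper's.

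\textbf{The paper's route.} The paper never leaves the singular ODE \eqref{eq241}. It differentiates that equation and reduces the question to the behaviour of $L=(A_x/\bar S)_x=(\bar S^2A_{xx}-A_x^2H)/\bar S^3$, and then of $L_x=L_1/\bar S^5$. It shows that $\bar\rho_{xx}$ and $\bar\rho_{xxx}$ have limits at $x_m$ by nested applications of l'H\^opital's rule, using $\bar S_x=A_xH/\bar S$ and the limit \eqref{eq243}. This forces it to split $(L_1)_x$ and $(L_3)_x$ into pieces according to their order of vanishing in $\bar S$.

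\textbf{Your route.} You remove the degeneracy at its source instead. The blow-up $\rho=\rho_m+ys$ turns the nondegenerate saddle of $\Phi$ at $(\rho_m,x_m)$ into a regular zero of the jointly $C^2$ function $\Psi$, so the implicit function theorem applies directly. The Hadamard factorizations $A_x=y\alpha$ and $\bar S=y\beta\,\sigma(\bar\rho,x)$ then let the factor $y$ cancel in \eqref{eq241}, which recovers the derivative lost in dividing by $y^2$.

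\textbf{What each buys.} Your approach avoids all the bookkeeping of indeterminate forms and makes transparent why $C^4$ data give exactly $C^3$. It also generalizes verbatim: $A\in C^k$ and $p\in C^{k+1}$ with $k\ge 3$ give $\bar\rho\in C^{k-1}$. It recovers \eqref{eq237'} as $\bar\rho_x(x_m)=s(0)=-s_*$ and identifies the second smooth branch, $s(0)=+s_*$. The paper's approach uses nothing beyond l'H\^opital's rule and the formulas already derived in Lemma \ref{Pro21}, and it produces the limits of $\bar\rho_{xx}$ and $\bar\rho_{xxx}$ at the throat through explicit expressions.

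\textbf{Two small points to make explicit.} First, $S$ is in fact $C^4$, so $\sigma\in C^3$; this is more than you need but harmless. Second, the formula for $\bar\rho_x$ holds a priori only for $y\neq0$. It extends to $y=0$ because $\bar\rho_x$ is already continuous by Stage 1.
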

\begin{proof}
It is enough to show $\bar{\rho} \in C^{3}(\bar{J})$, since $\bar{m}=Q/A\bar{\rho}$ belongs to $C^{4}(J)$.
By differentiating \eqref{eq241} in $x$, we have
\begin{gather} \label{eq249}
\bar{\rho}_{xx}
= \left( \frac{Q^2}{A^3 \bar{\rho}} \frac{A_x}{\bar{S}} \right)_x
= - \left( \frac{3 A_x}{A} + \frac{\bar{\rho}_x}{\bar{\rho}} \right) \bar{\rho}_x
+ \frac{Q^2}{A^3 \bar{\rho}} \left( \frac{A_x}{\bar{S}} \right)_x
\ \ \text{for} \ \ x \ne x_m.
\end{gather}
In view of \eqref{eq237'} and \eqref{eq243},
in order to compute $\disp \lim_{x \to x_m} \bar{\rho}_{xx}(x)$,
it remains to consider the following limit:
\begin{equation} \label{eq251}
L(x):= \left( \frac{A_x}{\bar{S}} \right)_x (x)
= \frac{ \bar{S} A_{xx} - A_x \bar{S}_x } {\bar{S}^2} (x)
= \frac{ \bar{S}^2 A_{xx} - A_x^2 H } {\bar{S}^3} (x)
\quad \text{as $x \to x_{m}$}.
\end{equation}
We observe that
\begin{align} \label{eq252}
\lim_{x \to x_m} 
\frac{ ( \bar{S}^2 A_{xx} - A_x^2 H )_x } { (\bar{S}^3)_x } (x)
&=
\lim_{x \to x_m} 
\frac{ \bar{S}^2 A_{xxx} - A_x^2 H_x } { 3 A_x \bar{S} H }(x)
\notag \\
&=
\frac{1}{3} \frac{A_{xxx}}{H}(x_m) \lim_{x \to x_m} \frac{\bar{S}}{A_x}(x)
- \frac{1}{3} \frac{H_{x}}{H}(x_m) \lim_{x \to x_m} \frac{A_x}{\bar{S}}(x),
\end{align}
where the rightmost is a finite value due to \eqref{eq243}.
From this and the l'H\^{o}pital rule,
we obtain the convergence of $L(x)$, 
and hence of $\bar{\rho}_{xx}$ as $x \to x_m$.
Thus $\bar{\rho} \in C^{2}(\bar{J})$.
Accordingly, $H(x)$ is also $C^2$ in a certain neighborhood of $x=x_m$.

Finally, differentiating \eqref{eq249} by $x$ results in 
\begin{gather*} 
\bar{\rho}_{xxx} (x)
= - \left( \frac{3 A_x}{A} + \frac{\bar{\rho}_x}{\bar{\rho}} \right) \bar{\rho}_{xx}
  - \left( 3 \frac{A_{xx}A - A_x^2}{A^2} + \frac{\bar{\rho}_{xx}\bar{\rho} - \bar{\rho}_x^2}{\bar{\rho}^2}
    \right) \bar{\rho}_x
  - \left( \frac{3 A_x}{A} + \frac{\bar{\rho}_x}{\bar{\rho}} \right) \frac{Q^2}{A^3 \bar{\rho}} L
  + \frac{Q^2}{A^3 \bar{\rho}} L_x
\end{gather*}
for \ $x \ne x_m$.
Recall that terms other than $L_x(x)$ have been shown to converge as $x \to x_m$.
Let us investigate $\disp \lim_{x \to x_m} L_x(x)$.
From \eqref{eq251}, we have
\begin{gather*} 
L_x = \left( \frac{A_{xx}} {\bar{S}} \right)_x - \left( \frac{ A_x^2 H } {\bar{S}^3} \right)_x
= \frac{L_1} {\bar{S}^5}, \quad
L_1(x) := \left[
 A_{xxx} \bar{S}^4 - H_x A_{x}^2 \bar{S}^2 + 3 H A_x \left( H A_x^2 - A_{xx} \bar{S}^2 \right) 
\right] (x). \notag 
\end{gather*}
Note that $L_1(x_m) =  \bar{S}(x_m)= 0$.
To apply l'H\^{o}pital rule, we compute $\disp \lim_{x \to x_m} (L_1)_{x}(x) / ( \bar{S}^5 )_x (x)$.
From \eqref{eq242} and \eqref{eq243}, it follows that $\bar{S}_x (x_{m}) \neq 0$ and then
\begin{gather*}
( \bar{S}^5 )_{x} = 5 \bar{S}^4 \bar{S}_x = {\cal O}(\bar{S}^4) \ne {\cal O}(\bar{S}^5)
\quad \text{as} \ x \to x_m.
\end{gather*}
In view of this, we decompose 
$(L_1)_{x}$ as $(L_1)_{x} = L_2 + L_3$, where
\begin{align*}
L_{2}(x)&:=\left(A_{xxxx} \bar{S}^4 - H_{xx} A_{x}^2 \bar{S}^2 \right)(x),
\\
L_3(x)&:= \left( H A_{xxx} A_x \bar{S}^2 - 5 H_{x} A_{xx} A_x \bar{S}^2 + 4 H_x H A_x^3
      + 3 H^2 A_{xx} A_x^2 - 3 H A_{xx}^2 \bar{S}^2 \right) (x).
\end{align*}
From $A \in C^4(\bar{I})$, \eqref{eq102}, and \eqref{eq243}, we see that 
$\disp L_{2} / ( \bar{S}^5 )_x $ converges as $x \to x_m$. 
It suffices to show the convergence of $\disp \lim_{x \to x_m} L_3(x) / ( \bar{S}^5 )_x (x)$, where both $L_{3}$ and $( \bar{S}^5 )_x$ vanish at $x=x_{m}$.
To apply the l'H\^{o}pital rule, we compute $\disp \lim_{x \to x_m} (L_{3})_{x}(x) / ( \bar{S}^5 )_{xx} (x)$.
By differentiating \eqref{eq242} in $x$, we have
\begin{equation} \label{eq261}
\bar{S}_{xx} = L H + \frac{A_x} {\bar{S}} H_x,
\end{equation} 
for which the limit as $x \to x_m$ exists due to \eqref{eq243}, \eqref{eq251}, and \eqref{eq252}.
Thus 
\begin{gather*}
( \bar{S}^5 )_{xx} = 5 \bar{S}^3 ( \bar{S} \bar{S}_{xx} + 4 \bar{S}_x^2 ) 
= {\cal O}(\bar{S}^3) \ne {\cal O}(\bar{S}^4)
\quad \text{as} \ x \to x_m.
\end{gather*}
In view of this, we decompose 
$(L_{3})_{x}$ as $(L_{3})_{x}=L_{4}+L_{5}$, where
\begin{align*} 
L_{4}(x) &:= \left[( H A_{xxx} )_x A_x \bar{S}^2 - 5 ( H_{x} A_{xx} )_x A_x \bar{S}^2 
 + 4 ( H_x H )_x A_x^3 \right](x),
\\
L_{5}(x) &:= \left[ H A_{xxx} \left( A_x \bar{S}^2 \right)_x 
 - 5 H_{x} A_{xx} \left( A_x \bar{S}^2 \right)_x
 + 4 H_x H \left( A_x^3 \right)_x
 + \left( 3 H^2 A_{xx} A_x^2 - 3 H A_{xx}^2 \bar{S}^2 \right)_x \right](x).
\end{align*}
It is easy to see that $\disp L_{4}/ ( \bar{S}^5 )_{xx} $ congerges as $x \to x_m$. 
What remains is to prove the convergence of $\disp L_{5}(x) / ( \bar{S}^5 )_{xx} (x)$ 
as $x \to x_m$.
Noting $L_{5} = - ( 5 H A_{xxx} + 8 H_x A_{xx} )  ( A_{xx} \bar{S}^2 - H A_x^2 )$,
we factorize $\disp L_{5}(x) / ( \bar{S}^5 )_{xx} $ as
\begin{equation*}
\frac{5L_{5}} { ( \bar{S}^5 )_{xx} } = - \frac{ 5 H A_{xxx} + 8 H_x A_{xx} } { \bar{S} \bar{S}_{xx} + 4 \bar{S}_x^2 }
\frac{ A_{xx} \bar{S}^2 - H A_x^2 } { \bar{S}^3 }
= - \frac{ 5 H A_{xxx} + 8 H_x A_{xx} } { \bar{S} \bar{S}_{xx} + 4 \bar{S}_x^2 }L.
\end{equation*}
The RHS converges as $x \to x_m$ due to \eqref{eq242}, \eqref{eq243}, and \eqref{eq251}--\eqref{eq261}.
Consequently, $\bar{\rho} \in C^{3}(\bar{J})$ holds. The proof is complete.
\end{proof}

\begin{lem} \label{Lem23}
For an arbitrary section $A(> 0)$ and a density $\rho_L (< \rho_T(A))$,
the inequality
$\disp F(\rho_L, A) > F(\rho_R(\rho_L,A), A)$ holds, 
where $F$ and $\rho_R$ are defined in \eqref{eq215} and \eqref{eq227}, respectively. 
\end{lem}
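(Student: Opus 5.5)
Fix $A>0$ and write $q:=Q^2/A^2$, so that
\[
F(\rho,A)=\frac{q}{2\rho^2}+\int_{\rho_\infty}^{\rho}\frac{p'(\zeta)}{\zeta}\,d\zeta,
\qquad
G(\rho,A)=\frac{q}{\rho}+p(\rho).
\]
Both have critical point $\rho_T:=\rho_T(A)$, and both are decreasing on $(0,\rho_T)$ and increasing on $(\rho_T,\infty)$. Set $\rho_R:=\rho_R(\rho_L,A)$; by \eqref{eq229} we have $\rho_L<\rho_T<\rho_R$ and $G(\rho_L,A)=G(\rho_R,A)$.

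The plan is to express the jump of $F$ as an integral of $\partial_\rho F$ and compare it with the vanishing jump of $G$. The key identity is
\[
\partial_\rho G=-\frac{q}{\rho^2}+p'(\rho)=\rho\,\partial_\rho F ,
\]
which follows from \eqref{eq217}. Therefore
\[
0=G(\rho_R,A)-G(\rho_L,A)=\int_{\rho_L}^{\rho_R}\rho\,\partial_\rho F(\rho,A)\,d\rho,
\]
while the quantity we want to sign is
\[
F(\rho_R,A)-F(\rho_L,A)=\int_{\rho_L}^{\rho_R}\partial_\rho F(\rho,A)\,d\rho .
\]

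The next step compares the two integrals. The function $\partial_\rho F$ is negative on $(\rho_L,\rho_T)$ and positive on $(\rho_T,\rho_R)$. Hence
\[
\int_{\rho_L}^{\rho_R}(\rho-\rho_T)\,\partial_\rho F\,d\rho>0,
\]
because the integrand is strictly positive away from $\rho_T$. Combining this with the vanishing $G$-jump gives
\[
\rho_T\int_{\rho_L}^{\rho_R}\partial_\rho F\,d\rho=\int_{\rho_L}^{\rho_R}\rho\,\partial_\rho F\,d\rho-\int_{\rho_L}^{\rho_R}(\rho-\rho_T)\,\partial_\rho F\,d\rho<0 .
\]
So $F(\rho_R,A)-F(\rho_L,A)<0$, which is the claim.

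The main step to verify is that the weight $\rho$ really separates the two sign regions. This holds because $\rho-\rho_T$ has the same sign as $\partial_\rho F$ on each side of $\rho_T$. Strictness comes from $\partial_\rho F\neq 0$ for $\rho\neq\rho_T$, which uses the monotonicity of $h$ from \eqref{eq219}. No convexity of $p$ beyond what is used in \eqref{eq219} is needed.
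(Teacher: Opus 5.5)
Your proof is correct and is essentially the paper's argument. The paper also rests on $\partial_\rho G=\rho\,\partial_\rho F$: it splits the identity $G(\rho_L,A)=G(\rho_R,A)$ at $\rho_T(A)$ and compares the weight $1/\rho$ with $1/\rho_T(A)$ on each side, which is your single-integral comparison of $\rho$ with $\rho_T$ up to the factor $\rho_T$.
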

\begin{proof}
Rewriting each side of \eqref{eq227}, we have
\begin{gather*}
\dfrac{Q^2}{A^2 \rho_L} + p(\rho_L) = \dfrac{Q^2}{A^2 \rho_T(A)} + p(\rho_T(A))
+ \int_{\rho_T(A)}^{\rho_L} \left( p'(\rho) - \dfrac{Q^2}{A^2 \rho^2} \right) d \rho,
\\
\dfrac{Q^2}{A^2 \rho_R} + p(\rho_R) = \dfrac{Q^2}{A^2 \rho_T(A)} + p(\rho_T(A))
+ \int_{\rho_T(A)}^{\rho_R} \left( p'(\rho) - \dfrac{Q^2}{A^2 \rho^2} \right) d \rho,
\end{gather*}
where $\rho_T$ is defined in \eqref{eq219}. 
These give
\begin{gather} \label{eq271}
\dfrac{1}{\rho_T(A)} \int^{\rho_T(A)}_{\rho_L} \left( \dfrac{Q^2}{A^2 \rho^2} - p'(\rho) \right) d \rho
=
\dfrac{1}{\rho_T(A)} \int_{\rho_T(A)}^{\rho_R} \left( p'(\rho) - \dfrac{Q^2}{A^2 \rho^2} \right) d \rho.
\end{gather}
Now we estimate both hand sides of \eqref{eq271}
noting $\disp \rho_L < \rho_T(A) < \rho_R = \rho_R(\rho_L, A)$.
From the LHS, we have
\begin{multline} \label{eq273}
\dfrac{1}{\rho_T(A)} 
\int^{\rho_T(A)}_{\rho_L} \left( \dfrac{Q^2}{A^2 \rho^2} - p'(\rho) \right) d \rho
< \int^{\rho_T(A)}_{\rho_L} \dfrac{1}{\rho} \left( \dfrac{Q^2}{A^2 \rho^2} - p'(\rho) \right)d \rho  
\\
= -\int^{\rho_T(A)}_{\rho_L} \dfrac{\partial F}{\partial \rho} (\rho, A) d \rho
= F(\rho_L, A) - F(\rho_T(A), A).
\end{multline}
The result from the RHS is
\begin{multline} \label{eq275}
\dfrac{1}{\rho_T(A)} \int_{\rho_T(A)}^{\rho_R} \left( p'(\rho) - \dfrac{Q^2}{A^2 \rho^2} \right) d \rho
> \int_{\rho_T(A)}^{\rho_R} \dfrac{1}{\rho} \left( p'(\rho) - \dfrac{Q^2}{A^2 \rho^2} \right) d \rho  
\\
= \int_{\rho_T(A)}^{\rho_R} \dfrac{\partial F}{\partial \rho} (\rho, A) d \rho
= F(\rho_R, A) - F(\rho_T(A), A).
\end{multline}
Substituting \eqref{eq273} and \eqref{eq275} in \eqref{eq271}, 
we have the desired inequality. 
\end{proof}
\begin{lem} \label{Lem24}
Assume \eqref{eq231}.
For a given $x_J \in (0, x_2)$, assume $\bar{\rho} \in C^1(\bar{I} \setminus \{ x_J \} )$ 
and $\bar{m}=Q/A$ satisfy \eqref{eq151}--\eqref{eq153}, \eqref{eq157}--\eqref{eq159} with $\bar{d}=x_{J}$, and the condition that the flow is subsonic in $x<0$, exactly transonic at $x=0$, supersonic in $(0, x_J)$, and subsonic again in $(x_J, x_2]$.
Set $\tilde{\rho}_+:=\bar{\rho}(x_2)$.
Then $\tilde{\rho}_+$ is a $C^1$-function of $A_J := A(x_J)$ 
satisfying 
\begin{gather}\label{eqlem24}
\frac{d \tilde{\rho}_+}{d A_J}(A_J) < 0, \quad 
\lim_{x_J \to + 0} \tilde{\rho}_+ (A(x_J)) = \rho_1 - 0, \quad 
\lim_{x_J \to x_{2}-0} \tilde{\rho}_+ (A(x_J)) = \rho_3 + 0.
\end{gather}
\end{lem}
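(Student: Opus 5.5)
The plan is to make the dependence of $\tilde{\rho}_+$ on $A_J$ explicit through the functions $F$ and $G$, and then read off monotonicity and the two limits.

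\textbf{The chain of maps.} Under the hypotheses, on $[x_1,x_J)$ the solution is the transonic branch of Lemma \ref{Pro21} with $F(\bar{\rho},A)=F_0$. Hence the upstream state at the shock is $\rho_L(A_J)$, the supersonic root of $F(\rho_L,A_J)=F_0$. This root is a $C^1$ function of $A_J$ for $A_J>A_0$ by the implicit function theorem, since $\partial_\rho F\neq 0$ off $\rho_T$. The Rankine--Hugoniot condition $\jump{G(\bar{\rho},A)}=0$ gives $\rho_R=\rho_R(\rho_L,A_J)$. This is $C^1$ because $\partial_\rho G(\rho_R,A_J)>0$ for $\rho_R>\rho_T(A_J)$. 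Downstream, $F(\bar{\rho},A)$ is constant along the subsonic branch (Lemma \ref{Lem20}), so $\tilde{\rho}_+$ is the subsonic root of
\[
F(\tilde{\rho}_+,A_+)=F(\rho_R,A_J)=:\Phi(A_J).
\]
This root is again $C^1$. Since $\partial_\rho F(\tilde\rho_+,A_+)>0$, the sign of $d\tilde{\rho}_+/dA_J$ equals the sign of $\Phi'(A_J)$.

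\textbf{Computing $\Phi'$.} We have
\[
\Phi'=\partial_\rho F(\rho_R,A_J)\frac{d\rho_R}{dA_J}+\partial_A F(\rho_R,A_J), \qquad \partial_AF(\rho,A)=-\frac{Q^2}{A^3\rho^2}.
\]
Differentiating the defining relation of $\rho_L$ gives $\partial_\rho F(\rho_L)\,\rho_L'=Q^2/(A^3\rho_L^2)$. Differentiating $G(\rho_L,A)=G(\rho_R,A)$, with $\partial_\rho G=\rho\,\partial_\rho F$ and $\partial_A G=-2Q^2/(A^3\rho)$, gives
\[
\rho_R\,\partial_\rho F(\rho_R)\,\rho_R'=\rho_L\,\partial_\rho F(\rho_L)\,\rho_L'-\frac{2Q^2}{A^3}\Big(\frac1{\rho_L}-\frac1{\rho_R}\Big).
\]
Substituting the first relation, the right-hand side becomes $\frac{Q^2}{A^3}\big(\frac{2}{\rho_R}-\frac1{\rho_L}\big)$. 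Therefore
\[
\Phi'=\frac{Q^2}{A^3\rho_R}\Big(\frac2{\rho_R}-\frac1{\rho_L}\Big)-\frac{Q^2}{A^3\rho_R^2}
=\frac{Q^2}{A^3\rho_R}\Big(\frac1{\rho_R}-\frac1{\rho_L}\Big)<0,
\]
because $\rho_L<\rho_R$ by \eqref{eq229}. This yields the strict decrease. The algebra should be checked, but in any case the sign must come from $\rho_L<\rho_R$.

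\textbf{The limits.} As $x_J\to+0$ we have $A_J\to A_0$, so $\rho_L\to\rho_T(A_0)$ and $\rho_R\to\rho_T(A_0)$, since the two branches of $G(\cdot,A)$ merge at the minimum. Hence $\Phi\to F_0$, and $\tilde{\rho}_+$ tends to the subsonic root of $F(\cdot,A_+)=F_0$, namely $\rho_1$. As $x_J\to x_2-0$ we have $A_J\to A_+$, $\rho_L\to\rho_2$ and $\rho_R\to\rho_R(\rho_2,A_+)=\rho_3$. In that limit the downstream interval shrinks, so $\tilde{\rho}_+\to\rho_3$. One-sidedness of both limits follows from the strict monotonicity and from $A$ being increasing on $(0,x_2)$.

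\textbf{Main obstacle.} The delicate point is the continuity of $\rho_R(\rho_L,A)$ as $\rho_L\to\rho_T(A)$, where $\partial_\rho G$ degenerates. I would handle it by the strict convexity of $G$ near its minimum: $\partial_\rho^2 G=p''+2Q^2/(A^2\rho^3)>0$. This forces $|\rho_R-\rho_T|\to0$ as $|\rho_L-\rho_T|\to0$.
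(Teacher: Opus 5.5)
Your proposal is correct and follows essentially the same route as the paper: the chain $A_J\mapsto\rho_L\mapsto\rho_R\mapsto\tilde{\rho}_+$ through three applications of the implicit function theorem, with your $\Phi'(A_J)=\frac{Q^2}{A_J^3\rho_R}\bigl(\frac{1}{\rho_R}-\frac{1}{\rho_L}\bigr)$ coinciding with the paper's $\frac{Q}{A_J^2\rho_R}(u_R-u_L)$, and the sign coming from the entropy condition together with the subsonic exit. The limits are obtained the same way too (your strict convexity of $G(\cdot,A)$ is a quantitative version of the paper's ``unique minimizer at $\rho_T(A)$'' argument), and you additionally make explicit the one-sidedness and the case $x_J\to x_2-0$, which the paper dismisses with ``similarly''.
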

\begin{proof}
First we investigate the regularity of $\disp \rho_L := \lim_{x \to x_J-0} \bar{\rho}(x)$ with respect to $A_{J}$.
We note that $\bar{\rho}$ satisfies $F(\bar{\rho}(x), A(x)) = F_0 =F(\rho_T(A_0), A_0)$ for $x < x_J$.
It is also seen that
$$\disp \frac{\partial F}{\partial {\rho}} (\rho_L, A_J) 
= \frac{1}{\rho_L} \left( p'({\rho_L}) - \frac{Q^2}{( A_J \rho_L )^2} \right)<0,
$$
because the flow is supersonic in $(0,x_J)$.
Then the implicit function theorem assures that $\rho_L(A_J)$ is a $C^1$-function with
\begin{gather} \label{eq277}
\frac{d \rho_L} {d A_J} = - \frac{\partial F / \partial A_J } {\partial F / \partial \rho_J }
= \frac{\rho_L} {A_J} \frac{\bar{M}_L^2} {1-\bar{M}_L^2} < 0,
\quad \text{where} \
\bar{M}_L := \frac{Q} {A_J \rho_L}  \frac{1}{\sqrt{p'({\rho_L})}}.
\end{gather}

Next we investigate the regularity of $\rho_R$, which is determined as a function of $\rho_L$ and $A_J$ through \eqref{eq227}. 
We see from $\bar{M}_L >1$ and the definition of $\rho_R$ that
\begin{gather*}
\frac{\partial}{\partial \rho_R} ( \text{RHS of } \eqref{eq227} )
= p'(\rho_R) - \frac{Q^2}{( A_J \rho_R )^2} = p'(\rho_R) - u_R^2 >0,
\quad \text{where} \ u_R := \frac{Q}{A_J \rho_R}.
\end{gather*}
The application of the implicit function theorem to \eqref{eq227} deduces that $\rho_R = \rho_R(\rho_L, A_J)$ is a $C^1$-function of $\rho_{L}$, and further
$\rho_R = \rho_R(\rho_L(A_J),A_J)$ is a $C^1$-function of $A_J$.
Then we differentiate \eqref{eq227} in $A_J$.
With the help of \eqref{eq277}, the LHS of  \eqref{eq227} yields 
\begin{align} \label{eq279}
\frac{d}{d A_J} ( \text{LHS of } (\ref{eq227}) )
&= \frac{\partial}{\partial A_J} ( \text{LHS of } (\ref{eq227}) )
+ \frac{\partial}{\partial \rho_L} ( \text{LHS of } (\ref{eq227}) ) \frac{d \rho_L} {d A_J}
\notag \\
&= - \dfrac{2 Q^2}{A_J^3 \rho_L} + \left( p'(\rho_L) - \dfrac{Q^2}{A_J^2 \rho_L^2} \right) 
    \frac{\rho_L} {A_J} \frac{\bar{M}_L^2} {1-\bar{M}_L^2}
 = - \frac{Q^2} {A_J^3 \rho_L}.
\end{align}
The RHS yields
\begin{align} \label{eq281}
\frac{d}{d A_J} ( \text{RHS of } (\ref{eq227}) )
&= - \dfrac{2 Q^2}{A_J^3 \rho_R} + \left( p'(\rho_R) - \dfrac{Q^2}{A_J^2 \rho_R^2} \right) 
    \frac{d \rho_R} {d A_J}.
\end{align}
Equating \eqref{eq279} and \eqref{eq281}, we obtain
\begin{equation} \label{eq283}
\frac{d \rho_R} {d A_J} 
= \left( \dfrac{2 Q^2}{A_J^3 \rho_R} - \dfrac{Q^2}{A_J^3 \rho_L} \right)  \frac{1}{ p'(\rho_R) - u_R^2 }. 
\end{equation}

From now on we show that $\tilde{\rho}_+$ is a $C^{1}$-function of $A_J$ and the inequality ${d \tilde{\rho}_+}/{d A_J}< 0$ in \eqref{eqlem24} holds.
We note that 
$\tilde{\rho}_+$, $\rho_R$, and $A_J$ are related as
\begin{equation} \label{eq285}
F(\rho_R, A_J) = F(\tilde{\rho}_+, A_+).
\end{equation}
It is seen from $A_+ > A_J$ that $\tilde{\rho}_+ > \rho_R$ and thus $1 > \bar{M}_R > \bar{M}(x_2)$.
From this and $\tilde{\rho}_+=\bar{\rho}(x_2)$, we observe that
$$\disp \frac{\partial F} {\partial \rho} (\tilde{\rho}_+, A_+) 
= \frac{1}{\tilde{\rho}_+} \left( p'(\tilde{\rho}_+) - \dfrac{Q^2}{A_+^2 \tilde{\rho}_+^2} \right) >0.
$$
Then applying the implicit function theorem to \eqref{eq285}, we deduce that
$\disp \tilde{\rho}_+ = \hat{\rho}_+(\rho_R, A_J)$ is a $C^1$-function with respect to $(\rho_R, A_J)$.
Therefore, the composition $\disp \tilde{\rho}_+ = \tilde{\rho}_+(\rho_R, A_J) = \hat{\rho}_+(\rho_R(\rho_L(A_J),A_J), A_J)$ 
is a $C^1$-function with respect to $A_J$.

To show the first equality in \eqref{eqlem24}, we differentiate \eqref{eq285} in $A_J$.
Using \eqref{eq283}, the result from the LHS of \eqref{eq285} is
\begin{align} \label{eq287}
\frac{d}{d A_J} ( \text{LHS of } (\ref{eq285}) )
&= \frac{\partial}{\partial A_J} ( \text{LHS of } (\ref{eq285}) )
+ \frac{\partial}{\partial \rho_R} ( \text{LHS of } (\ref{eq285}) ) \frac{d \rho_R} {d A_J}
\notag \\
&= - \dfrac{Q^2}{A_J^3 \rho_R^2} + \left( \frac{p'(\rho_R)} {\rho_R} - \dfrac{Q^2}{A_J^2 \rho_R^3} \right) 
\left( \dfrac{2 Q^2}{A_J^3 \rho_R} - \dfrac{Q^2}{A_J^3 \rho_L} \right)  \frac{1}{ p'(\rho_R) - u_R^2}
\notag \\
&= \frac{Q} {A_J^2 \rho_R} ( u_R - u_L ),
\quad \text{where} \  u_L = \frac{Q}{ A_J \rho_L }.
\end{align}
From the RHS, we have
\begin{align} \label{eq289}
\frac{d}{d A_J} ( \text{RHS of } (\ref{eq285}) )
= \left( \frac{p'(\tilde{\rho}_+)} {\tilde{\rho}_+} - \dfrac{Q^2}{A_J^2 \tilde{\rho}_+^3} \right) 
\frac{d \tilde{\rho}_+} {d A_J}.
\end{align}
It follows from \eqref{eq287} and \eqref{eq289} that 
\begin{equation*} 
\frac{d \tilde{\rho}_+} {d A_J} 
= \frac{Q \tilde{\rho}_+} {A_J^2 \rho_R}  \frac{u_R - u_L} { p'(\tilde{\rho}_+) - u_+^2 },
\quad \text{where} \  u_+ := \frac{Q}{A_+ \tilde{\rho}_+}.
\end{equation*}
Now using the entropy condition \eqref{eq159} and the fact that the flow is subsonic at $x = x_2$,
we arrive at ${d \tilde{\rho}_+}/{d A_J}< 0$.

We show the first equality in \eqref{eqlem24} 
by proving  the following three limits:
\begin{enumerate}[(i)]
\item $\disp \lim_{A_J \to A_0} \rho_L(A_J) = \rho_T(A_0) - 0$;
\hitem $\disp \lim_{A_J \to A_0} \rho_R(\rho_L(A_J), A_J) = \rho_T(A_0) + 0$;
\item $\disp \lim_{A_J \to A_0} F(\tilde{\rho}_+(A_J), A_+) = F( \rho_1, A_+ )$.
\end{enumerate}
Recall that  $F(\rho, A)$ defined in \eqref{eq215} is continuous in $(\rho, A)$,
and $F(\cdot, A_0)$ attains its minimum value $F_0$ only at $\rho_T(A_0)$.
From these and $F(\rho_L(A_J), A_J) = F_0$ for any $A_J$, we obtain (i).
Next recall that $G(\rho, A)$ defined in \eqref{defG1} is continuous in $(\rho, A)$, 
and $G(\rho, A)$ attains its minimum 
only at $\rho = \rho_T(A)$ for each $A$.
From these, \eqref{eq227}, and (i), we deduce (ii).
Furthermore, the continuity of $F(\rho,A)$ and (ii) imply (iii) as follows:
\[
\lim_{A_J \to A_0} F(\tilde{\rho}_+(A_J), A_+) = \lim_{A_J \to A_0} F(\rho_R(\rho_L(A_J), A_J), A_J)
= F(\rho_T(A_0), A_0) = F( \rho_1, A_+ ).
\]
Thus we conclude the second equality in \eqref{eqlem24}. Similarly, the last equality in \eqref{eqlem24} also holds.
The proof is complete.
\end{proof}

We are in a position to prove Theorem \ref{thm2.1}.

\begin{proof}[Proof of Theorem \ref{thm2.1}]
We consider the stationary solutions to \eqref{eq151}--\eqref{eq159} 
which are subsonic at $x=x_{1}$.
From \eqref{eq225}, we know that 
a stationary solution can undergo continuous transition 
from a subsonic state to a supersonic state only where $A_x$ vanishes.
%
%
If a piecewise smooth solution possesses a discontinuity,
the flow is supersonic on the upsteram side and is subsonic on the downstream side
due to the entropy condition \eqref{eq157}  (see also \eqref{eq229}).
From these considerations, the stationary solutions are classified into only three types.
\begin{enumerate}[(a)]
\item
The solution is subsonic entirely in $I$.

\item
The solution is subsonic in $x<0$, exactly transonic at $x=0$, and supersonic in $x>0$. 

\item
The solution is subsonic in $x<0$, exactly transonic at $x=0$, 
supersonic in $(0, x_J)$ for a certain $x_J \in (0, x_2)$, and subsonic again in $(x_J, x_2]$.
\end{enumerate}

We claim that the following is the necessary condition of the boundary value $\rho_{+}$ for the existence of the stationary solutions:
\begin{gather}\label{necessarybc1}
\rho_{+} \in \{\rho_{2}\} \cup (\rho_{3},\rho_{1}) \cup (\rho_{1},\infty),
\end{gather}
where $\rho_{1}$, $\rho_{2}$, and $\rho_{3}$ are defined just above Figure \ref{Fig01}, and satisfies $\rho_2 < \rho_3 < \rho_1$.
Let us verify the claim.
Recall that $A_{0}=A(0)$ and $F_0 = F(\rho_T(A_0), A_0)=F(\rho_{1},A_{+})=F(\rho_{2},A_{+})$.
The function $F(\bar{\rho}(x),A(x))$ remains constant where the solution is smooth as shown in \eqref{DF0}.
In the case (a), it must hold that
$F(\bar{\rho}(x),A(x)) \equiv F(\rho_+,A_+) > F_0$, and so $\bar{\rho}(x_2)=\rho_{+} > \rho_1$.
Similarly in the case (b), it is required that $F(\bar{\rho}(x),A(x)) \equiv F_0$, and so $\bar{\rho}(x_2) =\rho_{+}= \rho_2$.
In the case (c), there must hold that $F(\bar{\rho}(x),A(x)) = F_0$ for $x < x_J$, and 
$F(\bar{\rho}(x),A(x)) = F(\rho_+,A_+) $ for $x > x_J$.
Here $F_{0}=F(\bar{\rho}(x_J-0), A_J) > F(\bar{\rho}(x_J+0), A_J)=F(\rho_{+},A_{+})$ holds due to Lemma \ref{Lem23}.
From Lemma \ref{Lem24}, we have $\bar{\rho}(x_2)=\rho_{+} \in (\rho_3, \rho_1)$.
Consequently, we arrive at the necessary condition \eqref{necessarybc1}.

Since $\rho_2 < \rho_3 < \rho_1$,  it is clear that the solution is unique if ever exists.
What is left is to show the existence of stationary solutions satisfying each case of (i)--(iii) in Theorem \ref{thm2.1}.
First we treat the case $\rho_+ > \rho_1$ as in (i). We note that $F(\rho_+, A_+)>F(\rho_T(A_0), A_0)=F_{0}$.
Applying Lemma \ref{Lem20} with $J=I$, $A_{m}=A_{0}$, and $c=F(\rho_+, A_+)$, 
we see that the pair $(\bar{\rho},\bar{m}) \in C^{4}(\bar{I}) \times C^{4}(\bar{I})$ is the desired stationary solution to \eqref{eq151}--\eqref{eq155} as in (i). 
We next consider the case $\rho_+ = \rho_2$ as in (iii). 
By the definition, $F(\rho_+, A_+)=F(\rho_T(A_0), A_0)$ holds. 
Applying Lemmas \ref{Pro21} and \ref{Pro22} with $J=I$, $x_{m}=0$, $A_m=A_{0}$, and $c=F(\rho_+, A_+)$,
we have the stationary solution $(\bar{\rho},\bar{m}) \in C^{3}(\bar{I}) \times C^{4}(\bar{I})$ to \eqref{eq151}--\eqref{eq155}  as in (iii).
Finally, let us treat the case $\rho_+ \in (\rho_3, \rho_1)$ as in (ii).
We denote by $(\bar{\rho}^{*},\bar{m}^{*}) \in C^{3}(\bar{I}) \times C^{4}(\bar{I})$ the stationary solution as in (iii).
For any $x_J \in (0, x_2)$, we define a pair $(\bar{\rho}^{\#},\bar{m}^{\#}) \in C^{4}(\bar{J}) \times C^{4}(\bar{J})$, which  solves \eqref{eq151} and is subsonic on $(x_J,x_{2})$,
by applying Lemma \ref{Lem20} with $J=(x_J,x_{2})$, $A_{m}=A_{J}=A(x_{J})$, and $c=F(\rho_R(\bar{\rho}^{*}(x_{J}), A_J), A_J)$,
where $\rho_R$ is defined just above \eqref{eq227}, and $F(\rho_R(\bar{\rho}^{*}(x_{J}), A_J), A_J)>F(\rho_{T}(A_J), A_J)$ holds. 
Now we set
\begin{gather*}
(\bar{\rho},\bar{m})(x) =
\begin{cases}
(\bar{\rho}^{*},\bar{m}^{*})(x) \quad \text{for} \ x < x_{J},
\\
(\bar{\rho}^{\#},\bar{m}^{\#})(x) \quad \text{for} \ x > x_J.
\end{cases}
\end{gather*}
Recall that $\rho_R$ is defined in \eqref{eq227} so that the Rankine-Hugoniot condition \eqref{eq157} holds.
Furthermore, the entropy condition \eqref{eq159} also holds at $x=x_{J}$.
Hence, $(\bar{\rho},\bar{m})$ satisfies all the assumptions in Lemma \ref{Lem24}.
Using \eqref{eqlem24}, we can find a unique $x_J \in (0, x_2)$ such that $\bar{\rho}(x_{2})=\rho_{+} \in (\rho_{3},\rho_{1})$. Here $F(\rho_{+},A_{+})=F(\rho_R(\bar{\rho}^{*}(x_{J}), A_J), A_J)$ holds.
Consequently, we have the desired stationary solution $(\bar{\rho},\bar{m})$ as in (ii).
\end{proof}

\section{Subsonic problem} \label{Sec_sub}

In this section, we prove the asymptotic stability of stationary solutions $(\bar{\rho},\bar{m})$
which are entirely subsonic, i.e. $p'(\bar{\rho}(x)) > \bar{u}(x)^2$ for arbitrary $x \in I$, 
namely Theorem \ref{Thm11}.
%

%
If $A(x)$ is a smooth function, the system  \eqref{eq101} and \eqref{eq151} can be rewritten into 
\begin{subequations} \label{eq301}
\begin{gather} 
\rho_t + m_x + \dfrac{A_x}{A} m = 0,  
\\
m_t + \left( \dfrac{m^2}{\rho} + p(\rho) \right)_x + \dfrac{A_x}{A} \dfrac{m^2}{\rho} = 0,  
\end{gather}
\end{subequations}
and
\begin{subequations} \label{eq302}
\begin{gather} 
\bar{m}_x + \dfrac{A_x}{A} \bar{m} = 0,  
\\
\left( \dfrac{\bar{m}^2}{\bar{\rho}} + p(\bar{\rho}) \right)_x + \dfrac{A_x}{A} \dfrac{\bar{m}^2}{\bar{\rho}} = 0,  
\end{gather}
\end{subequations}
respectively.
We denote the perturbation from the stationary solution by
\begin{gather*}
\phi=\phi(t,x) := \rho(t,x) - \bar{\rho}(x), 
\quad
\omega=\omega(t,x) := m(t,x) - \bar{m}(x).
\end{gather*}
From the equations \eqref{eq301} and \eqref{eq302} and the conditions \eqref{eq104}, \eqref{eq105}, \eqref{eq108}, \eqref{eq153}, and \eqref{eq155}, we see that $f:=(\phi,\omega)^{\top}$ solves the system
\begin{subequations}\label{eq311*}
\begin{gather}\label{eq317}
\partial_t f + \hat{A}^1 \partial_x f + \hat{B} f = 0
\end{gather}
with the initial--boundary value
\begin{gather} \label{eq318}
f(0,x) = f^0(x) = 
\begin{bmatrix}
\phi^0(x)
\\ 
\omega^0(x)
\end{bmatrix}
:=
\begin{bmatrix}
\rho^0(x)-\bar{\rho}(x)
\\ 
m^0(x)-\bar{m}(x)
\end{bmatrix},
\\
\omega(t,x_{1})=0, \quad \phi(t,x_{2})=0, \quad t>0,
\label{eq314a}
\end{gather}
\end{subequations}
where
\begin{align*}
\hat{A}^{1}  (\rho,m) &:=
\begin{bmatrix}
0  &  1
\\[10pt]
S & 2m / \rho
\end{bmatrix},
\quad
\hat{B}(x,\rho,m) :=
\begin{bmatrix}
0 & A_x/A
\\  
b_{21} & b_{22}
\end{bmatrix},
\\
 S(\rho,m) &:= p'(\rho) - \frac{m^{2}}{\rho^{2}}, 
\quad
b_{22}(x,\rho,m):=
\frac{m-\bar{m}}{\rho} \frac{A_x}{A}
- \frac{ m + \bar{m} }{\rho^2} \bar{\rho}_x,
\\
b_{21}(x,\rho,m)&:=
\frac{\bar{m}^2}{\rho \bar{\rho}} \frac{A_x}{A}
+ \frac{\bar{m}^2}{\rho^2 \bar{\rho}^2} ( \rho + \bar{\rho} ) \bar{\rho}_x
+ \bar{\rho}_x \int_0^1 p'' ( \bar{\rho} + \theta (\rho-\bar{\rho}) ) d \theta.
\end{align*}
We require the compatibility conditions, which can be formally written as
$\disp \partial_t^j \omega(0,x_1) = 0$ and $\disp \partial_t^j \phi(0,x_2) = 0$ for $j=0,1$.
The precise formulations in terms of the initial value are
\begin{subequations} \label{comp1}
\begin{gather}
\omega^0(x_1) = 0, \quad 
(0 \ 1) \left( \hat{A}^1(\rho^0,m^0) \partial_x f^0(x_1) + \hat{B}(x_1,\rho^0,m^0) f^0(x_1) \right) = 0,
\label{comp1a}\\
\phi^0(x_2) = 0, \quad 
(1 \ 0) \left( \hat{A}^1(\rho^0,m^0) \partial_x f^0(x_2) + \hat{B}(x_2,\rho^0,m^0) f^0(x_2) \right) = 0.
\end{gather}
\end{subequations}
We seek the solution $f$ of the initial--boundary value problem \eqref{eq311*} in the following function space:
\begin{align*}
\mathfrak{X} (T):=  \left[ \bigcap_{j=0}^{2} C^j([0,T]; H^{2-j}(I)) \right]^{2} \ \text{for $T>0$}, \quad
\mathfrak{X} (\infty):= \left[ \bigcap_{j=0}^{2} C^j([0,\infty); H^{2-j}(I)) \right]^{2}. 
\end{align*}

We also use the following symmetrized form of the system \eqref{eq317}: 
\begin{gather}\label{eq319}
A^0 \partial_t f + A^1 \partial_x f + B f = 0,
\end{gather}
where 
\begin{align*}
A^0(x,\rho,m) &:= \dfrac{A}{\bar{\rho}} \, {\rm diag} (S,1), \quad
A^1(x,\rho,m) := A^0 \hat{A}^1
=
\dfrac{A}{\bar{\rho}}
\begin{bmatrix}
0  &  S
\\[10pt]
S & 2m / \rho
\end{bmatrix},
\\
B(x,\rho,m) &:= A^0 \hat{B}
=
\dfrac{A}{\bar{\rho}}
\begin{bmatrix}
0 & S A_x / A
\\  
b_{21} & b_{22}
\end{bmatrix}.
\end{align*}

Now we are ready to state the main theorem in this section, which immediately ensures Theorem \ref{Thm11}. 

\begin{thm} \label{Thm30}
Assume $A\in {C^3(\bar{I})}$, 
and that a stationary solution $(\bar{\rho}, \bar{m})  \in  {C^3(\bar{I}) \times C^3(\bar{I})}$ to (\ref{eq151})--(\ref{eq155}) exists and satisfies
\begin{gather}\label{spro1}
\inf_{x \in \bar{I}} \bar{\rho}(x) >0, \quad
\sup_{x \in \bar{I}} \bar{M}(x) < 1.
\end{gather}
Then there exists a constant $\delta_{0}>0$ such that if the initial value $f^{0} \in H^2(I) \times H^2(I)$ satisfies the compatibility condition \eqref{comp1} and $\| f^{0} \|_{H^2} \leq \delta_{0}$, 
then the initial--boundary value problem \eqref{eq311*} admits a unique time-global solution 
$f \in \mathfrak{X} (\infty)$.
Moreover, the solution $f$ decays in $H^{2}(I) \times H^2(I)$ exponentially fast  as $t \to \infty$.
\end{thm}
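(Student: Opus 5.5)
We prove Theorem \ref{Thm30} by the usual continuation argument: a local existence theorem combined with a uniform a priori estimate. Local existence and uniqueness in $\mathfrak{X}(T)$ for the symmetrizable system \eqref{eq319} with the maximally dissipative-type boundary conditions \eqref{eq314a} and the compatibility conditions \eqref{comp1} follow from the standard theory of \cite{RM1974}. The reason is that $\bar M<1$ makes the boundary non-characteristic: $\hat A^1$ has eigenvalues $u\pm\sqrt{p'}$ of opposite signs, so exactly one condition is needed at each end. It therefore suffices to show the following. There are $\delta_1>0$, $C>0$ and $\beta>0$ such that, whenever $f\in\mathfrak X(T)$ solves \eqref{eq311*} with $N(T):=\sup_{0\le t\le T}\|f(t)\|_{H^2}\le\delta_1$, we have
\begin{equation*}
\|f(t)\|_{H^2}\le Ce^{-\beta t}\|f^0\|_{H^2},\qquad 0\le t\le T.
\end{equation*}
Taking $\delta_0$ small then closes the argument. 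Throughout, smallness of $N(T)$ keeps $S\ge c>0$ and $\rho\ge c>0$, so $A^0$ is uniformly positive definite.

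\textbf{Step 1: basic energy.} Multiply \eqref{eq319} by $f^\top$ and integrate over $I$. This gives
\begin{equation*}
\frac{d}{dt}\frac12\int A^0f\cdot f\,dx+\Bigl[\tfrac12 A^1f\cdot f\Bigr]_{x_1}^{x_2}=\int \Bigl(\tfrac12(\partial_tA^0+\partial_xA^1)-B\Bigr)f\cdot f\,dx.
\end{equation*}
At $x_1$ we have $\omega=0$, and at $x_2$ we have $\phi=0$. Hence $A^1f\cdot f=\frac{A}{\bar\rho}(2S\phi\omega+\frac{2m}{\rho}\omega^2)$ reduces to $0$ at $x_1$ and to $\frac{2Am}{\bar\rho\rho}\omega^2\ge0$ at $x_2$. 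The boundary therefore contributes a dissipation $c\,\omega(t,x_2)^2$. The interior term, however, has no sign, because $A_x$ and $\bar\rho_x$ are not small.

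The idea is to recover interior dissipation from boundary dissipation by a characteristic argument rather than a spectral one. Diagonalize with the Riemann-type variables $w_\pm$, taken as the left-eigenvector combinations $\omega+\lambda_\mp\phi$. These satisfy $\partial_tw_\pm+\lambda_\pm\partial_xw_\pm=(\text{linear in }w)+O(|f|^2)$, with $\lambda_+>0>\lambda_-$. The boundary conditions read: $w_+=$ (multiple of $w_-$) at $x_1$, and $w_-=$ (multiple of $w_+$) at $x_2$.

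\textbf{Step 2: the characteristic argument.} Let $\tau$ be the round-trip travel time. Along the finitely many reflected characteristics, over any time window of length $\tau$, the values of $w_\pm$ in the interior at the end of the window are controlled by the boundary traces at $x_2$ (which are controlled by $\omega(\cdot,x_2)$ and the dissipation) plus Gronwall-type source contributions. I would encode this by a weighted functional $\int e^{\mu(x)}|w_+|^2+e^{-\mu(x)}|w_-|^2$ combined with the energy of Step 1. The result should be
\begin{equation*}
\int_I|f(t+\tau)|^2\,dx\le C\int_t^{t+\tau}\omega(s,x_2)^2\,ds+CN(T)\int_t^{t+\tau}\|f\|_{H^1}^2\,ds,
\end{equation*}
which together with Step 1 yields $E(t+\tau)\le\theta E(t)$ with $\theta<1$. \textbf{This is the main obstacle.} The linear zeroth-order coupling $B$ is large, so one must check that it cannot destroy the mechanism by which boundary dissipation propagates into the interior. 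In other words, one must exclude nondecaying linear modes. My first attempt would be to use the stationary structure (the relation \eqref{DF0} and the exact form of $b_{21},b_{22}$) to show that a linear solution with $\omega(\cdot,x_2)\equiv0$ on a window of length $\tau$ vanishes identically. This is a unique continuation for a $2\times2$ hyperbolic system with Cauchy data on the lateral boundary, which is provable by characteristics. A compactness-free quantitative version would then be obtained by Gronwall along characteristics.

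\textbf{Step 3: higher derivatives.} Apply the same procedure to $\partial_tf$ and $\partial_t^2f$, which satisfy the same boundary conditions and the same system up to commutators of size $N(T)\|f\|_{H^2}$. Then recover $\partial_xf$ and $\partial_x^2f$ from the equation, using the invertibility of $\hat A^1$ (from $S>0$). This gives exponential decay of the full $H^2$ energy via the time-weighted estimate $e^{\beta t}E(t)$, with the nonlinear terms absorbed by the smallness of $N(T)$.
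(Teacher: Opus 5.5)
Your architecture matches the paper's: boundary dissipation from the energy identity, an observability estimate by characteristics in place of a spectral argument, then time derivatives. However, the fact that makes it close is missing, and in Step 1 you assert its opposite.

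\textbf{The missing structure in Step 1.} The interior term is $-\frac12\int_I f^\top Rf\,dx$, with $R=-\partial_tA^0-\partial_xA^1+B+B^\top$ as in \eqref{eq325*}. For this symmetrizer, the part of $R$ independent of the perturbation vanishes identically:
\begin{itemize}
\item $R_{11}=-\frac{A}{\bar\rho}\partial_tS=O(|\partial_tf|)$;
\item the stationary parts ${\cal R}_{21}$, ${\cal R}_{22}$ of $R_{12}$ and $R_{22}$ are zero by \eqref{eq151}, \eqref{eq221}, \eqref{eq242}.
\end{itemize}
Thus $|R|\le CN(T)$ (Proposition \ref{Pro30}) however large $A_x$, $\bar\rho_x$ and $Q$ are. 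Step 1 is then the almost-conservation law
\begin{equation*}
E^0(t)+c\int_{t_0}^t\omega^2(s,x_2)\,ds\le E^0(t_0)+CN(T)\int_{t_0}^tE^0(s)\,ds .
\end{equation*}
Only this form combines with the observability bound to give $(1+c)E^0(t)\le E^0(t_0)+CN(T)\int E^0$.

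Your version of Step 1 has a source of size $C\|f\|_{L^2}^2$ with $C$ not small. From it you get at best $E(t+\tau)+c\int_t^{t+\tau}\omega^2\,ds\le e^{C\tau}E(t)$. Adding your Step 2 bound $E(t+\tau)\le C'\int_t^{t+\tau}\omega^2\,ds$ yields $\theta=e^{C\tau}/(1+c/C')$, which need not be below $1$.

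\textbf{Why your remedy does not fill the hole.} Unique continuation from $x_2$ says that a solution with $\phi=\omega=0$ there on a long enough window vanishes. This rules out only neutral, energy-conserving linear solutions. It cannot exclude modes that grow because the interior term feeds in energy while the boundary flux is nonzero; only a sign or size statement on $R$ excludes those.

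Moreover, that unique continuation and its quantitative version hold for any bounded zeroth-order coupling, so this is not where the stationary structure is needed. The paper proves $\iint_{\varOmega}|f|^2\le C\int D^0$ (Lemma \ref{Lem33}) by a sideways energy estimate. It multiplies \eqref{eq333} by $e^{kx}$ with $k$ large and uses that $M^1-vM^0$ is positive definite for $v\in[v_-,v_+]$, so the size of $B$ is harmless there.

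A forward-in-time weighted functional such as $\int e^{\mu}|w_+|^2+e^{-\mu}|w_-|^2$ would not serve this purpose in general. The reflection conditions at $x_1$ and $x_2$ bound how steep the weight may be, and hence how large a coupling it can absorb.

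The same cancellation in $R$ is needed again in Step 3, since $\partial_t^jf$ satisfies \eqref{eq361} with the same $R$.
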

\begin{rem}
We remark that the assumption on $A$ in Theorem \ref{Thm30} (see also Theorem \ref{Thm11}) is much weaker than the condition (\ref{eq231}) on $A$ in Theorem \ref{thm2.1}, and thus Theorem \ref{Thm30} covers the case of more general configurations of nozzles. It is seen that the stationary solution in Theorem \ref{Thm30} also satisfies
\begin{gather}\label{spro2}
\inf_{x \in \bar{I}} \bar{u}(x) = \inf_{x \in \bar{I}} \frac{Q}{A\bar{\rho}}(x) >0, \quad
\inf_{x \in \bar{I}} \bar{S}(x) > 0.
\end{gather}
It is worth pointing out that we do not require any restrictions on the magnitude of $|A'(x)|$ and $Q$ nor on the boundary value $\rho_{+}$ as long as the assumptions of Theorem \ref{Thm30} hold. Accordinigly, the derivatives of the stationary solutions can be large.
\end{rem}

Theorem \ref{Thm30} is an immediate consequence of the time-local solvability in Lemma \ref{local3}
and an a-priori estimate in Proposition \ref{Pro31}.
\begin{lem}\label{local3}
Assume the same conditions as in Theorem \ref{Thm30} for $A$ and $(\bar{\rho},\bar{m})$.
Then there exist constants $\delta_{\sharp}>0$ and $T_{\sharp}>0$ such that if the initial value $f^{0} \in H^2(I) \times H^2(I)$ satisfies the compatibility condition \eqref{comp1} and $\| f^{0} \|_{H^2} \leq \delta_{\sharp}$, 
then the initial--boundary value problem \eqref{eq311*} admits a unique solution 
$f \in \mathfrak{X} (T_{\sharp})$.
Furthermore, if $f \in \mathfrak{X} (T)$ is a solution of \eqref{eq311*} {and the flow is subsonic at $x=x_{2}$ for all $t \in [0,T]$}, then the following time-integral of the trace of the highest order derivative makes sense{\rm:}
\begin{gather}\label{trace3}
\int_{0}^{T} |\partial_{t}^{2} f|^{2}(t,x_{2}) dt.
\end{gather}
\end{lem}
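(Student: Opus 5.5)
The plan is to treat \eqref{eq311*} as a quasilinear symmetrizable hyperbolic initial--boundary value problem on the bounded interval $I$ with one boundary condition at each end, and to invoke the classical local theory for such problems (Rauch--Massey \cite{RM1974}, together with the standard $H^2$ theory for strictly hyperbolic systems in one space dimension). First I check the structural hypotheses near the stationary state. By \eqref{spro1}--\eqref{spro2}, for $\|f\|_{L^\infty}$ small the eigenvalues of $\hat A^1$, namely $\lambda_\pm = m/\rho \pm \sqrt{p'(\rho)}$, satisfy $\lambda_- < 0 < \lambda_+$ uniformly on $\bar I$. Hence exactly one characteristic enters at $x_1$ and one at $x_2$, which matches the single boundary condition imposed at each end. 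Next I verify the uniform Lopatinskii (Kreiss) condition for $\omega=0$ at $x_1$ and $\phi=0$ at $x_2$. In one space dimension this reduces to checking that the boundary operator restricted to the incoming eigenvector is nonzero. At $x_1$ the incoming eigenvector is $r_+=(1,\lambda_+)^\top$, and $(0\ 1)r_+=\lambda_+\neq 0$. At $x_2$ the incoming eigenvector is $r_-=(1,\lambda_-)^\top$, and $(1\ 0)r_-=1\neq 0$. Both conditions are therefore \emph{maximally dissipative} after a change of variables, and also with respect to the symmetrizer $A^0$. Indeed, the boundary quadratic form $f\cdot A^1 f = \frac{A}{\bar\rho}(2S\phi\omega + 2m\omega^2/\rho)$ vanishes when $\omega=0$, and also vanishes at $\phi=0$ up to the term $2m\omega^2/\rho\ge0$ at the outflow end. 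This sign is favorable for the energy inequality.

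With the boundary conditions in this form, I construct the solution by the standard iteration. I freeze the coefficients at the previous iterate $f^{(k)}$ and solve the linear problem $A^0(f^{(k)})\partial_t f^{(k+1)} + A^1(f^{(k)})\partial_x f^{(k+1)} + B f^{(k+1)}=0$ with the same boundary and initial data. I then derive $H^2$ estimates as follows. Time derivatives $\partial_t^j f$ for $j=0,1,2$ satisfy the differentiated system with the same type of boundary conditions, so the $L^2$ energy with symmetrizer $A^0$ controls them. Spatial derivatives are recovered from the equation itself, since $\hat A^1$ is invertible ($\det\hat A^1=-S\neq0$), giving $\partial_x f = -(\hat A^1)^{-1}(\partial_t f+\hat B f)$, and iterating once more yields $\partial_x^2 f$. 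The compatibility conditions \eqref{comp1} ensure that $\partial_t^j f(0)$ for $j\le1$ are consistent with the boundary conditions. The condition on $\partial_t^2 f$ is then handled in $L^2$, which is where the compatibility up to order one suffices for $H^2$ data. Uniform bounds on a short time $T_\sharp$ follow from smallness $\delta_\sharp$, and contraction in the lower norm $C([0,T];L^2)$ then gives existence and uniqueness in $\mathfrak X(T_\sharp)$.

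For the trace statement \eqref{trace3}, I apply the $L^2$ energy identity to $g:=\partial_t^2 f$, which satisfies a linear system $A^0\partial_t g + A^1\partial_x g + Bg = R$ with $R$ bounded in $L^2_{t,x}$ by the $\mathfrak X(T)$ norm. This is justified by mollification in time or by the weak formulation. Integrating over $(0,T)\times I$ produces the boundary term $\int_0^T (g\cdot A^1 g)(t,x_2)\,dt$. At $x_2$ the condition $\phi=0$ gives $\partial_t^2\phi=0$, so this term equals $\int_0^T \frac{A}{\bar\rho}\frac{2m}{\rho}|\partial_t^2\omega|^2(t,x_2)\,dt$. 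Since the flow is subsonic at $x_2$ and $m>0$, this term is coercive and bounded by the interior quantities, which gives the trace of $\partial_t^2\omega$. The trace of $\partial_t^2\phi$ is zero. A similar argument at $x_1$ is not needed for the statement.

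The main obstacle is the rigorous justification at the top order: $\partial_t^2 f$ is only $L^2$ in space, so its trace is not defined a priori. I would handle this by proving the identity first for smooth approximating data satisfying the compatibility conditions, and then passing to the limit using the uniform bound on the boundary integral. The remaining steps are routine.
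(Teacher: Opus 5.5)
Your argument is correct, but it takes a different route from the paper in both parts.

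\textbf{Existence.} The paper does not quote a classical result. It proves its own local theorem (Theorem~\ref{local} in Appendix~A) for symmetric systems with nonlinear boundary conditions coupled to ODEs. That theorem requires \emph{strictly} dissipative boundary conditions, namely condition (vii) of Assumption~\ref{ABasp1}. As you observed, with the symmetrizer $A^0$ in the $(\rho,m)$ variables the boundary form vanishes on $\{\omega=0\}$ at $x_1$. The paper therefore rewrites \eqref{eq311*} in $f_p=(\phi,\psi)$ with the symmetrizer of \eqref{eq512}. There $\omega=0$ becomes the nonlinear condition $\phi=-\bar\rho\psi/(\bar u+\psi)$, and on its kernel $-\langle A^1_p\eta,\eta\rangle=(A\bar u\bar S/\bar\rho+O(\delta))\eta_1^2>0$.

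You instead stay in $(\rho,m)$ and rely on maximal (non-strict) dissipativity together with the one-dimensional Kreiss condition you checked. This is enough because the boundary conditions are linear with constant coefficients. They commute with $\partial_t^j$, so no boundary commutators arise, and differences of iterates satisfy the same homogeneous conditions. The paper itself remarks that classical results such as \cite[Theorem A2]{Sc1986} suffice here. It builds Theorem~\ref{local} only because the shock problem \eqref{eq701*} has genuinely nonlinear boundary conditions. One omission in your limit step: boundedness in the high norm plus contraction in $C([0,T];L^2)$ only places the limit in $L^\infty$-in-time spaces. Continuity into $H^{2-j}$ needs the usual extra argument, as in \cite{KST2013}.

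\textbf{Trace.} The paper applies Theorem~\ref{thmB1}. It diagonalizes, mollifies in time, and integrates over small triangles bounded by $x=x_2$ and lines of slope $\lambda_*^\pm$. The boundary conditions are never used, and the argument yields all traces $\partial_t^j\partial_x^k f(\cdot,x_2)$ with $j+k=2$, assuming only $u\pm a\neq 0$. This local, boundary-condition-free form is what is needed later in Lemma~\ref{local5}, at the interface $x_*$ and at the supersonic exit.

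Your global energy identity for $\partial_t^2 J_\delta f$ is closer to the paper's proof of \eqref{reg0} in Lemma~\ref{lexist1}. It uses that $\partial_t^2 J_\delta\omega(\cdot,x_1)$ and $\partial_t^2 J_\delta\phi(\cdot,x_2)$ vanish identically, with the commutators arranged as in \eqref{meq1}. Its advantages:
\begin{itemize}
\item It is shorter.
\item A uniform bound suffices: since $J_\delta\omega(\cdot,x_2)\to\omega(\cdot,x_2)$ in $H^1$, weak compactness identifies the limit.
\item It does not even need $A^0>0$, because $\sup_t\|\partial_t^2 f\|_{L^2}$ is already finite.
\end{itemize}
Two points need care:
\begin{itemize}
\item Coercivity of $\frac{2Am}{\bar\rho\rho}|\partial_t^2\omega|^2$ at $x_2$ requires $m(t,x_2)>0$. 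This is not implied by ``subsonic at $x_2$.'' It does hold for small perturbations, since $\bar m=Q/A>0$.
\item You must also treat $t$ near $T$, e.g.\ with the backward mollifier, as the paper does.
\end{itemize}
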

\begin{proof}
We will show this lemma in Appendix.
\end{proof}
In general, the time-integral in \eqref{trace3} is not valid for functions in $\mathfrak{X} (T)$. However, if $f$ solves the initial--boundary value problem \eqref{eq311*},
we can verify this integral. 

To present an a-priori estimate, we introduce 
\begin{gather*} 
N(T) := \sup_{t \in[0, T]} \| f(t,\cdot) \|_{H^2(I)}.
\end{gather*}
\begin{pro} \label{Pro31}
Assume the same conditions as in Theorem \ref{Thm30} for $A$ and $(\bar{\rho},\bar{m})$.
Suppose that the initial-boundary value problem \eqref{eq311*} has a solution 
$f \in \mathfrak{X} (T)$ for some $T>0$.
Then there exist positive constants $\delta_{1}$, $C_{1}$, and $\nu$ independent of $T$ such that if $N(T) \leq \delta_{1}$, 
then the following estimate holds{\rm :}
\begin{equation} \label{eqPro31}
\| f (t,\cdot) \|_{H^2} \le C_{1} e^{-\nu t} \| f^{0} \|_{H^2} \quad \text{for} \  t \in [0,T].
\end{equation}
\end{pro}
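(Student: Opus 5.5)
I would prove Proposition \ref{Pro31} by energy estimates on the symmetrized system \eqref{eq319}, working with time derivatives rather than space derivatives. The reason is that time differentiation commutes with the boundary conditions \eqref{eq314a}: $\partial_t^j\omega(t,x_1)=0$ and $\partial_t^j\phi(t,x_2)=0$ for $j=0,1,2$. Spatial derivatives are then recovered from the equation itself. Because $\det\hat A^1=-S\neq 0$ uniformly by \eqref{spro2} and the smallness of $N(T)$, one can solve $\partial_x f=-(\hat A^1)^{-1}(\partial_t f+\hat Bf)$. Differentiating this once more gives
\[
\|f\|_{H^2}\le C\sum_{j\le 2}\|\partial_t^j f\|_{L^2}.
\]
So it suffices to show exponential decay of $E(t):=\sum_{j=0}^2 \int_I A^0\partial_t^jf\cdot\partial_t^jf\,dx$, which is equivalent to $\sum_{j\le2}\|\partial_t^jf\|_{L^2}^2$ since $A^0$ is uniformly positive definite.

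\textbf{Step 1: boundary fluxes.} Take the inner product of the $\partial_t^j$-differentiated equation with $\partial_t^jf$ and integrate over $I$. This gives
\[
\tfrac12\tfrac{d}{dt}\int A^0 g\cdot g\,dx+\tfrac12\bigl[A^1g\cdot g\bigr]_{x_1}^{x_2}=\int\Bigl(\tfrac12(\partial_tA^0+\partial_xA^1)g\cdot g-Bg\cdot g\Bigr)dx+\text{(commutators)},
\]
with $g=\partial_t^jf$. The commutators are cubic and bounded by $CN(T)E$. With $g=(g_1,g_2)$ we have $A^1g\cdot g=\frac{A}{\bar\rho}(2Sg_1g_2+\frac{2m}{\rho}g_2^2)$.
\begin{itemize}
\item At $x_2$, $g_1=0$, so the flux is $\frac{A}{\bar\rho}\frac{2m}{\rho}g_2^2\ge c|g|^2(x_2)$.
\item At $x_1$, $g_2=0$, so the flux vanishes.
\end{itemize}
The outflow boundary therefore gives dissipation $c|\partial_t^jf(t,x_2)|^2$. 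The trace of the $j=2$ term is meaningful by Lemma \ref{local3}.

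\textbf{Step 2: the main obstacle.} The interior zeroth-order term has no sign, because $\bar\rho_x,A_x$ may be large and no smallness is assumed. The plan is to replace the spectral argument with a characteristic argument. Diagonalize $\hat A^1$ with Riemann-type variables $w_\pm$, whose speeds $\lambda_\pm=\bar u\pm\bar a$ satisfy $\lambda_+>0>\lambda_-$ uniformly. The linear part then reads $\partial_tw_\pm+\lambda_\pm\partial_xw_\pm=$ (zeroth order in $w$).

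Integrate along characteristics. Each $w_+$ traces back in time at most $T_0:=\sup(x_2-x_1)/\min\lambda_+$ to $x_1$, where $\omega=0$ makes $w_+$ a fixed multiple of $w_-$. Each $w_-$ traces back to $x_2$, where $\phi=0$ makes $w_-$ a multiple of $w_+$. By Gronwall along the characteristic and this reflection, for a time window of length $2T_0$ the solution is controlled by the boundary trace at $x_2$:
\[
\|g(t)\|_{L^2}^2\le C\int_{t-2T_0}^{t}|g(\tau,x_2)|^2d\tau+CN(T)\sup\|\cdot\|^2 .
\]
This is an observability inequality, and the boundary dissipation then controls the energy over windows. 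The delicate point is checking that the reflection coefficients and Gronwall constants remain finite without smallness; I expect they do because $\lambda_\pm$ are bounded away from $0$.

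\textbf{Step 3: exponential decay.} The energy identity gives
\[
E(t)+c\int_s^t\sum_j|\partial_t^jf(\tau,x_2)|^2d\tau\le E(s)+C\int_s^tE\,d\tau.
\]
Combined with the window observability, this yields $E(t+2T_0)\le\theta E(t)$ for some $\theta<1$, after first estimating $\int_s^t E$ by the boundary integral via Step 2. Alternatively, a time-weighted form $e^{\nu t}E$ should give the same conclusion.

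Iterating gives $E(t)\le Ce^{-2\nu t}E(0)$. The smallness $N(T)\le\delta_1$ absorbs the cubic terms. Finally, express $E(0)$ through $\|f^0\|_{H^2}$ using the equation to write $\partial_t^jf(0)$ in terms of spatial derivatives of $f^0$, and convert $E(t)$ back to $\|f(t)\|_{H^2}$ by the elliptic-type recovery above. This yields \eqref{eqPro31}.
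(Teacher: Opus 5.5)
Your overall architecture is the paper's: time derivatives commuting with \eqref{eq314a}, dissipation at $x_2$, recovery of $x$-derivatives through $(\hat A^1)^{-1}$, and a windowed iteration. However, Step~3 does not close, because of a wrong claim in Step~2. You assert that the interior term $\int\bigl(\tfrac12(\partial_tA^0+\partial_xA^1)g\cdot g-Bg\cdot g\bigr)dx=-\tfrac12\int g^\top Rg\,dx$, with $R:=-\partial_tA^0-\partial_xA^1+B+B^\top$, is of size $O(1)$ with no sign. You then hope to beat $C\int_s^tE$ by boundary dissipation plus observability. That cannot work. From $E(t)+c\int_s^tD\le E(s)+C\int_s^tE$ and $E(\tau)\le C_1\int_{\tau-2T_0}^{\tau}D$ you only get $E(t)+c\int_s^tD\le E(s)+2CC_1T_0\int_{s-2T_0}^{t}D$. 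Nothing makes $c$ dominate $2CC_1T_0$ when $A_x$ and $\bar\rho_x$ are large. In fact both inequalities remain valid if you replace $\hat B$ by $\hat B-\kappa\,\mathrm{id}$ with $\kappa>0$ large, and that system has solutions growing like $e^{\kappa t}$, so the two inequalities alone cannot imply decay.

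The missing idea is that, for exactly this symmetrizer $A^0=\frac{A}{\bar\rho}\mathrm{diag}(S,1)$, the $O(1)$ part of $R$ vanishes identically:
\begin{itemize}
\item $R_{11}=-\frac{A}{\bar\rho}\partial_tS=O(|f_t|)$;
\item the leading part of $R_{22}$ is $-\partial_x(2Q/\bar\rho^2)-4A\bar m\bar\rho_x/\bar\rho^3=0$, since $A\bar m=Q$;
\item the leading part of $R_{12}$ reduces to $\frac{A\bar S\bar\rho_x}{\bar\rho^2}-\frac{\bar u^2A_x}{\bar\rho}$, which is zero by \eqref{eq241}.
\end{itemize}
Hence $|R|\le CN(T)$ (Proposition~\ref{Pro30}), and the energy inequality becomes $E(t)+c\int_s^tD\le E(s)+CN(T)\int_s^tE$ (Lemmas~\ref{Lem32} and \ref{Lem36}). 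Only then does the windowed argument close by smallness of $N(T)$ (Lemmas~\ref{Lem35} and \ref{Lem39}). This cancellation is what removes any restriction on $Q$ and $A_x$ without a spectral argument.

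A second, smaller problem is how you derive observability. The coupling $\mathcal{H}_\pm f$ in \eqref{eq339} contains $-e_\pm\hat Bf$ and $\lambda_\pm(\partial_xe_\pm)f$, which are $O(1)$. So tracing $w_\pm$ backward in time with reflections never ends in boundary data alone: each coupling integral involves interior values at earlier times. A forward Gronwall argument over the window therefore brings back the state at the start of the window with an $O(1)$ factor, which is not an observability bound.

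The paper instead uses a sideways estimate, in three steps:
\begin{itemize}
\item It multiplies by the second symmetrizer $M^0=M^1(\hat A^1)^{-1}$, $M^1=\mathrm{diag}(S,1)$, with weight $e^{kx}$ and $k$ large; the large $k$ absorbs the bounded but not small $M^2$.
\item It integrates over the trapezoid $\varOmega(\tilde t_0)$ determined by the trace at $x_2$ on $[\tilde t_0,\tilde t_0+T_0]$, obtaining $\iint_{\varOmega(\tilde t_0)}|f|^2\le C\int_{\tilde t_0}^{\tilde t_0+T_0}D^0$ (Lemma~\ref{Lem33}).
\item This bound controls the $O(1)$ source terms in the characteristic estimates and yields $E^0(\tilde t_0+T_-)\le C\int_{\tilde t_0}^{\tilde t_0+T_0}D^0$ (Lemma~\ref{Lem34}), which the energy inequality then carries forward.
\end{itemize}
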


Assuming Proposition \ref{Pro31} is true, now we prove Theorem \ref{Thm30}.

\begin{proof}[Proof of Theorem \ref{Thm30}]
We set 
\begin{align*}
\delta_{0}&:=\frac{\tilde{\delta}_{0}}{\max\{1,C_{1}\}}, \quad
\tilde{\delta}_{0}:= \frac{1}{2} \min\{\delta_{\sharp},\delta_{1}\} , 
\\
T_{sup}&:=\sup\left\{ t>0 ; \text{the solution $f$ exists and satisfies $N(t) < 2\tilde{\delta}_{0} $} \right\}, 
\end{align*}
where $\delta_{\sharp}$, $C_{1}$, and $\delta_{1}$ are the same constants as in Lemma \ref{local3} and Proposition \ref{Pro31}.

First we show the time-global solvability of the problem \eqref{eq311*} for the initial data $f^{0}$ with $\|f^{0}\|_{H^{2}} \leq \delta_{0}$.
Lemma \ref{local3} ensures $T_{sup} >0$ because of $\| f^{0} \|_{H^2} \leq \delta_{0} \leq \delta_{\sharp}/2$.
It is enough to show $T_{sup}=\infty$.
Suppose otherwise, i.e. that $T_{sup}<\infty$ holds. 
We can apply Lemma~\ref{local3} by regarding $\tilde{t}_{0}:=T_{sup}-\min\{T_{\sharp},T_{sup}\}/2$ as an initial time and $f(\tilde{t}_{0},\cdot)$ as an initial value with $\|f(\tilde{t}_{0},\cdot)\|_{L^{2}}<2\tilde{\delta}_{0}\leq \delta_{\sharp}$, where $T_{\sharp}$ is the same constant as in Lemma~\ref{local3}. 
Therefore the solution $f$ exists until  $t=\tilde{t}_{0}+T_{\sharp}>T_{sup}$ and also $N(T_{sup}) = 2 \tilde{\delta}_{0} \leq \delta_{1}$ holds.
So we can use Proposition \ref{Pro31}, and then see from \eqref{eqPro31} that 
\begin{gather*}
 \|f(t,\cdot)\|_{H^{2}} \leq C_{1} \| f^{0} \|_{H^2} \leq \tilde{\delta}_{0}, \quad  t \in [0,T_{sup}].
\end{gather*}
We arrive at a contradiction, i.e. $2\tilde{\delta}_{0}=N(T_{sup})  \leq \tilde{\delta}_{0}$.
Thus $T_{sup}=\infty$ holds.

It follows from $T_{sup}=\infty$ that $N(T) = 2\tilde{\delta}_{0} \leq \delta_{1}$ for any $T>0$. 
Using Proposition \ref{Pro31}, we see that the solution $f$ decays in $H^{2}(I) \times H^2(I)$ exponentially fast  as $t \to \infty$. 
The proof is complete.
\end{proof}

It remains only to prove Proposition \ref{Pro31}. 
It is seen from \eqref{spro1}, \eqref{spro2}, and $N(T) \ll 1$ that the symmetric matrix $A^{0}$ is positive definite on $[0,T] \times \bar{I}$.
Then we define energy and dissipative functionals for $j=0,1,2$ as 
\begin{subequations}\label{eq321}
\begin{gather}
E(t) := ( E^0 + E^1 + E^2) (t), \quad D(t) := ( D^0 + D^1 + D^2) (t),
\\
E^j(t) := \int_{x_1}^{x_2}  ( \partial_t^j f )^\top A^0 \partial_t^j f (t,x) dx, 
\quad 
D^{j}(t):=|\partial_t^j f |^{2}(t,x_{2}).
\end{gather}
\end{subequations}
We also set
\begin{gather} \label{eq322}
v_- := \frac{1}{2} \sup_{x \in \bar{I}} \left( \bar{u}(x) - \bar{a}(x) \right)<0,
\quad
v_+ := \frac{1}{2} \inf_{x \in  \bar{I}} \left( \bar{u}(x) + \bar{a}(x) \right)>0,
\\ \label{eq323}
T_{-} : = \frac{x_2 - x_1}{| v_- |},
\quad
T_{+} : = \frac{x_2 - x_1}{| v_+ |},
\quad
T_0 : = T_{-} +  T_{+},
\end{gather}
where $\bar{a}$ is the speed of sound. 
Furthermore, we define the following sets $\varOmega_{\pm},\varOmega \subset \overline{\mathbb R_{+}\times I}$  to obtain dissipation through boundaries:
\begin{subequations} \label{eq324}
\begin{align}
\varOmega_- {(\tilde{t}_{0})} &:= \{ (t, x) \, | \, \tilde{t}_{0} \le t \le \tilde{t}_{0}+T_{-}, \ x_2 + v_- (t-\tilde{t}_{0}) \le x \le x_2 \},
\\
\varOmega_+ {(\tilde{t}_{0})} &:= \{ (t, x) \, | \,   \tilde{t}_{0} + T_{-} \le t \le \tilde{t}_{0}+T_{0}, \ x_1 + v_+ (  t  -\tilde{t}_{0}- T_{-} ) \le x \le x_2 \},
\\
\varOmega {(\tilde{t}_{0})} &:= \varOmega_-(\tilde{t}_{0}) \cup \varOmega_+(\tilde{t}_{0}),
\end{align}
\end{subequations}
where $\tilde{t}_{0}$ is any nonnegative constant.

We derive several fundamental inequalities on $N(T)$.
By Sobolev's inequality, it holds that
\begin{subequations}\label{norm-es1}
\begin{gather}
|f(t,x)|+|\partial_{x} f(t,x)| \leq CN(T) \quad \text{for} \ (t,x) \in [0,T] \times \bar{I}. 
\end{gather}
Furthermore, we can also estimate $\partial_{t} f$ by using \eqref{eq317} with the aid of  \eqref{spro1}, $(\bar{\rho},\bar{m})  \in C^2(\bar{I}) \times C^2(\bar{I})$, and $N(T) \ll 1$ as follows:
\begin{gather}
|\partial_{t} f(t,x)| \leq  C (|f(t,x)| + |\partial_x f(t,x)|)  \leq CN(T) \quad \text{for} \ (t,x) \in [0,T] \times \bar{I}.
\end{gather}
\end{subequations}
Here $C$ is a positive constant independent of $(t,x)$ and $T$.

We will prove the a priori estimate in Proposition \ref{Pro31} by combining the methods used in articles \cite{RT1974,RXX2013}. 
Rauch--Xie--Xin \cite{RXX2013} studied the stability of subsonic stationary solutions of the system \eqref{eq101} in a bounded interval. They introduced the {\it total mass from the inlet}  $\varPsi(t,x) := \int_{x_1}^x \rho(t,y) A(y) dy$ as the sole unknown variable,  and rewrote the system into a type of wave equation to apply standard techniques for the wave equation. They also proposed a nice energy functional essentially the same as $E(t)$ above. Indeed, multiplying \eqref{eq319} by $2 f^\top$ from the left, we have
\begin{gather} \label{eq325*}
\partial_t ( f^\top A^0 f ) + \partial_x ( f^\top A^1 f ) + f^\top R f = 0,
\quad  \text{where} \ R := - \partial_t A^0 - \partial_x A^1 +  B + B^\top.
\end{gather}
We will show in Proposition \ref{Pro30} that the matrix $R$ is bounded from above by $CN(T)$, which enables us to establish the asymptotic stability of stationary solutions for nozzles of arbitrary shapes. Furthermore, \cite{RXX2013} pointed out that a dissipative structure is obtained from boundaries. Indeed, integrating the second term on the LHS of \er{eq325*}  over $\bar{I}$ yields a dissipative term. However,  \cite{RXX2013} used resolvent estimates based on a proof by contradiction in applying the techniques for wave equations. To clarify the mechanism of energy dissipation through the boundary terms, we utilize an energy method developed by Rauch--Taylor \cite{RT1974}, which studied {\it linear} hyperbolic systems in bounded domains. They directly estimated an energy functional using boundary terms by tracing characteristic curves of hyperbolic systems. One of the benefits of adopting this method is that we assume only $H^2$-regularity for the initial perturbation $f^0$, whereas \cite{RXX2013} required $H^{17}$-regularity to apply the techniques for  wave equations. Our approach will also play a prominent role in dealing with transonic flows, as discussed in Sections 4 and 5.

\begin{pro} \label{Pro30}
Under the same assumptions as in Proposition \ref{Pro31}, if $N(T)$ is sufficiently small,
the following estimate holds for $(t,x) \in [0,T] \times \bar{I}${\rm :}
\begin{gather} \label{eq326}
|R(t,x)| \leq C N(T),
\end{gather}
where $C$ is a positive constant independent of $(t,x)$ and $T$.
\end{pro}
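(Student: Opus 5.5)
The plan is to compute $R=-\partial_t A^0-\partial_x A^1+B+B^\top$ explicitly. The key fact to exploit is that at the stationary state $(\rho,m)=(\bar\rho,\bar m)$ the matrix $R$ vanishes identically; the bound \eqref{eq326} then follows by a mean-value argument in the perturbation variables together with \eqref{norm-es1}. Writing $A^0=\frac{A}{\bar\rho}\mathrm{diag}(S,1)$ with $S=S(\rho,m)$, each entry of $R$ is a smooth function of $(x,\rho,m,\rho_x,m_x,\rho_t,m_t)$. The entries are affine in the derivatives $\partial_t\rho,\partial_t m,\partial_x\rho,\partial_x m$, with coefficients built from $A,A_x,\bar\rho,\bar\rho_x,\bar m$ and $p',p''$. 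These coefficients are bounded because $A\in C^3$, $(\bar\rho,\bar m)\in C^3$, $\inf\bar\rho>0$, and $N(T)$ is small, so that $\rho$ stays in a compact subset of $(0,\infty)$.

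The entries are as follows.
\begin{itemize}
\item $R_{11}=-\partial_t(AS/\bar\rho)+2\cdot\frac{A}{\bar\rho}\cdot 0$, since $B_{11}=0$ and $A^1_{11}=0$. Hence $R_{11}=-\frac{A}{\bar\rho}\partial_t S$, and $\partial_t S=S_\rho\phi_t+S_m\omega_t=O(N(T))$ by \eqref{norm-es1}.
\item $R_{22}=-\partial_t(A/\bar\rho)-\partial_x\bigl(\frac{A}{\bar\rho}\frac{2m}{\rho}\bigr)+2\frac{A}{\bar\rho}b_{22}$.
\item $R_{12}=R_{21}=-\partial_x(AS/\bar\rho)+\frac{A}{\bar\rho}\bigl(SA_x/A+b_{21}\bigr)$.
\end{itemize}

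Next I evaluate these at $(\rho,m)=(\bar\rho,\bar m)$, where all time derivatives vanish.

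For $R_{22}$: here $b_{22}=-\frac{2\bar m}{\bar\rho^2}\bar\rho_x$ and $\partial_x(2A\bar m/\bar\rho^2)=2(A\bar m)_x/\bar\rho^2-4A\bar m\bar\rho_x/\bar\rho^3=-4A\bar m\bar\rho_x/\bar\rho^3$, using $(A\bar m)_x=0$ from \eqref{eq151a}. Adding $2\frac{A}{\bar\rho}b_{22}=-4A\bar m\bar\rho_x/\bar\rho^3$ gives $4-4=0$.

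For $R_{12}$: we have $-\partial_x(A\bar S/\bar\rho)+\bar SA_x/\bar\rho+\frac{A}{\bar\rho}\bar b_{21}=-\frac{A}{\bar\rho}\bar S_x+\frac{A\bar S\bar\rho_x}{\bar\rho^2}+\frac{A}{\bar\rho}\bar b_{21}$. Here $\bar b_{21}=\frac{\bar m^2}{\bar\rho^2}\frac{A_x}{A}+\frac{2\bar m^2}{\bar\rho^3}\bar\rho_x+p''(\bar\rho)\bar\rho_x$, and $\bar S_x=p''\bar\rho_x-\frac{2\bar m\bar m_x}{\bar\rho^2}+\frac{2\bar m^2\bar\rho_x}{\bar\rho^3}$. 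After cancellation, and using $\bar m_x=-\frac{A_x}{A}\bar m$, what remains is $\frac{A}{\bar\rho}\bigl[-\frac{2\bar m^2}{\bar\rho^2}\frac{A_x}{A}+\frac{\bar S\bar\rho_x}{\bar\rho}+\frac{\bar m^2}{\bar\rho^2}\frac{A_x}{A}\bigr]$. This vanishes by \eqref{eq221a}, which in the form $\bar S\bar\rho_x=\bar u^2\bar\rho A_x/A$ is exactly equivalent to \eqref{eq151b}.

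So $R|_{f=0}=0$. Checking these cancellations carefully, and verifying that they are precisely the stationary equations \eqref{eq302}, is the main algebraic step and the place where errors are likely.

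Finally, $R$ is a $C^1$ function $\mathcal R(x,f,\partial_x f,\partial_t f)$ on the bounded set allowed by \eqref{norm-es1}, and $\mathcal R(x,0,0,0)=0$. The mean value theorem therefore gives $|R(t,x)|\le C(|f|+|\partial_xf|+|\partial_tf|)(t,x)\le CN(T)$ by \eqref{norm-es1}, with $C$ depending only on $A$, $(\bar\rho,\bar m)$, $p$ and the bounds in \eqref{spro1}, and not on $T$. The smoothness needed for this step is $p\in C^2$ and $\bar\rho\in C^1$, which is more than covered by the assumptions.
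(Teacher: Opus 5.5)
Your proof is correct and follows the paper's argument: you split $R$ into its value at the stationary state plus an $\mathcal{O}(|f|+|f_x|+|f_t|)$ remainder, check that the stationary parts of $R_{12}$ and $R_{22}$ vanish by \eqref{eq151} and \eqref{eq221}, and conclude with \eqref{norm-es1}. One small correction: the linear bound in your mean-value step needs $p''$ to be Lipschitz (e.g.\ $p\in C^3$), not merely $p\in C^2$, because of terms like $\bar\rho_x\bigl(\int_0^1 p''(\bar\rho+\theta\phi)\,d\theta - p''(\rho)\bigr)$ in $R_{12}$. This does not affect the proof, since $p\in C^5$ is assumed.
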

\begin{proof}
Each entry of the symmetric matrix $R$ is rewritten as follows: 
\begin{gather*}
R_{11} = - \frac{A}{\bar{\rho}} \partial_t S = \mathcal{O} ( |f_t| ), \quad
R_{12} = R_{21} = {\cal R}_{21} + \mathcal{O} (|f|+|f_x|), \quad
R_{22} = {\cal R}_{22}  + \mathcal{O} (|f|+|f_x|),
\end{gather*}
where
\begin{align*}
{\cal R}_{21}&:=- \partial_x \left( \frac{A}{\bar{\rho}} \bar{S} \right)
+ \frac{A}{\bar{\rho}} 
                              \left\{ \bar{S} \frac{A_x}{A} 
                                       + \left(   \frac{\bar{m}^2}{\bar{\rho}^2} \frac{A_x}{A}
                                       + \frac{2 \bar{m}^2}{\bar{\rho}^4} \bar{\rho} \bar{\rho}_x
                                       + p''(\bar{\rho}) \bar{\rho}_x  \right)
                              \right\},
\\
{\cal R}_{22}&:= - \partial_x \left( \frac{2A \bar{u}}{\bar{\rho}} \right)
- \frac{2A}{\bar{\rho}} \frac{2 \bar{m} \bar{\rho}_x} {\bar{\rho}^2}.
\end{align*}
It follows from \eqref{eq151}, \eqref{eq221}, and \eqref{eq242} that ${\cal R}_{21}={\cal R}_{22}=0$.
Therefore, using \eqref{norm-es1}, we deduce \eqref{eq326}.
\end{proof}

\subsection{Basic estimates}\label{Sec31}
In this subsection, we estimate the $L^2$-norm of $f = (\phi, \omega)^\top$.
The main results are Lemmas \ref{Lem32} and \ref{Lem35}, 
which imply the exponential decay of  the $L^2$ norm. 
%
\begin{lem} \label{Lem32}
Under the same assumptions as in Proposition \ref{Pro31}, if $N(T)$ is sufficiently small,
the following estimate holds for $t_{0} \in [0,T]$ and $t \in [t_{0},T]${\rm :}
\begin{gather} \label{eqLem32}
E^0(t) + c \int_{t_{0}}^t {D^{0}(\tau) }d\tau \le E^0(t_{0}) + C N(T) \int_{ t_{0}}^t E^0(\tau) d\tau,
\end{gather}
where $c$ and $C$ are positive constants independent of $t_{0}$ and $T$.
\end{lem}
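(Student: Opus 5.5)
We will integrate the energy identity \eqref{eq325*} over $[t_0,t]\times I$. The first term integrates to $E^0(t)-E^0(t_0)$. The term $f^\top R f$ is bounded by Proposition \ref{Pro30}: $|f^\top R f|\le CN(T)|f|^2$. Since $A^0$ is uniformly positive definite (by \eqref{spro1}, \eqref{spro2} and $N(T)\ll1$), this gives $\int_I |f^\top Rf|\,dx\le CN(T)E^0$. The remaining task is to evaluate the flux term $\int_{t_0}^t [f^\top A^1 f]_{x=x_1}^{x=x_2}\,d\tau$ using the boundary conditions \eqref{eq314a}.

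Expanding the quadratic form gives
\[
f^\top A^1 f=\frac{A}{\bar{\rho}}\Bigl(2S\phi\omega+\frac{2m}{\rho}\omega^2\Bigr).
\]
At $x=x_1$ we have $\omega=0$, so the flux vanishes there. At $x=x_2$ we have $\phi=0$, so the flux equals $\frac{2A}{\bar{\rho}}\frac{m}{\rho}\omega^2(\tau,x_2)$. Since $m/\rho=u=\bar{u}+O(N(T))$ and $\inf\bar{u}>0$ by \eqref{spro2}, this term is bounded below by $c\,\omega^2(\tau,x_2)$ for small $N(T)$.

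Because $\phi(\tau,x_2)=0$, we have $|f|^2(\tau,x_2)=\omega^2(\tau,x_2)$. The boundary term is therefore $\ge c\,D^0(\tau)$, and it appears on the left-hand side with a positive sign.

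Collecting terms gives
\[
E^0(t)+c\int_{t_0}^tD^0\,d\tau\le E^0(t_0)+CN(T)\int_{t_0}^tE^0\,d\tau,
\]
which is \eqref{eqLem32}.

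The only delicate point is the sign and coercivity of the outflow flux at $x_2$. This rests on the positivity of $u$ near $x_2$, together with the fact that the Dirichlet condition on $\phi$ kills the indefinite cross term $2S\phi\omega$. Regularity of the $\mathfrak{X}(T)$ solution justifies the integration directly, since $f\in C([0,T];H^2)$ has continuous traces.
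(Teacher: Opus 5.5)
Your proposal is correct and follows essentially the same route as the paper. You integrate \eqref{eq325*} over $[t_0,t]\times I$ and bound the $R$-term by $CN(T)\int_{t_0}^t E^0\,d\tau$ via Proposition~\ref{Pro30} and the positive definiteness of $A^0$. The boundary conditions $\omega(\tau,x_1)=0$ and $\phi(\tau,x_2)=0$ then kill the inflow flux and reduce the outflow flux to $\frac{2A}{\bar{\rho}}u\,\omega^2(\tau,x_2)\ge c\,D^0(\tau)$.
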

\begin{proof}
We integrate \eqref{eq325*}  over $[t_{0},t] \times \bar{I}$ and use \eqref{eq326} to obtain
\begin{align} \label{eq327}
&\int_{x_1}^{x_2} f^\top A^0 f (t,x) dx
- \int_{t_{0}}^t  f^\top A^1 f (\tau, x_1) d\tau
+ \int_{t_{0}}^t  f^\top A^1 f (\tau, x_2) d\tau
\notag \\
& =\int_{x_1}^{x_2} f^\top A^0 f (t_{0},x) dx 
-  \int_{t_{0}}^t \!\! \int_{x_1}^{x_2} f^\top R f(\tau, x) dx d\tau
\notag \\
&\le
\int_{x_1}^{x_2} f^\top A^0 f (t_{0},x) dx 
+ C N(T) \int_{t_{0}}^t \!\! \int_{x_1}^{x_2} | f |^2 (\tau, x) dx d\tau.
\end{align}
Due to the boundary condition \eqref{eq314a} at $x=x_{1}$, the integrand of the second term on the LHS of \eqref{eq327} vanishes.
%
Using \eqref{eq314a}, \eqref{spro1}, \eqref{spro2}, and \eqref{norm-es1}, we estimate 
the integrand of the third term on the LHS of \eqref{eq327} from below as 
\begin{align} \label{eq329}
f^\top A^1 f (\tau, x_2) 
=  
\frac{2A}{\bar{\rho}}(x_2) u \omega^2 (\tau, x_2)
\ge 
c | f |^2 (\tau, x_2) =cD^{0}(\tau).
\end{align}
Substitution of \eqref{eq329} into \eqref{eq327} concludes \eqref{eqLem32}.
\end{proof}

As mentioned above, the time-integral of the boundary term at $x=x_{2}$ in \eqref{eqLem32} leads to energy dissipation over the entire interval $I$.
Lemmas \ref{Lem33}--\ref{Lem35} below detail this dissipative mechanism.
For that purpose, we use another symmetrized form of the system \eqref{eq317}.
We see from \eqref{spro1}, \eqref{spro2}, and $N(T) \ll 1$ that $\hat{A}^1$ is regular and
\begin{gather*}
(\hat{A}^1)^{-1} = \frac{1}{S} 
\begin{bmatrix}
-2u & 1
\\
S & 0
\end{bmatrix}.
\end{gather*}
In view of this, set $M^{1}:={\rm diag} (S,1)$ and $M^{0} :=  M^{1} (\hat{A}^1)^{-1}$,
where $M^{1}$ is positive definite.
We multiply \eqref{eq317} by $M^{0}$ from the left to have
\begin{gather}\label{eq333*}
M^0 \partial_t f + M^1 \partial_x f + M^{0} \hat{B} f=0.
\end{gather}

%
\begin{lem} \label{Lem33}
Under the same assumptions as in Proposition \ref{Pro31} with $T \geq T_{0}$, 
if $N(T)$ is sufficiently small,
the following estimate holds for $\tilde{t}_{0} \in [0,T-T_{0}]${\rm :}
\begin{gather} \label{eqLem33}
\iint_{\varOmega(\tilde{t}_{0})} | f |^2(\tau, x) dx d\tau
\le 
C \int_{\tilde{t}_{0}}^{\tilde{t}_{0}+T_0} { D^{0}(\tau) } d\tau,
\end{gather}
where $T_{0}$ and $\varOmega(\tilde{t}_{0})$ are defined in \eqref{eq323} and \eqref{eq324}, respectively, and $C$ is a positive constant independent of $\tilde{t}_{0}$ and $T$.
\end{lem}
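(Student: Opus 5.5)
The plan is a Rauch--Taylor type argument in which the roles of $t$ and $x$ are exchanged: we use the form \eqref{eq333*}, where $M^1={\rm diag}(S,1)$ is positive definite. Multiplying \eqref{eq333*} by $2\,w(t,x)f^\top$, with a weight $w$ to be chosen, gives an identity
\begin{gather*}
\partial_x ( w f^\top M^1 f ) + \partial_t ( w f^\top M^0 f ) - f^\top\bigl( w_x M^1 + w_t M^0 \bigr) f + w\, f^\top \tilde R f = 0,
\end{gather*}
with $|\tilde R|\le C$ by \eqref{spro1}, \eqref{spro2} and \eqref{norm-es1}. This identity will be integrated over each of $\varOmega_-(\tilde t_0)$ and $\varOmega_+(\tilde t_0)$, with "time" $x$ running backward from $x_2$. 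The datum at $x=x_2$ is controlled by $|f|^2(\tau,x_2)=D^0(\tau)$, while the other boundary contributions, coming from the slanted lines $x=x_2+v_-(t-\tilde t_0)$ and $x=x_1+v_+(t-\tilde t_0-T_-)$ and from the lines $t=$ const, should be nonpositive, i.e.\ spacelike in the $x$-direction.

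First step: check the sign of the quadratic form on the slanted boundaries. The outward normal there gives the form $f^\top (M^1 - s M^0) f$ up to a sign, with $s=v_\pm$ the slope $dx/dt$. Since $M^0=M^1(\hat A^1)^{-1}$, the form $M^1 - sM^0 = M^1(\hat A^1)^{-1}(\hat A^1 - s)$ has a definite sign whenever $s$ lies strictly between the eigenvalues $u\pm a$ of $\hat A^1$ after accounting for orientation. By \eqref{eq322}, $v_-$ lies strictly between $u-a$ and $0$, and $v_+$ between $0$ and $u+a$, uniformly for small $N(T)$. The definiteness of each slanted-boundary term has to be verified with the correct orientation of each subdomain, and this is the delicate point.

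Second step: choose $w=e^{\lambda(x-x_2)}$ or $w=e^{-\lambda x}$, with the sign picked so that $-w_xM^1$ is coercive, i.e.\ produces $+\lambda c|f|^2$ on the side where the estimate is needed. Taking $\lambda$ large then absorbs $w f^\top\tilde R f$, and on the bounded domain $w$ is comparable to $1$. If the $t=$ const pieces of $\partial\varOmega_\pm$ have the wrong sign (the form $M^0$ is indefinite), I would instead apply the estimate on thin strips $\{x\in[y,x_2]\}$ intersected with the domain and use a Gronwall argument in the variable $y$. Alternatively I would use the characteristic decomposition: diagonalize to Riemann-type variables $w_\pm=\ell_\pm f$ and integrate along characteristics backward to $x=x_2$. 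Along these, $|w_\pm|^2$ is bounded by its value at $x_2$ plus $C\int|f|^2$, and Gronwall closes the estimate.

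Third step: the two pieces are chained. On $\varOmega_-$ the only data come from $x=x_2$ over $[\tilde t_0,\tilde t_0+T_-]$. On $\varOmega_+$ the data come from $x=x_2$ and from the part of its lower boundary lying inside $\varOmega_-$, namely the line $t=\tilde t_0+T_-$, which is already controlled by the first step. Summing the two estimates gives \eqref{eqLem33}. The main obstacle is the first step, getting the geometry and orientation right so that every boundary term other than the one at $x=x_2$ has a favourable sign. I would therefore begin with the characteristic-variable version, which makes the domain-of-dependence structure transparent.
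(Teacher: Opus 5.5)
Your core mechanism is the paper's: the identity from \eqref{eq333*} with $x$ playing the role of time, an exponential weight in $x$, and positive definiteness of $M^1-vM^0$ on the slanted sides. Your first step is in fact routine. Since $M^0=\begin{bmatrix}-2u&1\\1&0\end{bmatrix}$, the matrix $M^1-vM^0=\begin{bmatrix}S+2vu&-v\\-v&1\end{bmatrix}$ has determinant $a^2-(u-v)^2$. It is therefore positive definite for all $v\in(u-a,u+a)$, and by \eqref{eq322} this interval contains $[v_-,v_+]$ with a uniform margin for $N(T)$ small. Both slanted sides have $\varOmega$ on the side $x\ge$ line, so their outward normals are proportional to $(v_\pm,-1)$ in $(t,x)$. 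Both therefore contribute $\int e^{kx}f^\top(v_\pm M^0-M^1)f\,d\tau\le 0$, and there is no orientation subtlety. The weight must increase in $x$, i.e.\ $e^{kx}$, which is your $e^{\lambda(x-x_2)}$. Then $k e^{kx}f^\top M^1f$ sits on the interior side and is balanced only by the flux through $x=x_2$; the choice $e^{-\lambda x}$ puts it on the wrong side.

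The genuine gap is in your second and third steps, which misread the geometry. You treat $\varOmega_-$ and $\varOmega_+$ separately and chain them across $t=\tilde t_0+T_-$. You claim that $\varOmega_-$ is controlled by the data on $\{x_2\}\times[\tilde t_0,\tilde t_0+T_-]$ alone, and that the line $t=\tilde t_0+T_-$ then serves as controlled data for $\varOmega_+$. As an energy argument this fails. The flux through a line $t=$ const is $f^\top M^0f$, and $M^0$ is indefinite ($\det M^0=-1$), so such a line is not spacelike for the evolution in $x$. Concretely, the $\lambda_+$-characteristics through points of $\varOmega_-$ near $t=\tilde t_0+T_-$ reach $x=x_2$ only after time $\tilde t_0+T_-$. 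Traced backward, they leave $\varOmega_-$ through the slanted side. Hence the $\varOmega_-$ identity bounds neither $\iint_{\varOmega_-}|f|^2$ nor $\int|f|^2(\tilde t_0+T_-,x)\,dx$ by $\int_{\tilde t_0}^{\tilde t_0+T_-}D^0\,d\tau$. Your fallbacks (Gronwall on strips, or characteristics), applied on $\varOmega_\pm$ separately, do not remove this, because the obstruction is one of domain of dependence.

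The missing observation is that no $t=$ const side is present at all. By \eqref{eq323}, $x_2+v_-T_-=x_1$ and $x_1+v_+T_+=x_2$. So $\varOmega(\tilde t_0)$ is the single triangle with vertices $(\tilde t_0,x_2)$, $(\tilde t_0+T_-,x_1)$, $(\tilde t_0+T_0,x_2)$. Its boundary consists only of the data segment on $x=x_2$ and the two slanted sides. Integrate $e^{kx}$ times the identity over this union; equivalently, add your two identities, whose contributions on $t=\tilde t_0+T_-$ cancel exactly. The slanted terms are nonpositive. Since $M^1$ is positive definite and $M^2$ is bounded, the interior term is at least $C^{-1}\iint_{\varOmega}|f|^2$ for $k$ large, and \eqref{eqLem33} follows. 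This is exactly the paper's proof. A Gronwall argument in $y$ on the slices $\varOmega\cap\{x\ge y\}$ of the union would also work in place of the weight.
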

\begin{proof}
Multiplying \eqref{eq333*} by $2 f^\top$ from the left yields
\begin{gather}  \label{eq333}
\partial_t \left( f^\top M^{0} f \right)
+\partial_x \left( f^\top M^{1} f  \right)
=
f^\top M^{2} f, \quad  \text{where} \  M^{2}:= \partial_t M^{0} + \partial_x M^{1}  - 2 M^{0} \hat{B}.
\end{gather}
From \eqref{spro1}, \eqref{spro2}, $(\bar{\rho},\bar{m})  \in C^2(\bar{I}) \times C^2(\bar{I})$, and $N(T) \ll 1$, it is seen that  
$\partial_x M^{1}$, $\partial_t M^{0}$, $M^{0}$, and $\hat{B}$ are all bounded uniformly in $(t,x)$, and so is $M^{2}$.
%

We integrate the product of $e^{kx}$ and \eqref{eq333} over $\varOmega(\tilde{t}_{0})$ defined in \eqref{eq324}, where $k>0$ is a constant to be determined later, and then apply Green's theorem to obtain
\begin{align} \label{eq335}
&\int_{\tilde{t}_{0}}^{\tilde{t}_{0}+T_0} e^{k x_2}  f^\top M^{1} f  (\tau, x_2) d\tau 
+ \int_{\tilde{t}_{0}}^{\tilde{t}_{0}+T_{-}} e^{k (x_{2}+v_- (\tau-\tilde{t}_{0}))} f^\top (  v_- M^{0} -M^{1} ) f (\tau, x_{2}+v_- (\tau-\tilde{t}_{0})) d\tau
\notag \\
& \quad + \int_{\tilde{t}_{0}+T_{-}}^{\tilde{t}_{0}+T_0} e^{k (x_{1}+v_+ (\tau -\tilde{t}_{0}- T_0) )} 
     f^\top ( v_+ M^{0} -M^{1} ) f (\tau, x_{1}+v_+ ( \tau - \tilde{t}_{0} - T_0 )  ) d\tau
\notag \\
&= k  \iint_{\varOmega(\tilde{t}_{0})} e^{kx} f^\top M^{1} f  (\tau, x) d\tau dx+\iint_{\varOmega(\tilde{t}_{0})} e^{kx} f^\top M^{2} f (\tau,x) d\tau dx 
\notag \\
&\geq C^{-1} \iint_{\varOmega(\tilde{t}_{0})} e^{kx_{1}} |f|^{2} (\tau,x) d\tau dx.
\end{align}
The last inequality holds for a sufficiently large $k$ because $M^1$ is positive definite, 
and $M^2$ is bounded.

We claim that the second and third terms on the LHS of \eqref{eq335} are nonpositive.
To verify this claim, it suffices to show that $ M^{1} - v M^{0} $ is positive definite for $v \in [v_-, v_+ ]$ if $N(T) \ll 1$.
We rewrite it as $ M^{1} - v M^{0} =  M^{3}[v] + \mathcal{O}(N(T))$, where
\begin{gather*}
M^{3}[v](x) := 
\begin{bmatrix}
\bar{S}(x) + 2 v \bar{u}(x) & -v
\\
-v & 1
\end{bmatrix}.
\end{gather*}
It is seen that $M_{22}^{3}[v](x)=1$ and 
\begin{gather*}
{\rm det} \ M^{3}[v](x) 
= \bar{a}(x)^2 - ( \bar{u}(x) - v )^2 > 0, \quad x \in [x_{1},x_{2}], \ v \in [v_-, v_+ ],
\end{gather*}
where we have used the definitions of $v_-$ and $v_+$ in \eqref{eq322} in deriving the last inequality.
Hence, $M^{3}$ is positive definite and so is $ M^{1} - v M^{0}$. The claim is valid. 
From this claim and \eqref{eq335}, we arrive at the desired estimate \eqref{eqLem33}.
\end{proof}

We rewrite \eqref{eq317} into a diagonalized form as follows.
The eigenvalues and left eigenvectors of the matrix $\hat{A}^1$ are $\lambda_\pm = u \pm a$ and $( S, u \pm a )$, respectively, 
%
where $a$ is a speed of sound. 
We note that $\lambda_{-}<0<\lambda_{+}$ holds for the case of the subsonic flow.
Set
\begin{gather*}
e_\pm:=( \pm a - u,  1) \parallel ( S, u \pm a ), \quad r_\pm := e_\pm f = ( \pm a - u ) \phi + \omega.
\end{gather*}
Multiplying \eqref{eq317} by $e_\pm$ from the left,
we have
\begin{gather} \label{eq339}
\partial_t r_\pm + \lambda_\pm \partial_x r_\pm
= {\cal H}_{\pm} f, \quad 
{\cal H}_{\pm} f :=-  e_\pm \hat{B} f + ( \partial_t e_\pm ) f + \lambda_\pm ( \partial_x e_\pm ) f.
\end{gather}
\begin{lem} \label{Lem34}
Under the same assumptions as in Proposition \ref{Pro31} with $T \geq T_{0}$, 
if $N(T)$ is sufficiently small, 
the following estimate holds for $\tilde{t}_{0} \in [0,T-T_{0}]${\rm :}
\begin{gather} \label{eqLem34}
E^0(\tilde{t}_{0}+T_{-})
\le 
C \int_{\tilde{t}_{0}}^{\tilde{t}_{0}+T_0} { D^{0}(\tau) } d\tau,
\end{gather}
where $T_{-}$ and $T_{0}$ are defined in \eqref{eq323}, and $C$ is a positive constant independent of $\tilde{t}_{0}$ and $T$.
\end{lem}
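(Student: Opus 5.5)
We want to bound $E^0$ at the time $\tilde t_0+T_-$, i.e. $\int_I |f|^2(\tilde t_0+T_-,x)\,dx$ (recall $A^0$ is uniformly positive definite, so $E^0$ is comparable to the $L^2$ norm), by the boundary dissipation on $[\tilde t_0,\tilde t_0+T_0]$. The plan is to avoid a direct estimate of the slice and instead trace the diagonal variables $r_\pm$ along characteristics using \eqref{eq339}. Since $f$ is recovered from $(r_+,r_-)$ via the invertible matrix with rows $e_\pm$ (its determinant is $2a\neq0$), it suffices to control $\int_I (r_+^2+r_-^2)(\tilde t_0+T_-,x)\,dx$.

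\textbf{The $r_-$ component.} For each $x\in I$, the backward... rather, I follow the $\lambda_-$-characteristic through $(\tilde t_0+T_-,x)$ forward in time. Since $\lambda_-\le v_-<0$ uniformly by \eqref{eq322} and $N(T)\ll1$, this characteristic reaches $x_1$ within time $T_-$. It therefore stays inside the set where $\varOmega_+$ lives; alternatively, I follow it backward to $x_2$. Backward is better: the backward $\lambda_-$-characteristic from $(\tilde t_0+T_-,x)$ moves to the right and hits $x=x_2$ at some time $s(x)\in[\tilde t_0,\tilde t_0+T_-]$, and it lies in $\varOmega_-(\tilde t_0)$ because $\lambda_-\le v_-$. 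Integrating \eqref{eq339} along it gives
\[
r_-^2(\tilde t_0+T_-,x)\le C\,r_-^2(s(x),x_2)+C\int \bigl|{\cal H}_-f\bigr|^2\,d\tau .
\]
Here $|{\cal H}_-f|\le C|f|$, because $\hat B$, $\partial_te_\pm$ and $\partial_xe_\pm$ are bounded. Integrating in $x$ and changing variables, where $dx/ds$ is bounded above and below by $|\lambda_-|$ since the characteristics foliate $\varOmega_-$, I obtain a bound by $C\int_{\tilde t_0}^{\tilde t_0+T_-}D^0\,d\tau+C\iint_{\varOmega_-}|f|^2$.

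\textbf{The $r_+$ component.} I follow the forward $\lambda_+$-characteristic from $(\tilde t_0+T_-,x)$. Since $\lambda_+\ge v_+>0$, it reaches $x_2$ before time $\tilde t_0+T_-+T_+=\tilde t_0+T_0$, and it stays in $\varOmega_+(\tilde t_0)$, whose left boundary moves at speed $v_+$. Integrating \eqref{eq339} backward from the exit gives the analogous bound by $C\int_{\tilde t_0+T_-}^{\tilde t_0+T_0}D^0\,d\tau+C\iint_{\varOmega_+}|f|^2$.

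Adding the two bounds and applying Lemma \ref{Lem33} to absorb $\iint_{\varOmega}|f|^2$ yields \eqref{eqLem34}.

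\textbf{Main obstacle.} The main obstacle is rigor at $H^2$/characteristic level: justifying the Gronwall-type integration of the source term along curves and the change of variables $x\mapsto s(x)$. I would handle this by working with the integral identity instead of pointwise characteristics. Concretely, I would multiply \eqref{eq339} by $2r_\pm$ and integrate over the curvilinear triangles bounded by the slice $t=\tilde t_0+T_-$, the boundary $x=x_2$, and the extreme characteristic. Green's theorem then gives a nonnegative flux on the characteristic boundary, which has exactly zero normal flux, and a weight $e^{\mp kx}$ absorbs the $C|f||r_\pm|$ terms into the domain integral controlled by Lemma \ref{Lem33}.
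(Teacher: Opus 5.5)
Your proposal is correct and is essentially the paper's argument. The paper also multiplies \eqref{eq339} by $2r_\pm$ and integrates \eqref{eq341} for $r_-$ over $\varOmega_-(\tilde{t}_{0})$ (inflow at $x=x_2$ during $[\tilde{t}_{0},\tilde{t}_{0}+T_{-}]$) and for $r_+$ over $\varOmega_+(\tilde{t}_{0})$ (outflow at $x=x_2$ during $[\tilde{t}_{0}+T_{-},\tilde{t}_{0}+T_{0}]$). The only difference is that it uses the fixed straight edges of slope $v_\pm$, on which the fluxes $(v_- -\lambda_-)r_-^2$ and $(\lambda_+-v_+)r_+^2$ are nonnegative, rather than exact characteristic edges. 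Your curvilinear triangles lie inside $\varOmega_\pm(\tilde{t}_{0})$, so nothing changes. The domain terms are then bounded by $C\iint_{\varOmega(\tilde{t}_{0})}|f|^2$ and removed with Lemma \ref{Lem33}. Your weight $e^{\mp kx}$ is therefore superfluous; by itself it could not absorb the $r_+r_-$ cross terms coming from ${\cal H}_{\pm}f$ anyway.
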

\begin{proof}
Multiplying \eqref{eq339} by $2 r_\pm$ gives
\begin{gather} \label{eq341}
   \partial_t \left( r_\pm^2 \right) 
+ \partial_x \left( \lambda_\pm r_\pm^2 \right)
= ( \partial_x \lambda_\pm ) r_\pm^2
+ 2 r_\pm {\cal H}_{\pm} f.
\end{gather}
We integrate \eqref{eq341} with minus subscripts over $\varOmega_-(\tilde{t}_{0})$ and then apply Green's theorem to obtain
\begin{align} 
& \int_{x_1}^{x_2} r_-^2 (\tilde{t}_{0}+T_{-},x) dx
+ \int_{\tilde{t}_{0}}^{\tilde{t}_{0}+T_{-}} ( v_- - \lambda_- )  r_-^2 ( \tau, x_2 + v_- (\tau -\tilde{t}_{0}) ) d\tau
\notag \\
&= - \int_{\tilde{t}_{0}}^{\tilde{t}_{0}+T_{-}} \lambda_- r_-^2 (\tau, x_2) d\tau + \iint_{\varOmega_-(\tilde{t}_{0})} \left\{ ( \partial_x \lambda_- ) r_-^2
+ 2 r_- {\cal H}_{-} f \right\}(\tau, x) dx d\tau
\notag \\
&\leq 
C \int_{\tilde{t}_{0}}^{\tilde{t}_{0}+T_{-}} r_-^2(\tau, x_2) d\tau 
+C \iint_{\varOmega_-(\tilde{t}_{0})} |f|^{2}(\tau, x) dx d\tau,
\label{eq343}
\end{align}
where we have also used \eqref{spro1}, \eqref{spro2}, $(\bar{\rho},\bar{m})  \in C^2(\bar{I}) \times C^2(\bar{I})$, and $N(T) \ll 1$ in deriving the last inequality. 
Recalling the definition of $v_-$ in \eqref{eq322},
we see that the second term on the LHS is nonnegative if $N(T)\ll 1$.

%
Similarly, integrating \eqref{eq341} with plus subscripts over $\varOmega_+$ leads to
\begin{align}\label{eq345}
&\int_{x_1}^{x_2} r_+^2 (\tilde{t}_{0}+T_{-},x) dx
+ \int_{\tilde{t}_{0}+T_{-}}^{\tilde{t}_{0}+T_0} ( \lambda_+ - v_+ )  r_+^2 ( \tau, x_1 + v_+ ( \tau - \tilde{t}_{0} - T_{-} ) ) d\tau
\notag\\
&= \int_{\tilde{t}_{0}+T_{-}}^{\tilde{t}_{0}+T_0} \lambda_+ r_-^2 (\tau, x_2) d\tau
- \iint_{\varOmega_+(\tilde{t}_{0})} \left\{( \partial_x \lambda_+ ) r_+^2 + 2 r_+ {\cal H}_{+} f \right\}(\tau, x) dx d\tau
\notag\\
& \leq C \int_{\tilde{t}_{0}+T_{-}}^{\tilde{t}_{0}+T_0} r_+^2(\tau, x_2) d\tau + C \iint_{\varOmega_+(\tilde{t}_{0})} | f |^2(\tau, x) dx d\tau.
\end{align}
%
%
We note that the second term on the LHS is nonnegative if $N(T)\ll 1$.

If $N(T)$ is small enough, 
there exist constants $c,C>0$ independent of $\tilde{t}_{0}$ and $T$ such that
\begin{gather*}
c (r_{-}^{2}+r_{+}^{2})(t,x) \leq |f|^{2}(t,x) \leq C (r_{-}^{2}+r_{+}^{2})(t,x) \quad \text{for $(t,x) \in [0,T] \times \bar{I}$}
\end{gather*}
holds. Now adding \eqref{eq343} and \eqref{eq345} implies
\begin{gather} \label{eq347}
E^0(\tilde{t}_{0}+T_{-})
\le 
C \int_{\tilde{t}_{0}}^{\tilde{t}_{0}+T_0} { D^{0} (\tau)} d\tau
+
C \iint_{\varOmega(\tilde{t}_{0})} | f |^2(\tau, x) dx d\tau.
\end{gather}
We can deduce \eqref{eqLem34} by multiplying \eqref{eq347} by $\varepsilon >0$, adding the result to \eqref{eqLem33}, and then letting $\ve \ll 1$.
\end{proof}

\begin{lem} \label{Lem35}
Under the same assumptions as in Proposition \ref{Pro31} with $T \geq T_{0}$, 
if $N(T)$ is sufficiently small, 
the following estimate holds for $\tilde{t}_{0} \in [0,T-T_{0}]$ and $t \in [\tilde{t}_{0}+T_{0},T]${\rm :}
\begin{gather} \label{eqLem35}
(1+c) E^0(t) + c \int_{\tilde{t}_{0}}^t D^{0}(\tau) d\tau
\le E^0(\tilde{t}_{0}) + C N(T) \int_{\tilde{t}_{0}}^t E^0(\tau) d\tau,
\end{gather}
where $c$ and $C$ are positive constants independent of $\tilde{t}_{0}$ and $T$.
\end{lem}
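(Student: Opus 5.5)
We combine Lemma \ref{Lem32}, applied on two consecutive time intervals, with Lemma \ref{Lem34}. The idea is to convert part of the boundary dissipation into a fixed fraction of the energy at some time inside the window $[\tilde{t}_{0},\tilde{t}_{0}+T_{0}]$, and then transport that gain forward to time $t$.

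\emph{Step 1.} Apply Lemma \ref{Lem32} with $t_{0}=\tilde{t}_{0}$ up to time $t\ge \tilde{t}_{0}+T_{0}$:
\begin{gather*}
E^0(t) + c_1 \int_{\tilde{t}_{0}}^{t} D^0(\tau)\,d\tau \le E^0(\tilde{t}_{0}) + C N(T)\int_{\tilde{t}_{0}}^{t} E^0(\tau)\,d\tau .
\end{gather*}

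\emph{Step 2.} Bound $E^0(t)$ by the energy at the intermediate time $s:=\tilde{t}_{0}+T_{-}\le t$. Lemma \ref{Lem32} with $t_{0}=s$, after dropping its nonnegative dissipation term, gives
\begin{gather*}
E^0(t)\le E^0(s)+CN(T)\int_{s}^{t}E^0\,d\tau .
\end{gather*}
Lemma \ref{Lem34} bounds $E^0(s)$ by $C_2\int_{\tilde{t}_{0}}^{\tilde{t}_{0}+T_{0}}D^0\,d\tau$, and $\tilde{t}_{0}+T_{0}\le t$, so
\begin{gather*}
E^0(t)\le C_2\int_{\tilde{t}_{0}}^{t}D^0\,d\tau + CN(T)\int_{\tilde{t}_{0}}^{t}E^0\,d\tau .
\end{gather*}

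\emph{Step 3.} Multiply the inequality of Step 2 by $\eta:=c_1/(2C_2)$ and add it to the inequality of Step 1. The $D^0$ term coming from Step 2 is absorbed into half of the dissipation from Step 1. This yields
\begin{gather*}
(1+\eta)E^0(t)+\tfrac{c_1}{2}\int_{\tilde{t}_{0}}^{t}D^0\,d\tau\le E^0(\tilde{t}_{0})+CN(T)\int_{\tilde{t}_{0}}^{t}E^0\,d\tau ,
\end{gather*}
which is \eqref{eqLem35} with $c=\min\{\eta,c_1/2\}$.

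\emph{Main obstacle.} The only delicate point is keeping all constants independent of $\tilde{t}_{0}$ and $T$. This holds because $T_{-}$, $T_{0}$ and the constants in Lemmas \ref{Lem32} and \ref{Lem34} depend only on the stationary solution and on the smallness of $N(T)$. We also need $s=\tilde{t}_{0}+T_{-}\le T$ so that Lemma \ref{Lem32} can be applied starting at $s$; this is guaranteed by $t\ge\tilde{t}_{0}+T_{0}$.
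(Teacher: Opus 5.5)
Your proposal is correct and follows the paper's proof essentially verbatim. Like the paper, you apply Lemma~\ref{Lem32} from $\tilde{t}_{0}+T_{-}$ and combine it with Lemma~\ref{Lem34} to bound $E^0(t)$ by the boundary dissipation, then add a small multiple of that bound to Lemma~\ref{Lem32} from $\tilde{t}_{0}$; your explicit choice $\eta=c_1/(2C_2)$ simply makes the paper's ``$\varepsilon\ll 1$'' concrete.
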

\begin{proof}
Using \eqref{eqLem32} with $t_{0}=\tilde{t}_{0}+T_{-}$ and $t \in [\tilde{t}_{0}+T_{0},T]$, we have
\begin{gather*} 
E^0(t) + c \int_{\tilde{t}_{0}+T_{-}}^t  { D^{0}(\tau)} d\tau 
\le E^0(\tilde{t}_{0}+T_{-}) + C N(T) \int_{\tilde{t}_{0}+T_{-}}^t E^0(\tau) d\tau.
\end{gather*}
From this and \eqref{eqLem34}, it holds that
\begin{gather} \label{eq353}
E^0(t) \le C \int_{\tilde{t}_{0}}^t { D^{0}(\tau)}  d\tau + C N(T) \int_{\tilde{t}_{0}+T_{-}}^t E^0(\tau) d\tau.
\end{gather}
Multiplying \eqref{eq353} by $\varepsilon>0$, adding the result to \eqref{eqLem32} with $t_{0}=\tilde{t}_{0}$ and $t \in [\tilde{t}_{0}+T_0, T]$, and then letting $\varepsilon \ll 1$, we conclude \eqref{eqLem35}.
\end{proof}

\subsection{Estimates on the higher order derivatives} \label{Sec32}
In this subsection, we will derive the $L^{2}$-estimate of the higher order derivatives of $f$ in a similar spirit of the $L^{2}$-estimate of $f$. To this end, we first derive the estimates for the time-derivatives of the solution. 
The reason why we work with the time-derivative is that it is more convenient when evaluating the boundary terms that arise during the course of our energy method.
After establishing the time-derivative estimates, we will derive estimates of the spatial-derivatives of the solution using the relation 
\begin{gather}
c \sum_{i=0}^j |\partial_x^i f|(t,x)
\le \sum_{i=0}^j |\partial_t^i f|(t,x)
\le C \sum_{i=0}^j |\partial_x^i f|(t,x), 
\label{ajs2}
\end{gather}
for $(t,x) \in [0,T] \times \bar{I}$ and $j=1,2$,  
where $c$ and $C$ are positive constants independent of $(t,x)$ and $T$.
%
%
%
%
%
%
This estimate will be shown in Lemma \ref{Lem3.55} below.

To make the computation in this subsection rigorous, 
we need to use a mollifier with respect to the time variable $t$ 
due to the insufficiency of the regularity of the solution.
However, we omit that argument since it is standard.
We apply the operator $\partial_{t}^{j}$ for $j=1,2$ to \eqref{eq317}  to obtain
\begin{gather}\label{eq361*}
\partial_{t} (\partial_{t}^{j} f) + \hat{A}^{1} \partial_{x} (\partial_{t}^{j} f) + \hat{B} (\partial_{t}^{j} f)
={\cal F}^{j}:=- [\partial_{t}^{j},\hat{A}^{1}\partial_{x}] f - [\partial_{t}^{j},\hat{B}] f,
\end{gather}
where $[\cal T,\cal S]:=\cal T\cal S-\cal S\cal T$ is a commutator.
Multiplying this by $A^{0}$ from the left gives
\begin{gather}
\label{eq361}
A^{0} \partial_{t} (\partial_{t}^{j} f) + A^{1} \partial_{x} (\partial_{t}^{j} f) + B (\partial_{t}^{j} f)
=A^{0}{\cal F}^{j}. 
\end{gather}
It also follows from \eqref{eq314a} that
\begin{gather} \label{eq367}
 \partial_t^j \omega ( t, x_1 ) = 0,
\quad
 \partial_t^j \phi ( t, x_2 ) = 0.
\end{gather}

Now we derive the relation \eqref{ajs2} and also estimate the remaining term ${\cal F}^{j}$.
\begin{lem}\label{Lem3.55}
Under the same assumptions as in Proposition \ref{Pro31}, 
if $N(T)$ is sufficiently small, 
the inequality \eqref{ajs2} and the following estimates hold for $j=1,2${\rm :}
\begin{gather}\label{calF1}
|{\cal F}^{j}|(t,x)  \leq CN(T) \sum_{i=0}^j |\partial_t^i f|(t,x),
\end{gather}
where $C$ is a positive constant independent of $(t,x)$ and $T$.
\end{lem}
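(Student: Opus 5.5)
We would prove both \eqref{ajs2} and \eqref{calF1} pointwise, using the equation \eqref{eq317} and its time derivatives together with the smallness bounds \eqref{norm-es1}. The key structural fact is that, in the subsonic regime, $\hat{A}^1(\rho,m)$ is invertible with $|(\hat{A}^1)^{-1}|\le C$ uniformly on $[0,T]\times\bar I$. This follows from \eqref{spro1}, \eqref{spro2} and $N(T)\ll1$, since $\det \hat{A}^1=-S$ with $S=\bar S+\mathcal O(N(T))\ge c>0$. The entries of $\hat{A}^1$ and $\hat{B}$ are smooth functions of $(x,\rho,m)$ with $\rho$ bounded away from zero. Their $x$-derivatives up to second order involve $A$, $\bar\rho$, $\bar m$ up to third derivatives, which are bounded by the $C^3$ assumptions.

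For \eqref{ajs2} with $j=1$, \eqref{eq317} gives $\partial_t f=-\hat{A}^1\partial_x f-\hat{B}f$, hence $|\partial_t f|\le C(|f|+|\partial_x f|)$. Conversely, $\partial_x f=-(\hat{A}^1)^{-1}(\partial_t f+\hat{B}f)$ gives $|\partial_x f|\le C(|f|+|\partial_t f|)$. For $j=2$ we differentiate \eqref{eq317} in $x$:
\[
\partial_x\partial_t f+\hat{A}^1\partial_x^2 f+(\partial_x\hat{A}^1)\partial_x f+\partial_x(\hat{B}f)=0 .
\]
Here $\partial_x\hat{A}^1$ involves $\partial_x f$ and $\bar\rho_x,\bar m_x$. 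It is bounded pointwise thanks to $|\partial_x f|\le CN(T)$ from \eqref{norm-es1}, and the same holds for $\partial_x\hat B$. Hence $|\partial_x^2 f|\le C(|\partial_x\partial_t f|+|\partial_xf|+|f|)$. We then express $\partial_x\partial_t f$ through $\partial_t^2 f$ by differentiating \eqref{eq317} in $t$:
\[
\partial_x\partial_t f=-(\hat{A}^1)^{-1}\bigl(\partial_t^2 f+(\partial_t\hat{A}^1)\partial_x f+\partial_t(\hat{B}f)\bigr),
\]
and $\partial_t\hat{A}^1,\partial_t\hat B$ are $\mathcal O(|\partial_t f|)$, again bounded pointwise. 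Combining this with the $j=1$ case yields the left inequality of \eqref{ajs2} for $j=2$. The right inequality follows symmetrically by solving for $\partial_t^2 f$ and $\partial_t\partial_x f$ in terms of $x$-derivatives.

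For \eqref{calF1} we expand the commutators. For $j=1$,
\[
{\cal F}^1=-(\partial_t\hat{A}^1)\partial_x f-(\partial_t\hat{B})f .
\]
Since $\hat{A}^1,\hat{B}$ depend on $t$ only through $(\rho,m)=(\bar\rho+\phi,\bar m+\omega)$, we have $|\partial_t\hat{A}^1|+|\partial_t\hat B|\le C|\partial_t f|$. Using \eqref{norm-es1} to bound one factor by $CN(T)$ gives $|{\cal F}^1|\le CN(T)(|\partial_x f|+|f|)$, and \eqref{ajs2} with $j=1$ converts this to the stated form. For $j=2$,
\[
{\cal F}^2=-2(\partial_t\hat{A}^1)\partial_x\partial_t f-(\partial_t^2\hat{A}^1)\partial_x f-2(\partial_t\hat B)\partial_t f-(\partial_t^2\hat B)f ,
\]
with $|\partial_t^2\hat{A}^1|\le C(|\partial_t^2 f|+|\partial_t f|^2)$. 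The terms $(\partial_t\hat{A}^1)\partial_x\partial_t f$ and $(\partial_t\hat B)\partial_t f$ are bounded by $CN(T)(|\partial_x\partial_t f|+|\partial_t f|)$. The terms with $\partial_t^2$ of coefficients are bounded by $C(|\partial_t^2f|+|\partial_tf|)(|\partial_xf|+|f|)\le CN(T)(|\partial_t^2 f|+|\partial_t f|)$. Finally, $\partial_x\partial_t f$ is replaced via the expression above, which is bounded by $C\sum_{i\le2}|\partial_t^if|$.

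The main obstacle is purely bookkeeping: at every step exactly one factor must be bounded pointwise in $L^\infty$ by $CN(T)$, using only first derivatives, which \eqref{norm-es1} controls. A second-order quantity such as $\partial_t^2 f$ may never appear in a product where we would need its $L^\infty$ bound. The expansions above are arranged so that each product contains at most one factor of second order. The uniform invertibility of $\hat{A}^1$, which is where subsonicity is used, is the only genuinely structural ingredient.
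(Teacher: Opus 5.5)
Your proposal is correct and follows the paper's proof. The paper likewise obtains \eqref{ajs2} for $j=1$ by inverting the coefficient of $\partial_x f$, gets $j=2$ by differentiating the system in $x$ and in $t$ and solving for $\partial_{xx}f$ and $\partial_{xt}f$, and then bounds ${\cal F}^j$ by expanding the commutators directly; the only cosmetic difference is that it inverts the symmetrized matrix $A^1=A^0\hat{A}^1$ from \eqref{eq319} rather than $\hat{A}^1$ itself, which is equivalent because $A^0$ is uniformly positive definite and bounded.
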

\begin{proof}
We show \eqref{ajs2} with $j=1$.
It follows from \eqref{spro1}, \eqref{spro2}, $(\bar{\rho},\bar{m})  \in C^2(\bar{I}) \times C^2(\bar{I})$, and $N(T) \ll 1$ that $A^1$ is regular and also $A^1$, $(A^1)^{-1}$, and $B$ are bounded uniformly in $(t,x)$. 
Multiplying \eqref{eq319} by $(A^{1})^{-1}$ from left, and then estimating $\partial_{x} f$ in the resultant equality, we arrive at the first inequality in \eqref{ajs2} with $j=1$. Similarly, the second inequality in \eqref{ajs2} holds for  $j=1$. 

Let us consider \eqref{ajs2} with $j=2$.
By a direct computation using  \eqref{norm-es1} and \eqref{ajs2} with $j=1$,
we see that $A^{0}{\cal F}^{1}$ in \eqref{eq361}  is estimated as $|A^{0}{\cal F}^{1}| \leq C(|f|+|\partial_{t}f|)$.
We multiply \eqref{eq361} with $j=1$ by $(A^{1})^{-1}$ from left, and then estimate $\partial_{xt} f$ as $|\partial_{xt} f|\leq C(|f|+|\partial_{t} f|+|\partial_{tt} f|)$.
On the other hand, applying $\partial_{x}$ to \eqref{eq319}, multiplying the resultant equality by $(A^{1})^{-1}$, and estimating $\partial_{xx} f$ similarly above, we can also obtain $|\partial_{xx} f| \leq C(|f|+|\partial_{x} f|+|\partial_{xt} f|)$.
Combining these inequalities and \eqref{ajs2} with $j=1$, we conclude the first inequality in \eqref{ajs2} with $j=2$.
Similarly, the second inequality in \eqref{ajs2} holds for  $j=2$. 

Finally, it is straightforward to see \eqref{calF1} by a direct computation with the aid of  \eqref{spro1}, \eqref{spro2}, \eqref{norm-es1}, \eqref{ajs2}, $(\bar{\rho},\bar{m})  \in C^2(\bar{I}) \times C^2(\bar{I})$, and $N(T) \ll 1$.
\end{proof}

In Lemmas \ref{Lem36}--\ref{Lem39} below,
we will estimate the $L^2$ norm of $\partial_{t}^{j} f$ in much the same way as in Lemmas \ref{Lem32}--\ref{Lem35}, but with \eqref{eq361*}, \eqref{eq361}, and \eqref{eq367} in place of \eqref{eq317}, \eqref{eq319}, and \eqref{eq314a}.
Lemmas \ref{Lem36}--\ref{Lem39} are parallel to Lemmas \ref{Lem32}--\ref{Lem35}, respectively. 

%
\begin{lem} \label{Lem36}
Under the same assumptions as in Proposition \ref{Pro31}, 
if $N(T)$ is sufficiently small, 
the following estimate holds for $t_{0} \in [0,T]$, $t \in [t_{0},T]$, and $j=1,2${\rm :}
\begin{gather} \label{eqLem36}
E^{j}(t) + c \int_{t_{0}}^t { D^{j}(\tau) } d\tau 
\le E^{j}(t_{0}) + C N(T) \int_{t_{0}}^t E(\tau) d\tau,
\end{gather}
where $c$ and $C$ are positive constants independent of $t_{0}$ and $T$, and 
the time-integral of the trace $D^{2}(\tau)=|\partial_{t}^{2}f(\tau)|^{2}$ is validated in Lemma \ref{local3}.
\end{lem}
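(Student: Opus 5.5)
The plan is to repeat the proof of Lemma \ref{Lem32}, using the differentiated symmetrized system \eqref{eq361} for $g:=\partial_t^j f$ in place of \eqref{eq319}. Multiplying \eqref{eq361} from the left by $2g^\top$ and using the symmetry of $A^0$ and $A^1$ gives the identity
\begin{gather*}
\partial_t ( g^\top A^0 g ) + \partial_x ( g^\top A^1 g ) + g^\top R g = 2 g^\top A^0 {\cal F}^{j},
\end{gather*}
with the same matrix $R$ as in \eqref{eq325*}. This works because $A^0$, $A^1$ and $B$ depend only on $(x,\rho,m)$, so the differentiated system has the same principal and zeroth-order structure, and all new terms sit in ${\cal F}^j$. (As the paper notes, a time mollifier is needed to make this rigorous; we take this for granted.) We then integrate over $[t_0,t]\times\bar{I}$.

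For the interior terms:
\begin{itemize}
\item Proposition \ref{Pro30} bounds $|g^\top R g|$ by $CN(T)|\partial_t^j f|^2$.
\item By \eqref{calF1} and the boundedness of $A^0$, $|2g^\top A^0{\cal F}^j| \le CN(T)\sum_{i=0}^j|\partial_t^i f|^2$.
\item Since $A^0$ is uniformly positive definite, $\int_{x_1}^{x_2}\sum_{i\le j}|\partial_t^i f|^2\,dx \le C E(\tau)$.
\end{itemize}
Together these produce the term $CN(T)\int_{t_0}^t E(\tau)\,d\tau$. Note that it involves the full $E$ rather than $E^j$, because the commutator ${\cal F}^j$ contains lower-order time derivatives.

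For the boundary terms we use \eqref{eq367}:
\begin{itemize}
\item At $x=x_1$ we have $\partial_t^j\omega=0$. Since $A^1_{11}=0$, the flux $g^\top A^1 g$ is a combination of $g_1g_2$ and $g_2^2$, so it vanishes.
\item At $x=x_2$ we have $\partial_t^j\phi=0$. Hence $g^\top A^1 g = \frac{2A}{\bar\rho}\frac{m}{\rho}(\partial_t^j\omega)^2$, exactly as in \eqref{eq329}.
\item Because $\bar u>0$ uniformly by \eqref{spro2} and $|u-\bar u|\le CN(T)$ by \eqref{norm-es1}, this flux is at least $c|\partial_t^j f|^2(\tau,x_2)=cD^j(\tau)$.
\end{itemize}
Moving this to the left-hand side gives \eqref{eqLem36}.

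The only delicate point is the trace at the highest order, $j=2$. The quantity $\int_{t_0}^t|\partial_t^2 f|^2(\tau,x_2)\,d\tau$ is not defined for a general element of $\mathfrak{X}(T)$. We therefore justify it through Lemma \ref{local3}, which applies since the flow remains subsonic at $x_2$ when $N(T)\ll1$. Concretely, we carry out the computation for the time-mollified solution, where every term is finite, and pass to the limit. The trace integral converges by Lemma \ref{local3}, and all other terms converge by the regularity $f\in\mathfrak{X}(T)$. Beyond this, everything is routine once \eqref{calF1} is available.
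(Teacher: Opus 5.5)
Your proposal is correct and follows essentially the same route as the paper. The paper multiplies \eqref{eq361} by $2\partial_t^j f^\top$ to obtain \eqref{eq325}, controls the interior terms by Proposition \ref{Pro30} and \eqref{calF1}, and handles the boundary fluxes through \eqref{eq367} exactly as in Lemma \ref{Lem32}; the time mollification, which the paper likewise omits as standard, together with Lemma \ref{local3}, justifies the $j=2$ trace.
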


\begin{proof}
Multiplying \eqref{eq361} by $2 \partial_{t}^{j} f^\top$ from the left, we have
\begin{gather} \label{eq325}
\partial_t ( \partial_{t}^{j} f^\top A^0 \partial_{t}^{j} f ) + \partial_x ( \partial_{t}^{j} f^\top A^1 \partial_{t}^{j} f ) + \partial_{t}^{j} f^\top R \partial_{t}^{j} f = 2 \partial_{t}^{j} f^\top A^{0}{\cal F}^{j},
\end{gather}
which is parallel to \eqref{eq325*}. Recall the estimate \eqref{calF1} of ${\cal F}^{j}$ and the boundary value \eqref{eq367}.
It follows from \eqref{spro1}, \eqref{spro2}, $(\bar{\rho},\bar{m})  \in C^2(\bar{I}) \times C^2(\bar{I})$, and $N(T) \ll 1$ that  $A^0$ is bounded uniformly in $(t,x)$. 
Following the proof of Lemma \ref{Lem32}, we can conclude \eqref{eqLem36}.
\end{proof}

\begin{lem} \label{Lem37}
Under the same assumptions as in Proposition \ref{Pro31} with $T \geq T_{0}$, 
if $N(T)$ is sufficiently small, 
the following estimate holds for $\tilde{t}_{0} \in [0,T-T_{0}]$ and $j=1,2${\rm :}
\begin{gather} \label{eqLem37}
\iint_{\varOmega(\tilde{t}_{0})} | \partial_t^j f |^2 (\tau,x) d\tau dx 
\le 
C \int_{\tilde{t}_{0}}^{\tilde{t}_{0}+T_0} { D^{j}(\tau)}  d\tau
 + C N(T) \int_{\tilde{t}_{0}}^{\tilde{t}_{0}+T_{0}} E(\tau) d\tau,
\end{gather}
where $T_{0}$ and $\varOmega(\tilde{t}_{0})$ are defined in \eqref{eq323} and \eqref{eq324}, respectively, and $C$ is a positive constant independent of $\tilde{t}_{0}$ and $T$.
\end{lem}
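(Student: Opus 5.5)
We follow the proof of Lemma \ref{Lem33} line by line, now applied to the differentiated system \eqref{eq361*}. The system is multiplied from the left by $M^0=M^1(\hat{A}^1)^{-1}$, which is legitimate because $\hat{A}^1$ is invertible by \eqref{spro1}, \eqref{spro2} and $N(T)\ll1$. This gives
\begin{gather*}
M^0 \partial_t (\partial_t^j f) + M^1 \partial_x (\partial_t^j f) + M^0\hat{B}\,\partial_t^j f = M^0 {\cal F}^j .
\end{gather*}
Multiplying by $2\partial_t^j f^\top$ yields the analogue of \eqref{eq333}:
\begin{gather*}
\partial_t ( \partial_t^j f^\top M^0 \partial_t^j f ) + \partial_x ( \partial_t^j f^\top M^1 \partial_t^j f ) = \partial_t^j f^\top M^2 \partial_t^j f + 2 \partial_t^j f^\top M^0 {\cal F}^j ,
\end{gather*}
with the same matrix $M^2$, which is bounded uniformly in $(t,x)$.

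Next we multiply by $e^{kx}$ and integrate over $\varOmega(\tilde{t}_0)$ using Green's theorem, exactly as in \eqref{eq335}. The matrices $M^0,M^1$ are unchanged, so the lateral boundary terms along $x=x_2+v_-(\tau-\tilde{t}_0)$ and $x=x_1+v_+(\tau-\tilde{t}_0-T_-)$ carry the quadratic form $v M^0-M^1$. We already showed that $M^1-vM^0$ is positive definite for $v\in[v_-,v_+]$ when $N(T)\ll1$, so these terms are nonpositive and can be dropped. For $k$ large, the interior term $k\,e^{kx}\partial_t^j f^\top M^1\partial_t^j f$ dominates $e^{kx}\partial_t^j f^\top M^2 \partial_t^j f$. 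The remaining boundary term is $\int e^{kx_2}\partial_t^j f^\top M^1 \partial_t^j f(\tau,x_2)\,d\tau$, which is bounded by $C\int D^j\,d\tau$; for $j=2$ the trace is meaningful by Lemma \ref{local3}.

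The only new ingredient is the source term. By \eqref{calF1} and Cauchy--Schwarz,
\begin{gather*}
\iint_{\varOmega} e^{kx} |\partial_t^j f^\top M^0 {\cal F}^j| \,dx\,d\tau \le C N(T) \iint_{\varOmega} \sum_{i=0}^j |\partial_t^i f|^2 \,dx\,d\tau .
\end{gather*}
Since $\varOmega(\tilde{t}_0)\subset[\tilde{t}_0,\tilde{t}_0+T_0]\times \bar I$ and $A^0$ is uniformly positive definite, the right-hand side is at most $CN(T)\int_{\tilde{t}_0}^{\tilde{t}_0+T_0}E(\tau)\,d\tau$. This produces exactly the extra term in \eqref{eqLem37}.

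I expect no real obstacle here, since the step that could have failed is already settled. That step is the sign of the lateral fluxes, which depends only on $M^0$ and $M^1$ and was established in Lemma \ref{Lem33}. The remaining points are bookkeeping. The constant $k$ must be chosen independently of $\tilde{t}_0$ and $T$, which holds because $M^1$ and $M^2$ are controlled by constants depending only on the stationary solution. Finally, the formal computation for $j=2$ must be justified by mollifying in $t$, as remarked at the start of Subsection \ref{Sec32}.
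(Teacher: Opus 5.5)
Your proposal is correct and matches the paper's proof. You multiply \eqref{eq361*} by $M^0$ to obtain \eqref{eq373}, then repeat the $e^{kx}$-weighted Green's-theorem argument of Lemma \ref{Lem33} on $\varOmega(\tilde{t}_0)$, with the commutator source $2\partial_t^j f^\top M^0{\cal F}^j$ controlled by \eqref{calF1}; this produces the extra $CN(T)\int E\,d\tau$ term, while the sign of the lateral fluxes, the choice of $k$, and the $x_2$-trace for $j=2$ are handled exactly as in the paper.
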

\begin{proof}
%
We multiply \eqref{eq361*} by $ M^{0}$ defined in the proof of Lemma \ref{Lem33} from the left to have
\begin{gather*}
M^{0} \partial_{t} (\partial_{t}^{j} f) + M^{1} \partial_{x} (\partial_{t}^{j} f) + M^{0}\hat{B} (\partial_{t}^{j} f)
=M^{0}{\cal F}^{j},
\end{gather*}
which corresponds to \eqref{eq333*}.
Multiplying this by $2 \partial_{t}^{j} f^\top $ from the left gives
\begin{gather}  \label{eq373}
\partial_t \left( \partial_{t}^{j} f^\top M^{0} \partial_{t}^{j} f \right)
+\partial_x \left( \partial_{t}^{j} f^\top M^{1}  \partial_{t}^{j} f  \right)
=
\partial_{t}^{j} f^\top M^{2} \partial_{t}^{j}f + 2 \partial_{t}^{j} f ^\top M^{0} {\cal F}^{j},
\end{gather}
which is parallel to \eqref{eq333}.
Recall \eqref{eq367}, \eqref{calF1}, and that $M^{0}$ is bounded uniformly in $(t,x)$.
Then following the proof of Lemma \ref{Lem33}, we can arrive at \eqref{eqLem37}.
\end{proof}

\begin{lem} \label{Lem38}
Under the same assumptions as in Proposition \ref{Pro31} with $T \geq T_{0}$, 
if $N(T)$ is sufficiently small, 
the following estimate holds for $\tilde{t}_{0} \in [0,T-T_{0}]$ and $j=1,2${\rm :}
\begin{gather} \label{eqLem38}
E^{j}(\tilde{t}_{0}+T_{-})
\le 
C \int_{\tilde{t}_{0}}^{\tilde{t}_{0}+T_0} { D^{j}(\tau)}  d\tau
+ C N(T) \int_{\tilde{t}_{0}}^{\tilde{t}_{0}+T_{0}} E(\tau) d\tau,
\end{gather}
where $T_{-}$ and $T_{0}$ are defined in \eqref{eq323}, and $C$ is a positive constant independent of $\tilde{t}_{0}$ and $T$.
\end{lem}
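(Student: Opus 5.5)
The proof follows that of Lemma \ref{Lem34}, with \eqref{eq339} replaced by its time-differentiated version. First I would derive the diagonalized equations for $\partial_t^j f$. Set $r_\pm^{(j)} := e_\pm \partial_t^j f = (\pm a - u)\partial_t^j\phi + \partial_t^j\omega$. Multiplying \eqref{eq361*} by $e_\pm$ from the left gives
\begin{gather*}
\partial_t r_\pm^{(j)} + \lambda_\pm \partial_x r_\pm^{(j)} = {\cal H}_\pm \partial_t^j f + e_\pm {\cal F}^j ,
\end{gather*}
where ${\cal H}_\pm$ is the same operator as in \eqref{eq339}. Multiplying by $2 r_\pm^{(j)}$ yields the analogue of \eqref{eq341}, with the extra source $2 r_\pm^{(j)} e_\pm {\cal F}^j$. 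By \eqref{calF1} and the bound $|e_\pm| \le C$, this source is pointwise bounded by $C N(T) \sum_{i=0}^j |\partial_t^i f|^2$.

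Next I would integrate the $-$ equation over $\varOmega_-(\tilde t_0)$ and the $+$ equation over $\varOmega_+(\tilde t_0)$ and apply Green's theorem, exactly as in \eqref{eq343} and \eqref{eq345}. The slanted boundary terms again have a good sign, because $v_- - \lambda_- > 0$ and $\lambda_+ - v_+ > 0$ when $N(T) \ll 1$. The boundary terms at $x = x_2$ are controlled by $C\int |\partial_t^j f|^2(\tau,x_2)\,d\tau = C\int D^j\,d\tau$. The source term integrated over $\varOmega(\tilde t_0) \subset [\tilde t_0, \tilde t_0+T_0]\times \bar I$ is bounded by
\begin{gather*}
C N(T) \int_{\tilde t_0}^{\tilde t_0+T_0} \sum_{i\le j} \|\partial_t^i f\|_{L^2}^2 \, d\tau \le C N(T) \int_{\tilde t_0}^{\tilde t_0+T_0} E(\tau)\, d\tau ,
\end{gather*}
where I use the positive definiteness of $A^0$. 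The remaining bulk terms give $C\iint_{\varOmega(\tilde t_0)} |\partial_t^j f|^2$. Finally, $|\partial_t^j f|^2 \le C\big((r_-^{(j)})^2 + (r_+^{(j)})^2\big)$ and $E^j \le C\|\partial_t^j f\|_{L^2}^2$. Adding the two estimates therefore gives the analogue of \eqref{eq347}:
\begin{gather*}
E^j(\tilde t_0+T_-) \le C\int_{\tilde t_0}^{\tilde t_0+T_0} D^j\,d\tau + C\iint_{\varOmega(\tilde t_0)} |\partial_t^j f|^2 \,dx\,d\tau + CN(T)\int_{\tilde t_0}^{\tilde t_0+T_0} E\,d\tau .
\end{gather*}

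To conclude, I would bound the double integral using Lemma \ref{Lem37}, which yields \eqref{eqLem38} directly. The main point to check is the trace term at $x=x_2$ when $j=2$. There the regularity is only what Lemma \ref{local3} provides, so the Green's theorem step has to be justified via the time mollification mentioned before \eqref{eq361*}, passing to the limit using the validity of \eqref{trace3}. The rest is routine bookkeeping: the commutator terms only contribute the small factor $N(T)$ times the full energy $E$.
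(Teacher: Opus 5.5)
Your proposal is correct and follows essentially the same route as the paper. You diagonalize \eqref{eq361*} with the left eigenvectors $e_\pm$, rerun the characteristic-domain argument of Lemma~\ref{Lem34} over $\varOmega_\pm(\tilde t_0)$ with the extra source $2r_\pm^{(j)}e_\pm{\cal F}^j$ controlled by \eqref{calF1}, and close with Lemma~\ref{Lem37} in place of Lemma~\ref{Lem33}; substituting Lemma~\ref{Lem37} directly, as you do, is equivalent to the $\varepsilon$-combination used in Lemma~\ref{Lem34}, and your mollification remark for the $j=2$ trace matches the paper's omitted justification.
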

\begin{proof}
Let $\disp r_{\pm}^{j} := e_\pm \, \partial_t^j f$, where $e_\pm = (\pm a -u , 1)$.
We multiply \eqref{eq361*} by $e_\pm$ from the left to obtain
$\partial_t r_{\pm}^{j} + \lambda_\pm \partial_x r_{\pm}^{j}
= {\cal H}_{\pm} \partial_{t}^{j} f + e_\pm {\cal F}^{j}$  
corresponding to \eqref{eq339}. 
Multiplying this by $2 r_\pm^{j}$ gives
\begin{gather}\label{eq391}
   \partial_t \left( (r_\pm^{j})^2 \right) 
+ \partial_x \left( \lambda_\pm (r_\pm^{j})^2 \right)
= ( \partial_x \lambda_\pm ) (r_\pm^{j})^2
+ 2 r_\pm^{j} {\cal H}_{\pm} \partial_{t}^{j} f +  2 r_\pm^{j} e_\pm {\cal F}^{j},
\end{gather}
which is parallel to \eqref{eq341}.
By the same method as in Lemma \ref{Lem34} with the aid of \eqref{calF1}, we can obtain \eqref{eqLem38}.
\end{proof}
\begin{lem} \label{Lem39}
Under the same assumptions as in Proposition \ref{Pro31} with $T \geq T_{0}$, 
if $N(T)$ is sufficiently small, 
the following estimate holds for $\tilde{t}_{0} \in [0,T-T_{0}]$, $t \in [\tilde{t}_{0}+T_{0},T]$, and $j=1,2${\rm :}
\begin{gather} \label{eqLem39}
(1+c) E^{j}(t) + c \int_{\tilde{t}_{0}}^t D^{j} (\tau) d\tau
\le E^{j}(\tilde{t}_{0}) + C N(T) \int_{\tilde{t}_{0}}^t E(\tau) d\tau,
\end{gather}
where $c$ and $C$ are positive constants independent of $\tilde{t}_{0}$ and $T$.
\end{lem}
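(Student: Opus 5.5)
The plan is to copy the proof of Lemma \ref{Lem35}, replacing Lemmas \ref{Lem32} and \ref{Lem34} by their higher-order counterparts Lemmas \ref{Lem36} and \ref{Lem38}. Fix $\tilde{t}_{0} \in [0,T-T_{0}]$, $t \in [\tilde{t}_{0}+T_{0},T]$ and $j \in \{1,2\}$.

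First apply \eqref{eqLem36} with $t_{0}=\tilde{t}_{0}+T_{-}$ and drop the nonnegative dissipation term:
\begin{gather*}
E^{j}(t) \le E^{j}(\tilde{t}_{0}+T_{-}) + C N(T) \int_{\tilde{t}_{0}+T_{-}}^{t} E(\tau) d\tau .
\end{gather*}
Then bound $E^{j}(\tilde{t}_{0}+T_{-})$ by \eqref{eqLem38}. Since $D^{j} \ge 0$ and $t \ge \tilde{t}_{0}+T_{0}$, the integrals over $[\tilde{t}_{0},\tilde{t}_{0}+T_{0}]$ can be enlarged to $[\tilde{t}_{0},t]$. This gives the analogue of \eqref{eq353}:
\begin{gather*}
E^{j}(t) \le C_{2} \int_{\tilde{t}_{0}}^{t} D^{j}(\tau) d\tau + C N(T) \int_{\tilde{t}_{0}}^{t} E(\tau) d\tau .
\end{gather*}

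Next multiply this by a small $\varepsilon>0$ and add it to \eqref{eqLem36} with $t_{0}=\tilde{t}_{0}$. The left-hand side becomes $(1+\varepsilon)E^{j}(t) + (c-\varepsilon C_{2})\int_{\tilde{t}_{0}}^{t} D^{j}\, d\tau$. Choosing $\varepsilon \le c/(2C_{2})$ leaves a positive dissipation coefficient. The right-hand side is $E^{j}(\tilde{t}_{0}) + (1+\varepsilon) C N(T)\int_{\tilde{t}_{0}}^{t} E\, d\tau$. Renaming the constants $\min\{\varepsilon, c/2\}$ as $c$ yields \eqref{eqLem39}.

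I expect no real obstacle. The only point to watch is that the error terms in Lemmas \ref{Lem36} and \ref{Lem38} involve the full energy $E$ rather than $E^{j}$, which arises from the commutators ${\cal F}^{j}$ through \eqref{calF1}. They must therefore be kept as $CN(T)\int E$, not absorbed into the left-hand side; this matches the form of \eqref{eqLem39}. It also has to be checked that all constants are independent of $\tilde{t}_{0}$ and $T$, which they are, since they come directly from the cited lemmas.
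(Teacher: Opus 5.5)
Your proposal is correct and follows the paper's proof: the paper proves this lemma by repeating the argument of Lemma~\ref{Lem35}, with \eqref{eqLem32} and \eqref{eqLem34} replaced by \eqref{eqLem36} and \eqref{eqLem38}. Your explicit choice $\varepsilon \le c/(2C_{2})$, and your remark that the error terms must be kept as $CN(T)\int E$, are exactly the bookkeeping that argument requires.
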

\begin{proof}
Repeating the proof of Lemma \ref{Lem35}, but replacing \eqref{eqLem32} and \eqref{eqLem34} by \eqref{eqLem36} and \eqref{eqLem38} respectively, we can conclude  \eqref{eqLem39}.
\end{proof}

\subsection{Completion of the a priori estimate} 
This subsection is devoted to the proof of Proposition \ref{Pro31}.
\begin{proof}[Proof of Proposition \ref{Pro31}]
For $t_{0} \in [0,T]$ and $t \in [t_{0},T]$, 
we sum up \eqref{eqLem32} and \eqref{eqLem36} and then apply Gronwall's inequality to the resultant inequality to obtain
 \begin{gather}\label{eq355}
E(t) \leq E(t_{0})e^{C_{1}N(T)(t-t_{0})}  \quad \text{for} \  t \in [t_{0},T],
\end{gather}
where  $C_{1}>0$ is a constant independent of $t_{0}$ and $T$.
Furthermore, for $\tilde{t}_{0} \in [0,T-T_0]$ and $t \in [\tilde{t}_{0}+T_0,T]$, 
we sum up \eqref{eqLem35} and \eqref{eqLem39} and then apply Gronwall's inequality to the result to obtain
\begin{gather} \label{eq356}
E (t) 
\le \frac{1}{ 1 + c_1 } \left( E (\tilde{t}_{0}) + C_{1}N(T) \int_{\tilde{t}_{0}}^{\tilde{t}_{0}+T_0} \!\! E (\tau) d\tau \right) \ 
e^{C_{1}N(T)(t-\tilde{t}_{0}-T_{0})}
\quad \text{for} \ t \in [\tilde{t}_{0}+T_0, T],
\end{gather}
where $C_{1}>0$ can be the same constant as in \eqref{eq355}, 
and $c_1>0$ is a constant  independent of $\tilde{t}_{0}$ and $T$.

We focus on $t \in [0, \min\{ T_0, T \}] \subset [0,T_{0}]$. Using \eqref{eq355} with $t_{0}=0$ and letting $N(T)$ be sufficiently small according to $C_{1}$ and $T_{0}$ but independent of $T$, we have
\begin{gather}\label{eq355*}
E (t) 
\le 2 E(0) \quad \text{for} \ t \in [0, \min\{ T_0, T \}].
\end{gather}

If $T>T_{0}$, we next treat $t \in [T_0, \min\{ 2T_0, T \}] \subset [T_{0},2T_{0}]$.
Substituting \eqref{eq355} with $t_{0}=0$ into the integral in \eqref{eq356} with $\tilde{t}_{0}=0$ leads to
\begin{gather} \label{eq357}
E (t) 
\le \frac{E(0)}{ 1 + c_1 } e^{2C_{1}N(T)T_{0}}
\quad \text{for} \ t \in [T_0, \min\{ 2T_0, T \}] .
\end{gather}
Letting $N(T) \leq \tilde{\delta}_{0}:=(2C_{1}T_{0})^{-1} \log(1+c_{1}/2)$, 
we arrive at
\begin{gather*} 
E (t) 
\le \frac{ 1 + c_1 / 2 }{ 1 + c_1 } E (0)
\quad  \text{for} \ t \in [T_0, \min\{ 2T_0, T \}].
\end{gather*}
If $T > 2T_{0}$, we can repeat the same argument as in the case $t \in [T_0, \min\{ 2T_0, T \}]$.
Indeed, in the same way as in the derivation of \eqref{eq357} but replacing $t_{0}=\tilde{t}_{0}=0$ by $t_{0}=\tilde{t}_{0}=T_{0}$, we deduce
\begin{gather*} 
E (t) 
\le \frac{E(T_{0})}{ 1 + c_1 } e^{2C_{1}N(T)T_{0}}
\quad \text{for} \ t \in [2T_0, \min\{ 3T_0, T \}],
\end{gather*}
where $c_{1}$ and $C_{1}$ are the same constants as in \eqref{eq357}.
From this and $N(T) \leq \tilde{\delta}_{0}$, it holds that
\begin{gather*} 
E (t) 
\le \frac{ 1 + c_1 / 2 }{ 1 + c_1 } E (T_{0})
\quad  \text{for} \ t \in [2T_0, \min\{ 3T_0, T \}].
\end{gather*}
Repeating the same argument $\lfloor T/T_0 \rfloor -2$ more times, we conclude that  there exists a certain constant $c>0$ such that
\begin{equation} \label{eq359}
E(t) \le (1+c)^{-n} E(0) \quad \text{for} \  t \in [n T_0, \min\{ (n+1) T_0, T \} ], \ \ n=1, \cdots, \lfloor T/T_0 \rfloor,
\end{equation}
where $\lfloor z \rfloor$ denotes the largest integer not exceeding $z \in \mathbb R$.
It is straightforward to obtain the desired inequality \eqref{eqPro31} from \eqref{ajs2}, \eqref{eq355*}, and \eqref{eq359}.
\end{proof}


\section{Transonic problem 1: continuous transition}\label{Sec5}

In this section, we prove Theorem \ref{Thm13} claiming the asymptotic stability of stationary solutions 
which are subsonic in $x<0$, exactly transonic at $x=0$, and supersonic in $x>0$.
%
%

The unknown function $f=(\phi,\omega)^{\top}=(\rho - \bar{\rho},m - \bar{m})^{\top}$ follows \eqref{eq311*},
but no boundary condition is imposed at $x = x_2$
{since the stationary solution is supersonic at $x=x_{2}$.
So we consider the following initial--boundary value problem: 
\begin{subequations}\label{eq411*}
\begin{gather}\label{eq417}
\partial_t f + \hat{A}^1 \partial_x f + \hat{B}_{b} f = 0,
\\
%
%
\label{eq418}
f(0,x) = f^0(x), 
\\
\omega(t,x_{1})=0, \quad t>0,
\label{eq414a}
\end{gather}
\end{subequations}
where $\hat{A}^{1}$ and $\hat{B}_b$ are defined just after \eqref{eq311*}.
We require the compatibility conditions \eqref{comp1a} only at $x=x_1$.
}
%
We seek the solution $f$ of the initial--boundary value problem \eqref{eq411*} in the following function space:
\begin{align*}
\mathfrak{X} (T)=  \left[ \bigcap_{j=0}^{2} C^j([0,T]; H^{2-j}(I)) \right]^{2} \ \text{for $T>0$}, \quad
\mathfrak{X} (\infty)= \left[ \bigcap_{j=0}^{2} C^j([0,\infty); H^{2-j}(I)) \right]^{2}. 
\end{align*}

We state the main theorem in this section, which immediately ensures Theorem \ref{Thm13}.

\begin{thm} \label{Thm50}
Assume $A \in  C^4(\bar{I})$ with $A_{x}(0) = 0$ and $A_{xx}(0) > 0$, 
and that a stationary solution $(\bar{\rho}, \bar{m})  \in C^3(\bar{I}) \times C^4(\bar{I})$ to (\ref{eq151})--(\ref{eq155}) exists and satisfies
\begin{gather}\label{spro51}
\inf_{x \in \bar{I}} \bar{\rho}(x) >0,  \quad
\begin{cases}
\bar{M}(x) < 1 & \text{for} \ x \in [x_1, 0),
\\
\bar{M}(x) = 1 & \text{for} \ x = 0,
\\
\bar{M}(x) > 1 & \text{for} \ x \in (0, x_2].
\end{cases}
\end{gather}
Then there exists a constant $\delta_{0}>0$ such that if an initial value $f^{0} \in H^2(I) \times H^2(I)$ satisfies the compatibility condition \eqref{comp1a} and $\| f^{0} \|_{H^2} \leq \delta_{0}$, 
then the initial--boundary value problem \eqref{eq411*} admits a unique time-global solution 
$f \in \mathfrak{X} (\infty)$.
Moreover, the solution $f$ decays in $H^{2}(I) \times H^2(I)$ exponentially fast  as $t \to \infty$.
\end{thm}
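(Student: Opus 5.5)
As in the proof of Theorem \ref{Thm30}, we would obtain Theorem \ref{Thm50} from two ingredients. The first is a local existence lemma for \eqref{eq411*} in $\mathfrak{X}(T)$. This is the analogue of Lemma \ref{local3}, now with an outflow boundary at $x_2$ where no condition is imposed. The second is an a priori estimate of the form $\|f(t)\|_{H^2}\le C e^{-\nu t}\|f^0\|_{H^2}$ under $N(T)\le\delta_1$. The continuation argument is then word for word the one given after Proposition \ref{Pro31}. All the work is in the a priori estimate, which we would carry out for $\partial_t^j f$, $j=0,1,2$, exactly as in Section \ref{Sec_sub}. Lemma \ref{Lem3.55} cannot be used verbatim, because $A^1$ (equivalently $S$) degenerates at $x=0$. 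Near $x=0$ we would therefore recover spatial derivatives from the equation for the $\lambda_+$ component, and for the $\lambda_-$ component use $\bar u_x(0)>0$ from \eqref{eq237}.

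The decomposition is as follows. Fix a small $\eta>0$ and an artificial interface $x=-\eta$, which lies in the subsonic region. On $I_1:=(x_1,-\eta)$ we use the symmetrizer $A^0$ of \eqref{eq319}. Since $\bar S>0$ on $\bar I_1$, Lemmas \ref{Lem32}--\ref{Lem35} apply, with the characteristic domains $\varOmega_\pm$ now built on $I_1$. The identity \eqref{eq325*} integrated over $I_1$ produces the interface flux $-f^\top A^1 f(t,-\eta)$. On $I_2:=(-\eta,x_2)$, which contains the sonic point, we instead use a weighted energy $\int_{I_2} w(x)\, f^\top P f\,dx$. Here $P$ is a positive definite symmetrizer adapted to the supersonic structure; a natural choice is $P=\mathrm{diag}(\cdot)$ in the Riemann variables $r_\pm$ of \eqref{eq339}. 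The weight $w$ is chosen so that the interior quadratic form $-\partial_x(w\,\cdot\,)$ plus the source terms is coercive. Near $0$ we have $\lambda_-=u-a$ with $\partial_x\lambda_-(0)=\bar u_x+(\text{a positive multiple of }\bar u_x)>0$ by \eqref{eq237} and $p''\ge0$. This gives $r_-$ a damping of the form $\partial_t r_-^2+\partial_x(\lambda_- r_-^2)=(\partial_x\lambda_-)r_-^2+\dots$ with a favorable sign after a weight $e^{-kx}$. For $r_+$, $\lambda_+\ge c>0$ on $\bar I_2$, so a weight $e^{-kx}$ with $k$ large gives dissipation. At $x_2$ both characteristic speeds are positive, so the boundary term there has a good sign. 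At $x=-\eta$ the weighted estimate yields an incoming flux: $\lambda_+ r_+^2$ enters, while $\lambda_- r_-^2$ leaves, since $\lambda_-<0$ there.

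The key step, and the main obstacle, is to show that the sum of the two interface fluxes at $x=-\eta$ is bounded below by $c|f(t,-\eta)|^2$ once the weights are normalized suitably. That is the favorable flux gap referred to in the introduction. Concretely, $f^\top A^1 f=\frac{A}{\bar\rho}(2S\phi\omega+2u\omega^2)$ expressed in $r_\pm$ must be compared with $w(-\eta)(\lambda_+r_+^2 - |\lambda_-|r_-^2)$. We would choose the ratio of the weights on $r_+$ and $r_-$ and the constant multiplying the $I_1$ energy so that the quadratic form in $(r_+,r_-)$ is positive definite. This is possible only because $\eta$ is small: $\lambda_-(-\eta)\approx -\bar u_x(0)\eta\cdot(\text{const})$ is small, so the $r_-$ outflow costs little, while the $r_+$ inflow is of order one. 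The estimate \eqref{eq237} is what quantifies these orders. One must also check that the weight on $r_-$ does not destroy coercivity on $I_2$. The $(\partial_x\lambda_-)r_-^2$ term only helps near $0$, and away from $0$ on the supersonic side $\lambda_->0$, which lets $e^{-kx}$ act. The coupling terms $r_+ r_-$ coming from $\mathcal{H}_\pm$ near $x=0$ are $O(1)$; these we would absorb by taking $k$ large, provided the $r_-$ damping constant $\partial_x\lambda_-(0)$ dominates. If it does not, we would instead split off a small neighborhood of $0$ and use the exact relation $\mathcal{R}_{21}=\mathcal{R}_{22}=0$ of Proposition \ref{Pro30}, which shows that the degenerate symmetrizer $A^0$ produces no bad interior terms there.

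Once the coupled estimate $E(t)+c\int_{t_0}^t D\,d\tau\le E(t_0)+CN(T)\int E$ holds, the remaining steps follow Section \ref{Sec_sub}. Here $D$ now includes $|\partial_t^jf(-\eta)|^2$ and the outflow trace at $x_2$. From the traces at $x=-\eta$ we recover the $I_1$ energy by the characteristic argument of Lemmas \ref{Lem33}--\ref{Lem34}. The weighted $I_2$ energy decays on its own because $\lambda_+>0$ there, and the $r_-$ part is transported out with damping in finite time. This yields the analogue of Lemma \ref{Lem35}, $(1+c)E(t)\le E(\tilde t_0)+CN\int E$ over windows of fixed length, and the iteration of \eqref{eq359} then gives exponential decay. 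Higher derivatives are treated via \eqref{eq361*}, with commutators bounded by $CN(T)$ as in \eqref{calF1}.
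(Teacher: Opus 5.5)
Your architecture matches the paper's: an artificial interface $x_*<0$ upstream of the throat, the Section~\ref{Sec_sub} energy on $(x_1,x_*)$, a weighted energy on $[x_*,x_2]$ that stays nondegenerate across $x=0$, a flux gap at $x_*$, characteristics on the left and interior dissipation on the right. Your diagonal Riemann energy is also essentially the paper's choice: to leading order $f_p^\top A^0_pf_p=\frac{A}{2\bar\rho}(r_+^2+r_-^2)$. With $W(x_*)=1$ the gap is then automatically $\frac{A\bar u}{\bar\rho}(\bar S\phi^2+\omega^2)$, cf.\ \eqref{eq533}, so it needs no smallness of $\eta$.

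\textbf{First gap: the sonic-point sign.} With a weight, $\partial_t(wr_-^2)+\partial_x(w\lambda_-r_-^2)=(w\,\partial_x\lambda_-+w_x\lambda_-)r_-^2+2wr_-\mathcal{H}_-f$, and a positive coefficient on the right means growth. So $w\,\partial_x\lambda_-(0)>0$ is anti-dissipative, and the weight's contribution $w_x\lambda_-$ vanishes at $x=0$. For $w=e^{-kx}$ that contribution is also anti-dissipative on the subsonic part $(-\eta,0)$ of $I_2$, which you overlooked. Hence $\eta$ must be chosen after $k$, just as the paper chooses $x_*$ after $k_*$.

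The damping actually comes from the diagonal entry of $\mathcal{H}_-$, which you treat only as a source of $r_+r_-$ couplings. From the stationary equations and \eqref{eq237}, at $x=0$ the $r_-$-coefficient of $\mathcal{H}_-f$ is $-\partial_x(\bar u-\bar a)(0)+O(N(T))$. The net coefficient is therefore $-\partial_x(\bar u-\bar a)(0)\,r_-^2$. This computation is the core of Proposition~\ref{Pro51}: the $k_*^{-1}$-correction $-K\bar\rho_x/\bar\rho+2\bar u_x/\bar u$ in $Q^0$ equals $2\partial_x(\bar u-\bar a)/\bar u$ at $x=0$. Your plan does not contain it.

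Your fallback cannot replace it. In Riemann variables $f^\top A^0f=\frac{A}{\bar\rho}\bigl(\frac{a-u}{2a}r_-^2+\frac{a+u}{2a}r_+^2\bigr)$, whose $r_-$-weight vanishes at the throat, and $R=O(N(T))$ gives no dissipation there. Decay on $I_2$ must likewise come from this interior coercivity (the paper's $e^{qt}$ argument), not from transport, since the $r_-$-characteristics stall at $x=0$.

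\textbf{Second gap: $H^2$ control on $I_2$.} Since $\lambda_-$ vanishes at $x=0$, the $r_-$-equation yields only $\lambda_-\partial_xr_-=\mathcal{H}_-f-\partial_tr_-$, i.e.\ roughly $x\,\partial_xr_-$. That does not bound $\|\partial_xr_-\|_{L^2}$, so \eqref{ajs2} fails on $I_2$. With it go the commutator bound \eqref{calF1} you invoke (its proof uses \eqref{ajs2}, and $\mathcal{F}^2$ contains $(\partial_t\hat A^1)\partial_x\partial_tf$) and the final passage from $\partial_t^jf$ to $\|f\|_{H^2}$.

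The paper closes this by estimating $\partial_x^jf_p$, $j=1,2$, directly from the $x$-differentiated system \eqref{eq561}:
\begin{itemize}
\item The commutator produces the extra zeroth-order term $j\bigl(\partial_xA^1_p-(\partial_xA^0_p)(A^0_p)^{-1}A^1_p\bigr)$, which at the throat \emph{increases} the damping. In your variables the net coefficient for $\partial_x^jr_-$ becomes $-(2j+1)\partial_x(\bar u-\bar a)(0)$, matching the factor $5$ in $Q^2$.
\item The lower-order couplings $\mathcal{G}^j_p$, which are not small, are absorbed by the hierarchy $\mu^{j+1}$ in $E^{\rm sp}_{p,\mu}$ of \eqref{EF5*}.
\item The traces of $\partial_x^jf_p$ at $x_*$ are bounded via \eqref{eq528} and \eqref{ajs2} on the subsonic side.
\end{itemize}
Once $E^{\rm sp}_{p,\mu}$ is added to the energy and \eqref{calF51} replaces \eqref{calF1}, your remaining steps go through as in Lemmas~\ref{Lem57}--\ref{Lem59}.
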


\begin{rem}
We remark that the assumption on $A$ in Theorem \ref{Thm50} (see also Theorem \ref{Thm13}) is weaker than the condition (\ref{eq231}) on $A$ in Theorem \ref{thm2.1}. 
It is seen that the stationary solution in Theorem \ref{Thm50} also satisfies
\begin{gather}\label{spro52}
\inf_{x \in \bar{I}} \bar{u}(x) = \inf_{x \in \bar{I}} \frac{Q}{A\bar{\rho}}(x) >0, \quad
\bar{\rho}_{x}(0)<0, \quad \bar{u}_{x}(0)>0.
\end{gather}
The last two equalities are obtained by applying Lemma \ref{Pro21} with $x_{m}=0$ and a suitable choice of $J$, since $\bar{\rho}$ satisfies  \eqref{spro51} and $F(\bar{\rho}(x),A(x))=F(\bar{\rho}(0),A(0))=F(\rho_{T}(A(0)),A(0))$.
It is worth pointing out that we do not require any restrictions on the magnitude of $|A'(x)|$ and $Q$, and accordingly the derivatives of the stationary solutions can be large.
\end{rem}

Theorem \ref{Thm50} is an immediate consequence of the time-local solvability in Lemma \ref{local5}
and an a-priori estimate in Proposition \ref{Pro50}.
\begin{lem}\label{local5}
Assume the same conditions as in Theorem \ref{Thm50} for $A$ and $(\bar{\rho}, \bar{m})$.
Then there exist constants $\delta_{\sharp}>0$ and $T_{\sharp}>0$ such that if the initial value $f^{0} \in H^2(I) \times H^2(I)$ satisfies the compatibility condition \eqref{comp1a} and $\| f^{0} \|_{H^2} \leq \delta_{\sharp}$, 
then the initial--boundary value problem \eqref{eq411*} admits a unique solution 
$f \in \mathfrak{X} (T_{\sharp})$.
Furthermore, if $f \in \mathfrak{X} (T)$
is a solution of \eqref{eq411*}, then the following time-integral of the trace of the highest order derivative $\partial_{t}^{j}\partial_{x}^{i} f$ makes sense for $i,j=0,1,2$ with $i+j=2$ and the point $x \in \bar{I}$ at which none of $u \pm \sqrt{p'(\rho)}$, characteristic speed of \eqref{eq417}, is zero for all $t \in [0,T]${\rm :}
\begin{gather}\label{trace5}
\int_{0}^{T} |\partial_{t}^{j}\partial_{x}^{i} f|^{2}(t,x) dt.  
\end{gather}
\end{lem}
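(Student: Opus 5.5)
The plan has two parts: local existence for \eqref{eq411*}, and the trace statement \eqref{trace5}. Both rest on the fact that near each boundary point the perturbed system is strictly hyperbolic with characteristic speeds bounded away from zero.

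For local existence we use the classical theory of symmetric hyperbolic initial--boundary value problems with maximally dissipative (or at least well-posed, uniformly Lopatinskii) boundary conditions, in the formulation of Rauch--Massey \cite{RM1974}. We freeze the coefficients of \eqref{eq417} at a given $\tilde f\in\mathfrak X(T)$ with $\sup_t\|\tilde f\|_{H^2}$ small. By \eqref{spro51}, \eqref{spro52} and Sobolev embedding, the eigenvalues $\lambda_\pm=u\pm a$ of $\hat A^1(\rho,m)$ behave as follows for small perturbations. At $x_1$ we have $\lambda_-<0<\lambda_+$, so exactly one condition is needed and $\omega=0$ is admissible: the row $(0\ 1)$ is not orthogonal to the incoming eigenvector, since $e_+\cdot(0,1)=1$. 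At $x_2$ both speeds are positive, so no condition is needed.

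The sonic point $x=0$ lies in the interior of $I$, where $\lambda_-$ vanishes. This is harmless for the Cauchy problem: the system is still symmetrizable there. We cannot use $A^0=\frac{A}{\bar\rho}{\rm diag}(S,1)$, since it degenerates where $S=0$. Instead we use the entropy symmetrizer ${\rm diag}(p'(\rho),1)$ adjusted by the standard Friedrichs form for the Euler system, namely the Hessian of the physical energy $\frac{m^2}{2\rho}+\int^\rho\int p'/\zeta$. This is positive definite uniformly in $\bar I$ because $\rho>0$.

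We then derive $H^2$ estimates for the linearized problem. Time derivatives up to order two are estimated by the energy method with the compatibility conditions \eqref{comp1a}. Spatial derivatives near $x_1$ are recovered from the equation, since $\hat A^1$ is invertible there, as in Lemma \ref{Lem3.55}. Away from the boundaries we use tangential and normal mollification. A contraction argument in a lower norm, with boundedness in the high norm, then gives $f\in\mathfrak X(T_\sharp)$ and uniqueness. This is standard, and we will only indicate the needed modifications relative to \cite{RM1974}.

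The trace claim \eqref{trace5} is the part needing care. Fix $x_*$ such that $u\pm a\ne0$ along $\{x=x_*\}$ for $t\in[0,T]$. By continuity the two speeds are then bounded away from zero in a strip $[0,T]\times[x_*-\eta,x_*+\eta]$. In this strip we rewrite \eqref{eq417} in characteristic variables $r_\pm=e_\pm f$ as in \eqref{eq339}, or more conveniently we exchange the roles of $t$ and $x$. Since $\hat A^1$ is invertible, $\partial_x f=-(\hat A^1)^{-1}(\partial_t f+\hat B_b f)$, which is a symmetric hyperbolic system in the \textquotedblleft time\textquotedblright{} variable $x$.

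Concretely, we apply $\partial_t^j\partial_x^i$ with $i+j=2$ to the equation, using mollified data. We multiply by a cutoff $\chi(x)$ supported in the strip with $\chi(x_*)=1$, and pair with $\partial_t^j\partial_x^i f$ weighted by the symmetrizer $M^1$-type matrix. This is the analogue of \eqref{eq333} with $M^1-vM^0$ replaced by a definite matrix; the relevant quadratic form is $M^1$ itself, and it is definite precisely because neither speed vanishes. Integrating over $[0,T]\times[x_*-\eta,x_*]$ gives
\[
\int_0^T|\partial_t^j\partial_x^i f|^2(t,x_*)\,dt\le C\Bigl(\sup_t\|f\|_{H^2}^2+\|f^0\|_{H^2}^2\Bigr)T+C\|f^0\|^2_{H^2},
\]
with the terms at $t=0$ and $t=T$ controlled by the $C([0,T];H^{2-j})$ regularity.

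The point is that the $x$-flux $M^1$ is definite, so the trace is bounded by interior norms even though $\partial_t^2 f$ is only in $L^2$ in space. The commutator terms are lower order and are bounded via Lemma \ref{Lem3.55}-type relations, valid where $\hat A^1$ is invertible. We then pass to the limit in the mollification.

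The main obstacle is obtaining this trace estimate uniformly in the mollification parameter for the highest-order terms $i+j=2$. The difficulty is that commutators of the mollifier with variable coefficients must be controlled using only $H^2$ regularity. We would first try Friedrichs' lemma with mollification in $t$ alone, combined with the $x$-evolution viewpoint.
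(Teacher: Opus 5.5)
Your plan is workable. It differs from the paper in both halves, however, and one step of the trace argument fails as stated and must be repaired.

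\textbf{Existence.} The paper does not stay in $(\rho,m)$. It passes to $f_p=(\phi,\psi)$ and \eqref{eq512}, whose symmetrizer $A^0_p=A\,{\rm diag}(p'(\rho)/\rho,\rho)$ is definite across $x=0$. The price is that $\omega(t,x_1)=0$ becomes the nonlinear condition $\phi=-\bar\rho\psi/(\bar u+\psi)$. This is why the paper invokes its own Theorem~\ref{local} (with $P_2={\rm id}$, $Q_2=M_2=0$ at the exit) and checks condition (vii) by hand.

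Your entropy Hessian $H=\nabla^2\eta$ in $(\rho,m)$ keeps the boundary condition linear, so classical theory applies directly. (Note that ${\rm diag}(p',1)$ does not symmetrize $\hat A^1$; the Hessian is the right object.) This is what the remark at the start of \S\ref{SecA3} tacitly needs, and $H$ is what makes it valid across the sonic point. The paper's detour buys uniformity with Lemma~\ref{local7}, where nonlinear boundary conditions are unavoidable.

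The right admissibility check at $x_1$ is dissipativity with respect to $H$, not $e_+\cdot(0,1)\neq0$, since $e_+$ is a left eigenvector. One computes $H\hat A^1=\rho^{-1}\begin{bmatrix}-uS&S\\S&u\end{bmatrix}$. Hence $\langle H\hat A^1\phi,\phi\rangle=-\rho^{-1}uS\phi_1^2<0$ on $\{\omega=0\}$, and $H\hat A^1$ is positive definite at the supersonic exit.

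\textbf{Trace: comparison of routes.} The paper (Theorem~\ref{thmB1}) diagonalizes and integrates each characteristic component of $P^{-1}\partial_t^2J_\delta f$ over characteristic triangles with a side on $\{x=y\}$. A single $x$-flux identity with a cutoff is a legitimate alternative that avoids the characteristic geometry, since the $t$-slice terms need only be bounded, not signed.

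\textbf{The weight fails.} The paper's $M^1={\rm diag}(S,1)$ is definite only where $S>0$, i.e.\ at subsonic points. At supersonic points it is indefinite, and $g^\top M^1g$ does not control $|g|^2$. This matters: at $x_2$, Section~\ref{Sec5} uses exactly the trace of $\partial_t^2f_p$. Nonvanishing speeds do not make the flux definite; they make $\hat A^1$ invertible. Use instead $M^1=H$ and $M^0=H(\hat A^1)^{-1}$, which is symmetric because $H\hat A^1$ is. Then the flux is coercive everywhere, and the hypothesis on the speeds enters only through $(\hat A^1)^{-1}$.

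\textbf{The open limit.} The step you leave open is where the paper does its work, and it should be organized as there.
\begin{enumerate}
\item Apply only $\partial_t^2$ to the $t$-mollified equation. With smoothing in $t$ alone, $x$-derivatives of the equation generate $\partial_x^3f$ or $\partial_t\partial_x^2f$, which are not available. With $\partial_t^2$ only, every term of the identity lies in $L^2$.
\item The commutator $\partial_t([\hat A^1,J_\delta]\partial_{xt}f)$ tends to $0$ by \eqref{mp4}.
\item Applying the identity to $\partial_t^2J_{\delta'}f-\partial_t^2J_{\delta''}f$ makes the traces Cauchy in $L^2(0,T-\eta)$.
\item A reflected mollifier covers $(\eta,T)$.
\item The traces with $i\ge1$ follow algebraically from $\partial_xf=-(\hat A^1)^{-1}(\partial_tf+\hat Bf)$.
\end{enumerate}
With $M^1=H$, your identity serves this Cauchy argument equally well.
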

\begin{proof}
We will show this lemma in Appendix.
\end{proof}

In general, the time-integral in \eqref{trace5} does not make sense for functions in $\mathfrak{X} (T)$. However, if $f$ solves the initial--boundary value problem \eqref{eq411*},
we can justify this integral. 

To present an a-priori estimate, we introduce 
\begin{gather*} 
N(T) := \sup_{t \in [0,T]} \| f(t,\cdot) \|_{H^2}.
\end{gather*}
\begin{pro} \label{Pro50}
Assume the same condition as in Theorem \ref{Thm50} for $A$ and $(\bar{\rho}, \bar{m})$.
Suppose that the initial-boundary value problem \eqref{eq411*} has a solution 
$f \in \mathfrak{X} (T)$ for some $T>0$.
Then there exist positive constants $\delta_{0}$, $C$, and $\nu$ independent of $T$ such that if $N(T) \leq \delta_{0}$, 
then the following estimate holds$:$
\begin{equation} \label{eqPro50}
\| f(t,\cdot) \|_{H^2} \le C e^{-\nu t} \| f^{0} \|_{H^2} \quad \text{for $t \in [0,T]$.}
\end{equation}
\end{pro}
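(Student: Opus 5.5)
The plan is to prove Proposition \ref{Pro50} along the lines of Proposition \ref{Pro31}: an energy inequality with a dissipative flux, a Rauch--Taylor characteristic argument turning the time-integrated flux into control of the full energy over a window of length $T_0$, and then the iteration \eqref{eq355}--\eqref{eq359}. The new feature is that $A^0=\frac{A}{\bar\rho}{\rm diag}(S,1)$ is no longer positive definite, since $\bar S$ changes sign at $x=0$. So I would fix an interface point $x_*\in(x_1,0)$ close to the sonic point, with $|x_*|$ small depending only on the stationary solution, and use two different energies:
\begin{itemize}
\item on $I_1:=(x_1,x_*)$, which is uniformly subsonic, the functional $E_1^j=\int_{x_1}^{x_*}(\partial_t^jf)^\top A^0\partial_t^jf\,dx$ exactly as in \eqref{eq321};
\item on $I_2:=(x_*,x_2)$, which contains the sonic point, a weighted diagonal energy $E_2^j=\int_{x_*}^{x_2}\bigl(w_+(r_+^j)^2+w_-(r_-^j)^2\bigr)dx$, with the Riemann-type variables $r^j_\pm=e_\pm\partial_t^jf$ of \eqref{eq339} and smooth positive weights $w_\pm(x)$ to be chosen.
\end{itemize}

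Step 1 is the weighted estimate on $I_2$. From \eqref{eq341} with weights, the identity reads
\begin{gather*}
\partial_t(w_\pm r_\pm^2)+\partial_x(\lambda_\pm w_\pm r_\pm^2)=(\lambda_\pm w_{\pm,x}+w_\pm\lambda_{\pm,x})r_\pm^2+2w_\pm r_\pm\mathcal H_\pm f .
\end{gather*}
For $r_+$ we have $\lambda_+\ge c>0$, so a steeply decreasing weight, $w_+=e^{-kx}$ with $k$ large, absorbs everything. For $r_-$ the speed $\lambda_-=u-a$ vanishes at $x=0$. By \eqref{eq237} and $p''\ge0$, $\bar\lambda_{-,x}(0)=\bar u_x(0)-\bar a_x(0)>0$, so $\lambda_-$ crosses zero increasingly. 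The diagonal part $(\partial_x\lambda_-)r_-^2$ therefore has the wrong sign near $0$, and I expect the real damping to come from the diagonal entry of $-e_-\hat B$ in $\mathcal H_-$. The first thing I would try is to compute the coefficient of $r_-^2$ in $\mathcal H_-$ at $x=0$ explicitly, using \eqref{eq221}, \eqref{eq225}, the relation $\bar S_x=A_xH/\bar S$ from \eqref{eq242}, and \eqref{eq243}. The hope is that, combined with $\lambda_{-,x}$, it yields a strictly negative quantity on a neighbourhood of $0$, which is where $A_{xx}(0)>0$ enters. If that works, I choose $w_-$ on $[x_*,x_2]$ decreasing away from that neighbourhood. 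The cross terms $r_+r_-$ are then absorbed by taking the ratio $w_+/w_-$ small near $0$ and large elsewhere, giving a negative definite interior term up to $CN(T)$ errors.

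Step 2 is the coupling at the interface. I would add $E_1+\kappa E_2$, where $\kappa>0$ is a constant to be fixed below.
\begin{itemize}
\item Integrating \eqref{eq325*} over $I_1$, the boundary at $x_1$ vanishes by \eqref{eq414a}, and the flux $+f^\top A^1f(x_*)$ appears as an outgoing term, which is indefinite.
\item Integrating the weighted identity over $I_2$ contributes $-\kappa(\lambda_+w_+r_+^2+\lambda_-w_-r_-^2)(x_*)$.
\item At $x_2$ the contribution $\kappa(\lambda_+w_+r_+^2+\lambda_-w_-r_-^2)(x_2)$ is nonnegative, since both speeds are positive there.
\end{itemize}
At $x_*$ we have $\lambda_-<0<\lambda_+$. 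Writing $f^\top A^1f$ in $r_\pm$ variables, the sum of the two fluxes is a quadratic form in $(r_+,r_-)(x_*)$. Its $r_-^2$ coefficient is of order $\kappa|\lambda_-|w_-$, which is positive and large once $\kappa$ is large. The $r_+^2$ coefficient is controlled because $f^\top A^1 f$ is positive on the incoming $r_+$ direction of the subsonic part.

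The point to check is that this quadratic form, the "flux gap", is positive definite for suitable $\kappa$. It plays the role of $D^0$ in Lemma \ref{Lem32}. With it, Lemmas \ref{Lem33}--\ref{Lem35} can be repeated: on $I_1$ one traces characteristics of \eqref{eq339} from $x_*$ and $x_1$ as in \eqref{eq324}, and on $I_2$ the interior term is already coercive. This yields the analogue of \eqref{eqLem35} for $E^0=E_1^0+\kappa E_2^0$.

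Step 3 handles the higher-order terms and closes the argument. For $j=1,2$ I would repeat Steps 1--2 on \eqref{eq361*} with the commutators ${\cal F}^j$, bounded by $CN(T)$ as in \eqref{calF1}; the traces at $x_*$ and $x_2$ are justified by Lemma \ref{local5}. The main obstacle I expect is the analogue of \eqref{ajs2}: $\hat A^1$ is singular at $x=0$, so $\partial_xf$ cannot be recovered from $\partial_tf$ by inverting the equation there. I would recover $\partial_x r_+$ from the equation, since $\lambda_+\neq0$. For $\partial_x r_-$ I would differentiate the $r_-$ equation in $x$ to get
\begin{gather*}
\partial_t q+\lambda_-\partial_xq=-(\lambda_{-,x})q+\cdots,\qquad q:=\partial_xr_- .
\end{gather*}
Near $x=0$ the term $-\lambda_{-,x}q$ is a genuine damping, because $\lambda_{-,x}(0)>0$. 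So a local weighted $L^2$ estimate for $q$ and $\partial_xq$ on a small neighbourhood of $0$, with inflow control from $\partial_t$-estimates away from $0$, should give the missing spatial norms. Summing all estimates and applying Gronwall's inequality and the $T_0$-step iteration exactly as in \eqref{eq355}--\eqref{eq359} then yields \eqref{eqPro50}.
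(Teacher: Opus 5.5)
Your overall scheme is the paper's. A strictly subsonic piece $(x_1,x_*)$ carries the energy of Section~\ref{Sec_sub}, and a piece $[x_*,x_2)$ containing the throat carries an exponentially weighted energy with coercive interior term. These are joined by a favourable flux gap at $x_*$, followed by Rauch--Taylor on the subsonic piece and the $T_0$-iteration.

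Your $E_2$ contains the paper's energy as a special case. Since $r_\pm=e_\pm f=\pm a\phi+\rho\psi-\phi\psi$, one has $Wf_p^\top A^0_pf_p=\frac{AW}{2\rho}(r_+^2+r_-^2)+O(N(T))|f|^2$. So the paper effectively takes $w_+=w_-=\frac{A}{2\bar\rho}e^{-k_*(x-x_*)}$ and $\kappa=1$.

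The computation you leave as a hope in Step~1 does work. At $x=0$ one has $\lambda_-=0$ and $e_-=(-2\bar a,1)$, and there $\bar u_x/\bar u=-\bar\rho_x/\bar\rho$. Hence the $r_-$-coefficient of $\mathcal H_-f$ is $b_{21}/(2\bar a)=\frac{\bar a}{2}(K+2)\frac{\bar\rho_x}{\bar\rho}(0)=-\bar\lambda_{-,x}(0)$. The net coefficient of $r_-^2$ is therefore $\bar\lambda_{-,x}-2\bar\lambda_{-,x}=-\bar\lambda_{-,x}(0)<0$. In your variables this is the $k_*$-independent $r_-^2$-entry $\frac{A}{2\bar\rho}\bar\lambda_{-,x}(0)$ of $\mathcal R^0$ in Proposition~\ref{Pro51}.

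Also, $\mathcal H_+f$ has no $r_-$-component at $x=0$, so no tuning of $w_+/w_-$ is needed. The choice $w_\pm=e^{-kx}$ with $k$ large and $|x_*|$ of order $1/k$ suffices. You must not make $w_+/w_-$ of order $1/k$ near $0$: the $r_+$-component of $w_-\mathcal H_-f$ is generically nonzero, and the only $r_-$-damping there is the $O(1)$ quantity $w_-\bar\lambda_{-,x}$.

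The step that fails as written is the choice of $\kappa$ in Step~2. The eigenvectors of $\hat A^1$ are $A^0$-orthogonal, so $f^\top A^0f=\frac{A}{2a\bar\rho}\bigl((a+u)r_+^2+(a-u)r_-^2\bigr)$ and $f^\top A^1f=\frac{A}{2a\bar\rho}\bigl((u+a)^2r_+^2-(a-u)^2r_-^2\bigr)$. Hence the flux gap at $x_*$ is diagonal:
\[
(\bar u+\bar a)\Bigl(\frac{A(\bar u+\bar a)}{2\bar a\bar\rho}-\kappa w_+\Bigr)r_+^2+(\bar a-\bar u)\Bigl(\kappa w_--\frac{A(\bar a-\bar u)}{2\bar a\bar\rho}\Bigr)r_-^2+O(N(T))|f|^2 .
\]
The $r_+$-flux leaving $I_1$ is fixed, while the $r_+$-flux charged to $I_2$ is $-\kappa\lambda_+w_+r_+^2$ and grows with $\kappa$. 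Taking $\kappa$ large therefore makes the $r_+^2$-coefficient negative and destroys the gap.

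What you need is the window $\frac{A(\bar a-\bar u)}{2\bar a\bar\rho\, w_-}<\kappa<\frac{A(\bar a+\bar u)}{2\bar a\bar\rho\, w_+}$ at $x_*$. It is nonempty iff $w_-/w_+>(\bar a-\bar u)/(\bar a+\bar u)$ there, which holds for instance when $w_+=w_-$, because $\bar u>0$. The paper's normalization $\kappa w_\pm(x_*)=A/(2\bar\rho)$ lies in this window and gives the gap $\bar u\, f^\top A^0f(x_*)$, which is \eqref{eq533}. With this correction, Step~2 and the Rauch--Taylor part go through.

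In Step~3, \eqref{calF1} is not available. It was derived from \eqref{ajs2}, which fails at the throat, and $[\partial_t^j,\hat A^1\partial_x]f$ contains $\partial_x\partial_t f$. You need a bound of the type \eqref{calF51}, involving all derivatives up to order $j$. That is harmless only because the spatial energies are part of the total energy.

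Your local treatment of $\partial_x^j r_-$ near the throat is a legitimate alternative to the paper's $E^{\rm sp}_{p,\mu}$. Once the $r_-$-component of $\mathcal H_-$ is included, the damping becomes $(1+2j)\bar\lambda_{-,x}(0)$. Both endpoints of a neighbourhood of $0$ are outflow for the $\lambda_-$-family, so no inflow data are needed there.

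The point you must handle is that the sources $\partial_x^j r_+$, recovered from time derivatives via $\lambda_+\neq0$, enter with $O(1)$ constants. The local estimates must therefore be multiplied by a small constant, and those sources absorbed by the coercive interior terms of the $E_2^i$-estimates on $I_2$. The paper avoids this coupling differently. It rewrites the time derivatives in \eqref{eq561} so that $\mathcal G^j_p$ contains only lower-order spatial derivatives, as in \eqref{calG51}. It then uses the positivity of $R^j_p$ on all of $[x_*,x_2]$ with weights $\mu^{j+1}$.
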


Combining the time-local solvability in Lemma \ref{local5} and the a-priori estimate in Proposition \ref{Pro50}, we can prove Theorem \ref{Thm50} in a way similar to that of Theorem \ref{Thm30}.

\bigskip

In the remaining part of this section, we show only Proposition \ref{Pro50}. 
The analysis in the previous section is applicable only to where the flow is subsonic.
In view of this, we divide the entire region $I$ into the subsonic region $I_{b}$ 
and the {\it extended supersonic region} $I_{p}$ as
\begin{gather}\label{decomI}
I = I_{b} \cup I_{p}, \quad I_{b}:=(x_{1},x_{*}), \quad I_{p}:=[x_{*},x_{2}), 
\end{gather}
where $x_{*} \in (x_{1},0)$ is specified in Proposition \ref{Pro51} below.
In the subsonic region $I_{b}$, both the time-dependent and stationary solutions are strictly subsonic,
while $I_{p}$ contains the supersonic region, the transonic point, and a portion of subsonic region.

{
One of the keys of the proof is to use the fundamental values $(\rho,m)$ within $I_{b}$ and $(\rho,u)$ within $I_{p}$.
We denote the velocity perturbation from the stationary solution by
\begin{gather*}
\psi=\psi(t,x) := u(t,x) - \bar{u}(x), \quad \psi^{0}=\psi^{0}(x) := u^{0}(x) - \bar{u}(x). 
\end{gather*}
We also use the subscripts $b$ and $p$ to distinguish between quantities utilized in $I_{b}$ and $I_{p}$, respectively.
For instance, the unknown functions and initial data are expressed by
\begin{gather*}
f_{b} := (\phi,\omega)^{\top}, \   f_{b}^{0} := (\phi^{0},\omega^{0})^{\top}
\ \ \text{and} \ \ 
f_{p} := (\phi,\psi)^{\top}, \  f_{p}^{0} := (\phi^{0},\psi^{0})^{\top}.
\end{gather*}
These unknown functions are  related each other by
\begin{gather}\label{eq528}
f_{b}
=
\begin{bmatrix}
1 & 0 
\\ 
\bar{u} & \rho
\end{bmatrix}
f_{p}
\ \ \text{or} \ \
f_{p} = 
\dfrac{1}{\rho}
\begin{bmatrix}
\rho & 0 
\\ 
-\bar{u} & 1
\end{bmatrix}
f_{b}.
\end{gather}
The unknown functions $f_{b}$ satisfies the system \eqref{eq319} which is a symmetrization of \eqref{eq317} and \eqref{eq417}. So we denote the matrices $A^0$, $A^1$, and $B$ in \eqref{eq319}  by $A^0_b$, $A^1_b$, and $B_b$, respectively, i.e.,  
\begin{gather*}
A_{b}^0 \partial_t f_{b} + A_{b}^1 \partial_x f_{b} + B_{b} f_{b} =A^0 \partial_t f + A^1 \partial_x f + B f= 0.
\end{gather*}
Note that $A_{b}^{0}$ is positive definite in $I_{b}$ where the flow is subsonic.
Let us derive the symmetric system for the unknown functions $f_{p}$.
The equations \eqref{eq101} and \eqref{eq151} can be transformed into 
}
\begin{subequations} \label{eq401}
\begin{gather} 
\rho_t + u \rho_x + \rho u_x + \dfrac{A_x}{A} \rho u = 0,  
\\
u_t + u u_x + \frac{1}{\rho} p(\rho)_x = 0,  
\end{gather}
\end{subequations}
and
\begin{subequations} \label{eq402}
\begin{gather} 
\bar{u} \bar{\rho}_x + \bar{\rho} \bar{u}_x + \dfrac{A_x}{A} \bar{\rho} \bar{u} = 0,  
\\
\bar{u} \bar{u}_x + \frac{1}{\bar{\rho}} p(\bar{\rho})_x = 0,  
\end{gather}
\end{subequations}
respectively.
From the equations \eqref{eq401} and \eqref{eq402}, 
{
we see that $f_p$ satisfies
\begin{gather}\label{eq512}
A^0_p \partial_t f_{p} + A^1_p \partial_x f_{p} + B_p f_{p} = 0,
\end{gather}
}
where
\begin{align*} 
A^0_p(x,\rho,u) &:=
\begin{bmatrix}
\dfrac{A p'(\rho)}{\rho}   & 0
\\[10pt]
0 & A\rho
\end{bmatrix},
\quad
A^1_p(x,\rho,u) := A^0_p
\begin{bmatrix}
u  & \rho
\\[10pt]
\dfrac{p'(\rho)}{\rho} & u
\end{bmatrix},
\\
{B}_p(x,\rho,u) &:= A^0_p
\begin{bmatrix}
\bar{u}_x + \dfrac{A_x}{A} u   & \bar{\rho}_x +  \dfrac{A_x}{A} \bar{\rho}
\\  
b_{p} & \bar{u}_x
\end{bmatrix},
\\
b_{p}(x,\rho,u)  &:= \bar{\rho}_x
\int_0^1 \left(\dfrac{d}{d \rho} \dfrac{p'(\rho)}{\rho} \right)\Big|_{\rho = \bar{\rho} + \theta (\rho-\bar{\rho}) } d \theta.
\notag
\end{align*}

{Another key of the proof} is to find an appropriate point $x_{*} \in (x_{1},0)$ to decompose $I$ as in \eqref{decomI}. 
We mention briefly our idea to find such a point $x_{*}$.
{We employ the weight function
\begin{gather} \label{eq522}
W=W(x) := e^{-k_* (x-x_*)},
\end{gather}
where $k_{*}>0$ will also be determined in Proposition \ref{Pro51}.
}
Multiplying \eqref{eq512} by $2W f^{\top}$ from the left gives
\begin{gather}\label{eq523}
\partial_t \left( W f_{p}^\top A^0_p f_{p} \right) 
+ \partial_x \left( W f_{p}^\top A^1_p f_{p} \right) 
+ W f_{p}^\top (R_{p} +k_{*}A^{1}_{p}) f_{p} = 0,
\end{gather}
where
\begin{gather}
R_p := - \partial_t A^0_p - \partial_x A^1_p +  B_p + B_p^\top.
\label{eq514}
\end{gather}
We note that $A^{1}_{p}$ is positive definite on $[\ve,x_{2}]$ for $\ve>0$ due to \eqref{spro51}, \eqref{spro52}, and $N(T) \ll 1$.
The matrix $k_{*}A^{1}_{p}$ in \eqref{eq523}, associated with the use of the weight function $W$, provides a dissipative structure on $[\ve,x_{2}]$. 
Although this strategy works for purely supersonic problems as in \cite{OS2020}, 
this alone is not enough to obtain a dissipative structure around the transonic point 
because $A^{1}_{p}$ is not positive definite there.
Alternatively, we observe another dissipative structure near the throat of the nozzle where the flow is
transonic.
Indeed, the matrix $R_{p}$ in \eqref{eq523} which includes derivatives of the stationary solution is shown to be positive definite around the transonic point due to \eqref{eq237}.
Consequently, we can choose $x_{*} \in (x_{1},0) $ and $k_{*}>0$ so that $R_{p}+k_{*}A^{1}_{p}$ is positive definite on $\bar{I}_{p}=[x_{*},x_{2}]$.

{The last key} is that at the interface between $I_b$ and $I_p$ i.e. at $x=x_*$, 
a dissipative term arises due to the difference in the flux from both sides, 
which implies the dissipation of the energy over the entire domain $I$.
These insights provide a mathematical description of the mechanism that the convergent-divergent nozzle stably produces a supersonic flow at the exit.

Observing that symmetric matrices $A^{0}_{b}$ and $A^{0}_{p}$ are positive definite on
$\overline{I_b}$ and $\overline{I_b}$ respectively,
we define the following energy and dissipative functionals for $j=0,1,2$:
\begin{subequations}\label{EF5}
\begin{gather}
E^{j}_{bp}(t):=(E^{j}_{b}+E^{j}_{p})(t), \quad D^{j}_{bp}(t):=(D^{j}_{b}+D^{j}_{p})(t), 
\\
E^{j}_b(t) := \int_{x_1}^{x_*}  ( \partial_t^j f_{b} )^\top A^0_b \partial_t^j f_{b} (t, x) dx, 
\quad
E^{j}_p(t) := \int_{x_*}^{x_2} W  ( \partial_t^j f_{p} )^\top A^0_p \partial_t^j f_{p} (t, x) dx,
\\
D^{j}_b(t) := |\partial_{t}^{j} f_{b}|^2 (\tau, x_*),
\quad
D^{j}_p(t) := |\partial_{t}^{j} f_{p}|^2 (\tau, x_2).
\end{gather}
\end{subequations}
Similarly to the derivation of \eqref{norm-es1}, using \eqref{eq512}, \eqref{spro51}, $(\bar{\rho},\bar{m})  \in C^3(\bar{I}) \times C^4(\bar{I})$, and $N(T) \ll 1$, we obtain
\begin{align}\label{norm-es51}
&|f_{b}(t,x)|+|\partial_{x} f_{b}(t,x)|+|\partial_{t} f_{b}(t,x)|  \leq CN(T) \quad \text{for} \ (t,x) \in [0,T] \times \bar{I},
\\
&|f_{p}(t,x)|+|\partial_{x} f_{p}(t,x)|+|\partial_{t} f_{p}(t,x)|  \leq CN(T) \quad \text{for} \ (t,x) \in [0,T] \times \bar{I},
\end{align}
where $C$ is a positive constant independent of $(t,x)$ and $T$.

Now we prove Proposition \ref{Pro51} to determine $x_{*} \in (x_{1},0)$ and $k_{*}>0$.

\begin{pro} \label{Pro51}
Under the same assumptions as in Proposition \ref{Pro50}, 
if $N(T)$ is sufficiently small, 
there exist constants $c, k_* > 0$ and $x_* \in (x_1,0)$ such that 
\begin{gather} \label{eqPro51}
f^\top R^j_{p}(t,x) f \ge c | f |^2 \quad \text{for $f \in \mathbb{R}^2, \ (t,x) \in [0,T] \times [x_*, x_2], \ j=0,1,2$}
\end{gather}
hold, where the $2 \times 2$ symmetric matrices $R^j_{p}$ are defined by
\begin{align*}
R^j_{p} &:= R_p + k_*  A^1_p + j \times \left( 2 \partial_x A^1_p -  (\partial_x A_p^0) (A_p^0)^{-1} A_p^1 - \{(\partial_x A_p^0) (A_p^0)^{-1} A_p^1\}^\top  \right)
\end{align*}
using $R_p$ in \eqref{eq514}.
\end{pro}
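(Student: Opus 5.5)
The plan is to prove \eqref{eqPro51} first for the linearized (stationary) matrices and then absorb the perturbation. Every entry of $R^j_p$ is a smooth function of $(x,\rho,u)$, $\bar\rho_x$, $\bar u_x$ and $\partial_t f_p$, except the terms $-\partial_x A^1_p$ and $\partial_x A^0_p$, which also contain $\partial_x f_p$ through the chain rule. Hence, by \eqref{norm-es51}, $R^j_p(t,x)=\bar R^j_p(x)+\mathcal O(N(T))$ uniformly on $[0,T]\times\bar I$. Here $\bar R^j_p$ denotes $R^j_p$ evaluated at $(\rho,u)=(\bar\rho,\bar u)$, with the $\partial_t A^0_p$ term dropped. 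The regularity $(\bar\rho,\bar m)\in C^3\times C^4$ and $A\in C^4$ make $\bar R^j_p$ continuous on $\bar I$. It therefore suffices to find $k_*$ and $x_*$ such that $\bar R^j_p\ge 2c$ on $[x_*,x_2]$ for $j=0,1,2$, and then to take $N(T)$ small.

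\textbf{Step 1: positivity at the sonic point.} I will compute $\bar R^j_p(0)$ explicitly. At $x=0$ we have $A_x=0$, $\bar u^2=p'(\bar\rho)$, and by \eqref{eq402} $\bar\rho_x=-\bar\rho\bar u_x/\bar u$. For $j=0$ a direct computation gives the following entries.
\begin{itemize}
\item $(\bar R_p)_{11}=A\bigl(p'\bar u_x/\bar\rho-p''\bar\rho_x\bar u/\bar\rho+p'\bar u\bar\rho_x/\bar\rho^2\bigr)$. Substituting $\bar\rho_x=-\bar\rho\bar u_x/\bar u$, the first and third terms cancel, so $(\bar R_p)_{11}=Ap''\bar u_x$. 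This is $\ge 0$, and it vanishes exactly when $p''(\bar\rho(0))=0$.
\item $(\bar R_p)_{22}=A(\bar\rho\bar u_x-\bar\rho_x\bar u)=2A\bar\rho\bar u_x>0$.
\item $(\bar R_p)_{12}=-Ap''\bar\rho_x+Ap'\bar\rho_x/\bar\rho+A\bar\rho\, b_p=0$.
\end{itemize}
So $\bar R_p(0)$ is only positive semidefinite, and the $(1,1)$ entry has no margin (take $p''(\bar\rho(0))=0$, which \eqref{eq102b} allows). The $k_*A^1_p$ term does not help here: at $x=0$ one has $\det A^1_p\propto\bar u^2-p'=0$ and $A^1_p$ has a null vector $(\bar\rho,-\bar u)$.

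For $j\ge1$, the added term is
\[
2\partial_x A^1_p-(\partial_xA^0_p)(A^0_p)^{-1}A^1_p-\{(\partial_xA^0_p)(A^0_p)^{-1}A^1_p\}^\top=2\,\mathrm{sym}(A^0_p\partial_x\hat A_p),\qquad \hat A_p:=\begin{bmatrix}u&\rho\\ p'/\rho&u\end{bmatrix}.
\]
At $x=0$ its diagonal is $(2Ap'\bar u_x/\bar\rho,\;2A\bar\rho\bar u_x)$, which is positive. Its off-diagonal entry is $Ap''\bar\rho_x$, nonzero in general. For $j=1,2$ I will check $\det\bar R^j_p(0)>0$, using the relations above and $\bar u_x(0)>0$ from \eqref{eq237}. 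Positive definiteness, and only this, then extends by continuity to an interval $[-\eta,\eta]$.

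\textbf{Step 2 (the step I expect to be the main obstacle).} For $j=0$ I cannot get $\bar R_p(0)>0$ by the calculation above. I will first try to use the $k_*A^1_p$ term near $0$ as well. On the null vector $(\bar\rho,-\bar u)$ of $A^1_p(0)$, $\bar R_p(0)$ is positive, since $(\bar R_p)_{22}\bar u^2>0$. On the complementary direction, $A^1_p(0)\ge0$ is positive. This suggests that $\bar R_p(0)+k_*A^1_p(0)>0$ for every $k_*>0$, because a positive semidefinite matrix plus one that is positive on its kernel is positive definite. If that works, I will then show $\bar R_p+k_*A^1_p\ge 2c$ on some $[-\eta,\eta]$ by continuity. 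The difficulty is that $\eta$ depends on $k_*$, which changes the order in which constants are fixed in Step 3. If it fails, I will redo the $(1,1)$ entry with a different weight or different variables.

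\textbf{Step 3: fixing constants.} For $j=1,2$ the order is as follows. Fix $\eta$ with $\bar R^j_p\ge 4c$ on $[-\eta,\eta]$. On $[\eta,x_2]$ the flow is strictly supersonic, so $A^1_p\ge c_\eta>0$; choose $k_*$ so that $k_*c_\eta\ge\sup_{\bar I}|\bar R^j_p|+2c$. On $[0,\eta]$ we have $A^1_p\ge0$, so the bound $4c$ survives. On $[x_*,0]$ the flow is subsonic but near-sonic, so $A^1_p$ has one negative eigenvalue, of size $\mathcal O(|\bar u-\bar a|)=\mathcal O(|x|)$ by \eqref{eq237}. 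Taking $|x_*|$ small, depending on $k_*$, makes $k_*$ times this eigenvalue at most $2c$, so $\bar R^j_p+k_*A^1_p\ge 2c$ there.

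For $j=0$ the order is different, since by Step 2 positivity on $[-\eta,\eta]$ itself uses $k_*A^1_p$. Fix $k_*$ first by the supersonic condition on $[\eta_0,x_2]$ for a preliminary $\eta_0$. Then shrink $\eta\le\eta_0$ and $|x_*|$ using the Step 2 argument for that fixed $k_*$. This may force $k_*$, $\eta$ and $x_*$ to be chosen jointly across all $j$. Finally, I will add the $\mathcal O(N(T))$ error and take $N(T)$ small to obtain \eqref{eqPro51}.
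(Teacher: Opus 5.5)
Your $j=0$ analysis is sound. The kernel argument at the sonic point is correct. The ordering worry also closes: since $A^1_p\ge0$ on $[0,x_2]$, you have $\bar R_p+k_*A^1_p\ge\bar R_p+A^1_p$ there for every $k_*\ge1$.

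The genuine gap is in Step 1 for $j=1,2$. Your Step 3 fixes $\eta$ before $k_*$, so it needs $\bar R^j_p(0)>0$ \emph{without} the $k_*A^1_p$ term. That claim is false in general. For all $x$ one has $\bar R_p=\mathrm{diag}(-A\bar u p''\bar\rho_x/\bar\rho,\,2A\bar\rho\bar u_x)$, and your added term equals $A\bigl(\begin{smallmatrix}2p'\bar u_x/\bar\rho & p''\bar\rho_x\\ p''\bar\rho_x & 2\bar\rho\bar u_x\end{smallmatrix}\bigr)$. Using $\bar\rho_x=-\bar\rho\bar u_x/\bar u$ and $p'=\bar u^2$ at $x=0$, one finds
\[
\det\bar R^j_p(0)=A^2\bar u_x^2\bar u^2\bigl[2(1+j)K+4j(1+j)-j^2K^2\bigr],\qquad K=K(\bar\rho(0)).
\]
This is $A^2\bar u_x^2\bar u^2(8+4K-K^2)$ for $j=1$ and $A^2\bar u_x^2\bar u^2(24+6K-4K^2)$ for $j=2$. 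Both become negative once $K>2+2\sqrt3$, respectively $K>(3+\sqrt{105})/4\approx3.31$. For instance $p=\rho^\gamma$ with $\gamma>4.31$ gives such $K$, and it is allowed by \eqref{eq102}. So your $j=1,2$ chain has no starting point.

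The repair is to use your Step 2 idea for every $j$, but with $k_*$ large rather than arbitrary.
\begin{enumerate}
\item Take $v=(\bar\rho,-\bar u)$, which spans $\ker A^1_p(0)$. Then $v^\top\bar R^j_p(0)v=(1+2j)A\bar\rho\bar u_x(\bar\rho p''+2p')|_{x=0}>0$.
\item Hence $\bar R^j_p(0)+k_0A^1_p(0)>0$ for some $k_0$, and by continuity this holds on some $[0,\ell']$.
\item Because $A^1_p\ge0$ on $[0,x_2]$, the bound persists on $[0,\ell']$ for every $k_*\ge k_0$.
\item Choose $k_*\ge k_0$ large enough to handle $[\ell',x_2]$, where $A^1_p$ is uniformly positive.
\item Finally choose $x_*$ by continuity, with $k_*$ now fixed.
\end{enumerate}

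This is the paper's argument in qualitative form. The paper keeps $k_*$ inside the entries and checks that $\mathcal R^j_{11},\mathcal R^j_{12}>0$ for large $k_*$. It then expands
\[
Q^j=\mathcal R^j_{11}\mathcal R^j_{22}/(\mathcal R^j_{12})^2=\bar M^2\bigl[1+\beta_jk_*^{-1}+\mathcal O(k_*^{-2})\bigr].
\]
At $x=0$, $\beta_j$ (for example $\beta_2=-5K\bar\rho_x/\bar\rho+10\bar u_x/\bar u$) is a positive multiple of your kernel quantity $v^\top\bar R^j_pv$, and it stays positive near the throat. Together with $\bar M\ge1$ on $[0,x_2]$, this gives $Q^j\ge1+2c_2$ there. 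So the paper never needs $\bar R^j_p(0)$ itself to be positive definite. It also avoids treating $j=1$ separately, via $R^1_p=(R^0_p+R^2_p)/2$.
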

\begin{proof}
It suffices to show positive definiteness of $R^0_{p}$ and $R^2_{p}$, since it implies that of $R^1_{p} = ( R^0_{p} + R^2_{p} ) / 2$.
Each entry of $R_{p}^0$ is  
\begin{align*}
R^0_{p11} 
&= - \partial_t \left( \frac{A p'}{\rho} \right) 
- \partial_x \left( \frac{A p'}{\rho} u \right) 
+ \frac{2p'}{\rho}  \left( A \bar{u}_x + A_x u \right)
+ k_* \dfrac{A u p'}{\rho}
=  {\cal R}^{0}_{11} + \mathcal{O} (N(T)),
\\
R^0_{p12} &= R^0_{p21} 
= - \partial_x \left( A p' \right)
+ \frac{p'}{\rho} ( A \bar{\rho} )_x 
+ \rho A b_{21} 
= {\cal R}^{0}_{12}+ \mathcal{O} (N(T)),
\\
R^0_{p22} 
&= 
- \partial_t \left( A \rho \right) 
- \partial_x \left( A \rho u \right)
+ 2A \rho \bar{u}_x + k_* A \rho u
={\cal R}^{0}_{22}+ \mathcal{O} (N(T)),
\end{align*}
where we have used \eqref{spro51}, \eqref{spro52}, \eqref{norm-es51}, and $(\bar{\rho}, \bar{m})  \in C^3(\bar{I}) \times C^4(\bar{I})$, and ${\cal R}^{0}$ is defined by
\begin{gather*}
{\cal R}^{0}_{11}:=\dfrac{A \bar{u} p'(\bar{\rho})}{\bar{\rho}} \left( k_* - K(\bar{\rho}) \dfrac{\bar{\rho}_x}{\bar{\rho}} \right), \quad
{\cal R}^{0}_{12}={\cal R}^{0}_{21}:=k_* A p'(\bar{\rho}), \quad
{\cal R}^{0}_{22}:=A \bar{\rho} \bar{u} \left( k_* + 2 \dfrac{\bar{u}_x}{\bar{u}} \right).
\end{gather*}
Here $K$ is defined in Theorem \ref{thm2.1}.
Likewise, the elements of $R^2_{p}$ are
\begin{align*}
R^2_{p11} = 
{\cal R}^{2}_{11}+ \mathcal{O} (N(T)), \quad
R^2_{p12} = R^2_{p21} = 
{\cal R}^{2}_{12}+ \mathcal{O} (N(T)), \quad
R^2_{p22} = {\cal R}^{2}_{22}+ \mathcal{O} (N(T)),
\end{align*}
where
\begin{align*}
{\cal R}^{2}_{11}&:=\dfrac{A \bar{u} p'(\bar{\rho})} {\bar{\rho}} 
 \left( k_* -  K(\bar{\rho}) \dfrac{\bar{\rho}_x}{\bar{\rho}} + 4 \dfrac{\bar{u}_x}{\bar{u}} \right), \quad
{\cal R}^{2}_{12}={\cal R}^{2}_{21}:=A p'(\bar{\rho}) \left( k_* + 2 K (\bar{\rho}) \dfrac{\bar{\rho}_x}{\bar{\rho}} \right),
\\
{\cal R}^{2}_{22}&:=A \bar{\rho} \bar{u} \left( k_* + 6 \dfrac{\bar{u}_x} {\bar{u}} \right).
\end{align*}
We note that ${\cal R}^{j}$ for $j=0,2$ depends on $x$ but independent of $t$.
We claim that there exist constants $c_{1}>0$ and $x_{*} \in (x_{1},0)$ such that ${\cal R}^{j}_{11}, {\rm det} \, {\cal R}^{j} \geq c_{1}$ holds for $x \in [x_{*},x_{2}]$. If the claim is true, then the desired inequality  \eqref{eqPro51} immediately follows from taking $N(T) \ll 1$ according to $c_{1}>0$. 

Let us show the claim.
Taking $k_{*}$ large gives $ \inf_{x \in \bar{I}} {\cal R}^{j}_{11}(x) >0$ and  $ \inf_{x \in \bar{I}} {\cal R}^{j}_{12}(x) >0$.
Therefore, in order to find $c_{1}$ and $x_{*}$ in the claim, it is sufficient to show that 
\begin{gather} \label{eq519}
Q^j(x) := \dfrac{{\cal R}^j_{11} {\cal R}^j_{22}} { \left( {\cal R}^j_{12} \right)^{2}} 
\ge 1 + c_2, \quad x \in [x_{*},x_{2}]
\end{gather}
for certain constants $c_{2}>0$ and $x_{*} \in (x_{1},0)$.
We see that as $\ k_* \to \infty$,
\begin{align*}
&Q^0(x) 
=\bar{M}^2 \left[ 1 + \left( - K(\bar{\rho}) \dfrac{\bar{\rho}_x} {\bar{\rho}} + 2 \dfrac{\bar{u}_x} {\bar{u}} \right) k_*^{-1} + \mathcal{O}(k_*^{-2}) \right],
\\
&Q^2(x) 
=
\bar{M}^2
\left[
 1 + \left( - 5 K(\bar{\rho}) \dfrac{\bar{\rho}_x} {\bar{\rho}} 
 + 10 \dfrac{\bar{u}_x}{\bar{u}} \right) k_*^{-1} + \mathcal{O} ( k_*^{-2} )
\right].
\end{align*}
Due to \eqref{spro52} and $(\bar{\rho}, \bar{m})  \in C^3(\bar{I}) \times C^4(\bar{I})$,
there exists a constant $\ell>0$ such that
\begin{gather*} 
\sup_{x\in[-\ell,\ell]}\bar{\rho}_x(x)<0, \quad \inf_{x\in[-\ell,\ell]}\bar{u}_x(x) >0. 
\end{gather*}
On the other hand, it also holds from \eqref{spro51} that $\inf_{x \in [0,x_{2}]} \bar{M} (x) \geq 1$ 
and $\inf_{x \in [\ell,x_{2}]} \bar{M} (x) > 1$.
Using these and letting $k_*>0$ be sufficiently large, we obtain
\begin{gather*}
Q^j(x) \ge 1 + 2c_{2}
\quad \text{for} \  (t,x) \in [0,T] \times [0, x_2], \ j = 0, 2
\end{gather*}
for a certain constant $c_2>0$.
By the continuity of $Q^j$, we can find $x_* \in (x_1,0)$ such that \eqref{eq519} holds.
The proof is complete.
\end{proof}

\subsection{Basic estimates} \label{Sec41}
In this subsection,
%
we derive the most fundamental estimate of $\| f_p \|_{L^2}$
in Lemma \ref{Lem54}. 
In the derivation, we divide the entire region $I$ into the subsonic region $I_{b}=(x_{1},x_{*})$, and
the extended supersonic region $I_{p}=[x_{*},x_{2})$, where $x_{*} \in (x_{1},0)$ is the constant 
specified in Proposition \ref{Pro51}.

\begin{lem} \label{Lem54}
Under the same assumptions as in Proposition \ref{Pro50}, 
if $N(T)$ is sufficiently small, the following estimate holds for $t_{0} \in [0,T]$ and $t \in [t_{0},T]${\rm :}
\begin{gather} \label{eqLem54}
E^0_{bp}(t)
+ c \int_{t_{0}}^t  { D^{0}_{bp} (\tau)} d\tau 
\le
E^0_{bp}(t_{0}) 
+ C N(T) \int_{t_{0}}^t E^0_{bp}(\tau) d\tau, 
\end{gather}
where $c$ and $C$ are positive constants independent of $t_{0}$ and $T$, and $E^{0}$ and $D^{0}$ are defined in \eqref{EF5}.
\end{lem}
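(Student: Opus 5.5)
Integrate the two energy identities over their respective subregions of $[t_0,t]$ and add them. On $I_b=(x_1,x_*)$ we use the identity \eqref{eq325*} for $f_b$, with $A^0_b,A^1_b,R$ as in Section~\ref{Sec_sub}. On $I_p=[x_*,x_2)$ we use the weighted identity \eqref{eq523} for $f_p$.

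The $I_b$ part goes as in the proof of Lemma~\ref{Lem32}. Proposition~\ref{Pro30} holds verbatim, since the cancellation ${\cal R}_{21}={\cal R}_{22}=0$ used only the stationary equations. Hence the interior term is bounded by $CN(T)\int E^0_b$, because $A^0_b$ is positive definite on $\overline{I_b}$. The flux at $x_1$ vanishes by \eqref{eq414a}. The flux at $x=x_*$ appears with a plus sign:
\[
\int_{t_0}^t f_b^\top A^1_b f_b(\tau,x_*)\,d\tau .
\]
The $I_p$ part is handled by Proposition~\ref{Pro51} with $j=0$, which gives $W f_p^\top(R_p+k_*A^1_p)f_p\ge c|f_p|^2\ge 0$; we simply drop this interior term. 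At $x_2$ the flow is supersonic, so $A^1_p(x_2)$ is positive definite. Therefore $W f_p^\top A^1_p f_p(\tau,x_2)\ge c|f_p|^2(\tau,x_2)=cD^0_p$, which is the dissipation term. The remaining boundary contribution is the flux at $x_*$ with a minus sign and $W(x_*)=1$:
\[
-\int_{t_0}^t f_p^\top A^1_p f_p(\tau,x_*)\,d\tau .
\]

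The key step is to show that the interface term
\[
J:=f_b^\top A^1_b f_b-f_p^\top A^1_p f_p\quad\text{at }x=x_*
\]
satisfies $J\ge c|f_b|^2(\tau,x_*)$ for small $N(T)$. Using \eqref{eq528}, write $f_p$ in terms of $f_b=(\phi,\omega)$, so that $J$ becomes a quadratic form in $(\phi,\omega)$. Its leading part is evaluated at the stationary state $(\bar\rho,\bar u)(x_*)$, with errors of order $\mathcal{O}(N(T))|f_b|^2$. From $A^1_b=\frac{A}{\bar\rho}\begin{bmatrix}0&S\\S&2u\end{bmatrix}$ we get $f_b^\top A^1_bf_b=\frac{A}{\bar\rho}(2\bar S\phi\omega+2\bar u\omega^2)$. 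For the other side,
\[
f_p^\top A^1_pf_p=A\Bigl(\frac{\bar u p'}{\bar\rho}\phi^2+2p'\phi\psi+\bar\rho\bar u\psi^2\Bigr),\qquad \psi=\frac{\omega-\bar u\phi}{\bar\rho}.
\]
Expanding, the $\phi\omega$ terms cancel and the $\omega^2$ coefficients combine. The expected result is
\[
J=\frac{A\bar u}{\bar\rho}\Bigl[(\bar a^2-\bar u^2)\phi^2+\omega^2\Bigr]+\mathcal{O}(N(T))|f_b|^2 ,
\]
up to the exact coefficients, which must be checked. Since $x_*<0$ is strictly subsonic, $\bar a^2-\bar u^2=\bar S(x_*)>0$, and the form is coercive. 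This is the flux gap referred to in the introduction.

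Adding the two identities then yields
\[
E^0_{bp}(t)+c\int_{t_0}^t D^0_{bp}\,d\tau\le E^0_{bp}(t_0)+CN(T)\int_{t_0}^t E^0_{bp}\,d\tau .
\]
The main obstacle is the algebra at $x_*$, namely confirming that the cross terms cancel exactly at leading order and that the gap has a positive sign. If the cancellation failed, we would fall back on the freedom to shift $x_*$ slightly within the range allowed by Proposition~\ref{Pro51}. We would also need to check carefully that the nonlinear corrections in $A^1_p$, $A^1_b$ and $\rho$ in \eqref{eq528} are $\mathcal{O}(N(T))$ by \eqref{norm-es51}.
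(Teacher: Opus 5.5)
Your proposal is correct and follows the paper's proof essentially step for step. The paper likewise combines the subsonic identity on $I_b$, which gives the flux $+\int f_b^\top A^1_b f_b(\tau,x_*)\,d\tau$, with the weighted identity on $I_p$, using Proposition~\ref{Pro51} with $j=0$ and the positivity of $A^1_p(x_2)$, and then treats the interface via \eqref{eq528}. Your leading-order computation $J=\frac{A\bar u}{\bar\rho}\bigl(\bar S\phi^2+\omega^2\bigr)+\mathcal{O}(N(T))|f_b|^2$ is exactly the paper's $f_b^\top\bigl(\frac{A\bar u}{\bar\rho}\mathrm{diag}(\bar S,1)+\mathcal{O}(N(T))\bigr)f_b$, so the cross terms do cancel and no fallback of moving $x_*$ is needed.
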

\begin{proof}
We claim that the following inequalities hold:
\begin{gather}
E_b^0(t) 
+ \int_{t_{0}}^t f_{b}^\top A^1_b f_{b} (\tau, x_*) d\tau
\le
E_b^0(t_{0})  
+ C N(T) \int_{t_{0}}^t E_b^0(\tau) d\tau,
\label{eqLem53}
\\
E_p^0(t) 
- \int_{t_{0}}^t f_{p}^\top  A^1_p  f_{p} (\tau, x_*) d \tau
 + c \int_{t_{0}}^t  D_{p}^{0} (\tau) d\tau 
\le 
E_p^0(t_{0}),
\label{eqLem52}
\end{gather}
where $E^0_b$, $E^0_p$, $D^0_b$, and $D^0_p$ are defined in \eqref{EF5}.

Let us first show \eqref{eqLem53}.
The proof is analogous to that of \eqref{eqLem32} in Lemma \ref{Lem32}.
Repeating the proof there but with $\bar{I}_{b}=[x_{1},x_{*}]$ in place of $\bar{I}=[x_{1},x_{2}]$, 
we obtain the estimate corresponding to \eqref{eq327} as
\begin{align} \label{eq531}
&\int_{x_1}^{x_*} f_b^\top A_b^0 f_b (t,x) dx
- \int_{t_{0}}^t  f_b^\top A_b^1 f_b (\tau, x_1) d\tau
+ \int_{t_{0}}^t  f_b^\top A_b^1 f_b (\tau, x_*) d\tau
\notag \\
& =\int_{x_1}^{x_*} f_b^\top A_b^0 f_b (t_{0},x) dx 
-  \int_{t_{0}}^t \!\! \int_{x_1}^{x_*} f_b^\top R_b f_b(\tau, x) dx d\tau
\notag \\
&\le
\int_{x_1}^{x_*} f_b^\top A_b^0 f_b(t_{0},x) dx 
+ C N(T) \int_{t_{0}}^t \!\! \int_{x_1}^{x_*} | f_b |^2 (\tau, x) dx d\tau,
\end{align}
where $R_b := - \partial_t A_b^0 - \partial_x A_b^1 +  B_b + B_b^\top$.
We note that the integrand of the second term on the LHS of \eqref{eq531} vanishes due to the boundary condition $\omega(t,x_{1})=0$. 
Therefore the estimate \eqref{eq531} is identical to \eqref{eqLem53}.

We next show \eqref{eqLem52}.
Integrating the transport equation \eqref{eq523} of energy $E^0_p$ over $[t_{0},t] \times \bar{I}_{p}$ gives
\begin{multline} \label{eq524}
E_p^0(t) 
- \int_{t_{0}}^t f_{p}^\top A^1_p  f_{p} (\tau, x_*) d\tau
+ W(x_2)  \int_{t_{0}}^t f_{p}^\top A^1_p f_{p} (\tau, x_2) d\tau
\\
+ \int_{t_{0}}^t\!\!\int_{x_{*}}^{x_{2}} W f_{p}^\top R^{0}_{p} f_{p} (\tau, x) dxd\tau
=
E_p^0(t_{0}), 
\end{multline}
where $R^0_{p}$ is positive definite on $[0,T] \times \bar{I}_{p}$ as shown in Proposition \ref{Pro51}.
Furthermore, it follows from  \eqref{spro51}, \eqref{spro52}, \eqref{norm-es51}, $(\bar{\rho}, \bar{m})  \in C^3(\bar{I}) \times C^4(\bar{I})$, and $N(T) \ll 1$ that $A^1_p(t,x_2)$ is also positive definite for any $t \in [0,T]$.
From these, we see that the last term on the LHS of \eqref{eq524} is nonnegative, 
{and that the integrant of the third term is bounded from below as
\begin{gather*}
f_{p}^\top A^1_p f_{p} (\tau, x_2)  \geq c  |f_{p}|^{2} (\tau, x_{2})   = c D_{p}^{0} (\tau).
\end{gather*}
}
Thus \eqref{eqLem52} holds.

We complete the proof by showing \eqref{eqLem54}.
Adding \eqref{eqLem53} and \eqref{eqLem52} yields
\begin{multline} \label{eq532}
E^0_{bp}(t)
+ \int_{t_{0}}^t \left( f_{b}^\top A^1_b f_{b} - f_{p}^\top A^1_p f_{p} \right) (\tau, x_*) d\tau
+ c \int_{t_0}^t D_p^0(\tau) d\tau \\
\le
E^0_{bp}(t_{0}) 
+ C N(T) \int_{t_{0}}^t E^0_{bp}(\tau) d\tau. 
\end{multline}
Using \eqref{eq528}, we rewrite the integrand of the second term on the LHS of \eqref{eq532} as
\begin{align*}
\left( f_{b}^\top A^1_b f_{b} - f_{p}^\top A^1_p f_{p} \right) (\tau, x_*) 
=
f_{b}^\top \left( \dfrac{A\bar{u}}{\bar{\rho}}
\mathrm{diag} \left( \bar{S}, 1 \right) + \mathcal{O}(N(T)) \right) f_{b} (\tau, x_*),
\end{align*}
where $\bar{S}$ is defined in \eqref{eq239} and $\bar{S}(x_{*})>0$ holds due to $\bar{M}(x_{*})<1$ in \eqref{spro51}.
Hence there exists a positive constant $c$ independent $T$ such that
\begin{gather} \label{eq533}
\left( f_{b}^\top A^1_b f_{b} - f_{p}^\top A^1_p f_{p} \right) (\tau, x_*) 
\ge
c | f_{b} |^2 (\tau, x_*) = c D_{b}^{0} (\tau) 
\end{gather}
as long as $N(T)$ is small enough. 
Substituting \eqref{eq533} into \eqref{eq532} concludes \eqref{eqLem54}.
\end{proof}

In the Lemma \ref{Lem55} below, we show that the second term on the LHS of \eqref{eqLem54} 
in Lemma \ref{Lem54} gives rise to the energy decay over the entire domain.
Combination of Lemma \ref{Lem54} with Lemma \ref{Lem55} will imply 
the exponential decay of $E^0_{bp}(t)$.

First, noting that the flow is subsonic over $I_b$ if $N(T)$ is sufficiently small,
we develop the same arguments as in Lemmas \ref{Lem33}--\ref{Lem35}
regarding $\int_{t_{0}}^t  |f_{b}|^2 (\tau, x_*) d\tau$ in \eqref{eqLem54} 
as the boundary term in \eqref{eqLem32}.
Next, we apply the time-weighted energy method to \eqref{eq523} over $I_p$.

For this purpose, we use the following notation:
%
%
\begin{gather} \label{eq525}
v_-^{*} := \frac{1}{2} \sup_{x \in \bar{I}_{b}} \left( \bar{u}(x) - \bar{a}(x) \right)< 0,
\quad
v_+^{*} := \frac{1}{2} \inf_{x \in \bar{I}_{b}} \left( \bar{u}(x) + \bar{a}(x) \right)> 0,
\\
\label{eq526}
T_{-}^{*} : = \frac{x_* - x_1}{| v^*_- |},
\quad
T_{+}^{*} : = \frac{x_* - x_1}{| v^*_+ |},
\quad
T_0^{*} : = T_{-}^{*} +  T_{+}^{*},
\end{gather}
where we have just replaced $\bar{I}$ and $x_{2}$ by $\bar{I}_{b}$ and $x_{*}$, respectively, in \eqref{eq322} and \eqref{eq323}.
Furthermore, we also define $\varOmega_{-}^{*} {(\tilde{t}_{0})}$, $\varOmega_{+}^{*} {(\tilde{t}_{0})}$, 
and $\varOmega^{*} {(\tilde{t}_{0})}$ by 
\begin{align*}
\varOmega_-^{*} {(\tilde{t}_{0})} &:= \{ (t, x) \, | \, \tilde{t}_{0} \le t \le \tilde{t}_{0}+T_{-}^{*}, \ 
x_* + v_-^{*} (t-\tilde{t}_{0}) \le x \le x_* \},
\\
\varOmega_+^{*} {(\tilde{t}_{0})} &:= \{ (t, x) \, | \, \tilde{t}_{0} + T_{-}^{*} \le t \le \tilde{t}_{0}+T_{0}^{*}, \ 
x_1 + v_+^{*} (  t  -\tilde{t}_{0}- T_{-}^{*} ) \le x \le x_* \},
\\
\varOmega^{*} {(\tilde{t}_{0})} &:= \varOmega_-^{*}(\tilde{t}_{0}) \cup \varOmega_+^{*}(\tilde{t}_{0})
\end{align*}
for any nonnegative constant $\tilde{t}_{0}$.

\begin{lem} \label{Lem55}
Under the same assumptions as in Proposition \ref{Pro50} with $T \geq T_{0}^{*}$, 
if $N(T)$ is sufficiently small, 
the following estimate holds for $\tilde{t}_{0} \in [0,T-T_{0}^{*}]$ and $t \in [\tilde{t}_{0}+T_{0}^{*},T]${\rm :}
\begin{gather} \label{eqLem56}
 ( 1 + c ) E^{0}_{bp}(t)
+ c \int_{\tilde{t}_{0}}^t D_{bp}^0 (\tau) d\tau
\le 
E_{bp}^{0}(\tilde{t}_{0}) + C N(T) \int_{\tilde{t}_{0}}^t E_{bp}^{0}(\tau) d\tau,
\end{gather}
where $c$ and $C$ are positive constants independent of $\tilde{t}_{0}$ and $T$.
\end{lem}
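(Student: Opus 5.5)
We will mimic the structure of Lemmas \ref{Lem33}--\ref{Lem35}, treating $x=x_*$ as an artificial outflow boundary for the subsonic part and using the weighted estimate \eqref{eq523} with Proposition \ref{Pro51} for the extended supersonic part. The goal is a bound of the form
\begin{gather*}
E^0_{bp}(\tilde{t}_0+T_0^*) \le C\int_{\tilde{t}_0}^{\tilde{t}_0+T_0^*} D^0_{bp}(\tau)\,d\tau + C N(T)\int_{\tilde{t}_0}^{\tilde{t}_0+T_0^*} E^0_{bp}(\tau)\,d\tau .
\end{gather*}
We then combine it with Lemma \ref{Lem54} exactly as in the proof of Lemma \ref{Lem35}.

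\emph{Subsonic part.} On $\bar I_b$ the flow is strictly subsonic with $\inf(\bar u+\bar a)>0>\sup(\bar u-\bar a)$, so the arguments of Lemmas \ref{Lem33} and \ref{Lem34} transfer verbatim with $x_2,T_\pm,T_0,\varOmega$ replaced by $x_*,T^*_\pm,T^*_0,\varOmega^*$. Multiplying \eqref{eq333*} by $e^{kx}$ and integrating over $\varOmega^*(\tilde t_0)$ gives $\iint_{\varOmega^*}|f_b|^2\le C\int D_b^0$. The characteristic variables $r_\pm$ of \eqref{eq339} integrated over $\varOmega^*_\mp$ then give
\begin{gather*}
E_b^0(\tilde t_0+T_-^*)\le C\int_{\tilde t_0}^{\tilde t_0+T_0^*}D^0_b\,d\tau .
\end{gather*}
The only boundary data needed are $\omega(t,x_1)=0$, where the outgoing characteristic $r_+$ is expressed through the incoming $r_-$ up to $O(1)$ factors, and the trace at $x_*$, which is exactly $D^0_b$. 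The lateral boundary terms on the slanted lines have the good sign because $M^1-vM^0$ is positive definite for $v\in[v^*_-,v^*_+]$. Via Lemma \ref{Lem54} on $[\tilde t_0+T_-^*,t]$, this controls $E^0_b(t)$ for $t\ge \tilde t_0+T_0^*$. The quadratic terms give the $CN(T)\int E^0_{bp}$ correction.

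\emph{Extended supersonic part.} This is the time-weighted step, and the main obstacle. The flux of \eqref{eq523} at $x_*$ is incoming, since $A^1_p(x_*)$ has one positive eigenvalue, and it cannot be estimated from the $I_p$ side alone. Instead, I will multiply \eqref{eq523} by $(\tau-\tilde t_0)$ and integrate over $[\tilde t_0,t]\times I_p$. The time-derivative term produces $(t-\tilde t_0)E^0_p(t)-\int_{\tilde t_0}^t E^0_p\,d\tau$. The coercivity from Proposition \ref{Pro51}, $\int W f_p^\top R^0_p f_p\ge c E^0_p$, absorbs the $-\int E^0_p$ term, or more directly the coercive term controls $\int_{\tilde t_0}^t E_p^0$ by \eqref{eqLem52}. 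The outflow term at $x_2$ is nonnegative. The inflow term at $x_*$ is bounded by $C(t-\tilde t_0)\int_{\tilde t_0}^t|f_p|^2(\tau,x_*)\,d\tau\le C\int D^0_b$, using \eqref{eq528} to write $|f_p|\le C|f_b|$. Alternatively, and more simply, \eqref{eqLem52} already gives
\begin{gather*}
c\int_{\tilde t_0}^t E^0_p\,d\tau\le E^0_p(\tilde t_0)+C\int_{\tilde t_0}^t D^0_b\,d\tau .
\end{gather*}
To avoid the $E^0_p(\tilde t_0)$ term, I intend to use the time weight, giving
\begin{gather*}
(t-\tilde t_0)E_p^0(t)\le \int_{\tilde t_0}^tE^0_p + C(t-\tilde t_0)\int D^0_b ,
\end{gather*}
and then the integrated inequality for $\int E^0_p$. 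The delicate point is keeping constants independent of $\tilde t_0$ while still producing a strict factor. I expect to fix $t=\tilde t_0+T$ with a fixed large $T\ge T_0^*$ so that $E^0_p(t)\le \tfrac{C}{T}E^0_p(\tilde t_0)+C\int D^0_b$, and choose $T$ large.

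\emph{Combination.} Adding the two parts yields $E^0_{bp}(t)\le \varepsilon E^0_{bp}(\tilde t_0)+C\int_{\tilde t_0}^t D^0_{bp}+CN(T)\int E^0_{bp}$, with $\varepsilon$ small. Multiplying by a small $\eta$ and adding to \eqref{eqLem54} gives $(1+\eta)E^0_{bp}(t)+(c-C\eta)\int D^0_{bp}\le(1+\eta\varepsilon)E^0_{bp}(\tilde t_0)+\cdots$. Dividing by $1+\eta\varepsilon$ then gives \eqref{eqLem56}, possibly after replacing $T_0^*$ by the fixed larger time above.
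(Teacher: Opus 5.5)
Your architecture matches the paper's. You run the trapezoid argument of Lemmas \ref{Lem33}--\ref{Lem34} on $\bar I_b$ with $x_*$ in the role of $x_2$, prove an energy estimate on $\bar I_p$ whose incoming flux at $x_*$ is bounded by $C\int D^0_b\,d\tau$ through \eqref{eq528}, and finish with an $\varepsilon$-combination with \eqref{eqLem54}. Two steps, however, do not work as written.

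First, to control $E^0_b(t)$ you must use the $I_b$-only inequality \eqref{eqLem53} with $t_0=\tilde t_0+T^*_-$. There you move the indefinite flux $\int f_b^\top A^1_b f_b(\tau,x_*)\,d\tau$ to the right-hand side and bound it by $C\int D^0_b\,d\tau$. Lemma \ref{Lem54} on $[\tilde t_0+T^*_-,t]$ instead leaves $E^0_p(\tilde t_0+T^*_-)$ with coefficient one on the right. That term is not controlled by $\int D^0_b$, and once added to the supersonic bound it destroys the strict factor in front of $E^0_{bp}(\tilde t_0)$. (The inlet condition $\omega(t,x_1)=0$ is used exactly in \eqref{eqLem53}, to kill the flux at $x_1$. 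The sets $\varOmega^*$ meet $x=x_1$ only at a corner, so the characteristic step does not need it.)

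Second, your supersonic step gives at best $E^0_p(t)\le \frac{1}{c(t-\tilde t_0)}E^0_p(\tilde t_0)+C\int_{\tilde t_0}^t D^0_b\,d\tau$. This is a contraction only for $t-\tilde t_0>1/c$, so, as you concede, you get the lemma only with $T^*_0$ enlarged, not for all $t\in[\tilde t_0+T^*_0,T]$ as stated. Some factor in front of $E^0_p(\tilde t_0)$ is unavoidable, contrary to the target bound in your first paragraph: the $u-a$ characteristics traced backward converge to the sonic point, so $E^0_p(t)$ cannot be bounded by boundary traces alone.

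The paper integrates $e^{q\tau}$ times \eqref{eq523} over $[\tilde t_0,t]\times\bar I_p$, with $q$ so small that $R^0_p-qA^0_p$ stays positive definite by Proposition \ref{Pro51}. This yields $E^0_p(t)\le e^{-q(t-\tilde t_0)}E^0_p(\tilde t_0)+C\int_{\tilde t_0}^t D^0_b\,d\tau$, i.e.\ \eqref{eq547} with $1+c=e^{qT^*_0}$, for every admissible $t$.

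Your linear weight can be repaired in the same spirit by keeping $E^0_p(t)$ in the unweighted identity \eqref{eq524}. Combine $E^0_p(t)+c\int_{\tilde t_0}^t E^0_p\,d\tau\le E^0_p(\tilde t_0)+C\int_{\tilde t_0}^t D^0_b\,d\tau$ with your weighted bound. This gives $(t-\tilde t_0+c^{-1})E^0_p(t)\le c^{-1}E^0_p(\tilde t_0)+C(t-\tilde t_0+c^{-1})\int_{\tilde t_0}^t D^0_b\,d\tau$, hence the factor $(1+c(t-\tilde t_0))^{-1}\le(1+cT^*_0)^{-1}<1$. The final combination only needs a factor strictly below one, not a small one, so no enlargement of $T^*_0$ is ever required.
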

\begin{proof}
We claim that the following estimates hold:
 \begin{gather} 
E_b^0(t) 
\le 
C \int_{\tilde{t}_{0}}^t { D_{b}^{0}(\tau) }d \tau 
+ C N(T) \int_{\tilde{t}_{0}}^t E_b^0(\tau) d\tau,
\label{eqLem55} 
\\
( 1 + c ) E_p^0(t) 
\le 
E_p^0(\tilde{t}_{0}) + C \int_{\tilde{t}_{0}}^t {D_{b}^{0}(\tau) } d\tau,
\label{eq547}
\end{gather}
where $c$ and $C$ are positive constants independent of $\tilde{t}_{0}$ and $T$.

We first show \eqref{eqLem55} by repeating the proof of Lemma \ref{Lem33} 
but replacing $[x_{1},x_{2}]$ by $\bar{I}_{b}=[x_{1},x_{*}]$. 
Then we can have
\begin{gather} \label{eq535}
\iint_{\varOmega^{*}(\tilde{t}_{0})} | f_{b} |^2(\tau, x) dx d\tau
\le 
C \int_{\tilde{t}_{0}}^{\tilde{t}_{0}+T_0^{*}} { D_{b}^{0}(\tau) } d\tau,
\end{gather}
which is parallel to \eqref{eqLem33}.
In much the same way as in the proof of Lemma \ref{Lem34} using \eqref{eq535} instead of \eqref{eqLem33}, we can obtain
\begin{gather} \label{eq537}
E_b^{0}(\tilde{t}_{0}+T_{-}^{*})
\le 
C \int_{\tilde{t}_{0}}^{\tilde{t}_{0}+T_0^{*}} { D_{b}^{0}(\tau) } d\tau,
\end{gather}
which is parallel to \eqref{eqLem34}.
Then we substitute \eqref{eq537} into \eqref{eqLem53} with $t_{0}=\tilde{t}_{0}+T_{-}^{*}$ 
and $t \in [\tilde{t}_{0}+T^*_0,T]$, and move the second term on the LHS of the resultant inequality 
to the RHS to obtain \eqref{eqLem55}. 


To prove \eqref{eq547}, 
we integrate the product of $e^{q t}$ and \eqref{eq523} over $[\tilde{t}_{0}, t] \times \bar{I}_{p}$ 
for a positve constant $q$ to obtain
\begin{align}
& E_p^0(t) 
+ W(x_2) \int_{\tilde{t}_{0}}^t e^{-q (t-\tau)}   f_{p}^\top A^1_p f_{p} (\tau, x_2) d\tau
\notag\\
& \quad 
+ \int_{\tilde{t}_{0}}^t\!\!\int_{x_{*}}^{x_{2}} e^{-q (t-\tau)} W f_{p}^\top (R^{0}_{p}-q A^{0}_{p}) f_{p} (\tau, x) dxd\tau
\notag \\
\quad
& =
e^{-q(t-\tilde{t}_{0})}E_p^0(\tilde{t}_{0})
+ \int_{\tilde{t}_{0}}^t e^{-q (t-\tau)} f_{p}^\top A^1_p  f_{p} (\tau, x_*) d\tau
\notag\\
& \leq 
e^{-q(t-\tilde{t}_{0})}E_p^0(\tilde{t}_{0})
+  C \int_{\tilde{t}_{0}}^t { D_{b}^{0}(\tau) }d\tau,
 \label{eq545}
\end{align}
where we have used \eqref{spro51}, \eqref{eq528}, $(\bar{\rho}, \bar{m})  \in C^3(\bar{I}) \times C^4(\bar{I})$, and $N(T) \ll 1$ in the derivation.
Note that $R^{0}_{p}$ is positive definite in $\bar{I}_{p}$, 
and $A^{1}_{p}$ is also positive definite at $x=x_{2}$. 
Then letting $q \ll 1$, we see that the second and the last term on the LHS are nonnegative.
On the RHS, $e^{-q (t-\tilde{t}_{0})} \le e^{-q T_0^{*}}<1$ holds for $\disp t \in [\tilde{t}_{0}+T_0^{*}, T]$.
Substituting this into \eqref{eq545}, we have \eqref{eq547}.

Finally, we multiply the sum of twice of $\eqref{eqLem55}$ and $\eqref{eq547}$ by $\ve>0$, 
and add the resultant inequality to \eqref{eqLem54} with $t_0=\tilde{t}_{0}$.
Then letting $\ve \ll 1$ concludes \eqref{eqLem56}.
\end{proof}

\subsection{Estimates on the higher order derivatives} \label{Sec42}
In this subsection, we will derive the $L^{2}$-estimate of the higher order derivatives of the solution.
In the subsonic region $\bar{I}_{b}$, we work with the time-derivatives similarly to Section \ref{Sec_sub}, since we can use the relation \eqref{ajs2} to obtain the $L^{2}$-estimate of the spatial derivatives. 
On the other hand, in the extended supersonic region $\bar{I}_{p}$, the relation \eqref{ajs2} does not hold, 
because neither ${A}^1_b$ nor ${A}^1_p$ is regular entirely on $\bar{I}_{p}$. So some additional care is required to estimate the spatial-derivatives of the solution in $\bar{I}_{p}$.
%
%

To make the computation in this subsection rigorous, we need to use a mollifier due to the insufficiency of the regularity of the solution. 
However, we omit the argument since it is standard. 
We apply the operator $\partial_{t}^{j}$ for $j=1,2$ to \eqref{eq512} to obtain
\begin{gather}\label{eq554}
{A}^{0}_{p}\partial_{t} (\partial_{t}^{j} f_{p}) + {A}^{1}_{p} \partial_{x} (\partial_{t}^{j} f_{p}) + {B}_{p} (\partial_{t}^{j} f_{p})
={\cal F}^{j}_{p},
\\
{\cal F}^{j}_{p}:=- [\partial_{t}^{j},{A}^{0}_{p}\partial_{t}] f_{p} - [\partial_{t}^{j},{A}^{1}_{p}\partial_{x}] f_{p} - [\partial_{t}^{j},{B}_{p}] f_{p},
\notag
\end{gather}
where $[\cal T,\cal S]:=\cal T\cal S-\cal S\cal T$ is a commutator. It is straightforward to see from \eqref{eq528} that
\begin{gather}\label{eq528*}
\left| \partial_t^j f_{p}
-
\dfrac{1} {\rho}
\begin{bmatrix}
\rho & 0 
\\ 
-\bar{u} & 1
\end{bmatrix}
\partial_t^j f_{b}
\right|(t,x)
\leq CN(T) \sum_{i=0}^{j-1} |\partial_t^i f_{b}|(t,x),
\end{gather}
where $C$ is a positive constant independent of $(t,x)$ and $T$.
Let us also consider the spatial-derivative of $f_{p}$.
Applying $\partial_{x}^{j}$ for $j=1,2$ to \eqref{eq512} gives
\begin{gather*}
{A}^{0}_{p}\partial_{t} (\partial_{x}^{j} f_{p}) + {A}^{1}_{p} \partial_{x} (\partial_{x}^{j} f_{p}) + {B}_{p} (\partial_{x}^{j} f_{p})
=- [\partial_{x}^{j},{A}^{0}_{p}\partial_{t}] f_{p} - [\partial_{x}^{j},{A}^{1}_{p}\partial_{x}] f_{p} - [\partial_{x}^{j},{B}_{p}] f_{p}.
\end{gather*}
Then we rewrite the term $(\partial_{x} A^{1}_{p}) \partial_{x}^{j-1} \partial_{t} f_{p}$ on the RHS into some linear combination of $f_{p}$ and its spatial-derivatives of at most $(j-1)$-th order 
by using \eqref{eq512} to obtain
\begin{gather}\label{eq561}
{A}^{0}_{p}\partial_{t} (\partial_{x}^{j} f_{p}) + {A}^{1}_{p} \partial_{x} (\partial_{x}^{j} f_{p}) + {B}_{p} (\partial_{x}^{j} f_{p})
+j\times \left(\partial_x A^1_p -  (\partial_x A_p^0) (A_p^0)^{-1} A_p^1\right)\!(\partial_{x}^{j} f_{p})
={\cal G}^{j}_{p},
\end{gather}
where
\begin{align*}
{\cal G}^{1}_{p}&:= \left( \partial_x A_p^0 \right) \left( A_p^0 \right)^{-1} B_p f_{p} - (\partial_{x} B_{p}) f_{p},
\\
{\cal G}^{2}_{p}&:= 2 \left( \partial_x A_p^0 \right) \left( A_p^0 \right)^{-1} \!\! \left[ B_p \partial_{x} f_p - (\partial_x A_p^0) \left( A_p^0 \right)^{-1}\!\! \left( A_p^1  \partial_{x} f_p + B_p f_p \right) + (\partial_x A_p^1)  \partial_{x} f_p + (\partial_x B_p) f_{p} \right]
\\
& \quad - 2 (\partial_{x} B_{p})  \partial_{x} f_{p} - (\partial _{xx} A_{p}^{0})  \partial_{t} f_{p} - (\partial _{xx} A_{p}^{1})  \partial_{x}f_{p} - (\partial_{xx} B_{p}) f_{p}.
\end{align*}
The equality \eqref{eq561} holds also for $j=0$ by regarding ${\cal G}^{0}_{p} = 0$.

Now we estimate the remaining terms ${\cal F}^{j}_{p}$ and ${\cal G}^{j}_{p}$.

\begin{lem}\label{Lem5.55}
Under the same assumptions as in Proposition \ref{Pro50}, 
if $N(T)$ is sufficiently small, 
the following estimates hold for $j=1,2${\rm :}
\begin{gather}
|{\cal F}^{j}_{p}|(t,x)  \leq CN(T) \sum_{i=0}^j (|\partial_t^i f_{p}| + |\partial_x^i f_{p}|)(t,x),
\label{calF51} \\
|{\cal G}^{j}_{p}|(t,x)  \leq C \sum_{i=0}^{j-1} |\partial_x^i f_{p}|(t,x),
\label{calG51}
\end{gather}
where $C$ is a positive constant independent of $(t,x)$ and $T$.
\end{lem}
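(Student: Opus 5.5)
The plan is a direct pointwise computation. It uses three inputs: the bounds \eqref{norm-es51}, the regularity $(\bar{\rho},\bar{m})\in C^3(\bar I)\times C^4(\bar I)$ together with $A\in C^4(\bar I)$, and $\inf\bar\rho>0$ with $N(T)\ll1$. The last ensures that $A^0_p$, $A^1_p$, $B_p$ and $(A^0_p)^{-1}$ are smooth functions of $(x,\rho,u)$ on a compact set of states. We never invert $A^1_p$, which is singular near the sonic point; this is exactly why the estimates are stated with both $\partial_t^i f_p$ and $\partial_x^i f_p$ on the right-hand side.

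\textbf{Estimate \eqref{calF51}.} Expand the commutators by the Leibniz rule. Since $A^0_p$, $A^1_p$, $B_p$ depend on $t$ only through $(\rho,u)=(\bar\rho+\phi,\bar u+\psi)$, every time derivative falling on a coefficient produces a factor $\partial_t^k\phi$ or $\partial_t^k\psi$ with $k\ge1$. For $j=1$ the terms are $(\partial_t A^0_p)\partial_t f_p+(\partial_t A^1_p)\partial_x f_p+(\partial_t B_p)f_p$. Each coefficient derivative is $O(|\partial_t f_p|)=O(N(T))$ by \eqref{norm-es51}, which gives the bound.

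For $j=2$ one gets additionally the products $2(\partial_tA^0_p)\partial_t^2 f_p$, $2(\partial_tA^1_p)\partial_x\partial_t f_p$ and $(\partial_t^2A^1_p)\partial_x f_p$, among others. The first is $CN(T)|\partial_t^2 f_p|$. For $(\partial_t^2A^1_p)\partial_xf_p$, note that $\partial_t^2A^1_p$ is linear in $\partial_t^2 f_p$ plus a quadratic term in $\partial_t f_p$, and $|\partial_x f_p|\le CN(T)$; so it is $\le CN(T)(|\partial_t^2f_p|+|\partial_tf_p|)$.

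The mixed term $2(\partial_tA^1_p)\partial_x\partial_t f_p$ is the only one not directly of the stated form. I would differentiate \eqref{eq512} in $x$, keeping $A^1_p$ on the left, so that $A^0_p\partial_t\partial_x f_p=-\partial_x(A^1_p\partial_x f_p+B_pf_p)-(\partial_xA^0_p)\partial_tf_p$. This gives
\[
|\partial_x\partial_t f_p|\le C(|\partial_x^2 f_p|+|\partial_x f_p|+|f_p|+|\partial_t f_p|).
\]
Multiplying by $|\partial_tA^1_p|\le CN(T)$ then yields the required form. This step relies on $A^0_p$ being invertible, not $A^1_p$, so it is valid across the sonic point.

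\textbf{Estimate \eqref{calG51}.} This is simpler. In ${\cal G}^1_p$ only $f_p$ appears, multiplied by coefficients involving $\partial_xA^0_p,(A^0_p)^{-1},B_p,\partial_xB_p$. These coefficients are bounded because $\partial_x$ of a coefficient involves $\bar\rho_x,\bar u_x,A_x$, which are bounded, and $\partial_x\phi,\partial_x\psi$, which are bounded by $CN(T)$.

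In ${\cal G}^2_p$ the coefficients involve at most second $x$-derivatives of $A^0_p,A^1_p,B_p$. These bring in $\bar\rho_{xx},\bar u_{xx}$; $\partial_x b_p$ contains $\bar\rho_{xx}$ and $\partial_{xx}b_p$ contains $\bar\rho_{xxx}$, which is why $C^3$ is needed. They also bring in $\partial_x^2\phi,\partial_x^2\psi$, which appear linearly multiplied by bounded quantities. The terms with no $f_p$-derivative in the coefficient are bounded times $|f_p|+|\partial_xf_p|$. The term $(\partial_{xx}A^0_p)\partial_tf_p$ needs care, because $\partial_tf_p$ appears. I would replace $\partial_t f_p$ using \eqref{eq512}: $\partial_t f_p=-(A^0_p)^{-1}(A^1_p\partial_xf_p+B_pf_p)$, so $|\partial_t f_p|\le C(|f_p|+|\partial_x f_p|)$. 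Likewise, the second-derivative-of-perturbation pieces such as $(\partial_x^2\rho)\,\partial_x f_p$ are handled by bounding $|\partial_x f_p|\le CN(T)$ pointwise and absorbing; this keeps $C$ uniform and yields $C(|f_p|+|\partial_xf_p|)$.

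\textbf{Main obstacle.} I expect the main difficulty to be the bookkeeping of factors like $\partial_x^2\phi\cdot\partial_x f_p$ in ${\cal G}^2_p$ (and $\partial_t^2\phi\cdot\partial_x f_p$ in ${\cal F}^2_p$). These must be charged as (quantity controlled by the $L^\infty$ bound \eqref{norm-es51}) times (quantity that stays on the right-hand side), so that the resulting constant is independent of $T$ and no $L^\infty$ bound on second derivatives is required.
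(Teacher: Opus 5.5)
Your treatment of \eqref{calF51} is the paper's argument. You expand the commutators and bound every time derivative landing on a coefficient by \eqref{norm-es51}. You then control the one missing second-order quantity $\partial_x\partial_t f_p$ through the $x$-differentiated equation (\eqref{eq561} with $j=1$), inverting $A^0_p$ rather than the degenerate $A^1_p$. The case $j=1$ of \eqref{calG51} is also fine.

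The step that fails is your treatment of \eqref{calG51} for $j=2$. The terms $(\partial_x^2A^0_p)\partial_tf_p$, $(\partial_x^2A^1_p)\partial_xf_p$ and $(\partial_x^2B_p)f_p$ contain $\phi_{xx},\psi_{xx}$ multiplied by first-order quantities of the perturbation. For example, $\partial_uA^1_p=A^0_p$, $A^0_p$ does not depend on $u$, and $u$ enters $B_p$ only through the first-row entry $\frac{A_x}{A}u$. Hence the coefficient of $\psi_{xx}$ in the second component of ${\cal G}^2_p$ is exactly $-A\rho\,\psi_x$, so ${\cal G}^2_p$ contains $-A\rho\,\psi_x\psi_{xx}$ and nothing cancels it. Your substitution $\partial_tf_p=-(A^0_p)^{-1}(A^1_p\partial_xf_p+B_pf_p)$ likewise leaves a term $\phi_{xx}\cdot O(|f_p|+|\partial_xf_p|)$ from $(\partial_x^2A^0_p)\partial_tf_p$.

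Using $|\partial_xf_p|\le CN(T)$ on such a term produces $CN(T)|\partial_x^2f_p|$, not $C(|f_p|+|\partial_xf_p|)$. The right-hand side of \eqref{calG51} has no second-order term to absorb it into. Moreover, $\partial_x^2f_p(t,\cdot)$ is only in $L^2$, so it can be arbitrarily large at points where $\psi_x\neq0$ while $\|f\|_{H^2}$ stays small. Your charging scheme works for ${\cal F}^2_p$ precisely because $|\partial_t^2f_p|$ is kept on the right of \eqref{calF51}; for ${\cal G}^2_p$ it does not.

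In fact \eqref{calG51} with $j=2$ is not true as stated; the paper's ``direct computation'' passes over the same point, and the same remark applies to \eqref{calG571}. What your computation does prove is
\[
|{\cal G}^2_p|\le C\bigl(|f_p|+|\partial_xf_p|\bigr)+CN(T)\,|\partial_x^2f_p|,
\]
and that is what you should state. It suffices for every use of the lemma, namely \eqref{Espa1} and hence Lemmas \ref{Lem57} and \ref{Lem59}. There the extra piece contributes at most $C\mu^3N(T)\iint e^{-q(t-\tau)}W|\partial_x^2f_p|^2\,dx\,d\tau$. This is absorbed by the positive-definite $R^2_p$ term on the left once $N(T)$ is small, independently of $\mu$.
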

\begin{proof}
We note that the derivative $\partial_{xt} f$ is the only term up to the second order derivatives of $f_{p}$ missing in the RHS of \eqref{calF51}.
Let us estimate $\partial_{xt} f$ by $f$ and other derivatives.
It follows from \eqref{spro51}, \eqref{spro52}, $(\bar{\rho}, \bar{m})  \in C^3(\bar{I}) \times C^4(\bar{I})$, and $N(T) \ll 1$ that $A^0_{p}$ is regular, and also $(A^{0}_{p})^{-1}$ and all the matrices in \eqref{eq561} with $j=1$ are bounded uniformly in $(t,x)$. 
We multiply \eqref{eq561} with $j=1$ by $(A^{0})^{-1}$ from left, and then estimate $\partial_{xt} f$ as $|\partial_{xt} f|\leq C(|f|+|\partial_{x} f|+|\partial_{t} f|+|\partial_{xx} f|)$.
Using this and \eqref{norm-es51}, we arrive at \eqref{calF51}.

It is straightforward to see \eqref{calG51} by a direct computation with the aid of  \eqref{spro51}, \eqref{spro52}, \eqref{norm-es51}, $(\bar{\rho}, \bar{m})  \in C^3(\bar{I}) \times C^4(\bar{I})$, and $N(T) \ll 1$.
\end{proof}

Let us prove Lemma \ref{Lem57} below, which corresponds to Lemma \ref{Lem54}. 
We use notation
\begin{subequations}\label{EF5*}
\begin{gather}
E_{bp}(t) := (E_{bp}^{0}+E_{bp}^{1}+E_{bp}^{2}+E^{\rm sp}_{p,\mu})(t), \quad 
D_{bp}(t):=(D_{bp}^{0}+D_{bp}^{1}+D_{bp}^{2})(t),
\\
E^{\rm sp}_{p,\mu} (t) := 
\sum_{j=0}^2 \mu^{j+1} \int_{x_*}^{x_2}  W ( \partial_x^j f_{p} )^\top A^0_p \partial_x^j f_{p} (t, x) dx,
\end{gather}
\end{subequations}
where $\mu \in (0,1)$ is to be determined in Lemma \ref{Lem57}; 
$W$, $E_{bp}^{j}$, and $D_{bp}^{j}$ are defined in \eqref{eq522} and \eqref{EF5}, respectively.

\begin{lem} \label{Lem57}
Under the same assumptions as in Proposition \ref{Pro50}, 
if $N(T)$ is sufficiently small, there exists a constant $\mu \in (0,1)$ 
such that the following estimate holds for $t_{0} \in [0,T]$ and $t \in [t_{0},T]${\rm :}
\begin{gather} \label{eqLem57} 
E_{bp}(t)
+ c \int_{t_{0}}^t { D_{bp} (\tau) } d\tau 
\le
E_{bp}(t_{0}) 
+ C N(T) \int_{t_{0}}^t E_{bp}(\tau) d\tau, 
\end{gather}
where $c$ and $C$ are positive constants independent of $t_{0}$ and $T$,
and the time-integral of the traces of the highest order derivatives $\partial_t^2 f_{b}$ and $\partial_t^2 f_{p}$ are valid by Lemma \ref{local5}.
\end{lem}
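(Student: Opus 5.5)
The plan is to build the estimate \eqref{eqLem57} by summing four families of energy identities: the time-derivative estimates on $I_b$ for $j=1,2$, the time-derivative estimates on $I_p$ for $j=1,2$, the spatial-derivative estimates on $I_p$ weighted by $\mu^{j+1}$, and the zeroth-order estimate \eqref{eqLem54} already proved.

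\textbf{Time derivatives.} On $I_b$ we multiply \eqref{eq361} (with $f_b$ in place of $f$) by $2\partial_t^j f_b^\top$. We integrate over $[t_0,t]\times\bar I_b$, use \eqref{eq367} at $x=x_1$, and bound $R_b$ and $A^0_b\mathcal F^j$ by Proposition \ref{Pro30} and Lemma \ref{Lem3.55}. Lemma \ref{Lem3.55} applies because $I_b$ is strictly subsonic. This gives the analogue of \eqref{eqLem53}, with a remainder $CN(T)\int E_{bp}$ and the outgoing flux $\int(\partial_t^jf_b)^\top A^1_b\partial_t^jf_b(\tau,x_*)\,d\tau$ on the left.

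On $I_p$ we multiply \eqref{eq554} by $2W\partial_t^jf_p^\top$. Time differentiation does not change the principal part, so the zeroth-order matrix is exactly $R_p+k_*A^1_p=R^0_p$, which is positive definite by Proposition \ref{Pro51}. This gives the analogue of \eqref{eqLem52}, with the error term $2W\partial_t^jf_p^\top\mathcal F^j_p$. By \eqref{calF51} that error is at most $CN(T)$ times $\sum_{i\le j}(|\partial_t^if_p|^2+|\partial_x^if_p|^2)$. Since $A^0_p$ is positive definite on $\bar I_p$, this is controlled by $CN(T)E_{bp}$, provided $E_{bp}$ contains $E^{\rm sp}_{p,\mu}$. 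That is exactly why $E^{\rm sp}_{p,\mu}$ is included, and $\mu$ is fixed independently of $N(T)$.

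At the interface $x_*$ we combine the two fluxes using \eqref{eq528*} in place of \eqref{eq528}. Exactly as in \eqref{eq533}, this gives $(\partial_t^jf_b)^\top A^1_b\partial_t^jf_b-(\partial_t^jf_p)^\top A^1_p\partial_t^jf_p\ge cD_b^j-CN(T)\sum_{i<j}|\partial_t^if_b|^2(x_*)$. The lower-order traces here are absorbed by $D_b^i$ with $i<j$ from the lower-order estimates. At $x_2$ the flux term $W(x_2)(\partial_t^jf_p)^\top A^1_p\partial_t^jf_p$ yields $cD^j_p$. Its time integral is meaningful by Lemma \ref{local5}.

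\textbf{Spatial derivatives on $I_p$.} This step is the main obstacle. We multiply \eqref{eq561} by $2W\partial_x^jf_p^\top$ and integrate. The bulk matrix becomes exactly $R^j_p$ of Proposition \ref{Pro51}, hence is $\ge c$. This is the reason for the extra $j$-dependent terms in that proposition.

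The difficulties sit at the boundaries. At $x_2$ the flux is still nonnegative, since $A^1_p(x_2)>0$, so it can be dropped. At $x_*$, however, there is an incoming flux $\int(\partial_x^jf_p)^\top A^1_p\partial_x^jf_p(\tau,x_*)\,d\tau$ with no boundary condition available. To handle it, note that $x_*$ is strictly subsonic, so $A^1_b(x_*)$ is invertible and \eqref{ajs2} holds near $x_*$. Solving the equation for $\partial_x$ therefore gives $|\partial_x^jf_p|(x_*)\le C\sum_{i\le j}|\partial_t^if_b|(x_*)$, and the flux is bounded by $C\sum_{i\le j}D_b^i$.

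The source term $\mathcal G^j_p$ is not small; \eqref{calG51} gives a bound only by lower-order derivatives with an $O(1)$ constant. So we Young-split it, $2|\partial_x^jf_p||\mathcal G^j_p|\le \tfrac c2|\partial_x^jf_p|^2+C\sum_{i<j}|\partial_x^if_p|^2$. The first piece is absorbed by $R^j_p$. The second piece is absorbed by the bulk dissipation $c\int|\partial_x^if_p|^2$ of the lower-order spatial estimate, once the level-$j$ estimate is multiplied by $\mu^{j+1}$ with $\mu$ small. Because $E^{\rm sp}_{p,\mu}$ involves $\partial_x$ rather than $\partial_t$, the $j=0$ level of this hierarchy is \eqref{eqLem52} itself, which already carries its own bulk term.

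\textbf{Closing.} We add all the pieces. The $\mu$-weighted boundary terms $C\mu^{j+1}\sum_iD_b^i$ are absorbed by $cD_b^i$ from the time-derivative estimates, since $\mu<1$ is small. Finally, $\mu$ is fixed first and $N(T)$ is chosen small afterwards, which yields \eqref{eqLem57} with $D_{bp}$ on the left.
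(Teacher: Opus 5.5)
Your proposal is correct and follows essentially the paper's route. The time-derivative estimates \eqref{eq553} and \eqref{eq556} are glued at $x_*$ via \eqref{eq528*} to give \eqref{eqLem57*}, with the $\mu^{-3}$ loss harmless because you fix $\mu$ before $N(T)$; the $\mu^{j+1}$-weighted spatial estimates, with bulk matrix $R^j_p$ and interface traces controlled through \eqref{ajs2}, give \eqref{eqLem51}; and the resulting $C\mu\int D_{bp}$ is absorbed by the time-derivative dissipation. The differences are cosmetic: you drop the time weight $e^{qt}$ the paper carries in \eqref{Espa1}, which this lemma does not need, and you absorb the $\mathcal{G}^j_p$ terms level by level, which is equivalent to the paper's single bound $C\mu^{1/2}\sum_j\mu^{j+1}\iint W|\partial_x^j f_p|^2$.
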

\begin{proof}
Firstly we prove the following inequality on $E_{bp}^{j}$ for $j=1,2$ parallel to \eqref{eqLem54}:
\begin{multline} \label{eqLem57*} 
E_{bp}^j(t)
{ + c \int_{t_{0}}^t D_{bp}^{j} (\tau) d \tau }
\\
\le
E_{bp}^j(t_{0}) 
+ C N(T) \int_{t_{0}}^t D_{bp}(\tau) d\tau
+C \mu^{-3} N(T) \int_{t_{0}}^t \Biggl(E^{\rm sp}_{p,\mu}+\sum_{i=0}^{2}E_{bp}^{i}\Biggr) (\tau) d\tau, 
\end{multline}
where $c,C>0$ are constants independent of $\mu$, $t_{0}$, and $T$.
To verify this, we show  
\begin{gather} \label{eq553}
E_b^j(t) 
+ \int_{t_{0}}^t ( \partial_t^j f_{b} )^\top A_{b}^{1} \partial_t^j  f_{b} (\tau, x_*) d\tau
\le
E_b^j(t_{0})  + C N(T)  \sum_{i=0}^{2} \int_{t_{0}}^t E^{i}_{b} (\tau) d\tau,
\end{gather}
and
\begin{multline}
E_p^j(t) 
- \int_{t_{0}}^t ( \partial_t^j f_{p} )^\top A_{p}^{1} \partial_t^j  f_{p} (\tau, x_*) d \tau
+ c \int_{t_{0}}^t D_{p}^{j} (\tau) d \tau
\\ 
\leq E_p^j(t_{0})+ C \mu^{-3} N(T) \int_{t_{0}}^t \Biggl(E^{\rm sp}_{p,\mu}+\sum_{i=0}^{2}E_{p}^{i}\Biggr)(\tau) d\tau,
\label{eq556}
\end{multline}
which are parallel to \eqref{eqLem53} and \eqref{eqLem52}, respectively.

We obtain \eqref{eq553} by repeating the proof of Lemma \ref{Lem36} 
but with $\bar{I}_{b}=[x_{1},x_{*}]$ in place of $\bar{I}=[x_{1},x_{2}]$.  
Indeed, recall that the second term on the LHS of \eqref{eqLem36} in Lemma \ref{Lem36} has been obtained by estimating $( \partial_t^j f_{b} )^\top A_{b}^{1} \partial_t^j  f_{b} (t,x_2)$
from below by $c|\partial_t^j  f_{b} |^{2} (t,x_2)$ using the boundary condition \eqref{eq314a}. 

In order to show \eqref{eq556}, 
we multiply \eqref{eq554} by $2W( \partial_t^j f_{p} )^\top$ from the left to get
\begin{multline}\label{eq523*}
\partial_t \left( W ( \partial_t^j f_{p} )^\top A_{p}^{0} \partial_t^j  f_{p} \right) 
+ \partial_x \left( W ( \partial_t^j f_{p} )^\top A_{p}^{1} \partial_t^j  f_{p}  \right) 
\\
+ W ( \partial_t^j f_{p} )^\top (R_{p} +k_{*}A^{1}_{p})  \partial_t^j  f_{p} 
= 2W(\partial_t^j f_{p} )^\top {\cal F}^{j}_{p},
\end{multline}
which is parallel to \eqref{eq523}.
Recalling the estimate \eqref{calF51} of ${\cal F}^{j}_{p}$, and following the derivation of \eqref{eqLem52}, we conclude \eqref{eq556}.
%

Then we can obtain \eqref{eqLem57*} by following the proof of Lemma \ref{Lem54} 
but replacing \eqref{eq528}, \eqref{eqLem53}, and \eqref{eqLem52} 
by \eqref{eq528*}, \eqref{eq553}, and \eqref{eq556}, respectively.

Secondly we prove the following inequality on $E^{\rm sp}_{p,\mu}$:
\begin{gather} \label{eqLem51} 
E^{\rm sp}_{p,\mu} (t) \leq
E^{\rm sp}_{p,\mu} (t_{0}) 
+ C \mu \int_{t_0}^t { D_{bp}(\tau) } d \tau,
\end{gather}
where $C$ is a positive constant independent of $\mu$, $t_{0}$, and $T$.
For $j=0,1,2$, we multiply \eqref{eq561} by $2 \mu^{j+1}W(\partial_{x}^{j} f_p)^{\top}$ from the left, 
and use the definition of $R^{j}_{p}$ in Proposition \ref{Pro51} to have
\begin{multline}\label{eq523**}
\partial_t \left( \mu^{j+1}W (\partial_{x}^{j} f_p)^{\top} A^0_p \partial_{x}^{j} f_p \right) 
+ \partial_x \left( \mu^{j+1}W (\partial_{x}^{j} f_p)^{\top} A^1_p \partial_{x}^{j} f_p \right) 
+ \mu^{j+1} W (\partial_{x}^{j} f_p)^{\top} R^{j}_{p} \partial_{x}^{j} f_p 
\\
= 2 \mu^{j+1} W (\partial_{x}^{j} f_p)^{\top}{\cal G}^{j}_{p},
\end{multline}
which is parallel to \eqref{eq523}.
From \eqref{eq528}, \eqref{ajs2}, and $x_{*} \in \bar{I}_{b}$, we have the relation
\begin{gather*}
\sum_{j=0}^{2} |\partial_{x}^{j} f_{p}|(t, x_*) \leq C \sum_{j=0}^{2} |\partial_{x}^{j} f_{b}|(t, x_*) 
\leq C D_{bp}(t).
\end{gather*}
Now we follow the derivation of \eqref{eq545} in Lemma \ref{Lem55}.
Namely, integrating the product of $e^{q t}$ and \eqref{eq523**} over $[t_{0}, t] \times \bar{I}_{p}$
for a positive constant $q$, summing up all the resultant equalities, 
and using the above relation, we arrive at  
\begin{multline}\label{Espa1}
E^{\rm sp}_{p,\mu}(t) 
+ \sum_{j=0}^{2} \mu^{j+1} \int_{t_{0}}^t\!\!\int_{x_{*}}^{x_{2}} e^{-q (t-\tau)} W (\partial_{x}^{j} f_p)^{\top}  (R^{j}_{p}-q A^{0}_{p}) \partial_{x}^{j} f_p (\tau, x) dxd\tau
\\
\leq
e^{-q(t-t_0)}E^{\rm sp}_{p,\mu}(t_{0})
+ C \mu \int_{t_0}^t { D_{bp}(\tau)} d \tau + 2 \sum_{j=0}^{2} \mu^{j+1} \int_{t_{0}}^t\!\!\int_{x_{*}}^{x_{2}} e^{-q (t-\tau)}  W (\partial_{x}^{j} f_p)^{\top}{\cal G}^{j}_{p} (\tau, x) dxd\tau,
\end{multline}
where $C$ is a positive constant independent of $\mu$, $t_{0}$, and $T$.

We claim that the rightmost term can be absorbed into the second term on the LHS if $\mu$ is sufficiently small.
Recall $R^{j}_{p}$ is positive definite in $I_{p}$ as shown in Proposition \ref{Pro51}.
Letting $q\ll 1$ yields
\begin{gather*}
\text{(2nd term on LHS)} \geq c\sum_{j=0}^{2} \mu^{j+1} \int_{t_{0}}^t\!\!\int_{x_{*}}^{x_{2}} e^{-q (t-\tau)} W  |\partial_{x}^{j} f_p |^{2} (\tau, x) dxd\tau \geq 0.
\end{gather*}
On the other hand, using \eqref{calG51} and Schwarz's inequality in view of ${\cal G}^{0}_{p} = 0$, we have
\begin{gather*}
\text{(rightmost term)} \leq C \mu^{1/2} \sum_{j=0}^{2} \mu^{j+1} \int_{t_{0}}^t\!\!\int_{x_{*}}^{x_{2}} e^{-q (t-\tau)} W  |\partial_{x}^{j} f |^{2} (\tau, x) dxd\tau.
\end{gather*}
Therefore choosing $\mu$ sufficiently small, we verify the claim.
From this claim and \eqref{Espa1}, we concludes \eqref{eqLem51}.

Finally, summing up \eqref{eqLem54}, \eqref{eqLem57*} for $j=1,2$, and \eqref{eqLem51}, 
choosing $\mu$ sufficiently small, and letting $N(T) \ll 1$, we conclude \eqref{eqLem57}.
\end{proof}

Hereinafter we fix $\mu$ so that \eqref{eqLem57} in Lemma \ref{Lem57} holds. 
Let us derive an estimate corresponding to \eqref{eqLem56} in Lemma \ref{Lem55}.

\begin{lem} \label{Lem59}
Under the same assumptions as in Proposition \ref{Pro50} with $T \geq T_{0}^{*}$, 
if $N(T)$ is sufficiently small, 
the following estimate holds for $\tilde{t}_{0} \in [0,T-T_{0}^{*}]$ and $t \in [\tilde{t}_{0}+T_{0}^{*},T]${\rm :}
\begin{gather} \label{eqLem58}
( 1 + c ) E_{bp} (t)
+ c \int_{\tilde{t}_{0}}^t D_{bp} (\tau) d\tau
\le E_{bp} (\tilde{t}_{0}) + C N(T) \int_{\tilde{t}_{0}}^t E_{bp} (\tau) d\tau,
\end{gather}
where $c$ and $C$ are positive constants independent of $\tilde{t}_{0}$ and $T$.
\end{lem}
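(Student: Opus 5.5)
The plan is to mimic the proof of Lemma \ref{Lem55} at the level of the full energy $E_{bp}$, using Lemma \ref{Lem57} as the basic inequality in place of Lemma \ref{Lem54}. The extra ingredient is a decay mechanism for each piece of $E_{bp}$ over a time window of length $T_0^*$, controlled by $\int D_{bp}$ plus $CN(T)\int E_{bp}$. Concretely, I aim for two auxiliary estimates, valid for $t\in[\tilde t_0+T_0^*,T]$:
\begin{gather*}
\sum_{j=0}^{2}E^j_b(t)\le C\int_{\tilde t_0}^t D_{bp}(\tau)\,d\tau + CN(T)\int_{\tilde t_0}^t E_{bp}(\tau)\,d\tau,
\\
(1+c)\Bigl(\sum_{j=0}^{2}E^j_p+E^{\rm sp}_{p,\mu}\Bigr)(t)\le \Bigl(\sum_{j=0}^{2}E^j_p+E^{\rm sp}_{p,\mu}\Bigr)(\tilde t_0)+C\int_{\tilde t_0}^t D_{bp}(\tau)\,d\tau+CN(T)\int_{\tilde t_0}^t E_{bp}(\tau)\,d\tau .
\end{gather*}

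For the subsonic part, I apply Lemmas \ref{Lem37}--\ref{Lem38} on $\bar I_b$, with $x_2$ replaced by $x_*$ and $\varOmega,T_0$ replaced by $\varOmega^*,T_0^*$. These are the higher-order analogues of \eqref{eq535}--\eqref{eq537} and give
\[
E^j_b(\tilde t_0+T_-^*)\le C\int_{\tilde t_0}^{\tilde t_0+T_0^*} D^j_b\,d\tau+CN(T)\int_{\tilde t_0}^{\tilde t_0+T_0^*} E_{bp}\,d\tau .
\]
I then substitute this into \eqref{eq553} with $t_0=\tilde t_0+T_-^*$. The interface flux term $\int(\partial_t^jf_b)^\top A^1_b\partial_t^jf_b(\tau,x_*)\,d\tau$ is moved to the right, where it is bounded by $C\int D^j_b$. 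This yields the first auxiliary estimate.

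For the extended supersonic part, I multiply \eqref{eq523*} for $j=1,2$ and \eqref{eq523**} for $j=0,1,2$ by $e^{qt}$ and integrate over $[\tilde t_0,t]\times\bar I_p$, exactly as in \eqref{eq545} and \eqref{Espa1}. The positivity of $R^j_p$ (Proposition \ref{Pro51}) with $q\ll1$ absorbs the $q A^0_p$ terms. The $x_2$ flux has a good sign because $A^1_p(x_2)>0$. The $x_*$ fluxes are bounded by $C\int D_{bp}$ through \eqref{eq528}, \eqref{eq528*}, \eqref{ajs2}. The ${\cal G}^j_p$ terms are absorbed for the fixed small $\mu$ as in Lemma \ref{Lem57}. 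The ${\cal F}^j_p$ terms, via \eqref{calF51}, cost $C\mu^{-3}N(T)\int E_{bp}$; since $\mu$ is now fixed, this is $CN(T)\int E_{bp}$. The factor $e^{-q(t-\tilde t_0)}\le e^{-qT_0^*}<1$ then gives the second auxiliary estimate with $1+c=e^{qT_0^*}$.

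To conclude, I add the two auxiliary estimates, multiply the sum by a small $\ve>0$, and add the result to \eqref{eqLem57} with $t_0=\tilde t_0$. The term $\ve C\int D_{bp}$ is absorbed by $c\int D_{bp}$ on the left of \eqref{eqLem57}, and the $E_b$ contribution supplies the missing factor for the subsonic energies. Letting $\ve\ll1$ and $N(T)\ll1$ gives \eqref{eqLem58}.

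The main obstacle is bookkeeping at the interface $x_*$ for second derivatives. There $D_{bp}$ contains only time derivatives, yet the spatial energy flux involves $\partial_x^2 f_p(x_*)$. The plan handles this through \eqref{ajs2} on the subsonic side, which is available because $x_*\in\bar I_b$. This is also why the trace control of Lemma \ref{local5} at $x_*$ (where $u\pm a\neq0$) is needed. A secondary concern is ensuring the constants multiplying $\int D_{bp}$ in the decay estimates do not depend on $\ve$, so that the absorption step is legitimate.
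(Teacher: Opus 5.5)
Your proposal follows the paper's proof essentially step by step: the subsonic analogues of Lemmas~\ref{Lem37}--\ref{Lem38} are substituted into \eqref{eq553}, the $e^{qt}$-weighted versions of \eqref{eq523*} and \eqref{eq523**} give the factor $e^{qT_0^*}$, and the result is combined with \eqref{eqLem57} via a small $\ve$. The only structural difference is that the paper takes the zeroth-order part directly from Lemma~\ref{Lem55} instead of re-deriving it. One bookkeeping point should be made explicit: the $E_b$ estimate must enter the final sum with a weight strictly larger than one (the paper takes it twice), which costs nothing since it has no $E_b(\tilde t_0)$ on its right-hand side. With equal weights, the term $\ve\bigl(\sum_j E^j_p+E^{\rm sp}_{p,\mu}\bigr)(\tilde t_0)$ on the right forces you to bound the right side by $(1+\ve)E_{bp}(\tilde t_0)$, and then no factor $(1+c)$ survives in front of the subsonic energies.
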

\begin{proof}
We claim that the following inequalities hold:
\begin{gather} 
E_b^j(t) 
\le 
C \int_{\tilde{t}_{0}}^t D_b(\tau) d \tau 
+ C N(T) \int_{\tilde{t}_{0}}^t E_b(\tau) d\tau,
\ \ j=1,2,
\label{eqLem59}
\\
( 1 + c ) E_p^j(t) 
\le 
E_p^j(\tilde{t}_{0}) + C \int_{\tilde{t}_{0}}^t D_b(\tau) d\tau + C N(T) \int_{\tilde{t}_{0}}^t E_p (\tau) d\tau,
\ \ j=1,2,
\label{eqLem50}
\\  
(1+c) E^{\rm sp}_{p,\mu}(t) 
\le 
E^{\rm sp}_{p,\mu} (\tilde{t}_{0}) 
+ C \int_{\tilde{t}_{0}}^t D_{bp}(\tau)  d\tau, 
\label{eqLem50*}
\end{gather}
where $c$ and $C$ are positive constants independent of $\tilde{t}_{0}$ and $T$.

To derive \eqref{eqLem59}, we repeat the proofs of Lemmas \ref{Lem37} and \ref{Lem38} 
but replacing $[x_{1},x_{2}]$ by $\bar{I}_{b}=[x_{1},x_{*}]$.
Then we can deduce
\begin{gather} \label{eq537*}
E_b^{j}(\tilde{t}_{0}+T_{-}^{*})
\le 
C \int_{\tilde{t}_{0}}^{\tilde{t}_{0}+T_0^{*}} D_{b}^{j}(\tau) d\tau
+ C N(T) \int_{\tilde{t}_{0}}^{\tilde{t}_{0}+T_0^{*}} E_b(\tau) d\tau,
\end{gather}
which is parallel to \eqref{eqLem38}.
Then we substitute \eqref{eq537*} into \eqref{eq553} with $t_{0}=\tilde{t}_{0}+T_{-}^{*}$, and move the second term on the LHS of the resultant inequality to the RHS to obtain \eqref{eqLem59}. 
The proof of \eqref{eqLem50} is done in much the same way as in the proof of \eqref{eq547} but with \eqref{eq523*} and \eqref{eq528*} in place of \eqref{eq523} and \eqref{eq528} respectively.
Indeed, recalling that the transport equation of energy \eqref{eq523*} corresponds to \eqref{eq523}, 
and that the remaining term ${\cal F}^{j}_{p}$ satisfies \eqref{calF51}, we can conclude \eqref{eqLem50}.
The estimate \eqref{eqLem50*} is obtained simply by replacing $t_{0}$ by $\tilde{t}_{0}$ in \eqref{Espa1}, 
and using $e^{-q (t-\tilde{t}_{0})} \le e^{-q T_0^{*}}<1$.

Finally, we sum up \eqref{eqLem56}, twice of \eqref{eqLem59} for $j=1,2$, \eqref{eqLem50} for $j=1,2$, and \eqref{eqLem50*},
then add the product of the resultant inequality and $\ve>0$ to \eqref{eqLem57} with $t_0=\tilde{t}_{0}$, 
and finally let $\ve \ll 1$ to obtain \eqref{eqLem58}.
The proof is complete.
\end{proof}

\subsection{Completion of the a priori estimate}

This subsection is devoted to the proof of Proposition \ref{Pro50}. 

\begin{proof}[Proof of Proposition \ref{Pro50}]
First it follows from the application of Gronwall's inequality to \eqref{eqLem57} that 
\begin{gather}\label{eq560}
E_{bp}(t) \leq E_{bp}(t_{0})e^{C_{1}N(T)(t-t_{0})}  \quad \text{for} \  t \in [t_{0},T],
\end{gather}
where  $C_{1}>0$ is a constant independent of $t_{0}$ and $T$.

Secondly, applying Gronwall's inequality to \eqref{eqLem58}, 
we have
\begin{gather}\label{eq570}
E_{bp}(t) \le \frac{1}{ 1 + c_1 } \left( E_{bp} (\tilde{t}_{0}) + C_{1}N(T) \int_{\tilde{t}_{0}}^{\tilde{t}_{0}+T_0^{*}} \!\! E_{bp} (\tau) d\tau \right)  
e^{C_{1}N(T)(t-\tilde{t}_{0}-T_{0}^{*})}
\ \ \text{for} \ t \in [\tilde{t}_{0}+T_0^{*}, T],
\end{gather}
where $C_{1}>0$ can be the same constant as in \eqref{eq560}, and $c_1>0$ is a constant  independent of $\tilde{t}_{0}$ and $T$.
We note that \eqref{eq560} and \eqref{eq570} are parallel to \eqref{eq355} and \eqref{eq356}, respectively.
It is also clear that $E(0) \leq C\|f^{0}_{b}\|_{H^{2}(I)}^{2}$.
Therefore, following the proof of Proposition \ref{Pro31}, we can conclude \eqref{eqPro50}.
\end{proof}

\section{Transonic problem 2: continuous and discontinuous transitions}\label{Sec7}

{This section is devoted to the proof of Theorem \ref{Thm15}.
Namely, we prove the asymptotic stability of discontinuous stationary solutions 
in which the flow} undergoes not only a continuous transition 
from a subsonic state to a supersonic state around the throat at $x=0$,
but also a discontinuous transition 
from a supersonic state to a subsonic state at a single point $x = d(t) \in (0,x_2)$.

We rewrite the system of equations \eqref{eq101} 
with a moving discontinuity at $x=d(t)$ following \eqref{eq106a} into a fixed coordinate system.
First, let us denote the restriction of the stationary solution {$(\bar{\rho},\bar{m})$}
on $[x_1, \bar{d})$ by $(\bar{\rho}_1, \bar{m}_1)$, 
and on $(\bar{d}, x_2]$ by $(\bar{\rho}_2, \bar{m}_2)$, respectively. 
{We also set $\bar{u}_{1}:= \bar{m}_{1} / \bar{\rho}_1$ and $\bar{u}_{2}:= \bar{m}_{2} / \bar{\rho}_2$.}
We note there exists a positive constant $\varepsilon_0$ such that {the smooth extension of 
$(\bar{\rho}_1, \bar{m}_1)$ to $[x_1, \bar{d}+\varepsilon_0)$ satisfies the equations \eqref{eq151}, and  the smooth extension of  $(\bar{\rho}_2, \bar{m}_2)$ to $(\bar{d}-\varepsilon_0, x_2]$ satisfies the equations \eqref{eq151}.
}
{It is expected} that the discontinuity $x=d(t)$ always lies in $[\bar{d}-\varepsilon_0, \bar{d}+\varepsilon_0] \subset (0,x_2)$ if the {initial} perturbation is small enough.

We introduce the perturbation from the stationary solution as
\begin{align*}
(\tilde{\phi}_{1}, \tilde{\omega}_{1}, \tilde{\psi}_{1})(t,x) &:= 
(\rho,m,u)(t,x) - (\bar{\rho}_{1},\bar{m}_{1},\bar{u}_{1})(x) \quad \text{for} \ x \in J_{1}(t):=(x_{1},d(t)),
\\
(\tilde{\phi}_{2}, \tilde{\omega}_{2}, \tilde{\psi}_{2})(t,x) &:= 
(\rho,m,u)(t,x) - (\bar{\rho}_{2},\bar{m}_{2},\bar{u}_{2})(x) \quad \text{for} \ x \in J_{2}(t):=(d(t),x_{2}),
\\
\tilde{\sigma}(t) &:= d(t) - \bar{d}.
\end{align*}
We perform a change of coordinates $(t,x)$ into $(s,y)$ defined by
\begin{gather*}
\left\{
\begin{array}{lll}
 s = t ,
\\
 y = x_i + \dfrac{\bar{d} - x_i}{\bar{d}+\sigma(t) - x_i} ( x - x_i ) 
 \end{array}
\right. \quad
\text{for}  \ t \in \mathbb R_{+} \ \text{and} \ x \in J_{i}(t),
\end{gather*}
{where $i =1,2$.}
{We note that}
$\overline{J_{1}(t)}=[x_1,d(t)]$ and $\overline{J_{2}(t)}=[d(t),x_2]$ are mapped onto $\bar{I}_{1}$ and $\bar{I}_{2}$ for all $t>0$, respectively, where
\begin{gather*}
I_{1}:=(x_1,\bar{d}), \quad  I_{2}:=(\bar{d},x_2).
\end{gather*}
Conversely, the original coordinates are represented {as}
\begin{gather*}
\left\{
\begin{array}{lll}
t = t(s,y) = s, 
\\[10pt]
x = x(s,y) = y + \sigma(s) \dfrac{y - x_i}{\bar{d} - x_i} 
\end{array}
\right. \quad
\text{for}  \ (s,y) \in \mathbb R_{+} \times I_{i}, 
\end{gather*}
where $i =1,2$.
For later convenience, let us denote the RHS of the latter transform by $\hat{x}_{i}(\sigma,y)$:
\begin{equation*}
\hat{x}_{i}(\sigma,y) := y + \sigma \dfrac{y - x_i}{\bar{d} - x_i} \quad 
\text{for} \ (\sigma,y) \in (x_1-\bar{d}, x_2-\bar{d}) \times I_i.
\end{equation*}
The differential operators are replaced with
\begin{gather*}
\left\{
\begin{array}{lll}
\partial_t = \partial_s - \dot{\sigma} \dfrac{y - x_i}{\bar{d}+\sigma(s) - x_i} \,  \partial_y,
\\[10pt]
\partial_x = \dfrac{\bar{d} - x_i}{\bar{d}+\sigma(s) - x_i} \, \partial_y
\end{array}
\right. \quad
\text{for}  \ (s,y) \in \mathbb R_{+} \times I_{i}.
\end{gather*}
The perturbations in the new coordinate are defined by
\begin{align*}
(\phi_{i}, \omega_{i}, \psi_{i})(s,y) &:= (\tilde{\phi}_{i}, \tilde{\omega}_{i}, \tilde{\psi}_{i})(t(s,y), x(s,y) ), \quad
\sigma(s) := \tilde{\sigma}(t(s,y)) \quad \text{for} \ (s,y) \in \mathbb R_{+} \times I_{i}.
\end{align*}
Hereafter fundamental functions $\rho_{i}$, $m_{i}$, and $u_{i}$ mean 
\begin{gather*}
(\rho_{i}, m_{i}, u_{i}) = (\rho_{i}, m_{i}, u_{i})(s,y) 
= (\bar{\rho}_i, \bar{m}_{i}, \bar{u}_i) (x(s,y)) + (\phi_{i}, \omega_{i}, \psi_{i})(s,y) \quad \text{for} \ (s,y) \in \mathbb R_{+} \times I_{i}.
\end{gather*}
Then we see from the systems \eqref{eq101}, \eqref{eq151}, and \eqref{eq106a},
the boundary conditions \eqref{eq104}, \eqref{eq105}, \eqref{eq110}, and \eqref{eq153}--\eqref{eq157}, 
and the initial conditions \eqref{eq108}, \eqref{eq109} 
that $f_{i}:=(\phi_{i},\omega_{i})^{\top}$ for $i=1,2$ and $\sigma$ solve the system
\begin{subequations} \label{eq701*}
\begin{gather}
\partial_s f_{i} + \hat{A}_{i}^1 \partial_y f_{i} + \hat{B}_{i} f_{i} = 0 \quad \text{in} \ \mathbb R_{+} \times I_{i},
\label{eq701}\\
 \dot{\sigma}(s) \jump{ \rho }(s,\bar{d}) 
 = \jump{\omega}(s,\bar{d}) \quad \text{for} \ s>0,
\label{eq706a}
\end{gather}
with the initial--boundary value
\begin{gather} 
f_{i}(0,y) = f_{i}^0(y) = 
\begin{bmatrix}
\phi_{i}^0(y)
\label{eq709}\\ 
\omega_{i}^0(y)
\end{bmatrix}
:=
\begin{bmatrix}
\rho^0(x(0,y))-\bar{\rho}_{i}(x(0,y))
\\ 
m^0(x(0,y))-\bar{m}_{i}(x(0,y))
\end{bmatrix} \quad \text{for} \ y \in I_{i},
\\
\sigma(0)=\sigma^{0} := d^0 - \bar{d},
\label{eq709b}
\\
\omega_{1}(s,x_{1})=0, \quad \phi_{2}(s,x_{2})=0 \quad \text{for} \ s>0,
\label{eq705a}
\\
\jump{ \rho } (s, \bar{d}) \Bigl[\!\Bigl[ \dfrac{m^2}{\rho} + p(\rho) \Bigr]\!\Bigr] (s, \bar{d})
= \jump{ \omega } (s, \bar{d})^2 \quad \text{for} \ s>0,
\label{eq706c}
\end{gather}
\end{subequations}
where {the function $\dot{\sigma}$ and the constant $\sigma^{0}$ denote the $s$-derivative and the initial value of $\sigma$, respectively.} Here
\begin{align*}
\jump{g}(s,\bar{d}) &= g_{2}(s,\bar{d}) - g_{1}(s,\bar{d}) \quad \text{for $g_{i} \in C^{0}(\overline{\mathbb R_{+} \times I_{i}})$},
\\
\jump{g}(x) &= g_{2}(x) - g_{1}(x) \quad \text{for $g_{1} \in C^{0}([x_{1},\bar{d}+\ve])$, $g_{2} \in C^{0}([\bar{d}-\ve,x_{2}])$},
\\
\hat{A}_{i}^{1} = \hat{A}_{i}^{1}(y,\rho_i, m_i, \sigma, \dot{\sigma}) &:=
 \frac{\bar{d} - x_i}{\bar{d}+\sigma - x_i} \hat{A}^1(\rho_i, m_i)
- \dot{\sigma} \frac{y - x_i}{\bar{d}+\sigma - x_i} {\rm id},
\\
\hat{B}_i = \hat{B}_i(y,\rho_i, m_i,\sigma) &:= \hat{B}(\hat{x}_{i}(\sigma,y),\rho_i, m_i),
\end{align*}
where $\hat{A}^1$ and $\hat{B}$ are defined just below \eqref{eq311*},
and ${\rm id}$ is an identity matrix.
We require the compatibility conditions at $y=x_1, x_2$ and $y=\bar{d}$. 
At $y=x_1, x_2$, they can be formally written as
$\disp \partial_s^j \omega_1(0,x_1) = 0$ and $\disp \partial_s^j \phi_2(0,x_2) = 0$ for $j=0,1$.
At $y=\bar{d}$, the compatibility conditions can be formally given 
by the $j$-th ($j=0,1$) derivatives of \eqref{eq706c} in $s$ when $s=0$.
The precise formulations in terms of the initial value are
\begin{subequations} \label{comp7}
\begin{gather}
\omega_1^0(x_1) = 0, \\
(0 \ 1) 
\left( \hat{A}^1_1(x_1, \rho_1^0(x_1), m_1^0(x_1), \sigma^0, ``\dot{\sigma}" ) (f_1^0)_y(x_1)
 + \hat{B}_1(x_1, \rho_1^0(x_1), m_1^0(x_1), \sigma^0) f_1^0(x_1) \right) = 0,
\\
\phi_2^0(x_2) = 0, \\ 
(1 \ 0) 
\left( \hat{A}^1_2(x_2, \rho_2^0(x_2), m_2^0(x_2), \sigma^0, ``\dot{\sigma}" ) (f_2^0)_y(x_2)
 + \hat{B}_2(x_2, \rho_2^0(x_2), m_2^0(x_2), \sigma^0) f_2^0(x_2) \right) = 0,
\\
\jump{ \rho^0 } (\bar{d}) \Bigl[\!\Bigl[ \dfrac{(m^0)^2}{\rho^0} + p(\rho^0) \Bigr]\!\Bigr] (\bar{d})
= \jump{ \omega^0 } (\bar{d})^2,
\\
\jump{ ``\partial_s \rho" } (\bar{d}) \Bigl[\!\Bigl[ \dfrac{(m^0)^2}{\rho^0} + p(\rho^0) \Bigr]\!\Bigr (\bar{d})
+
\jump{ \rho^0 } (\bar{d}) 
\Bigl[\!\Bigl[ 
 \left( - \dfrac{(m^0)^2}{(\rho^0)^2} + p'(\rho^0) \right) ``\partial_s \rho" 
 + \dfrac{2m^0}{\rho^0} ``\partial_s m" 
\Bigr]\!\Bigr] (\bar{d})
\notag \\
\hspace{50mm}
= 2 \jump{ \omega^0 } (\bar{d}) \jump{ ``\partial_s \omega'' } (\bar{d}),
\end{gather}
\end{subequations}
where $s$-derivative terms in the last condition should be understood as
\begin{gather*}
``\partial_s \rho_i" (\bar{d}) = ``\dot{\sigma}" (\bar{\rho}_i)_x(\bar{d}+\sigma^0) 
+ ``\partial_s \phi_i"(\bar{d}),
\quad
``\partial_s m_i" (\bar{d}) = ``\dot{\sigma}" (\bar{m}_i)_x(\bar{d}+\sigma^0) 
+ ``\partial_s \omega_i"(\bar{d})
\end{gather*}
with $\disp ``\dot{\sigma}" = \jump{\omega^0}(\bar{d}) / \jump{\rho^0}(\bar{d})$ 
due to \eqref{eq706a}, and
\begin{gather*}
\begin{bmatrix}
``\partial_s \phi_i"
\\ 
``\partial_s \omega_i"
\end{bmatrix} (\bar{d})
= -  \left ( \hat{A}^1_i(\bar{d}, \rho^0_i(\bar{d}), m^0_i(\bar{d}), \sigma^0, ``\dot{\sigma}") (f^0_i)_y(\bar{d}) 
             + \hat{B}_i(\bar{d}, \rho^0_i(\bar{d}), m^0_i(\bar{d}), \sigma^0) f^0_i(\bar{d}) \right).
\end{gather*}
We seek the solution $(f_{1},f_{2},\sigma)$ of the problem \eqref{eq701*} in the function space 
$\mathfrak{X}_{1} (T) \times  \mathfrak{X}_{2} (T) \times  H^{3}(0,T)$ for $T>0$, where
\begin{align*}
\mathfrak{X}_{i} (T):= \left[ \left\{ \left. f_{i} \in  \bigcap_{j=0}^{2} C^j([0,T]; H^{2-j}(I_{i})) \, \right| \, f_{i}(\cdot,\bar{d}) \in H^{2}(0,T) \right\}\right]^{2}.
\end{align*}
We denote by $\mathfrak{X}_{i} (\infty)$ the function space which is replaces $T$ by $\infty$ in $\mathfrak{X}_{i} (T)$.

We are in a position to state the main theorem in this section, which immediately ensures Theorem \ref{Thm15}.

\begin{thm} \label{Thm70}
Assume $A \in  C^4(\bar{I})$ with $A_{x}(0) = 0$ and $A_{xx}(0) > 0$.
Suppose that there exists a constant $\bar{d} \in (0, x_2)$ such that 
$A_x(\bar{d}) > 0$ holds, and that a stationary solution 
$(\bar{\rho}, \bar{m})  \in C^3(\bar{I} \setminus \{ \bar{d} \} ) \times C^4(\bar{I})$ to (\ref{eq151})--(\ref{eq159}) 
exists and satisfies
\begin{gather}\label{spro71}
\inf_{x \in \bar{I}} \bar{\rho}(x) >0,  \quad
\begin{cases}
\bar{M}(x) < 1 & \text{for} \ x \in [x_1, 0),
\\
\bar{M}(x) = 1 & \text{for} \ x = 0,
\\
\bar{M}(x) > 1 & \text{for} \ x \in (0, \bar{d}),
\\
\bar{M}(x) < 1 & \text{for} \ x \in (\bar{d}, x_2].
\end{cases}
\end{gather}
Then there exists a constant $\delta_{0}>0$ such that if an initial value $(f_{1}^{0},f_{2}^{0},\sigma^{0}) \in (H^{2}(I_{1}))^{2} \times (H^{2}(I_{2}))^{2} \times \mathbb R$ satisfies the compatibility condition \eqref{comp7} and $\| f^0_1 \|_{H^2(I_1)} + \| f^0_2 \|_{H^2(I_2)} + | \sigma_0 | \leq \delta_{0}$,
then the problem \eqref{eq701*} admits a unique global solution $(f_{1},f_{2},\sigma) \in  \mathfrak{X}_{1} (\infty) \times  \mathfrak{X}_{2} (\infty) \times  H^{3}(0,\infty)$.
Moreover, the solution $(f_{1},f_{2},\sigma)$  decays in $(H^{2}(I_{1}))^{2} \times (H^{2}(I_{2}))^{2} \times \mathbb R$ exponentially fast  as $s \to \infty$.
\end{thm}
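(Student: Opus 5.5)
As for Theorems \ref{Thm30} and \ref{Thm50}, I would obtain Theorem \ref{Thm70} from a local existence lemma for \eqref{eq701*} in $\mathfrak{X}_1(T)\times\mathfrak{X}_2(T)\times H^3(0,T)$ (proved as in the Appendix, with the shock treated as a free boundary following Majda's theory) together with an a priori estimate. The a priori estimate should give exponential decay of $\|f_1\|_{H^2(I_1)}+\|f_2\|_{H^2(I_2)}+|\sigma|+|\dot\sigma|$ under smallness of $N(T):=\sup_s(\|f_1\|_{H^2}+\|f_2\|_{H^2}+|\sigma|+|\dot\sigma|+|\ddot\sigma|)$. The continuation argument of Theorem \ref{Thm30} then applies verbatim. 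The change of variables only perturbs the coefficients by $\mathcal{O}(|\sigma|+|\dot\sigma|)$, so $\hat A_i^1,\hat B_i$ differ from the stationary-coordinate ones by $\mathcal{O}(N(T))$. The exception is the term $(\bar\rho_i,\bar m_i)(\hat x_i(\sigma,y))-(\bar\rho_i,\bar m_i)(y)$, which is linear in $\sigma$ and must be kept as a genuine forcing $\sigma\,g_i(y)$.

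On $I_1=(x_1,\bar d)$ I would reuse the whole Section~\ref{Sec5} machinery: split $I_1=I_b\cup I_p$ at the same $x_*$ with weight $W$, use $f_b$ on $I_b$ and $f_p$ on $[x_*,\bar d)$, and use Proposition \ref{Pro51} on $[x_*,\bar d]$ (the stationary solution there coincides with that of type (C) up to $\bar d$). The interface flux gap at $x_*$ again supplies $D_b$, and since the flow is supersonic at $y=\bar d-0$, $A^1_p$ is positive definite there. The exit flux becomes an outgoing dissipation $|\partial_s^jf_1|^2(s,\bar d)$. On $I_2$ the flow is strictly subsonic, so Section~\ref{Sec_sub} applies with boundaries $\bar d$ and $x_2$. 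At $x_2$ we have $\phi_2=0$, giving the dissipation $D^j$. At $\bar d+0$ there is one incoming characteristic, and its datum must be supplied by the linearized Rankine--Hugoniot condition \eqref{eq706c}.

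The key step is the shock boundary. I would linearize \eqref{eq706c} and \eqref{eq706a} about the stationary jump. Using $\jump{G(\bar\rho,A)}=0$, $\jump{A\bar m}=0$ and the computation of Lemma \ref{Lem24}, the linearized \eqref{eq706c} reads
\begin{equation*}
\beta\,\sigma=\ell_2\cdot f_2(s,\bar d)-\ell_1\cdot f_1(s,\bar d)+\mathcal{O}(N(T))(|f|+|\sigma|),
\end{equation*}
where $\beta$ is proportional to $A_x(\bar d)\jump{\bar u}$-type quantities, and \eqref{eq706a} gives $\dot\sigma\jump{\bar\rho}=\jump{\omega}$. Solving for the incoming Riemann invariant $r_-$ at $\bar d+0$ in terms of $f_1(s,\bar d)$ and $\sigma$, I would derive an ODE $\dot\sigma+\kappa\sigma=\ell\cdot f_1(s,\bar d)+(\text{outgoing }r_+ \text{ at } \bar d+0)+\text{h.o.t.}$ The sign condition $\kappa>0$ is exactly where $A_x(\bar d)>0$ enters, in the spirit of Liu's expanding-duct criterion. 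Multiplying by $\sigma$ then gives $|\sigma|^2$ control with a damping term.

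The energy for $I_2$ is then $E_2$ plus a multiple of $\sigma^2$. Its flux at $\bar d+0$ is bounded by $C(|f_1(\bar d)|^2+\sigma^2)$ plus a favorable term, and the $f_1(\bar d)$ part is absorbed by a large multiple of the $I_1$ outgoing dissipation. The $\sigma g_i$ forcing is absorbed by the $\sigma$ damping after weighting. Exponential decay then follows exactly as in Lemmas \ref{Lem35} and \ref{Lem55}, using Rauch--Taylor characteristic domains in $I_2$ seeded by $D$ at $x_2$ and at $\bar d$. For $j=1,2$ I would differentiate in $s$, and the boundary relations differentiate cleanly, which is why $\sigma\in H^3$. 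Spatial derivatives are recovered by \eqref{ajs2} on $I_b$ and $I_2$ and by $E^{\rm sp}_{p,\mu}$ on $I_p$.

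The main obstacle I expect is the shock coupling: getting the precise sign of $\kappa$ and closing the cross term between the $I_2$ incoming flux and the $I_1$ outgoing dissipation without any smallness on the shock strength. My first attempt would be to choose the weights $\sigma^2$ versus $E_1$ versus $E_2$ sequentially, as in $\ve$-absorption arguments.
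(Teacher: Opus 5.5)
\textbf{Verdict: genuine gap at the key step.} Your architecture matches the paper's. It has the Section~\ref{Sec5} splitting on $I_1$ with outgoing dissipation at $\bar d-0$, the Section~\ref{Sec_sub} energy on $I_2$ augmented by a multiple of $\sigma^2$, Rauch--Taylor domains, and a weighting of $I_1$ against $I_2$. However, the step you flag as the main obstacle is not resolved, and the mechanism you propose for it cannot close.

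The cross term with the $I_1$ outgoing dissipation is harmless; it is handled by weighting. The real obstruction is the $\sigma$--$f_2$ loop at $\bar d+0$. At $y=\bar d$, the linearized conditions \eqref{eq706a} and \eqref{eq706c} read, up to errors controlled by $\sqrt{D_{1bp}}$ and $\mathcal{O}(N)$,
\[
\omega_2=\jump{\bar\rho}\dot\sigma,\qquad \bar S_2\phi_2=\frac{QA_x}{A^2}\jump{\bar u}\sigma-2\bar u_2\omega_2 .
\]
Inserting these into the Section~\ref{Sec_sub} flux gives, to leading order,
\[
-f_2^\top A^1_2 f_2(s,\bar d)=\frac{2A\bar u_2}{\bar\rho_2}\omega_2^2-\frac{2QA_x\jump{\bar u}}{A\bar\rho_2}\sigma\omega_2=\gamma_0\dot\sigma^2+K_0\frac{d}{ds}\sigma^2,\qquad \gamma_0:=\frac{2A\bar u_2\jump{\bar\rho}^2}{\bar\rho_2}(\bar d),\quad K_0:=(A_x\bar\rho_1\jump{\bar u}^2)(\bar d).
\]
Take your energy to be $\int_{\bar d}^{x_2}f_2^\top A^0_2f_2\,dy+K\sigma^2$. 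The $\bar d$-contribution to its time derivative is then $-\gamma_0\dot\sigma^2+2(K-K_0)\sigma\dot\sigma$. This form has no $\sigma^2$ term, so it is indefinite for every $K\neq K_0$.

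Re-expressing $\dot\sigma$ through your ODE $\dot\sigma=-\kappa\sigma+\beta r$ only rewrites the same form. In $(r,\sigma)$ its discriminant is $\beta^2(K-K_0)^2\ge 0$, so the apparent damping $\kappa\sigma^2$ never yields a positive-definite boundary term. (With $\lambda_\pm=u\pm a$, it is $r_+$ that enters $I_2$ at $\bar d$ and $r_-$ that leaves.) Consequently, bounding the flux by $C\sigma^2$ and absorbing it with the ODE damping cannot work, and neither can any sequential $\varepsilon$-weighting.

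The missing idea is to take exactly $K=K_0$, so that the cross term becomes a total derivative. $K_0$ is positive precisely because $A_x(\bar d)>0$, and this is where the hypothesis enters. With this choice, $E_2^j$ (which contains $K_0(\frac{d^j\sigma}{ds^j})^2$) is nonincreasing modulo $\alpha^{-1}D_{1bp}$ and small terms. It dissipates $(\frac{d^{j+1}\sigma}{ds^{j+1}})^2$ but gives \emph{no} damping of $\sigma$ itself (Lemma~\ref{Lem73}).

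Decay of $\sigma$ must therefore come from time-integrated observability, as in Lemma~\ref{Lem74}:
\begin{enumerate}
\item A trapezoid seeded only at $x_2$, with weight $e^{ky}$, gives $\int|f_2|^2(\tau,\bar d)\,d\tau\le C\int|f_2|^2(\tau,x_2)\,d\tau$ over suitable windows.
\item The linearized \eqref{eq706c} contains $\sigma$ with the nonzero coefficient $\frac{QA_x}{A^2\bar S_2}\jump{\bar u}(\bar d)$, so step 1 becomes a bound on $\int\sigma^2$.
\item The bound $\sigma^2(s)\le C\int(\sigma^2+\dot\sigma^2)$ finishes.
\end{enumerate}

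Two further corrections.
\begin{enumerate}
\item There is no forcing $\sigma g_i(y)$. In \eqref{eq701*} the perturbation is measured against $(\bar\rho_i,\bar m_i)(x(s,y))$, so \eqref{eq701} is homogeneous and $\sigma$ enters only through the coefficients. An $\mathcal{O}(|\sigma|)$ linear forcing on $I_2$ could not be absorbed anyway, since $R_2=\mathcal{O}(N)$ and there is no interior damping there.
\item Applying $\partial_s^2$ to the $\dot\sigma$-dependent coefficients produces $\dddot\sigma$ on $I_1$, which the $I_1$ energy does not control. It is paid for only by the $\dddot\sigma^2$ dissipation from the $j=2$ flux at $\bar d+0$. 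This forces the combination $E_{1bp}+\alpha^2E_2$ with $N\le\alpha^3$. Then $CN\int\dddot\sigma^2$ from $I_1$ is absorbed by $\alpha^2\int D_2$, while $\alpha^2\cdot\alpha^{-1}\int D_{1bp}$ is absorbed on $I_1$.
\end{enumerate}
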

\begin{rem}
We remark that the assumptions on $A$ in Theorem \ref{Thm70} (see also Theorem \ref{Thm15}) is weaker than the condition (\ref{eq231}) on $A$ in Theorem \ref{thm2.1}. 
It is seen that the stationary solution in Theorem \ref{Thm70} also satisfies
\begin{gather}\label{spro72}
\inf_{x \in \bar{I}_{i}} \bar{u}_{i}(x) = \inf_{x \in \bar{I}_{i}} \frac{Q}{A\bar{\rho}_{i}}(x) >0
\ \ \text{for} \ i=1,2, \ \
\bar{\rho}_{1x}(0)<0, \quad \bar{u}_{1x}(0)>0, \quad \jump{\bar{\rho}}(\bar{d})>0.
\end{gather}
\end{rem}

Theorem \ref{Thm70} is an immediate consequence of the time-local solvability in Lemma \ref{local7}
and an a-priori estimate in Proposition \ref{Pro70}.
\begin{lem}\label{local7}
Assume the same conditions as in Theorem \ref{Thm70} for $A$ and $(\bar{\rho}, \bar{m})$.
Then there exist constants $\delta_{\sharp}>0$ and $T_{\sharp}>0$ such that if the initial value $(f_{1}^{0},f_{2}^{0},\sigma^{0}) \in (H^{2}(I_{1}))^{2} \times (H^{2}(I_{2}))^{2} \times \mathbb R$ satisfies the compatibility condition {\eqref{comp7}} and $\| f^0_1 \|_{H^2(I_1)} + \| f^0_2 \|_{H^2(I_2)} + | \sigma_0 |\leq \delta_{\sharp}$, 
then the problem \eqref{eq701*} admits a unique solution 
$(f_{1},f_{2},\sigma) \in  \mathfrak{X}_{1} (T_{\sharp}) \times  \mathfrak{X}_{2} (T_{\sharp}) \times  H^{3}(0,T_{\sharp})$.
Furthermore, if $(f_{1},f_{2},\sigma) \in  \mathfrak{X}_{1} (T) \times  \mathfrak{X}_{2} (T) \times  H^{3}(0,T)$
is a solution of \eqref{eq701*}, then the following $s$-integral of the trace of the highest order derivative $\partial_{s}^{j}\partial_{y}^{k} f_{i}$ makes sense for $i=1,2$, $j,k=0,1,2$ with $j+k=2$, 
and the point $y \in \bar{I}_{i}$ at which none of {$\frac{\bar{d} - x_i}{\bar{d}+\sigma - x_i} (u_{i} \pm \sqrt{p'(\rho_{i})}) - \dot{\sigma} \frac{y - x_i}{\bar{d} - x_i}$}, characteristic speed of \eqref{eq701}, is zero for all $s \in [0,T]${\rm :}
\begin{gather}\label{trace7}
\int_{0}^{T} |\partial_{s}^{j}\partial_{y}^{k} f_{i}|^{2}(s,y) ds.  
\end{gather}
\end{lem}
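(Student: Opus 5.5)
The plan is to treat \eqref{eq701*} as a coupled system: two symmetric hyperbolic mixed problems on the \emph{fixed} intervals $I_1$ and $I_2$, and an ODE \eqref{eq706a} for $\sigma$, linked at $y=\bar d$ through \eqref{eq706c}. I would solve it by a Picard iteration in the spirit of Majda's and Li Ta-tsien--Yu Wen-ci's treatment of shock fronts. The appendix-type proof of Lemmas \ref{local3} and \ref{local5} is the model for the parts away from the shock.

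\textbf{Step 1 (symmetrizers and the sonic point).} On $I_1$, which contains the sonic point $y=0$, I would not use the $(\rho,m)$-symmetrizer $A^0$: it degenerates where $S=0$. Instead I would rewrite \eqref{eq701} for $i=1$ in the variables $(\phi_1,\psi_1)$, as in Section~\ref{Sec5}, and symmetrize with $A^0_p$, which stays positive definite near $\bar\rho_1$. The coordinate change only adds the term $-\dot\sigma\frac{y-x_1}{\bar d+\sigma-x_1}{\rm id}$ and smooth factors, so symmetry is preserved. On $I_2$ the flow is strictly subsonic, so $A^0_b$ can be kept.

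\textbf{Step 2 (boundary structure at $y=\bar d$).} Count characteristics using \eqref{spro71} and the Lax condition, with $|\sigma|,|\dot\sigma|$ small. On the $I_1$ side both speeds point into the boundary, so no condition is needed. On the $I_2$ side exactly one characteristic, $u_2-a_2$, enters $I_2$. The single condition \eqref{eq706c}, together with \eqref{eq706a}, must therefore determine the incoming Riemann variable $r^{(2)}_-=e_-f_2$ and $\dot\sigma$ from the outgoing data. For this I would linearize \eqref{eq706c} about the stationary shock and substitute $\jump{\omega}=\dot\sigma\jump{\rho}$. Since $\jump{\bar\rho}(\bar d)\neq0$ by \eqref{spro72}, one can solve for $r^{(2)}_-(s,\bar d)$ as a smooth function of $(r^{(1)}_\pm(s,\bar d), r^{(2)}_+(s,\bar d),\sigma,\dot\sigma)$. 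The coefficient in front of $r^{(2)}_-$ is nonzero precisely because of the Lax inequalities \eqref{eq159*}; this is the uniform Lopatinski\u{\i} condition for this $1$-D problem. This step is the main obstacle. I would verify the nonvanishing coefficient explicitly, using $G$ from \eqref{defG1} and the fact $\partial_\rho G(\bar\rho_2,A)>0>\partial_\rho G(\bar\rho_1,A)$.

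\textbf{Step 3 (iteration).} Given an iterate $(f_1^{(n)},f_2^{(n)},\sigma^{(n)})$ with $\sigma^{(n)}\in H^3(0,T)$, freeze the coefficients and solve two linear problems. The first is on $I_1$ with boundary condition $\omega_1=0$ at $x_1$ only. The second is on $I_2$ with $\phi_2=0$ at $x_2$ and the solved form of the linearized \eqref{eq706c} at $\bar d$, whose inhomogeneity involves the traces of $f_1$. Then update $\sigma^{(n+1)}$ through \eqref{eq706a}. The energy estimates are the time-differentiated ones of Sections~\ref{Sec_sub}--\ref{Sec5}, i.e.\ $\partial_s^j$, $j\le2$, combined with the equation to recover $\partial_y$-derivatives away from the sonic point and the $\partial_y^j$-estimate \eqref{eq561} near it. Because the boundary is noncharacteristic on both sides (Step 2), they bound $\int_0^T|\partial_s^2f_i|^2(s,\bar d)\,ds$; this is exactly the statement $f_i(\cdot,\bar d)\in H^2(0,T)$ built into $\mathfrak X_i(T)$. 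The ODE then gives $\dot\sigma\in H^2$, so $\sigma\in H^3$: there is no loss of derivatives. The compatibility conditions \eqref{comp7} guarantee that the iterates stay in $\mathfrak X_i(T)$. For small $T_\sharp$ and $\delta_\sharp$ I would prove uniform boundedness in the high norm and contraction in the $L^2\times H^1$-level norm, which gives existence and uniqueness.

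\textbf{Step 4 (trace integrals \eqref{trace7}).} Fix $y$ at which no characteristic speed of \eqref{eq701} vanishes on $[0,T]$; by continuity this also holds on a small interval $[y,y+\eta]$ or $[y-\eta,y]$. Apply $\partial_s^j\partial_y^k$ with $j+k=2$ (mollifying in $s$ to make this rigorous) and diagonalize into $e_\pm\,\partial_s^j\partial_y^kf_i$. Multiply each component's transport equation by twice the component and integrate over $[0,T]$ times that small interval. Choose the side so that the flux at $y$ appears with the sign of the speed $\lambda$ and the far flux is bounded by the interior estimate. Then $|\lambda|\int_0^T(\cdot)^2(s,y)\,ds$ is controlled by $\sup_s\|f_i(s)\|_{H^2}^2$ plus lower-order terms, since $|\lambda|\ge c>0$ there. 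This justifies \eqref{trace7}.
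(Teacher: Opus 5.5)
\textbf{Gap: the characteristic count in Step 2 is reversed on the downstream side.} This is the step you single out as the crux.

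At $y=\bar d$, the \emph{left} endpoint of $I_2$, the speeds of \eqref{eq701} for $i=2$ are close to $\bar u_2\pm\bar a_2$. By \eqref{spro71} and the Lax condition, $\bar u_2-\bar a_2<0<\bar u_2+\bar a_2$. So $r^{(2)}_-=e_-f_2$, which moves with the negative speed, reaches $\bar d$ from inside $I_2$; it is the third characteristic impinging on the shock, besides the two from $I_1$. The variable that enters $I_2$, and that the Rankine--Hugoniot relations must supply together with $\dot\sigma$, is $r^{(2)}_+$.

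Linearizing \eqref{eq706c} (cf.\ \eqref{eq732}) gives
\[
\bar S_2\phi_2+2\bar u_2\omega_2=\frac{(\bar a_2+\bar u_2)^2}{2\bar a_2}\,r^{(2)}_+-\frac{(\bar a_2-\bar u_2)^2}{2\bar a_2}\,r^{(2)}_-
\]
up to terms in $f_1(s,\bar d)$ and $\sigma$. Isolating $r^{(2)}_-$ instead of $r^{(2)}_+$ does not change this relation, but it does change what must be verified.

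\begin{itemize}
\item The nondegeneracy that matters is that the coefficient of the \emph{incoming} $r^{(2)}_+$ is nonzero. Equivalently, the linearized RH system in the unknowns $(\dot\sigma,r^{(2)}_+)$ has determinant $\jump{\bar\rho}(\bar d)\,(\bar a_2+\bar u_2)^2/(2\bar a_2)\neq0$. The coefficient of $r^{(2)}_-$, which you propose to check via the Lax inequalities, is not the relevant quantity.
\item In the Step~3 energy estimate on $I_2$, the flux at $\bar d$ with the bad sign is $\lambda_{2+}(r^{(2)}_+)^2(s,\bar d)$. It can only be absorbed by writing the condition as $r^{(2)}_+=\beta r^{(2)}_-+(\text{traces of }f_1,\sigma)$, with $0<\beta=(\bar a_2-\bar u_2)^2/(\bar a_2+\bar u_2)^2<1$. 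One then uses the good outflow flux of $r^{(2)}_-$ and the $I_1$ traces.
\end{itemize}

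Used as you set it up, with $r^{(2)}_-$ expressed through $r^{(2)}_+$, the relation bounds an outgoing quantity by an incoming one and leaves the inflow flux uncontrolled. So your argument does not close, even though the correct condition happens to hold.

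\textbf{Comparison with the paper after the fix.} With this corrected, your sequential scheme is a legitimate alternative. You solve $I_1$ first; it needs no data at the outflow point $\bar d$ and yields $f_1(\cdot,\bar d)\in H^2(0,T)$. You then solve $I_2$ with these traces as boundary data, then update $\sigma$. The cost is that you must carry the $I_1$ traces as inhomogeneous data and recheck compatibility at each step.

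The paper instead solves \eqref{eq706c} by the implicit function theorem for $\phi_2$ as a function of $(\phi_1,\psi_1,\omega_2,\sigma)$. At the origin, $\partial\phi_2/\partial\omega_2=-2\bar u_2/\bar S_2$. This is the same relation, and it is exactly dissipative for $A^0_2$: if $\eta_\phi=-2\bar u_2\eta_\omega/\bar S_2$, then $-\langle A^1_2\eta,\eta\rangle=2A\bar u_2\eta_\omega^2/\bar\rho_2>0$ to leading order. The paper then folds $I_2$ onto $I_1$ and subtracts $\chi=\frac{z-L_1}{L_2-L_1}\mathscr{A}(0,0,0,\sigma)$. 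It rescales the $I_2$ unknowns by $\delta$, so that the cross terms $\delta\mathscr{B}^{\phi_1},\delta\mathscr{B}^{\psi_1}$ do not spoil condition (vii). The whole system is then handled by a single application of Theorem~\ref{local}.

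\textbf{Step~4 also needs repair.} On a rectangle $[0,T]\times[y,y+\eta]$, the flux through the far side is a pointwise trace of the same order as the one you want. It is therefore not bounded by the interior estimate. You must either average over the far endpoint or use characteristic triangles as in Theorem~\ref{thmB1}. Also, since you mollify only in $s$, you should differentiate only in $s$. Bound $\partial_s^2f_i(\cdot,y)$ first, then recover the other second derivatives from $\partial_yf_i=-(\hat A^1_i)^{-1}(\partial_sf_i+\hat B_if_i)$. This works precisely because no speed vanishes at $y$.
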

\begin{proof}
We will show this lemma in Appendix.
\end{proof}
To present an a-priori estimate, we introduce 
\begin{gather*} 
N_\sigma(T) := \|\sigma\|_{C^{2}(0,T)}+\sup_{s \in [0,T]} \left(\| f_{1}(s,\cdot) \|_{H^2(I_{1})}+\| f_{2}(s,\cdot) \|_{H^2(I_{2})}\right).
\end{gather*}
\begin{pro} \label{Pro70}
{
Assume the same conditions as in Theorem \ref{Thm70} for $A$ and $(\bar{\rho}, \bar{m})$.
Suppose that the problem \eqref{eq701*} has a solution 
$(f_{1},f_{2},\sigma) \in  \mathfrak{X}_{1} (T) \times  \mathfrak{X}_{2} (T) \times  H^{3}(0,T)$ for some $T>0$.
Then there exist positive constants $\delta_{0}$, $C$, and $\nu$ independent of $T$ such that if $N_\sigma(T) \leq \delta_{0}$, 
then the following estimate holds for $s \in [0,T]$$:$
}
\begin{gather} \label{eqPro70}
 \| f_{1}(s,\cdot) \|_{H^2(I_{1})}+\| f_{2}(s,\cdot) \|_{H^2(I_{2})} + {|\sigma(s)| }
\le C e^{-\nu t} \left(\| f_{1}^{0} \|_{H^2(I_{1})}+\| f_{2}^{0} \|_{H^2(I_{2})} +|\sigma^{0}| \right).
\end{gather}
\end{pro}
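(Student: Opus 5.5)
We prove Proposition~\ref{Pro70} by combining the transonic framework of Section~\ref{Sec5} on the upstream domain $I_1$, the subsonic framework of Section~\ref{Sec_sub} on the downstream domain $I_2$, and an ODE estimate for $\sigma$ obtained from \eqref{eq706a} and \eqref{eq706c}. On $I_1$ we split at $x_*\in(x_1,0)$, chosen as in Proposition~\ref{Pro51} but with $x_2$ replaced by $\bar d$. This gives a subsonic piece $(x_1,x_*)$, where we use $f_b=(\phi_1,\omega_1)$, and an extended supersonic piece $[x_*,\bar d)$, where we use $f_p=(\phi_1,\psi_1)$ with weight $W$. On $I_2$ the flow is strictly subsonic, and we use the energy $\int_{I_2}f_2^\top A^0 f_2$. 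Since $\bar d>0$ and $\bar M>1$ on $(0,\bar d)$, the matrix $A^1_p$ is positive definite at $y=\bar d-0$. The coordinate change adds terms of size $O(|\sigma|+|\dot\sigma|)$ to the coefficients. These terms vanish at the stationary state, so they act as perturbations of size $O(N_\sigma(T))$, provided $\dot\sigma$ is controlled by the traces. From \eqref{eq706a} and $\jump{\bar\rho}(\bar d)>0$ we get $|\dot\sigma|\le C(|f_1|+|f_2|)(s,\bar d)+C|\sigma|^2$.

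\textbf{Step 1: linearize the shock conditions.} Write $\jump{\cdot}(s,\bar d)$ of the full quantities as the jump of the stationary profiles evaluated at $\hat x_i(\sigma,\bar d)=\bar d+\sigma$, plus the jump of the perturbations. Linearizing \eqref{eq706c} about $\sigma=0$ and using the stationary relations \eqref{eq157} together with $\bar m=Q/A$, this yields a relation of the form
\[
\alpha\,\sigma=\ell_2 f_2(s,\bar d)-\ell_1 f_1(s,\bar d)+O(|\sigma|^2+|f|^2),
\]
where $\alpha$ is proportional to $\frac{d}{dx}\jump{G(\bar\rho_i,A)}$ at $\bar d$. The computation in Lemma~\ref{Lem24} (cf.\ \eqref{eq287}) shows that $\alpha\neq0$ with a sign determined by $A_x(\bar d)>0$ and $\jump{\bar u}<0$. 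Using this relation, we solve for the incoming characteristic $r_-$ on the subsonic side, so that the downstream boundary condition at $y=\bar d+0$ takes the form $r_-^{(2)}=\beta_1 f_1+\gamma\sigma+$ quadratic terms. Combining with \eqref{eq706a} then gives an ODE $\dot\sigma=-\kappa\sigma+L(f_1,f_2)(s,\bar d)+O(\cdot^2)$, and the sign of $A_x(\bar d)$ should give $\kappa>0$.

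\textbf{Step 2: $L^2$ energy.} Sum the weighted $f_p$-energy on $[x_*,\bar d]$, the $f_b$-energy on $[x_1,x_*]$, the $I_2$-energy, and $K\sigma^2$ with $K$ large. At $x_*$ the flux gap gives the dissipation $D_b^0$, exactly as in \eqref{eq533}. At $\bar d-0$ the outgoing flux $W f_p^\top A^1_p f_p$ is positive and gives $c|f_1|^2(\bar d)$. At $\bar d+0$ the flux $-f_2^\top A^1 f_2$ has an indefinite part $-\lambda_-(r_-^{(2)})^2$. This part is bounded via Step~1 by $C(|f_1|^2+\sigma^2)$, and these are absorbed by the dissipation from the $\bar d-0$ flux, scaled by choosing the weights appropriately, together with $\kappa K\sigma^2$ from the ODE. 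The outgoing component $r_+^{(2)}$ at $\bar d+0$ yields dissipation. Then, as in Lemmas~\ref{Lem33}--\ref{Lem35} on $I_2$ (with dissipation at $\bar d$ playing the role of boundary dissipation) and Lemma~\ref{Lem55} on $I_1$, we obtain the $(1+c)$-contraction over a time period $T_0$.

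\textbf{Step 3: higher derivatives and closure.} Apply $\partial_s^j$, $j=1,2$, to all equations, including the ODE and the linearized shock relation; this requires $\sigma\in H^3$. Recover spatial derivatives on $I_2$ and on $(x_1,x_*)$ via \eqref{ajs2}, and on $[x_*,\bar d]$ via $E^{\rm sp}_{p,\mu}$ as in Lemma~\ref{Lem57}. At $\bar d-0$, the traces of $\partial_y^k f_1$ are converted to $s$-derivative traces using \eqref{eq701}, since $A^1_p$ is invertible there. The Gronwall iteration of Proposition~\ref{Pro31} then gives \eqref{eqPro70}. We expect the main obstacle to be Step~1 and its use in Step~2: extracting the precise sign of $\alpha$ and $\kappa$ from $A_x(\bar d)>0$, and verifying that the indefinite $r_-^{(2)}$ flux can be dominated by the upstream outgoing flux plus $\sigma$-damping. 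If direct domination fails, we will first try inserting an extra weight on $I_2$, e.g.\ $e^{k(x-\bar d)}$ with sign chosen to shrink the $r_-$ flux coefficient at $\bar d$.
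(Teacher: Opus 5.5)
Your splitting of $I_1$, the use of the outgoing flux at $\bar d-0$, and the linearized shock relations agree with the paper. Your ODE $\dot\sigma=-\kappa\sigma+L$ with $\kappa>0$ is also correct, provided $L$ may depend on $f_1(s,\bar d)$ and on the outgoing downstream characteristic.

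However, you have the characteristics at $y=\bar d+0$ reversed. There $\bar u_2+\bar a_2>0$, so $r_+^{(2)}$ is the incoming one and carries the bad flux. The linearized \eqref{eq706c} reads $r_+^{(2)}=\frac{(\bar a_2-\bar u_2)^2}{(\bar a_2+\bar u_2)^2}r_-^{(2)}+\gamma\sigma+O(f_1)$, so it necessarily contains $r_-^{(2)}$, the same quantity that drives $L$.

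This is what breaks Step 2: no choice of $K$, or of relative weights, lets $\kappa K\sigma^2$ absorb the $\sigma$-part of the $\bar d+0$ flux. At $\bar d$, modulo upstream traces, use $\omega_2(s,\bar d)\approx\jump{\bar{\rho}}\dot\sigma$ and $\bar S_2\phi_2+2\bar u_2\omega_2\approx\frac{QA_x}{A^2}\jump{\bar{u}}\sigma$. One then finds exactly
\[
\frac{d}{ds}\Bigl(\int_{\bar d}^{x_2}f_2^\top A_2^0f_2\,dy+K\sigma^2\Bigr)=-c_0\dot\sigma^2+2(K-K_0)\sigma\dot\sigma-f_2^\top A_2^1f_2(s,x_2)+\cdots,
\]
with $c_0=\bigl(2A\bar u_2\jump{\bar{\rho}}^2/\bar\rho_2\bigr)(\bar d)>0$ and $K_0=\bigl(A_x\bar\rho_1\jump{\bar{u}}^2\bigr)(\bar d)$. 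The map $(\sigma,r_-^{(2)})\mapsto(\sigma,\dot\sigma)$ is invertible, so this boundary form has no $\sigma^2$ term at all. It is negative semidefinite only for $K=K_0$ and indefinite otherwise, so it never yields $-c\sigma^2$. In your variables, the cross term $2K\ell\sigma r_-^{(2)}$ grows linearly in $K$ while the $r_-^{(2)}$-dissipation stays $O(1)$.

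Your fallback weight $e^{k(x-\bar d)}$ cannot repair this. It rescales the incoming and outgoing parts of the $\bar d$-flux by the same factor, and it adds an indefinite interior term because $A_2^1$ is indefinite in the subsonic region.

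The paper takes exactly $K=K_0$. This is where $A_x(\bar d)>0$ enters: it makes the $\sigma$-energy positive. The cross term is then an exact derivative, and the $\bar d$-flux supplies only $c(\sigma^{(j+1)})^2$ dissipation (Lemma \ref{Lem73}). Decay of $\sigma$ itself comes from a separate, time-integrated step (Lemma \ref{Lem74}):
\begin{itemize}
\item Since $A_x(\bar d)\jump{\bar{u}}\neq0$, the linearized \eqref{eq706c} gives $|\sigma|\le C\bigl(|f_2(s,\bar d)|+\sqrt{D_{1bp}}\bigr)$.
\item A weighted trapezoid estimate on the subsonic interval $I_2$ bounds $\int|f_2(\tau,\bar d)|^2$ over one time window by $\int|f_2(\tau,x_2)|^2$ over a longer one. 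It uses only the positivity of $M_2^1$ and $M_2^1-vM_2^0$, and no boundary condition at $\bar d$. The latter integral is dissipated because $\phi_2(s,x_2)=0$.
\item A one-dimensional trace inequality in $s$ then converts $\int(\sigma^2+\dot\sigma^2)$ into a pointwise bound on $\sigma^2$.
\end{itemize}
You could instead feed such a trace bound on $r_-^{(2)}(\cdot,\bar d)$ into your ODE via Duhamel, but some observability step of this kind is indispensable.

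Finally, you leave open how $\dddot\sigma$ is controlled. Because $\hat A^1_i$ depends on $\dot\sigma$, the $\partial_s^2$-estimates on $I_1$ generate $CN_\sigma(T)\int|\dddot\sigma|^2$, which $N_\sigma(T)$ does not bound. The paper absorbs this term with the $\dddot\sigma^2$ dissipation from the $j=2$ boundary term at $\bar d$. It compensates the $C\alpha^{-1}\int D_{1bp}$ produced downstream by working with $E_{1bp}+\alpha^2E_2$ and requiring $N_\sigma(T)\le\alpha^3$.
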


Combining the time-local solvability in Lemma \ref{local7} and the a-priori estimate in Proposition \ref{Pro70}, we can prove Theorem \ref{Thm70}. We omit the proof, since it is similar to that of Theorem \ref{Thm30}.

In the remaining part of this section, we show only Proposition \ref{Pro70}. 
{
We observe that 
\begin{gather}\label{approx1}
|x(s,y)-y|+|\partial_{y} x(s,y)-1|+|\sigma(s)| + |\dot{\sigma}(s)| + |\ddot{\sigma}(s)|  \leq CN_\sigma(T), \ \ (s,y) \in [0,T] \times \bar{I}_{i}
\end{gather}
for $i=1,2$.
This allows the matrices $\hat{A}^1_{i}$ and $\hat{B}_{i}$ in \eqref{eq701} to be approximated as follows:
\begin{subequations}\label{approxM1}
\begin{align}
|\hat{A}^1_{i}(y,\rho_{i}(s,y), m_{i}(s,y), \sigma(s), \dot{\sigma}(s)) - \hat{A}^1(y, \bar{\rho}_{i}(y), \bar{m}_{i}(y))| & \leq CN_\sigma(T),
\\
|\hat{B}_{i}(y,\rho_{i}(s,y), m_{i}(s,y), \sigma(s)) - \hat{B}(y, \bar{\rho}_{i}(y), \bar{m}_{i}(y))| & \leq CN_\sigma(T),
\end{align}
\end{subequations}
where $\hat{A}^1$ and $\hat{B}$ are the same as in \eqref{eq317}.
Recall the assumption \eqref{spro71} and the definition that $(\bar{\rho}_{i},\bar{m}_{i})(y)=(\bar{\rho},\bar{m})(y)$ for $y \in I_{i}$ and $i=1,2$.
Within the interval $I_{1}$, the stationary solution $(\bar{\rho}_{1},\bar{m}_{1})$ is subsonic in $(x_{1},0)$, exactly transonic at $y=0$, and supersonic in $(0,\bar{d})$. The stationary solution $(\bar{\rho}_{2},\bar{m}_{2})$ is entirely subsonic in the interval $I_{2}$.
The situation in $I_1$ is similar to the problem in Section 4, 
while that in $I_2$ resembles the problem in Section 3.
The arguments in these sections are useful here, but spectial care is needed to handle the motion 
of the shock location.
In subsections \ref{S7.1} and \ref{S7.2}, we will derive the a priori estimates in $I_{1}$ and $I_{2}$, respectively.
}

\subsection{A priori estimate in $I_{1}$}\label{S7.1}
In this subsection, we show the a priori estimate in $I_{1}$ using the same energy method as in Section \ref{Sec5}.
Namely, we divide the interval $I_{1}$ into the subsonic region $I_{1b}$ and the extended supersonic region $I_{1p}$ as
\begin{gather*}
I_{1} = I_{1b} \cup I_{1p}, \quad I_{1b}:=(x_{1},y_{*}), \quad I_{1p}:=[y_{*},\bar{d}),
\end{gather*}
where $y_{*} \in (x_{1},0)$ is specified in Proposition \ref{Pro71} below. 

We use the subscripts $b$ and $p$ to distinguish between quantities utilized in $I_{1b}$ and $I_{1p}$, respectively. 
In the subsonic region $I_{1b}$, we adopt the following symmetric system for the unknown function $f_{1b}:=(\phi_{1},\omega_{1})^{\top}$:
\begin{gather}\label{eq702}
A^0_ {1b} \partial_s f_{1b} + A^1_{1b} \partial_y f_{1b} + B_{1b} f_{1b} = 0 \quad \text{in $\mathbb R_{+} \times I_{1b}$},
\end{gather}
where 
\begin{align*}
A^0_{1b}(y,\rho_{1},m_{1},\sigma) &:= \dfrac{A(\hat{x}_{1}(\sigma,y))}{\bar{\rho}(\hat{x}_{1}(\sigma,y))} \, {\rm diag} (S(\rho_{1},m_{1}),1),\\
A^1_{1b}(y,\rho_{1}, m_{1}, \sigma, \dot{\sigma})& := A^0_{1b}(y,\rho_{1},m_{1},\sigma) \hat{A}^1_{1}(y,\rho_{1}, m_{1}, \sigma, \dot{\sigma}), \\
B_{1b}(y,\rho_{1},m_{1},\sigma) &:= A^0_{1b}(y,\rho_{1},m_{1},\sigma) \hat{B}_{1}(y,\rho_{1},m_{1},\sigma).
\end{align*}
Here the function $S$ and the matrices $\hat{A}^1_{1}$ and $\hat{B}_{1}$ 
are defined after \eqref{eq311*} and \eqref{eq701*}.
In the extended supersonic region $I_{1p}$, we adopt the following symmetric system for the unknown function $f_{1p}:=(\phi_{1},\psi_{1})^{\top}$:
\begin{gather}\label{eq704}
A^0_ {1p} \partial_s f_{1p} + A^1_{1p} \partial_y f_{1p} + B_{1p} f_{1p} = 0 \quad \text{in $\mathbb R_{+} \times I_{1p}$},
\end{gather}
where 
\begin{align*}
A^0_{1p}(y,\rho_{1},u_{1},\sigma) &:= A^0_{p}(\hat{x}_{1}(\sigma,y),\rho_{1},u_{1}),\\
A^1_{1p}(y,\rho_{1}, u_{1}, \sigma, \dot{\sigma})& :=  \frac{\bar{d} - x_1}{\bar{d}+\sigma - x_1} 
A_{p}^1(\hat{x}_{1}(\sigma,y),\rho_{1}, u_{1})
- \dot{\sigma} \frac{y - x_1}{\bar{d}+\sigma - x_1} A^0_{p}(\hat{x}_{1}(\sigma,y),\rho_{1},u_{1}), \\
B_{1p}(y,\rho_{1},u_{1},\sigma) &:= B_{p}(\hat{x}_{1}(\sigma,y),\rho_{1},u_{1}).
\end{align*}
Here the matrices $A^0_{p}$, $A^1_{p}$, and $B_{p}$ are defined just after \eqref{eq512},
and $f_{1p}$ is related to $f_{1b}$ by
\begin{equation*}
f_{1p} = 
\dfrac{1}{\rho}
\begin{bmatrix}
\rho & 0 
\\ 
-\bar{u} & 1
\end{bmatrix}
f_{1b}.
\end{equation*}

Now we fix $y_{*} \in (x_{1},0)$ and $k_{*}>0$ in the same way as in Proposition \ref{Pro51}.
\begin{pro} \label{Pro71}
Under the same assumptions as in Proposition \ref{Pro70}, 
if $N_\sigma(T)$ is sufficiently small, 
there exist constants $c, k_*,C > 0$ and $y_* \in (x_1,0)$ such that the following estimates hold$:$
\begin{gather} 
|R_{1b}(s,y)| \leq CN_\sigma(T)  \quad \text{for $(s,y) \in [0,T] \times \bar{I}_{1}$},
\label{eqPro79}
\\
f^\top R^j_{1p}(s,y) f \ge c | f |^2 \quad \text{for $f \in \mathbb{R}^2, \ (s,y) \in [0,T] \times [y_*, \bar{d}], \ j=0,1,2$},
\label{eqPro71}
\end{gather}
where the $2 \times 2$ symmetric matrices $R_{1b}$ and $R^j_{1p}$ are defined by
\begin{align*}
R_{1b} &:= - \partial_s A_{1b}^0 - \partial_y A_{1b}^1 +  B_{1b} + B_{1b}^\top,
\\
R_{1p} &:= - \partial_s A^0_{1p} - \partial_y A^1_{1p} +  B_{1p} + B_{1p}^\top,
\\
R^j_{1p} &:= R_{1p} + k_*  A^1_{1p} + j \times \left( 2 \partial_y A^1_{1p} -  (\partial_y A_{1p}^0) (A_{1p}^0)^{-1} A_{1p}^1 - \{(\partial_y A_{1p}^0) (A_{1p}^0)^{-1} A_{1p}^1\}^\top  \right).
\end{align*}
\end{pro}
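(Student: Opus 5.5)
The proof is a perturbation argument. Both $R_{1b}$ and $R^j_{1p}$ differ from the corresponding matrices of Sections 3 and 4, built from the unperturbed coefficients and the stationary solution $(\bar\rho_1,\bar m_1)$, by terms of order $N_\sigma(T)$. We then reuse Proposition \ref{Pro30} and Proposition \ref{Pro51}.

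\textbf{Reduction via \eqref{approx1}.} Every coefficient matrix in \eqref{eq702} and \eqref{eq704} is a smooth function of
\[
\bigl(\hat x_1(\sigma,y),\ \rho_1,\ m_1 \text{ (or } u_1),\ \sigma,\ \dot\sigma\bigr).
\]
By \eqref{approx1} and the Sobolev bounds analogous to \eqref{norm-es51}, its $s$- and $y$-derivatives differ from the $t$- and $x$-derivatives of the Section 3/4 matrices, evaluated at $x=y$ on $(\bar\rho_1,\bar m_1)$, by $\mathcal O(N_\sigma(T))$. The contributing pieces are:
\begin{itemize}
\item $\partial_s$ acting through $\sigma(s)$, which produces factors of $\dot\sigma$;
\item the extra term $-\dot\sigma\frac{y-x_1}{\bar d+\sigma-x_1}\,{\rm id}$ in $\hat A^1_1$, together with its $y$-derivative;
\item the factor $\frac{\bar d-x_1}{\bar d+\sigma-x_1}=1+\mathcal O(\sigma)$;
\item the shift $\hat x_1(\sigma,y)-y=\mathcal O(\sigma)$ inside $A$, $\bar\rho_1$ and $\bar m_1$, which are $C^3$ on $[x_1,\bar d+\varepsilon_0]$.
\end{itemize}
Each of these is controlled by $|\sigma|+|\dot\sigma|+|\ddot\sigma|\le CN_\sigma(T)$. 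The only derivative of $\sigma$ entering $R_{1b}$ and $R_{1p}$ is $\ddot\sigma$, which comes from $\partial_s$ of $A^1$ through $\dot\sigma$. This is why $N_\sigma$ contains $\|\sigma\|_{C^2}$. In particular, $R_{1b}=R_b[\bar\rho_1,\bar m_1](y)+\mathcal O(N_\sigma(T))$ and $R^j_{1p}=R^j_p[\bar\rho_1,\bar m_1](y)+\mathcal O(N_\sigma(T))$.

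\textbf{Estimate \eqref{eqPro79}.} As in Proposition \ref{Pro30}, the stationary part of $R_b$ consists of the entries ${\cal R}_{21}$ and ${\cal R}_{22}$. These vanish identically because $(\bar\rho_1,\bar m_1)$ satisfies \eqref{eq151}, together with \eqref{eq221} and \eqref{eq242}, on the whole of $[x_1,\bar d+\varepsilon_0)$. This identity uses only the ODE, not the sign of $\bar S$. The remaining terms are $\mathcal O(N_\sigma(T))$, which gives \eqref{eqPro79} on all of $\bar I_1$.

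\textbf{Estimate \eqref{eqPro71} (main step).} We repeat the proof of Proposition \ref{Pro51} for the stationary matrices ${\cal R}^j$, $j=0,2$, with $x_2$ replaced by $\bar d$. The case $j=1$ follows as the average of the cases $j=0$ and $j=2$. On $[y_*,\bar d]$ the stationary solution $\bar\rho_1$ has the following properties:
\begin{itemize}
\item $\bar M<1$ only on $[y_*,0)$, and $\bar M=1$ at $0$;
\item by \eqref{spro72}, $\bar\rho_{1x}<0$ and $\bar u_{1x}>0$ near $0$;
\item $\bar M>1$ on $(0,\bar d]$, with $\bar M(\bar d-0)>1$ by the Lax condition \eqref{eq159*}.
\end{itemize}
This yields the following:
\begin{enumerate}
\item Choosing $k_*$ large makes ${\cal R}^j_{11}$ and ${\cal R}^j_{12}$ positive on $\bar I_1$.
\item The expansions of $Q^j$ in powers of $k_*^{-1}$ give $Q^j\ge1+2c_2$ on $[0,\bar d]$. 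Near $0$ the positive first-order corrections do the work; on $[\ell,\bar d]$ the lower bound $\inf\bar M>1$ does.
\item By continuity of $Q^j$, we can choose $y_*\in(x_1,0)$ with $Q^j\ge1+c_2$ on $[y_*,\bar d]$.
\end{enumerate}
Positivity of the stationary matrices then absorbs the $\mathcal O(N_\sigma(T))$ errors once $N_\sigma(T)$ is small.

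\textbf{Main obstacle.} The delicate point is the uniformity near the transonic point. There the positivity of $R^j_{1p}$ rests on the first-order $k_*^{-1}$ terms rather than on $\bar M^2-1$, so these terms must not be spoiled by the moving frame. Since all frame corrections are uniformly $\mathcal O(N_\sigma(T))$ and the stationary matrices are bounded below by a fixed $c_1$ on the compact interval $[y_*,\bar d]$, this causes no difficulty.
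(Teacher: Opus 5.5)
Your proposal is correct and follows the paper's own route. You approximate $R_{1b}$ and $R^j_{1p}$ via \eqref{approx1} by their stationary counterparts. You then get \eqref{eqPro79} from the cancellation ${\cal R}_{21}={\cal R}_{22}=0$ as in Proposition \ref{Pro30}, and \eqref{eqPro71} by rerunning the proof of Proposition \ref{Pro51} on $[y_*,\bar d]$ (the paper simply takes $y_*=x_*$). One harmless slip: $\ddot\sigma$ does not actually enter $R_{1b}$ or $R_{1p}$, since only $\partial_s A^0$ (which contains no $\dot\sigma$) and $\partial_y A^1$ appear there, so only $\sigma$ and $\dot\sigma$ occur; the $C^2$ part of $N_\sigma(T)$ is needed elsewhere, e.g. for the commutator terms ${\cal F}^j$.
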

\begin{proof}
Recalling \eqref{approx1}, we can approximate $R_{1b}$ and $R_{1p}^{j}$ similarly to \eqref{approxM1} to arrive at \eqref{eqPro79} just following the proofs of Propositions \ref{Pro31}. 
The choice of $y_* = x_*$ ensures that the arguments of the proof of Proposition \ref{Pro51} works here,
leading to \eqref{eqPro71}.
\end{proof}

%
%

Next we will show a lemma similar to Lemma \ref{Lem57}.
For this purpose, 
we apply the operator $\partial_{s}^{j}$ for $j=0,1,2$ to \eqref{eq701}  to obtain
\begin{gather}\label{eq716*}
\partial_{s} (\partial_{s}^{j} f_{1b}) + \hat{A}_{1b}^{1} \partial_{y} (\partial_{s}^{j} f_{1b}) + \hat{B}_{1b} (\partial_{s}^{j} f_{1b})
={\cal F}_{1b}^{j}:=- [\partial_{s}^{j},\hat{A}_{1b}^{1}\partial_{y}] f_{1b} - [\partial_{s}^{j},\hat{B}_{1b}] f_{1b},
\end{gather}
where $\hat{A}_{1b}^{1}:=\hat{A}_{1}^{1}$ and $\hat{B}_{1b}:=\hat{B}_{1}$.
Multiplying this by $A_{1b}^{0}$ from the left gives
\begin{gather}
\label{eq716}
A_{1b}^{0} \partial_{s} (\partial_{s}^{j} f_{1b}) + A_{1b}^{1} \partial_{y} (\partial_{s}^{j} f_{1b}) + B_{1b} (\partial_{s}^{j} f_{1b})
=A_{1b}^{0}{\cal F}_{1b}^{j}. 
\end{gather}
We also apply the operator $\partial_{s}^{j}$ for $j=0,1,2$ to \eqref{eq704} to obtain
\begin{gather}\label{eq717}
{A}^{0}_{1p}\partial_{s} (\partial_{s}^{j} f_{1p}) + {A}^{1}_{1p} \partial_{y} (\partial_{s}^{j} f_{1p}) + {B}_{1p} (\partial_{s}^{j} f_{1p})
={\cal F}_{1p}^{j},
\\
{\cal F}^{j}_{1p}:=- [\partial_{s}^{j},{A}^{0}_{1p}\partial_{s}] f_{1p} - [\partial_{s}^{j},{A}^{1}_{1p}\partial_{y}] f_{1p} - [\partial_{s}^{j},{B}_{1p}] f_{1p}.
\notag
\end{gather}
Let us consider the $y$-derivative of $f_{1p}$.
Applying $\partial_{y}^{j}$ for $j=0,1,2$ to \eqref{eq704} gives
\begin{multline*}
{A}^{0}_{1p}\partial_{s} (\partial_{y}^{j} f_{1p}) + {A}^{1}_{1p} \partial_{y} (\partial_{y}^{j} f_{1p}) + {B}_{1p} (\partial_{y}^{j} f_{1p})
\\
=- [\partial_{y}^{j},{A}^{0}_{1p}\partial_{s}] f_{1p} - [\partial_{y}^{j},{A}^{1}_{1p}\partial_{y}] f_{1p} - [\partial_{y}^{j},{B}_{1p}] f_{1p}.
\end{multline*}
For $j=1,2$, we rewrite the term $(\partial_{y} A^{0}_{1p}) \partial_{y}^{j-1} \partial_{s} f_{1p}$ on the RHS into some linear combination of $f_{1p}$ and its $y$-derivatives of at most $(j-1)$-th order 
by using \eqref{eq704} to obtain
\begin{multline}\label{eq718}
{A}^{0}_{1p}\partial_{s} (\partial_{y}^{j} f_{1p}) + {A}^{1}_{1p} \partial_{y} (\partial_{y}^{j} f_{1p}) + {B}_{1p} (\partial_{y}^{j} f_{1p})
\\
+j\times \left(\partial_y A^1_{1p} -  (\partial_y A_{1p}^0) (A_{1p}^0)^{-1} A_{1p}^1\right)(\partial_{y}^{j} f_{1p}) = {\cal G}^{j}_{1p},
\end{multline}
where 
\begin{align*}
{\cal G}^{1}_{1p}&:= \left( \partial_y A_{1p}^0 \right) \left( A_{1p}^0 \right)^{-1} B_{1p} f_{1p} - (\partial_{y} B_{1p}) f_{1p},
\\
{\cal G}^{2}_{2p}&:= 2 \left( \partial_y A_{1p}^0 \right) \left( A_{1p}^0 \right)^{-1} 
\\
&\quad \times  \left[ B_{1p} \partial_{y} f_{1p} - (\partial_y A_{1p}^0) \left( A_{1p}^0 \right)^{-1} \left( A_{1p}^1  \partial_{y} f_{1p} + B_{1p} f_{1p} \right) + (\partial_y A_{1p}^1)  \partial_{y} f_{1p} + (\partial_y B_{1p}) f_{1p} \right]
\\
& \quad - 2 (\partial_{y} B_{p1})  \partial_{y} f_{1p} - (\partial _{yy} A_{1p}^{0})  \partial_{s} f_{1p} - (\partial _{yy} A_{1p}^{1})  \partial_{y}f_{1p} - (\partial_{yy} B_{1p}) f_{1p}.
\end{align*}
The equality \eqref{eq718} holds also for $j=0$ by regarding ${\cal G}^{0}_{1p}:=0$.
We note that \eqref{eq716*}, \eqref{eq716}, \eqref{eq717}, and \eqref{eq718} correspond to \eqref{eq361*}, \eqref{eq361}, \eqref{eq554}, and \eqref{eq561}, respectively.	

The following estimates hold for $(s,y) \in [0,T] \times I_{1}$ and $j=0,1,2$:
\begin{gather}
\left| \partial_s^j f_{1p}
-
\dfrac{1} {\rho}
\begin{bmatrix}
\rho & 0 
\\ 
-\bar{u} & 1
\end{bmatrix}
\partial_s^j f_{1b}
\right|
(s,y)
\leq 
CN_\sigma(T) \sum_{i=0}^1 |\partial_s^i f_{1b}|(s,y),
\label{eq728*}
\\
c \sum_{i=0}^j |\partial_y^i f_{1b}|(s,y)
\le \sum_{i=0}^j |\partial_s^i f_{1b}|(s,y)
\le C \sum_{i=0}^j |\partial_y^i f_{1b}|(s,y), 
\label{ajs72}
\\
|{\cal F}_{1b}^{j}|(s,y)  \leq CN_\sigma(T) \sum_{i=0}^j |\partial_s^i f_{1b}|(s,y)+CN_\sigma(T)|\dddot{\sigma}|(s),
\label{calF70}\\
|{\cal F}^{j}_{1p}|(s,y)  \leq CN_\sigma(T) \sum_{i=0}^j (|\partial_s^i f_{1p}|+ |\partial_y^i f_{1p}|)(s,y)
+CN_\sigma(T)|\dddot{\sigma}|(s),
\label{calF71} \\
|{\cal G}^{j}_{1p}|(s,y)  \leq C j \sum_{i=0}^{\max\{0,j-1\}} |\partial_y^i f_{1p}|(s,y),
\label{calG571}
\end{gather}
where $c$ and $C$ are positive constants independent of $(s,y)$ and $T$.
These derivations are parallel to those of \eqref{eq528*}, \eqref{ajs2}, \eqref{calF1}, \eqref{calF51}, and \eqref{calG51}, respectively.
The key difference, however, lies only in the presence or absence of $|\dddot{\sigma}|$ appearing in \eqref{calF70} and   \eqref{calF71}.
%
%

We employ the weight function  
\begin{gather} \label{eq715}
W_{1}(y) := e^{-k_* (y-y_*)}
\end{gather}
using $k_* > 0$ and $y_* \in (x_1, 0)$ determined in Proposition \ref{Pro71},
and define the following functionals for $j=0,1,2$:
\begin{subequations}\label{EF7}
\begin{gather}
E_{1bp}(s) := (E^{0}_{1bp}+E^{1}_{1bp}+E^{2}_{1bp}+E^{\rm sp}_{1p,\mu})(s), \quad E_{1bp}^{j}(s):=(E^{j}_{1b}+E^{j}_{1p})(s),
\\
E^{j}_{1b}(s) := \int_{x_1}^{y_*}  ( \partial_s^j f_{1b} )^\top A^0_{1b} \partial_s^j f_{1b} (s, y) dy, 
\\
E^{j}_{1p}(s) := \int_{y_*}^{\bar{d}} W_{1}  ( \partial_s^j f_{1p} )^\top A^0_{1p} \partial_s^j f_{1p} (s, y) dy, 
\\
E^{\rm sp}_{1p,\mu} (s) := 
\sum_{j=0}^2 \mu^{j+1} \int_{x_*}^{\bar{d}}  W_{1} ( \partial_y^j f_{1p} )^\top A^0_{1p} \partial_y^j f_{1p} (s, y) dy,
\\
D_{1bp}(s) := (D^{0}_{1bp}+D^{1}_{1bp}+D^{2}_{1bp})(s), \quad D_{1bp}^{j}(s):=(D^{j}_{1b}+D^{j}_{1p})(s),
\\
D_{1b}^{j}(s):=  | \partial_s^j f_{1b} |^2 (s, y_*), \quad D_{1p}^{j}(s):=  | \partial_s^j f_{1p} |^2 (s, \bar{d}), 
\end{gather}
\end{subequations}
which are analogous with $E_{bp}$, $E^{j}_{bp}$, $E^{j}_{b}$, $E^{j}_{p}$, $E^{\rm sp}_{p,\mu}$, $D_{bp}$, $D_{bp}^{j}$, $D_{b}^{j}$, and $D_{p}^{j}$ in \eqref{EF5} and \eqref{EF5*}.
Here $\mu \in (0,1)$ is a constant to be determined in Lemma \ref{Lem72} below, which is parallel to Lemma \ref{Lem57}.

\begin{lem} \label{Lem72}
Under the same assumptions as in Proposition \ref{Pro70}, 
if $N_\sigma(T)$ is sufficiently small, there exists a constant $\mu \in (0,1)$ 
such that the following estimate holds for $s_{0} \in [0,T]$ and $s \in [s_{0},T]${\rm :}
\begin{gather} \label{eqLem72} 
E_{1bp}(s)
+ c \int_{s_{0}}^s  D_{1bp} (\tau)  d\tau 
\le
E_{1bp}(s_{0}) 
+ C N_\sigma(T) \int_{s_{0}}^s (E_{1bp}+|\dddot{\sigma}|^{2})(\tau) d\tau, 
\end{gather}
where $c$ and $C$ are positive constants independent of $s_{0}$ and $T$.
\end{lem}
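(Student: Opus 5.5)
The proof will mirror that of Lemma \ref{Lem57}, with $I_p=[x_*,x_2)$ replaced by $I_{1p}=[y_*,\bar d)$. The exit boundary $x_2$ is replaced by the shock side $y=\bar d-0$. All error terms coming from the moving coordinate are controlled by \eqref{approx1}, \eqref{approxM1} and \eqref{calF70}--\eqref{calG571}.

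\emph{Step 1 ($j=0$).} On $I_{1b}$ we multiply \eqref{eq702} by $2f_{1b}^\top$ and integrate over $[s_0,s]\times I_{1b}$. The term at $y=x_1$ vanishes because $\omega_1(s,x_1)=0$, and \eqref{eqPro79} bounds the bulk term by $CN_\sigma(T)\int E^0_{1b}$. This gives the analogue of \eqref{eqLem53}. On $I_{1p}$ we multiply \eqref{eq704} by $2W_1f_{1p}^\top$; this produces the bulk term $W_1 f_{1p}^\top R^0_{1p}f_{1p}\ge0$ by \eqref{eqPro71}. For the boundary term at $y=\bar d$, note that $A^1_{1p}$ equals $A^1_p(\bar d,\bar\rho_1,\bar u_1)$ up to $O(N_\sigma(T))$, the correction involving $\dot\sigma$ being small. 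Since $\bar M(\bar d-0)>1$, this matrix is positive definite, so the outgoing flux at $\bar d$ gives $c\int D^0_{1p}$. The interface terms at $y_*$ combine exactly as in \eqref{eq533}: using $f_{1p}=$ (matrix)$\,f_{1b}$ and $\bar S(y_*)>0$, they yield $c\int D^0_{1b}$. No shock boundary condition is needed on the $I_1$ side, because both characteristics leave through $\bar d-0$. This is why the flux there has a good sign, and why $\sigma$ enters only through coefficients.

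\emph{Step 2 ($j=1,2$, time derivatives).} We apply the same computation to \eqref{eq716} and \eqref{eq717}, with the source terms $A^0_{1b}\mathcal F^j_{1b}$ and $\mathcal F^j_{1p}$. By \eqref{calF70}, \eqref{calF71} and Cauchy--Schwarz, these sources contribute at most
\[
CN_\sigma(T)\int\bigl(E_{1bp}+D_{1bp}+|\dddot\sigma|^2\bigr)\,d\tau .
\]
This is where the new term $|\dddot\sigma|^2$ appears. The spatial derivatives in $\mathcal F^j_{1p}$ are controlled through $E^{\rm sp}_{1p,\mu}$, at the cost of a factor $\mu^{-3}$. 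The interface inequality uses \eqref{eq728*} in place of \eqref{eq528} and gives $c\int D^j_{1b}$. The boundary traces of $\partial_s^2 f$ at $y_*$ and at $\bar d$ are justified by Lemma \ref{local7}, since the characteristic speeds do not vanish at these points.

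\emph{Step 3 (spatial derivatives on $I_{1p}$).} We multiply \eqref{eq718} by $2\mu^{j+1}W_1(\partial_y^jf_{1p})^\top$. Proposition \ref{Pro71} makes $R^j_{1p}$ coercive. The flux at $\bar d$ is nonnegative and may be dropped. The flux at $y_*$ is bounded by $C\mu\, D_{1bp}$ via \eqref{ajs72}. The source $\mathcal G^j_{1p}$ is absorbed for small $\mu$ using \eqref{calG571}. This yields the analogue of \eqref{eqLem51}. Finally we add Steps 1--3: we fix $\mu$ small first, then take $N_\sigma(T)$ small so that the $N_\sigma\int D$ terms are absorbed into the dissipation. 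This gives \eqref{eqLem72}.

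The main obstacle is Step 2. There we must check carefully that differentiating the $\dot\sigma$-dependent coefficients of $\hat A^1_1$ twice in $s$ produces only terms of the form $N_\sigma(T)(|\partial^{\le2}f|+|\dddot\sigma|)$. We must also check that the $\dot\sigma$ term in $A^1_{1p}$ at $y=\bar d$ does not destroy positivity of the shock-side flux. Both points rely on smallness of $\|\sigma\|_{C^2}$, which is included in $N_\sigma(T)$.
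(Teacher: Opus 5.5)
Your proposal is correct and follows the paper's route. The paper likewise repeats the proof of Lemma \ref{Lem57} on $I_{1b}\cup I_{1p}$: it obtains \eqref{eq723} and \eqref{eq724}, where \eqref{calF70}--\eqref{calF71} produce the $\mu^{-3}$ and $|\dddot{\sigma}|^{2}$ losses; couples them at $y_*$ through \eqref{eq728*} to get \eqref{eqLem77*}; derives \eqref{eqLem71} by absorbing $\mathcal{G}^j_{1p}$ for small $\mu$; and then fixes $\mu$ before taking $N_\sigma(T)$ small, with only a small bookkeeping slip on your part: the $CN_\sigma(T)\int D_{1bp}$ error comes from the interface conversion \eqref{eq728*}, not from the bulk sources $\mathcal{F}^j$, which are controlled by $E_{1bp}$ and $|\dddot{\sigma}|^2$ alone.
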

\begin{proof} 
Let us follow the proof of Lemma \ref{Lem57}, replacing $I_{b}$, $I_{p}$, and $[t_{0},t]$ by $I_{1b}$, $I_{1p}$, and $[s_{0},s]$, respectively.
Multiplying \eqref{eq716} by $2 \partial_{s}^{j} f_{1b}^\top$ from the left, 
we have the following equalities for $j=0,1,2$:
\begin{gather*} 
\partial_s \left( (\partial_s^j f_{1b})^\top A_{1b}^0 \partial_s^j f_{1b} \right) 
+ \partial_y \left( (\partial_s^j f_{1b})^\top A_{1b}^1 \partial_s^j f_{1b} \right) 
+ (\partial_s^j f_{1b})^\top R_{1b} \partial_s^j f_{1b}
= 2(\partial_s^j f_{1b})^\top A_{1b}^{0}{\cal F}_{1b}^{j},
\end{gather*}
which is parallel to \eqref{eq325} in Lemma \ref{Lem36}.
Then repeating the derivation of \eqref{eq553} with the aid of \eqref{eqPro79} and \eqref{calF70}, one can obtain
\begin{multline} \label{eq723}
E_{1b}^{j}(s) + \int_{s_{0}}^s (\partial_s^j f_{1b})^\top A^1_{1b} \partial_s^j f_{1b} (\tau, y_*) d \tau 
\leq E_{1b}^{j}(s_{0}) + C N_\sigma(T) \int_{s_{0}}^s \left( \sum_{i =0}^{2 }E_{1b}^{i} + |\dddot{\sigma}|^{2} \right) (\tau) d \tau,
\end{multline}
where $C>0$ is a constant independent of $s_{0}$ and $T$.
Next we multiply \eqref{eq717} by $2W_{1}( \partial_s^j f_{1p} )^\top$ from the left,
and use the definition of $R^{0}_{1p}$ in Proposition \ref{Pro71}
to get the following equalities for $j=0,1,2$:
\begin{multline*}
\partial_s \left( W_{1} ( \partial_s^j f_{1p} )^\top A_{1p}^{0} \partial_s^j  f_{1p} \right) 
+ \partial_y \left( W_{1} ( \partial_s^j f_{1p} )^\top A_{1p}^{1} \partial_s^j  f_{1p}  \right) 
\\
+ W_{1} ( \partial_s^j f_{1p} )^\top R^{0}_{1p}  \partial_s^j  f_{1p} 
= 2W_{1}(\partial_s^j f_{1p} )^\top {\cal F}^{j}_{1p},
\end{multline*}
which are parallel to \eqref{eq523*}. 
In the same way as the derivation of \eqref{eq556} with using \eqref{eqPro71} and \eqref{calF71}, one can have
\begin{multline}\label{eq724}
E_{1p}^j(s) 
- \int_{s_{0}}^s ( \partial_s^j f_{1p} )^\top A_{1p}^{1} \partial_s^j  f_{1p} (\tau, y_*) d \tau
+ c \int_{s_{0}}^s D_{1p}^{j} (\tau) d \tau
\\ 
\leq E_{1p}^j(s_{0})+ C {\mu}^{-3} N_\sigma(T) \int_{s_{0}}^s \Biggl(E^{\rm sp}_{1p,\mu}+\sum_{i=0}^{2}E_{1p}^{i}+|\dddot{\sigma}|^{2}\Biggr)(\tau) d\tau,
\end{multline}
where $c,C>0$ are constants independent of $\mu$, $s_{0}$, and $T$.
Now we repeat the derivation of \eqref{eqLem57*} but with \eqref{eq728*}, \eqref{eq723}, and \eqref{eq724} in place of \eqref{eq528*}, \eqref{eq553}, and \eqref{eq556}, respectively, to conclude that 
\begin{multline} \label{eqLem77*} 
E_{1bp}^j(s)
+ c \int_{s_{0}}^s  D_{1bp}^{j} (\tau) d\tau 
\\
\le
E_{1bp}^j(s_{0}) 
+ C N_\sigma(T) \int_{s_{0}}^s D_{1bp}(\tau) d\tau
+C {\mu}^{-3} N_\sigma(T) \int_{s_{0}}^s \Biggl(E^{\rm sp}_{1p,\mu}+\sum_{i=0}^{2}E_{1bp}^{i}+|\dddot{\sigma}|^{2}\Biggr) (\tau) d\tau.
\end{multline}

Furthermore,  for $j=0,1,2$, we multiply \eqref{eq718} by $2 \mu^{j+1}W(\partial_{y}^{j} f_{1p})^{\top}$ from the left, 
and use the definition of $R^{j}_{1p}$ in Proposition \ref{Pro71} to have
\begin{multline}\label{eq723**}
\partial_s \left( \mu^{j+1}W_{1} (\partial_{y}^{j} f_{1p})^{\top} A^0_{1p} \partial_{y}^{j} f_{1p} \right) 
+ \partial_y \left( \mu^{j+1}W_{1} (\partial_{y}^{j} f_{1p})^{\top} A^1_{1p} \partial_{y}^{j} f_{1p} \right) 
\\
+ \mu^{j+1} W_{1} (\partial_{y}^{j} f_{1p})^{\top} R^{j}_{1p} \partial_{y}^{j} f_{1p} 
= 2 \mu^{j+1} W_{1} (\partial_{y}^{j} f_{1p})^{\top}{\cal G}^{j}_{1p},
\end{multline}
which is parallel to \eqref{eq523**}. 
Then following the derivation of \eqref{eqLem51} with the aid of  \eqref{eqPro71} and \eqref{calG571}, we arrive at
\begin{gather} \label{eqLem71} 
E^{\rm sp}_{1p,\mu} (s) \leq
E^{\rm sp}_{1p,\mu} (s_{0}) 
+ C \mu \int_{s_0}^s D_{1bp}(\tau)  d \tau,
\end{gather}
where $C$ is a positive constant independent of $\mu$, $s_{0}$, and $T$.

Finally, summing up \eqref{eqLem77*} for $j=0,1,2$ and \eqref{eqLem71}, 
choosing $\mu$ sufficiently small, and letting $N_\sigma(T) \ll 1$, we conclude \eqref{eqLem72}.
\end{proof} 

Hereinafter we fix $\mu$ so that \eqref{eqLem72} in Lemma \ref{Lem72} holds.
Next let us derive the inequality \eqref{eqLem78} below, which is similar to \eqref{eqLem58} in Lemma \ref{Lem59}.
We set
\begin{gather*}
v_{1-}^{*} := \frac{1}{2} \sup_{y \in \bar{I}_{1b} } \left( \bar{u}(y) - \bar{a}(y) \right) < 0, \quad
v_{1+}^{*} := \frac{1}{2} \inf_{y \in \bar{I}_{1b} } \left( \bar{u}(y) + \bar{a}(y) \right) > 0,
\\
T_{1-}^* : = \frac{y_* - x_1}{| v_{1-}^* |}, \quad 
T_{1+}^{*} : = \frac{y_* - x_1}{| v_{1+}^* |}, \quad 
T_{10}^{*} : = T_{1-}^* +  T_{1+}^*,
\end{gather*}
%
which are analogous to \eqref{eq525} and \eqref{eq526}. 

\begin{lem} \label{Lem79}
Under the same assumptions as in Proposition \ref{Pro70} with $T \geq T_{10}^{*}$, 
if $N_\sigma(T)$ is sufficiently small, 
the following estimate holds for $\tilde{s}_{0} \in [0,T-T_{10}^{*}]$ and $s \in [\tilde{s}_{0}+T_{10}^{*},T]${\rm :}
\begin{gather} \label{eqLem78}
( 1 + c ) E_{1bp} (s) + c\int_{\tilde{s}_{0}}^s  D_{1bp}(\tau) d\tau
\le E_{1bp} (\tilde{s}_{0}) + C N_\sigma(T) \int_{\tilde{s}_{0}}^s (E_{1bp}+|\dddot{\sigma}|^{2}) (\tau) d\tau,
\end{gather}
where $c$ and $C$ are positive constants independent of $\tilde{s}_{0}$ and $T$.
\end{lem}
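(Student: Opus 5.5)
The plan is to follow the proof of Lemma \ref{Lem59} line by line, with $I_{b},I_{p},x_{*},T_{0}^{*}$ replaced by $I_{1b},I_{1p},y_{*},T_{10}^{*}$. The only new features are the $|\dddot{\sigma}|^{2}$ contributions from \eqref{calF70}--\eqref{calF71} and the fact that the right endpoint of $I_{1p}$ is now $y=\bar{d}$ instead of $x_{2}$.

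\textbf{Step 1: the subsonic part $I_{1b}$.} I will establish, for $j=0,1,2$,
\begin{gather*}
E_{1b}^{j}(s)\le C\int_{\tilde{s}_{0}}^{s}D_{1b}(\tau)\,d\tau+CN_\sigma(T)\int_{\tilde{s}_{0}}^{s}(E_{1bp}+|\dddot{\sigma}|^{2})(\tau)\,d\tau .
\end{gather*}
The characteristic speeds of \eqref{eq701} on $\bar I_{1b}$ differ from $\bar u\pm\bar a$ only by $O(N_\sigma(T))$ because of \eqref{approx1}. Hence the domains $\varOmega^{*}(\tilde s_0)$, built with $v_{1\pm}^{*}$ and $y_{*}$ in place of $v_\pm^*$ and $x_*$, still have boundaries that are spacelike in the required sense. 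With this, I will repeat Lemma \ref{Lem33} (weight $e^{ky}$ with the matrix $M^{0}$) and Lemma \ref{Lem34} (Riemann-type quantities $r_\pm^{j}$ with $\partial_s^j$ applied) on $[x_{1},y_{*}]$. The source terms are controlled by \eqref{calF70}, and \eqref{ajs72} is used to pass between $s$- and $y$-derivatives. This yields $E_{1b}^j(\tilde s_0+T_{1-}^*)\le C\int D_{1b}^j+CN_\sigma\int(E_{1bp}+|\dddot\sigma|^2)$. I then insert this into \eqref{eq723} with $s_0=\tilde s_0+T^*_{1-}$, moving the interface flux to the right-hand side. That flux is bounded by $C D_{1b}^j$ since $A^1_{1b}$ is bounded.

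\textbf{Step 2: the extended supersonic part $I_{1p}$.} I will multiply the identities used for \eqref{eq724}, and \eqref{eq723**}, by $e^{q\tau}$ and integrate over $[\tilde s_0,s]\times[y_*,\bar d]$. The flux at $y_{*}$ is bounded by $C D_{1b}$ using \eqref{eq728*} and \eqref{ajs72}. For the flux at $y=\bar d$, note that the flow there is supersonic on the left and the shock speed is $O(N_\sigma)$, so $A^1_{1p}(s,\bar d)$ is positive definite and this term has a good sign. By Proposition \ref{Pro71}, $R^{j}_{1p}-qA^0_{1p}$ is positive definite for $q\ll1$. For the spatial-derivative energy, the $\mathcal G^{j}_{1p}$ terms are absorbed using the smallness of $\mu$ already fixed in Lemma \ref{Lem72}. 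Because $e^{-q(s-\tilde s_0)}\le e^{-qT_{10}^*}<1$, this yields
\begin{gather*}
(1+c)(E^{j}_{1p}+E^{\rm sp}_{1p,\mu})(s)\le (E^{j}_{1p}+E^{\rm sp}_{1p,\mu})(\tilde s_0)+C\int_{\tilde s_0}^{s}D_{1bp}\,d\tau+CN_\sigma(T)\int_{\tilde s_0}^{s}(E_{1bp}+|\dddot\sigma|^2)\,d\tau .
\end{gather*}

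\textbf{Step 3: combination.} I will sum Step 1 (multiplied by a fixed constant such as $2$) and Step 2, multiply the result by a small $\varepsilon>0$, and add it to \eqref{eqLem72} with $s_0=\tilde s_0$. The term $\varepsilon C\int D_{1bp}$ is absorbed by the dissipation $c\int D_{1bp}$ in \eqref{eqLem72} once $\varepsilon\ll1$. This gives the factor $(1+c\varepsilon)$ on $E_{1bp}(s)$, which proves \eqref{eqLem78}.

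The main obstacle is Step 1: it requires checking that the characteristic method still works uniformly when $\hat A^1_1$ contains the moving-frame term $-\dot\sigma\frac{y-x_1}{\bar d+\sigma-x_1}{\rm id}$. Concretely, $M^{1}-vM^{0}$ and the signs of $v_\pm^*-\lambda_\pm$ must remain definite up to $O(N_\sigma(T))$. This relies on \eqref{approx1} and on $\bar I_{1b}$ being strictly subsonic, and it is also where the $|\dddot\sigma|^2$ error terms enter.
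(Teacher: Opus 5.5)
Your proposal is correct and follows the paper's proof essentially step for step. Your Step 1 is \eqref{eqLem79}, your Step 2 is \eqref{eqLem70}--\eqref{eqLem70*}, and your Step 3 is the same $\varepsilon$-combination with \eqref{eqLem72} that the paper uses; all of it is obtained by transplanting the proof of Lemma \ref{Lem59} to $I_{1b}$ and $I_{1p}$, with the extra $|\dddot{\sigma}|^{2}$ terms coming from \eqref{calF70}--\eqref{calF71}, and your observation that $A^1_{1p}(s,\bar d)$ stays positive definite (supersonic upstream state, $\dot\sigma=O(N_\sigma(T))$) is exactly what makes the $y=\bar d$ flux harmless, as $x=x_2$ was in Section \ref{Sec5}.
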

\begin{proof}
This lemma can be shown in the same way as in the proof of Lemma \ref{Lem59}.
Indeed, we use the equations \eqref{eq716*}, \eqref{eq716}, \eqref{eq717}, and \eqref{eq718} in place of the equations \eqref{eq361*}, \eqref{eq361}, \eqref{eq554}, and \eqref{eq561}, respectively, and replace $I_{b}$, $I_{p}$, and $[\tilde{t}_{0},t]$ by $I_{1b}$, $I_{1p}$, and $[\tilde{s}_{0},s]$, respectively.
Then repeating the derivations of \eqref{eqLem59}--\eqref{eqLem50*}, one can conclude that for $j=0,1,2$,
\begin{gather} 
E_{1b}^j(s) 
\le 
C \int_{\tilde{s}_{0}}^s D_{1bp}(\tau) d \tau 
+ C N_\sigma(T) \int_{\tilde{s}_{0}}^s (E_{1bp}+|\dddot{\sigma}|^{2}) (\tau) d\tau,
\label{eqLem79}
\\
( 1 + c ) E_{1p}^j(s) 
\le 
E_{1p}^j(\tilde{s}_{0}) + C \int_{\tilde{s}_{0}}^s  D_{1bp}(\tau) d\tau+ C N_\sigma(T) \int_{\tilde{s}_{0}}^s (E_{1bp}+|\dddot{\sigma}|^{2})  (\tau) d\tau,
\label{eqLem70}
\\  
(1+c) E^{\rm sp}_{1p,\mu}(s) 
\le 
E^{\rm sp}_{1p,\mu} (\tilde{s}_{0}) 
+ C \int_{\tilde{s}_{0}}^s   D_{1bp}(\tau)  d\tau, 
\label{eqLem70*}
\end{gather}
where $c$ and $C$ are positive constants independent of $\tilde{s}_{0}$ and $T$.
Finally, we sum up twice of \eqref{eqLem79} for $j=0,1,2$, \eqref{eqLem70} for $j=0,1,2$, and \eqref{eqLem70*},
then add the product of the resultant inequality and $\ve>0$ to \eqref{eqLem72} with $s_0=\tilde{s}_{0}$, 
and finally let $\ve \ll 1$ to obtain \eqref{eqLem78}.
\end{proof}

At this stage, we cannot close the a priori estimate due to the presence of $\dddot{\sigma}$ on RHSs of \eqref{eqLem72} and \eqref{eqLem78}.
However, through the derivation of the a priori estimate in $I_{2}$, 
we will be able to have a dissipative term on $\dddot{\sigma}$.

\subsection{A priori estimate in $I_{2}$}\label{S7.2}
In this subsection, we show the a priori estimate in $I_{2}$ applying an energy method similar to one used in Section \ref{Sec_sub}. A primary distinction from Section \ref{Sec_sub} concerns with the handling of the boundary values on $x=x_{1}$ and $y=\bar{d}$. While the boundary value at $x=x_{1}$ vanishes in Section \ref{Sec_sub} due to \eqref{eq314a} in the course of the energy method, in this context, we are required to handle the nonzero boundary value at $y=\bar{d}$ 
subject to the Rankine-Hugoniot conditions \eqref{eq706a} and \eqref{eq706c}. 
In particular, we need to find a dissipative term on $\dddot{\sigma}$. Our focus will be on the treatment of the boundary value at $y=\bar{d}$.

Similarly to Section \ref{Sec_sub}, we use the following symmetrization of \eqref{eq701} with $i=2$:
\begin{gather}\label{eq703}
A^0_ {2} \partial_s f_{2} + A^1_{2} \partial_y f_{2} + B_{2} f_{2} = 0 \quad \text{in $\mathbb R_{+} \times I_{2}$},
\end{gather}
where 
\begin{align*}
A^0_{2}=A^0_{2}(y,\rho_2,m_2,\sigma) &:= 
\dfrac{A(\hat{x}_{2}(\sigma,y))}{\bar{\rho}_{2}(\hat{x}_{2}(\sigma,y))} \, {\rm diag} (S(\rho_2,m_2),1), \\
A^1_{2}=A^1_{2}(y,\rho_2, m_2, \sigma, \dot{\sigma})& := A^0_{2}(y,\rho_2,m_2,\sigma) \hat{A}^1_{2}(y,\rho_2, m_2, \sigma, \dot{\sigma}), \\
B_{2}=B_{2}(y,\rho_2,m_2,\sigma) &:= A^0_{2}(y,\rho_2,m_2,\sigma) \hat{B}_{2}(y,\rho_2,m_2,\sigma).
\end{align*}
Here the function $S$ and the matrices $\hat{A}^1_{2}$ and $\hat{B}_{2}$ are defined after \eqref{eq311*} and \eqref{eq701*}.

Now we show the following proposition which corresponds to Proposition \ref{Pro30}.
\begin{pro} \label{Pro72}
Under the same assumptions as in Proposition \ref{Pro70}, 
if $N_\sigma(T)$ is sufficiently small, 
there exists a constant $C > 0$ such that the following estimate holds$:$
\begin{gather} 
|R_{2}(s,y)| \leq CN_\sigma(T)  \quad \text{for $(s,y) \in [0,T] \times \bar{I}_{2}$},
\label{eqPro72}
\end{gather}
where $R_{2} := - \partial_s A_{2}^0 - \partial_y A_{2}^1 +  B_{2} + B_{2}^\top$.
\end{pro}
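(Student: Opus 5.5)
The plan is to reduce \eqref{eqPro72} to the computation already carried out in Proposition \ref{Pro30}, viewing every discrepancy from the stationary, unmoved setting as an $\mathcal{O}(N_\sigma(T))$ error.

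First, I would freeze the coordinates. By \eqref{approx1}, the factor $\frac{\bar{d}-x_2}{\bar{d}+\sigma-x_2}$ equals $1+\mathcal{O}(N_\sigma(T))$, and its $s$-derivative is $\mathcal{O}(|\dot\sigma|)=\mathcal{O}(N_\sigma(T))$. Likewise, $\hat{x}_2(\sigma,y)=y+\mathcal{O}(N_\sigma(T))$, with $\partial_y\hat{x}_2=1+\mathcal{O}(N_\sigma(T))$ and $\partial_s\hat{x}_2=\mathcal{O}(|\dot\sigma|)$. The stationary profile $\bar\rho_2\in C^3$ and $A\in C^4$ on $[\bar d-\varepsilon_0,x_2]$. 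Hence $A(\hat{x}_2)$, $\bar\rho_2(\hat{x}_2)$ and $(\bar\rho_2,\bar m_2)(x(s,y))$, together with their first $y$-derivatives, differ from their values at $x=y$ by $CN_\sigma(T)$, and their $s$-derivatives are $\mathcal{O}(N_\sigma(T))$.

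Next, I would decompose the matrices. Write
\[
A^1_2=\tfrac{\bar d-x_2}{\bar d+\sigma-x_2}A^0_2\hat{A}^1(\rho_2,m_2)-\dot\sigma\tfrac{y-x_2}{\bar d+\sigma-x_2}A^0_2 .
\]
The second term and its $y$-derivative are bounded by $C|\dot\sigma|\le CN_\sigma(T)$, because $A^0_2$ and $\partial_yA^0_2$ are bounded. The sign of the factor $y-x_2$ plays no role here, since only the size of this term matters.

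Then $\partial_sA^0_2$ involves only $\partial_s\rho_2$, $\partial_s m_2$ and $\partial_s\hat x_2$. Now $\partial_s\rho_2=\partial_s\phi_2+\dot\sigma(\bar\rho_2)_x\partial_\sigma\hat{x}_2$, and similarly for $m_2$. Bounding $|\partial_s f_2|\le CN_\sigma(T)$ as in \eqref{norm-es1}, via Sobolev's inequality and equation \eqref{eq701}, gives $|\partial_sA^0_2|\le CN_\sigma(T)$.

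The remaining part is $-\partial_y(A^0_2\hat A^1)+B_2+B_2^\top$. Evaluated at $(\rho_2,m_2)=(\bar\rho_2,\bar m_2)(y)$ with $\sigma=0$, this is exactly the matrix whose entries are the quantities $\mathcal{R}_{21}$ and $\mathcal{R}_{22}$ of Proposition \ref{Pro30}, computed for the subsonic branch $(\bar\rho_2,\bar m_2)$. Those quantities vanish identically by \eqref{eq151}, \eqref{eq221} and \eqref{eq242}, which hold on $I_2$ because $\bar\rho_2$ solves \eqref{eq151} there. The $(1,1)$ entry is only $-\partial_s$ of $\frac{A}{\bar\rho}S$, which has already been handled.

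The deviations of $R_2$ from this stationary matrix are Lipschitz in $(\phi_2,\omega_2,\partial_y\phi_2,\partial_y\omega_2,\sigma,\dot\sigma)$. By Sobolev embedding these are all bounded by $CN_\sigma(T)$. Here I use that $\inf\bar\rho_2>0$ and that $A^0_2$ stays uniformly positive on $\bar I_2$ by \eqref{spro71} and \eqref{spro72}.

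I expect the main obstacle to be bookkeeping rather than a genuine difficulty. One must make sure that $y$-derivatives of $\bar\rho_2(\hat x_2(\sigma,y))$ generate only factors $\partial_y\hat x_2=1+\mathcal{O}(\sigma)$, so that the exact cancellation of $\mathcal{R}_{21}$ and $\mathcal{R}_{22}$ survives up to $\mathcal{O}(N_\sigma(T))$. I would handle this by first performing the cancellation in the variable $x=\hat x_2(\sigma,y)$, where the stationary identities hold exactly, and only then converting $\partial_y=\partial_y\hat{x}_2\,\partial_x$.
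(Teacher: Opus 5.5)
Your proposal is correct and takes the same route as the paper. You use \eqref{approx1} to replace the moving-frame coefficients $A^0_2$, $A^1_2$, $B_2$ by their stationary counterparts up to $\mathcal{O}(N_\sigma(T))$ errors, with the $\dot\sigma$-term and $\partial_s A^0_2$ directly of that size, and then invoke the exact cancellations $\mathcal{R}_{21}=\mathcal{R}_{22}=0$ from the proof of Proposition \ref{Pro30}. Those cancellations hold for the smooth extension of $(\bar\rho_2,\bar m_2)$ to $(\bar d-\varepsilon_0,x_2]$. You simply spell out the bookkeeping that the paper compresses into one line; note also that $\frac{\bar d-x_2}{\bar d+\sigma-x_2}\,\partial_y\hat x_2=1$, so the stationary cancellation at $x=\hat x_2(\sigma,y)$ is exact rather than merely correct up to $\mathcal{O}(\sigma)$.
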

\begin{proof}
Recalling \eqref{approx1}, we can approximate $R_{2}$  similarly to \eqref{approxM1}. Then we arrive at \eqref{eqPro72} just by following the proof of Proposition \ref{Pro30}.
\end{proof}

Next we will show a lemma similar to Lemmas \ref{Lem32} and \ref{Lem36}.
For this purpose, 
we apply the operator $\partial_{s}^{j}$ for $j=0,1,2$ to \eqref{eq701} with $i=2$  to obtain
\begin{gather}\label{eq761*}
\partial_{s} (\partial_{s}^{j} f_{2}) + \hat{A}^{1}_{2} \partial_{y} (\partial_{s}^{j} f_{2}) + \hat{B}_{2} (\partial_{s}^{j} f_{2})
={\cal F}^{j}_{2}:=- [\partial_{s}^{j},\hat{A}^{1}_{2}\partial_{y}] f_{2} - [\partial_{s}^{j},\hat{B}_{2}] f_{2}.
\end{gather}
Multiplying this by $A^{0}_{2}$ from the left gives
\begin{gather}
\label{eq761}
A^{0}_{2} \partial_{s} (\partial_{s}^{j} f_{2}) + A^{1}_{2} \partial_{y} (\partial_{s}^{j} f_{2}) + B_{2} (\partial_{s}^{j} f_{2})
=A^{0}_{2}{\cal F}^{j}_{2}.
\end{gather}
Furthermore, applying the operator ${d^{j}}/{ds^{j}}$ for $j=0,1,2$ to the conditions 
\eqref{eq706a}, \eqref{eq706c} and \eqref{eq705a}, we have
\begin{subequations} \label{eq732}
\begin{gather} 
\left| \partial_{s}^{j}\omega_2(s,\bar{d}) 
 -\jump{ \bar{\rho} } ( \bar{d} ) \dfrac{d^{j+1} \sigma}{ds^{j+1}} (s) \right|
  \leq C \sqrt{D_{1bp}}(s)+CN_\sigma(T)(|\sigma|+\sqrt{D_{2}})(s),
\\
\begin{aligned}
 \biggl| \partial_{s}^{j} \phi_{2}(s,\bar{d}) - 
      \frac{Q A_x}{A^2\bar{S}_2} \jump{\bar{u}} (\bar{d}) \dfrac{d^{j} \sigma}{ds^{j}} (s)
    &+ 2 \frac{ \bar{u}_2 }{\bar{S}_2}(\bar{d})  \partial_{s}^{j}  \omega_{2}(s,\bar{d}) \biggr|
\\
  & \leq C \sqrt{D_{1bp}}(s)+CN_\sigma(T)(|\sigma|+\sqrt{D_{2}})(s),
\end{aligned}
\\
\partial_{s}^{j} \phi_{2}(s,x_{2})=0,
 \label{eq732*}
\end{gather}
\end{subequations}
where $\bar{S}_2:=p'(\bar{\rho}_{2}) - \bar{u}_{2}^2$, and $D_{1bp}$ is defined in \eqref{EF7}.
We note that \eqref{eq761*}, \eqref{eq761}, and \eqref{eq732*} correspond to \eqref{eq361*}, \eqref{eq361}, and \eqref{eq367}, respectively.

The following hold for $(s,y) \in [0,T] \times I_{2}$ and $j=0,1,2$:
\begin{gather}
c \sum_{i=0}^j |\partial_y^i f_{2}|(s,y)
\le \sum_{i=0}^j |\partial_s^i f_{2}|(s,y)
\le C \sum_{i=0}^j |\partial_y^i f_{2}|(s,y), 
\label{ajs72*}
\\
|{\cal F}_{2}^{j}|(s,y)  \leq CN_\sigma(T) \sum_{i=0}^j |\partial_s^i f_{2}|(s,y)+CN_\sigma(T)\max\{0,j-1\}|\dddot{\sigma}|(s),
\label{calF70*}
\end{gather}
where $c$ and $C$ are positive constants independent of $(s,y)$ and $T$.
These derivations are parallel to those of \eqref{ajs2} and \eqref{calF1}, respectively.

Now we define the following functionals for $j=0,1,2$:
\begin{subequations}\label{EF7*}
\begin{gather}
E_{2}(s):=(E^{0}_{2}+E^{1}_{2}+E^{2}_{2})(s), \quad D_{2}(s):=(D^{0}_{2}+D^{1}_{2}+D^{2}_{2})(s),
\\
E^{j}_{2}(s) := \int_{\bar{d}}^{x_{2}}  ( \partial_s^j f_{2} )^\top A^0_{2} \partial_s^j f_{2} (s, y) dy 
+ \left( A_x \bar{\rho}_1 \jump{\bar{u}}^2\right) (\bar{d}) \left( \dfrac{d^{j} \sigma}{ds^j} \right)^2 (s),
\\
D_{2}^{j}(s):=   |\partial_s^j f_{2}|^{2} (s, x_{2}) + \left( \dfrac{d^{j+1} \sigma}{ds^{j+1}} \right)^2 (s),
\end{gather}
\end{subequations}
where the condition $A_x(\bar{d})>0$ is assumed in Theorem \ref{Thm70}.
If the terms involving $\sigma$ are ignored in \eqref{EF7*}, 
these functionals are parallel to $E$, $D$, $E^{j}$, and $D^{j}$ in \eqref{eq321}.

Now we show the following lemma, which implies the nonincrease of $E_2$ 
up to the dissiaptive term $D_1$. 
\begin{lem} \label{Lem73}
Under the same assumptions as in Proposition \ref{Pro70}, if $N_\sigma(T)$ is sufficiently small, 
the following estimate holds for $s_{0} \in [0,T]$, $s \in [s_{0},T]$, and $\alpha \in (0,1)${\rm :}
\begin{multline} \label{eqLem73} 
E_{2}(s)
+ c \int_{s_{0}}^s  D_{2} (\tau)  d\tau 
\\
\le
E_{2}(s_{0}) 
+ C \alpha^{-1} \int_{s_{0}}^s D_{1bp}(\tau) d\tau
+ C(N_\sigma(T) +\alpha) \int_{s_{0}}^s (E_{2}+D_{2})(\tau) d\tau,
\end{multline}
where $c$ and $C$ are positive constants independent of $s_{0}$, $T$, and $\alpha$.
\end{lem}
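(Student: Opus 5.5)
The plan is to follow Lemmas \ref{Lem32} and \ref{Lem36} on $I_2$. The only new ingredient is the boundary at $y=\bar d$, where the boundary flux is no longer zero. Instead it is expressed through $\sigma$ via \eqref{eq732}.

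For $j=0,1,2$, multiply \eqref{eq761} by $2(\partial_s^j f_2)^\top$ and integrate over $[s_0,s]\times I_2$. Proposition \ref{Pro72} and \eqref{calF70*} then give
\begin{multline*}
\int_{\bar d}^{x_2}(\partial_s^jf_2)^\top A^0_2\partial_s^jf_2\,dy\Big|_{s_0}^{s}
+\int_{s_0}^s(\partial_s^jf_2)^\top A^1_2\partial_s^jf_2(\tau,x_2)\,d\tau \\
-\int_{s_0}^s(\partial_s^jf_2)^\top A^1_2\partial_s^jf_2(\tau,\bar d)\,d\tau
\le CN_\sigma(T)\int_{s_0}^s(E_2+D_2+|\dddot\sigma|^2)\,d\tau .
\end{multline*}
The flux at $x_2$ is treated exactly as in \eqref{eq329}. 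Since $\partial_s^j\phi_2(s,x_2)=0$ and $u_2>0$, it is bounded below by $c|\partial_s^jf_2|^2(\tau,x_2)$. The term $|\dddot\sigma|^2$ is part of $D^2_2$, so it is absorbed into $N_\sigma(T)\int D_2$.

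The main step is the flux at $\bar d$. Up to $\mathcal O(N_\sigma)$ errors, $A^1_2(\bar d)=\frac{A}{\bar\rho_2}\begin{bmatrix}0&\bar S_2\\ \bar S_2&2\bar u_2\end{bmatrix}$. Writing $\Phi=\partial_s^j\phi_2(\cdot,\bar d)$ and $\Omega=\partial_s^j\omega_2(\cdot,\bar d)$, the quantity to control is $-\frac{A}{\bar\rho_2}(2\bar S_2\Phi\Omega+2\bar u_2\Omega^2)$. Substitute \eqref{eq732}(b), namely $\Phi\approx\frac{QA_x}{A^2\bar S_2}\jump{\bar u}\sigma^{(j)}-\frac{2\bar u_2}{\bar S_2}\Omega$. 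This turns the $\bar d$-term into
\[
\frac{A}{\bar\rho_2}\Bigl(2\bar u_2\Omega^2-\frac{2QA_x}{A^2}\jump{\bar u}\,\sigma^{(j)}\Omega\Bigr).
\]
Next substitute $\Omega\approx\jump{\bar\rho}\sigma^{(j+1)}$ from \eqref{eq732}(a). This gives a positive term $\frac{2A\bar u_2}{\bar\rho_2}\jump{\bar\rho}^2(\sigma^{(j+1)})^2$, which supplies the $\sigma$-part of $D_2^j$. The cross term is an exact $s$-derivative, $-\frac{Q A_x}{A\bar\rho_2}\jump{\bar u}\jump{\bar\rho}\frac{d}{ds}(\sigma^{(j)})^2$. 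Using the Rankine--Hugoniot relation $\jump{\bar\rho\bar u}=0$, one has $\frac{Q}{A\bar\rho_2}\jump{\bar u}\jump{\bar\rho}=\bar u_2\jump{\bar u}\jump{\bar\rho}=-\bar\rho_1\jump{\bar u}^2$. So the cross term equals $+\frac{d}{ds}\bigl(A_x\bar\rho_1\jump{\bar u}^2(\sigma^{(j)})^2\bigr)$. Its sign is good precisely because $A_x(\bar d)>0$, and it produces the extra term of $E^j_2$. I expect this sign and constant bookkeeping to be the main obstacle, and I will check it against the definition \eqref{EF7*}. If an extra factor appears, I will rescale the weight rather than change the structure.

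The remaining errors in \eqref{eq732} are $C\sqrt{D_{1bp}}+CN_\sigma(|\sigma|+\sqrt{D_2})$. They multiply quantities bounded by $\sqrt{D_2}$ or by $|\sigma^{(j)}|\lesssim\sqrt{E_2}$. By Young's inequality with parameter $\alpha$, the products with $\sqrt{D_{1bp}}$ are bounded by $\alpha(D_2+E_2)+C\alpha^{-1}D_{1bp}$. The remaining products are $\mathcal O(N_\sigma)(E_2+D_2)$. Finally, sum over $j=0,1,2$. The dissipation on the left is then $c\int(|\partial_s^jf_2|^2(x_2)+(\sigma^{(j+1)})^2)$, i.e.\ $c\int D_2$, which yields \eqref{eqLem73}.
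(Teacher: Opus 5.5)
Your proposal is correct and follows essentially the same route as the paper: you use the $L^2$ energy identity for $\partial_s^j f_2$ on $I_2$ and treat the outflow flux at $x_2$ as in \eqref{eq329}. You then rewrite the flux at $\bar d$ via \eqref{eq732} into the exact derivative $\frac{d}{ds}\bigl(A_x\bar\rho_1\jump{\bar u}^2(\sigma^{(j)})^2\bigr)$ plus a coercive $(\sigma^{(j+1)})^2$ term, and absorb the $D_{1bp}$ errors with Young's inequality in $\alpha$. Your explicit computation, using $\jump{\bar\rho\bar u}=0$, reproduces exactly the weight in \eqref{EF7*}, so no rescaling is needed.
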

\begin{proof}
We multiply \eqref{eq761} by $2(\partial_{s}^{j} f_{2})^{\top} $ from the left hand and integrate it over $\disp [s_{0},s] \times \bar{I}_{2}$ to have
\begin{align} \label{eq731}
&\int_{\bar{d}}^{x_{2}}  ( \partial_s^j f_{2} )^\top A^0_{2} \partial_s^j f_{2} (s, y) dy
+ \int_{s_{0}}^s (\partial_s^j f_{2})^\top (-A^1_2) \partial_s^j f_{2} (\tau, \bar{d}) d \tau 
 +  \int_{s_{0}}^s (\partial_s^j f_{2})^\top A^1_2 \partial_s^j f_{2} (\tau, x_2) d \tau 
\notag \\
&=
\int_{\bar{d}}^{x_{2}}  ( \partial_s^j f_{2} )^\top A^0_{2} \partial_s^j f_{2} (s_{0}, y) dy
+ \int_{s_{0}}^s \!\! \int_{\bar{d}}^{x_{2}}  (\partial_{s}^{j} f_{2})^\top (2A_{2}^{0}{\cal F}_{2}^{j}-R_{2}) \partial_{s}^{j} f_{2} (\tau,y) dyd \tau
\notag \\
&\leq \int_{\bar{d}}^{x_{2}}  ( \partial_s^j f_{2} )^\top A^0_{2} \partial_s^j f_{2} (s_{0}, y) dy
+C N_\sigma(T) \int_{s_{0}}^s (E_{2}+|\dddot{\sigma}|^{2})(\tau) d\tau,
\end{align}
where we have used \eqref{eqPro72} and \eqref{calF70*} in deriving the last inequality.
Using \eqref{eq732}, \eqref{spro71}, \eqref{spro72}, and the Schwarz inequality, we estimate
the integrand of the second term in \eqref{eq731} from below as
\begin{align*} 
(\partial_s^j f_{2})^\top  (-A^1_2) \partial_s^j f_{2} (\tau, \bar{d}) 
&= - \frac{2 A}{\bar{\rho}_2} (\bar{d}+\sigma) 
   \left( S(\rho_{2},m_{2}) \partial_{s}^{j}\phi_{2} + u_{2}  \partial_{s}^{j} \omega_{2} \right)  \partial_{s}^{j} \omega_{2}(\tau,\bar{d}) 
\\
&\ge \frac{d}{d s} \left[ \left( A_x \bar{\rho}_1 \jump{\bar{u}}^2 \right) (\bar{d}) 
 \left( \dfrac{d^{j} \sigma}{ds^{j}}\right)^2 \right]
 + c \left( \dfrac{d^{j+1} \sigma}{ds^{j+1}} \right)^2 
 \\
& \quad 
 - \alpha^{-1} C D_{1bp}(s) - C(N_\sigma(T)+\alpha) (|\sigma|^{2}+D_{2})(s),
\end{align*}
where $\alpha \in (0,1)$.
Substituting these into \eqref{eq731} and summing up the resultant inequalities for $j=0,1,2$, 
we arrive at \eqref{eqLem73}.
\end{proof}

Next we show Lemma \ref{Lem74} below, which corresponds to Lemmas \ref{Lem34} and \ref{Lem38}.
To this end, we use notation 
\begin{gather*}
v_{2-} := \frac{1}{2} \sup_{y \in \bar{I}_{2} } \left( \bar{u}(y) - \bar{a}(y) \right) < 0, \quad
v_{2+} := \frac{1}{2} \inf_{y \in \bar{I}_{2} } \left( \bar{u}(y) + \bar{a}(y) \right) > 0,
\\
T_{2-} : = \frac{x_{2} - \bar{d}}{| v_{2-} |}, \quad 
T_{2+} : = \frac{x_{2} - \bar{d}}{| v_{2+} |}, \quad 
T_{20} : = T_{2-} +  T_{2+},
\end{gather*}
which are analogous to \eqref{eq322} and \eqref{eq323}. 
We also define the trapezoidal set $\Omega_{2}(\tilde{s}_{0})$ for $\tilde{s}_{0} \geq 0$ as
\begin{gather*}
\Omega_{2}(\tilde{s}_{0}) := \{ (s, y) \, | \, 
  \bar{d} \leq  y \leq x_{2},  \  x_{2} + v_{2-} (s-\tilde{s}_{0}) \leq y,  \ \bar{d} + v_{2+} (s  -\tilde{s}_{0}- T_{2-}-T_{20} ) \le y \},
\end{gather*}
which will be used for the treatment of $\sigma$.
\begin{lem} \label{Lem74}
Under the same assumptions as in Proposition \ref{Pro70}  with $T \geq 2T_{20}$,
if $N_\sigma(T)$ is sufficiently small, 
the following estimate holds for $\tilde{s}_{0} \in [0,T-2T_{20}]${\rm :}
\begin{gather}\label{eqLem74**}
E_{2}(\tilde{s}_{0}+T_{2-})
\le
C \int_{\tilde{s}_{0}}^{\tilde{s}_{0}+2T_{20}}  (D_{1bp}+D_{2}) (\tau)  d\tau
+ C N_\sigma(T) \int_{\tilde{s}_{0}}^{\tilde{s}_{0}+2T_{20}} E_{2}(\tau) d\tau.
\end{gather}
where $c$ and $C$ are positive constants independent of $\tilde{s}_{0}$ and $T$.
\end{lem}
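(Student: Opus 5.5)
The plan is to reproduce the characteristic argument of Lemmas \ref{Lem33}, \ref{Lem34} and \ref{Lem38} on $I_2$, now with a non-homogeneous left boundary $y=\bar d$ whose data are supplied through \eqref{eq732}. There is one genuinely new ingredient: the $\sigma$-part of $E_2$, namely $(d^j\sigma/ds^j)^2$ for $j=0,1,2$. Its derivative parts $j=1,2$ come for free, because $\dot\sigma^2,\ddot\sigma^2$ at time $\tilde s_0+T_{2-}$ are bounded by $\sup$ of $D_2$-type quantities. More precisely, $d^{j}\sigma/ds^{j}$ for $j\ge1$ appears squared in $D_2^{j-1}$, so a mean-value/fundamental-theorem argument in time is needed; the hard term is $\sigma^2$ itself.

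First, on $\Omega_2(\tilde s_0)$ (which contains the backward $\lambda_-$-strip from $x_2$ and the forward $\lambda_+$-strip from $\bar d$, over a time window of length $2T_{20}$), integrate the weighted identity $\partial_s(e^{ky}\partial_s^jf_2^\top M^0\partial_s^jf_2)+\partial_y(\cdots M^1\cdots)=\cdots$ as in \eqref{eq335}. The lateral boundaries are again nonpositive since $M^1-vM^0>0$ for $v\in[v_{2-},v_{2+}]$. The physical boundary terms are $|\partial_s^jf_2|^2(\cdot,x_2)\le D_2$ and $|\partial_s^jf_2|^2(\cdot,\bar d)$. For the latter, \eqref{eq732} gives
\[
|\partial_s^jf_2|^2(\tau,\bar d)\le C\Bigl(\bigl(\tfrac{d^{j+1}\sigma}{ds^{j+1}}\bigr)^2+\bigl(\tfrac{d^{j}\sigma}{ds^{j}}\bigr)^2\Bigr)+CD_{1bp}+CN_\sigma(T)(\sigma^2+D_2).
\]
This controls $\iint_{\Omega_2}|\partial_s^jf_2|^2$ by $\int(D_{1bp}+D_2)+C\int\sigma^2$, plus $N_\sigma(T)$-small terms (with \eqref{calF70*} for the commutators). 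Then use the Riemann invariants $r_\pm^j$ as in \eqref{eq343}--\eqref{eq345}: $r_-^j$ is transported from $x_2$ and $r_+^j$ from $\bar d$, and the incoming trace of $r_+^j$ at $\bar d$ is again bounded via \eqref{eq732}. This gives the $L^2$ part of $E_2(\tilde s_0+T_{2-})$ in terms of $\int(D_{1bp}+D_2)$, $\int\sigma^2$, and small terms.

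It remains to bound $\int\sigma^2$ and $\sigma^2(\tilde s_0+T_{2-})$ by dissipation. I would do this by looking at the boundary relation at $x_2$ in combination with the right-moving wave. Equation \eqref{eq732} at $j=0$ says that $\phi_2(\cdot,\bar d)$ contains $\frac{QA_x}{A^2\bar S_2}\jump{\bar u}\sigma$ with a nonzero coefficient, since $A_x(\bar d)>0$ and $\jump{\bar u}<0$. So $\sigma$ is expressed through $f_2(\cdot,\bar d)$ and $\dot\sigma$, up to $D_{1bp}$. The characteristic $r_+$ emanating from $\bar d$ reaches $x_2$ after time at most $T_{2+}$, where it is observed in $D_2^0$. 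Integrating the $r_+$ transport equation backward along the trapezoid from $x_2$ (the reverse of \eqref{eq345}, which is allowed since it is a transport equation, up to the $\Omega_2$ bulk term already controlled) yields
\[
\int_{\tilde s_0}^{\tilde s_0+T_{2-}+T_{20}}r_+^2(\tau,\bar d)\,d\tau\le C\int D_2^0+C\iint_{\Omega_2}|f_2|^2 .
\]
This is why the window is $2T_{20}$. Combining with \eqref{eq732} and the fact that $r_+(\bar d)$ is a nondegenerate combination of $\sigma$ and $\omega_2(\bar d)\approx\jump{\bar\rho}\dot\sigma$ gives $\int\sigma^2\le C\int(D_{1bp}+D_2)+$ small. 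The pointwise value $\sigma^2(\tilde s_0+T_{2-})$ then follows from $\sigma^2(a)\le C\int_I\sigma^2+C\int_I\dot\sigma^2$ on an interval $I$ of fixed length. The derivatives are handled identically.

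The main obstacle is the circularity in the $\iint_{\Omega_2}$ estimate: it itself requires $\int\sigma^2$. I would break it with an $\varepsilon$-absorption, as in the proof of Lemma \ref{Lem34}. I also need to check carefully that the nondegeneracy of the coefficient of $\sigma$ in $r_+(\bar d)$ really uses $A_x(\bar d)>0$ and $\jump{\bar u}\neq0$.
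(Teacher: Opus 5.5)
\textbf{The proposal has a genuine gap in your first step, and it is what produces the circularity you flag at the end.} You treat the flux through $y=\bar d$ in the $e^{ky}$-weighted identity for \eqref{eq733} as a term to be bounded from above via \eqref{eq732}, which puts $\int\sigma^2$ into the bulk estimate. But this flux has a favorable sign. After integrating over $\Omega_2(\tilde s_0)$ it appears as $\int e^{k\bar d}f_2^\top M_2^1 f_2(\tau,\bar d)\,d\tau$, on the same side as the nonnegative slanted terms and the bulk (nonnegative for $k$ large). Moreover $M_2^1=\mathrm{diag}(S(\rho_2,m_2),1)$ is positive definite because the flow is subsonic on $I_2$. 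So the identity directly gives the sideways estimate
\[
\int_{\tilde s_0+T_{2-}}^{\tilde s_0+T_{2-}+T_{20}}|f_2|^2(\tau,\bar d)\,d\tau\le C\int_{\tilde s_0}^{\tilde s_0+2T_{20}}|f_2|^2(\tau,x_2)\,d\tau ,
\]
with no $\sigma$ on the right. Next apply \eqref{eq732} with $j=0$; the coefficient of $\sigma$ there is nonzero because $A_x(\bar d)\jump{\bar u}(\bar d)\neq0$. This bounds $\int\sigma^2$ over that window by $\int(D_{1bp}+D_2)$, using that $\dot\sigma^2$ is part of $D_2^0$. Finally, $\sigma^2(\tilde s_0+T_{2-})$ follows from the one-dimensional trace inequality at the left endpoint of the window. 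This is the paper's argument. It needs no Riemann invariant for $\sigma$, and it recovers both components of $f_2(\cdot,\bar d)$ at once, not only $r_+$.

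As written, your argument does not close. Step 1 gives $\iint_{\Omega_2}|f_2|^2\le A+C_1\int\sigma^2$, and Step 3 gives $\int\sigma^2\le B+C_2\iint_{\Omega_2}|f_2|^2$, with $C_1,C_2$ of order one. No $\varepsilon$-absorption can break such a loop. In Lemma \ref{Lem34} the absorption works only because the bulk bound \eqref{eqLem33} involves nothing but data at the observation boundary. (You could rescue the loop by noting that the bulk carries a factor $k$ while the $\bar d$-flux carries only $e^{k\bar d}$, so $C_1=O(1/k)$; but that is not what you propose.)

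Your time windows also do not fit. Step 2 transports $r_+^j$ forward from $\bar d$, and Step 3 claims control of the $\bar d$-trace on $[\tilde s_0,\tilde s_0+T_{2-}+T_{20}]$. For $\tau<\tilde s_0+T_{2-}$, however, the corresponding regions are not contained in $\Omega_2(\tilde s_0)$, so your Step 1 does not control their bulk. The paper avoids $\bar d$-data for the $L^2$ part altogether. As in Lemmas \ref{Lem33}--\ref{Lem34}, its domains meet $\bar d$ only at the corner $(\tilde s_0+T_{2-},\bar d)$, and $r_+^j$ is propagated backward from $x_2$. This is why \eqref{eqLem74} has only $D_2$ on its right-hand side.
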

\begin{proof}
It suffices to show that
\begin{gather} \label{eqLem74} 
\int_{\bar{d}}^{x_{2}}  ( \partial_s^j f_{2} )^\top A^0_{2} \partial_s^j f_{2} (\tilde{s}_{0}+T_{2-}, y) dy
\leq 
C \int_{\tilde{s}_{0}}^{\tilde{s}_{0}+T_{20}}  D_{2} (\tau)  d\tau 
+ C N_\sigma(T) \int_{\tilde{s}_{0}}^{\tilde{s}_{0}+T_{20}} E_{2}(\tau) d\tau,
\\
A_x \bar{\rho}_1 \jump{\bar{u}}^2(\bar{d})  \left( \dfrac{d^{j} \sigma}{ds^{j}} \right)^2 (\tilde{s}_{0}+T_{2-}) 
\leq 
C \int_{\tilde{s}_{0}}^{\tilde{s}_{0}+2T_{20}}  (D_{1bp}+D_{2}) (\tau)  d\tau,
 \label{eqLem74*} 
 \end{gather}
Indeed, summing up \eqref{eqLem74} and \eqref{eqLem74*} for $j=0,1,2$ gives \eqref{eqLem74**}. 

Firstly we show \eqref{eqLem74} following the proofs of Lemmas \ref{Lem37} and \ref{Lem38}.
Set $M_{2}^{1}:={\rm diag} (S(\rho_{2},m_{2}),1)$, $M_2^{0} :=  M^{1}_{2} (\hat{A}_{2}^1)^{-1}$,
and $M_{2}^{2}:= \partial_s M_{2}^{0} + \partial_y M_{2}^{1}  - 2 M_{2}^{0} \hat{B}_{2}$.
We multiply \eqref{eq761*} by $2 \partial_{s}^{j} f_{2}^\top M_{2}^{0}$ from the left to have
\begin{gather}  \label{eq733}
\partial_s \left( \partial_{s}^{j} f_{2}^\top M_{2}^{0} \partial_{s}^{j} f_{2} \right)
+\partial_y \left( \partial_{s}^{j} f_{2}^\top M_{2}^{1}  \partial_{s}^{j} f_{2}  \right)
=
\partial_{s}^{j} f_{2}^\top M_{2}^{2} \partial_{s}^{j}f_{2} + 2 \partial_{s}^{j} f_{2} ^\top M_{2}^{0} {\cal F}_{2}^{j},
\end{gather}
which is parallel to \eqref{eq373}.
Furthermore, we denote the eigenvalues and left eigenvectors of the matrix $\hat{A}_{2}^1$ by $\lambda_{2\pm}$ and $e_{2\pm}$, respectively, where $\lambda_{2-}<0<\lambda_{2+}$ holds.
Set $r_{2\pm}^{j} := e_{2\pm} \, \partial_s^j f_{2}$.
Multiplying \eqref{eq761*} by $e_{2\pm}$ from the left to obtain $\partial_s r_{2\pm}^{j} + \lambda_{2\pm} \partial_y r_{2\pm}^{j}
= {\cal H}_{2\pm} \partial_{s}^{j} f_{2} + e_{2\pm} {\cal F}^{j}_{2}$, 
where ${\cal H}_{2\pm} f :=-  e_{2\pm} \hat{B}_{2} f + ( \partial_s e_{2\pm} ) f + \lambda_{2\pm} ( \partial_y e_{2\pm} ) f.$
Multiplying this by $2 r_{2\pm}^{j}$ gives
\begin{gather*}
   \partial_s \left( (r_{2\pm}^{j})^2 \right) 
+ \partial_y \left( \lambda_{2\pm} (r_{2\pm}^{j})^2 \right)
= ( \partial_y \lambda_{2\pm} ) (r_{2\pm}^{j})^2
+ 2 r_{2\pm}^{j} {\cal H}_{2\pm} \partial_{s}^{j} f_{2} +  2 r_{2\pm}^{j} e_{2\pm} {\cal F}^{j}_{2},
\end{gather*}
which is parallel to \eqref{eq391}.
Recalling \eqref{approx1} and \eqref{approxM1}, 
we see that $M_{2}^{0}$, $M_{2}^{1}$, and $M_{2}^{1}$ can be approximated similarly to \eqref{approxM1}, and that $|\lambda_{2\pm} - ( u_{2} \pm \sqrt{p'(\rho_{2})} ) | \leq CN_\sigma(T)$ holds.
Now repeating the proofs of Lemmas \ref{Lem37} and \ref{Lem38} with the aid of \eqref{calF70*}, one can conclude \eqref{eqLem74}.

Secondly we show \eqref{eqLem74*}. 
The case $j=1,2$ immediately follows from standard trace theory
and the definition of $D_2$ in \eqref{EF7*}. 
It suffices to show the case $j=0$.
We integrate the product of $e^{ky}$ and \eqref{eq733} with $j=0$ over $\Omega(\tilde{s}_{0})$, where $k>0$ is a constant to be determined later, and then apply Green's theorem to obtain
\begin{align*} 
& \int_{\tilde{s}_{0}+T_{2-}}^{s_{0}+T_{2-}+T_{20}} e^{k \bar{d}}  f_{2}^\top M_{2}^{1} f_{2} (\tau,\bar{d}) d\tau 
\notag \\
& \quad + \int_{\tilde{s}_{0}}^{\tilde{s}_{0}+T_{2-}} e^{k (x_{2}+v_{2-} (\tau-\tilde{s}_{0}))} f_2^\top ( M_{2}^{1} - v_{2-} M_{2}^{0} ) f_{2} (\tau, x_{2}+v_{2-} (\tau-\tilde{s}_{0})) d\tau
\notag \\
&  \quad + \int_{\tilde{s}_{0}+T_{2-}+T_{20}}^{\tilde{s}_{0}+2T_{20}} e^{k (\bar{d} + v_{2+} (\tau  -\tilde{s}_{0}- T_{2-}-T_{20} ))} 
     f_{2}^\top ( M_{2}^{1} - v_{2+} M_{2}^{0}  ) f_{2} (\tau, \bar{d} + v_{2+} (\tau  -\tilde{s}_{0}- T_{2-}-T_{20} ) ) d\tau
\notag \\
& \quad + k  \iint_{\Omega(\tilde{s}_{0})} e^{ky} f_{2}^\top M_{2}^{1} f_{2}  (\tau, y) d\tau dy
+ \iint_{\Omega(\tilde{s}_{0})} e^{ky} f_{2}^\top M_{2}^{2} f_{2} (\tau,y) d\tau dy
\notag \\
&= \int_{\tilde{s}_{0}}^{\tilde{s}_{0}+2T_{20}} e^{k x_2}  f_{2}^\top M_{2}^{1} f_{2}  (\tau, x_2) d\tau.
\end{align*}
It is seen that $ M_{2}^{1} - v M_{2}^{0} $ is positive definite for $v \in [v_{2-}, v_{2+} ]$ if $N_\sigma(T) \ll 1$ by the same method as in the last paragraph of the proof of Lemma \ref{Lem33}. So the second and third terms on the LHS are nonnegative.
Using the bound of $M^{2}_{2}$ and the positive definiteness of $M^{1}_{2}$, and taking $k$ sufficiently large, we see that the sum of the last two terms on the LHS is nonnegative. 
Consequently, using the positive definiteness of $M^{1}_{2}$ for the first term on the LHS, we obtain
\begin{gather*} 
\int_{\tilde{s}_{0}+T_{2-}}^{s_{0}+T_{2-}+T_{20}} | f_{2} |^2 (\tau, \bar{d}) d\tau
\le
C \int_{\tilde{s}_{0}}^{\tilde{s}_{0}+2T_{20}} | f_{2} | (\tau, x_2) d\tau.
\end{gather*}
From this, \eqref{eq732}, and standard trace theory, we observe that
\begin{gather*} 
\sigma^2 (\tilde{s}_{0}+T_{2-}) \leq C
\int_{\tilde{s}_{0}+T_{2-}}^{s_{0}+T_{2-}+T_{20}}  ( \sigma^2 + \dot{\sigma}^2 ) (\tau) d\tau 
\le
C \int_{\tilde{s}_{0}}^{\tilde{s}_{0}+2T_{20}} \left( D_{1bp}+D_{2}\right)(\tau) d\tau.
\end{gather*}
Thus \eqref{eqLem74*} holds for $j=0$. 
The proof is complete.
\end{proof}

We also show the next lemma which corresponds to Lemmas \ref{Lem34} and \ref{Lem39}.

\begin{lem} \label{Lem77}
Under the same assumptions as in Proposition \ref{Pro70}  with $T \geq 2T_{20}$,
if $N_\sigma(T)$ is sufficiently small, 
the following estimate holds for $\tilde{s}_{0} \in [0,T-2T_{20}]$, $s \in [\tilde{s}_{0}+2T_{20},T]$, and $\alpha \in (0,1)${\rm :}
\begin{multline} \label{eqLem77} 
(1+c)E_{2}(s) + c \int_{\tilde{s}_{0}}^s D_{2}(\tau) d\tau
\\
\le
E_{2}(\tilde{s}_{0}) 
+ C \alpha^{-1} \int_{\tilde{s}_{0}}^s D_{1bp}(\tau) d\tau
+ C (N_\sigma(T) +\alpha)  \int_{\tilde{s}_{0}}^s E_{2}(\tau) d\tau,
\end{multline}
where $c$ and $C$ are positive constants independent of $\tilde{s}_{0}$, $T$, and $\alpha$.
\end{lem}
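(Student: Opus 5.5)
The proof will mirror that of Lemma \ref{Lem35} (and Lemma \ref{Lem39}), with Lemma \ref{Lem73} playing the role of \eqref{eqLem32}/\eqref{eqLem36} and Lemma \ref{Lem74} the role of \eqref{eqLem34}/\eqref{eqLem38}.

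\emph{Step 1 (energy at a later time).} Apply Lemma \ref{Lem73} with $s_0=\tilde{s}_0+T_{2-}$ and $s\in[\tilde{s}_0+2T_{20},T]$; since $T_{2-}\le 2T_{20}$, this interval lies to the right of $s_0$. Drop the nonnegative dissipative term on the LHS. The remaining initial term $E_2(\tilde{s}_0+T_{2-})$ is then bounded by \eqref{eqLem74**} of Lemma \ref{Lem74}. This yields
\begin{equation*}
E_2(s)\le C\alpha^{-1}\int_{\tilde{s}_0}^{s} D_{1bp}(\tau)\,d\tau + C\int_{\tilde{s}_0}^{s}D_2(\tau)\,d\tau + C(N_\sigma(T)+\alpha)\int_{\tilde{s}_0}^{s}(E_2+D_2)(\tau)\,d\tau .
\end{equation*}
Here the $D_1$ contribution from \eqref{eqLem74**} carries constant $C\le C\alpha^{-1}$, because $\alpha<1$.

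\emph{Step 2 (combination).} Write Lemma \ref{Lem73} with $s_0=\tilde{s}_0$ and the same $s$. Multiply the Step 1 inequality by a small $\varepsilon>0$ and add it to this estimate. The term $\varepsilon C\int D_2$ on the RHS is absorbed into $c\int D_2$ on the LHS once $\varepsilon\le c/(2C)$; the LHS then becomes $(1+\varepsilon)E_2(s)+\tfrac c2\int D_2$. The $\int D_2$ pieces multiplied by $C(N_\sigma(T)+\alpha)$ are absorbed in the same way, provided $N_\sigma(T)$ and $\alpha$ are below a threshold depending only on $c,C$. This is compatible with the statement's ``for $\alpha\in(0,1)$'' because for larger $\alpha$ one can enlarge constants: the statement's RHS term $C(N_\sigma+\alpha)\int E_2$ and the $\alpha^{-1}\int D_{1bp}$ term only become weaker. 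More precisely, for $\alpha\ge\alpha_0$ I would apply the already proven bound with $\alpha_0$ and use $\alpha_0^{-1}\le C\alpha^{-1}$. All remaining terms are of the asserted form $C\alpha^{-1}\int D_{1bp}+C(N_\sigma+\alpha)\int E_2$, with $c=\min\{\varepsilon,c/2\}$.

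\emph{Main obstacle.} The delicate point is the bookkeeping of $\int D_2$ produced with an $O(1)$ constant by Lemma \ref{Lem74}. It must be absorbed using only the dissipation of Lemma \ref{Lem73}, and not the small factor $N_\sigma+\alpha$; this is exactly why the $\varepsilon$-weighting is needed. A secondary point is that the $\int D_2$ appearing with coefficient $C(N_\sigma+\alpha)$ in Lemma \ref{Lem73} is handled by the same absorption, which forces $\alpha$ small. I would treat this by the reduction to small $\alpha_0$ described in Step 2.
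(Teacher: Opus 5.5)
Your proposal is correct and is essentially the paper's proof: apply Lemma~\ref{Lem73} on $[\tilde{s}_0+T_{2-},s]$, bound $E_2(\tilde{s}_0+T_{2-})$ by \eqref{eqLem74**}, and add an $\varepsilon$-multiple of the result to Lemma~\ref{Lem73} on $[\tilde{s}_0,s]$. You also note that absorbing the $C(N_\sigma(T)+\alpha)\int D_2$ term requires $\alpha\le\alpha_0$, and you reduce the case $\alpha\in[\alpha_0,1)$ to $\alpha_0$ using $\alpha_0^{-1}\le\alpha_0^{-1}\alpha^{-1}$ and $\alpha_0\le\alpha$; this is valid and makes explicit a point the paper leaves implicit, since it only takes $\alpha$ small later, in Proposition~\ref{Pro70}.
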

\begin{proof}
First it follows from \eqref{eqLem74**} and \eqref{eqLem73} with $s_{0}=\tilde{s}_{0}+T_{2-}$ and $s \in [\tilde{s}_{0}+ 2T_{20},T]$ that
\begin{gather*}
E_{2}(s) 
\le
 C \alpha^{-1} \int_{\tilde{s}_{0}}^s D_{1bp}(\tau) d\tau
+ C \int_{\tilde{s}_{0}}^s D_{2}(\tau) d\tau
+ C (N_\sigma(T) +\alpha) \int_{\tilde{s}_{0}}^s (E_{2} + D_2 ) (\tau) d\tau.
\end{gather*}
Multipling this by $\ve>0$, adding the resultant inequality to \eqref{eqLem73} with $s_{0}=\tilde{s}_{0}$ and $s \in [\tilde{s}_{0}+2T_{20},T]$, and letting $\ve \ll 1$, we arrive at \eqref{eqLem77}.
\end{proof}

\subsection{Completion of the a priori estimate}

In this subsection, we complete the proof of Proposition \ref{Pro70}. 
\begin{proof}[Proof of Proposition \ref{Pro70}]
Set
\begin{gather*}
T_{0}:=T_{10}^{*} +2T_{20}, \quad 
E_{\alpha}(s):=(E_{1bp}+\alpha^{2} E_{2})(s) \quad \text{for $\alpha \in (0,1)$}.
\end{gather*}
We add the product of \eqref{eqLem73} and $\alpha^{2}$ to \eqref{eqLem72}, and letting $N_\sigma(T) \leq \alpha^{3} \leq \alpha \ll 1$  to see that for $s_{0} \in [0,T]$ and $s \in [s_{0},T]$,
\begin{gather} \label{eq793} 
E_{\alpha}(s)
\le
E_{\alpha}(s_{0}) 
+ C (N_\sigma(T) +\alpha) \int_{s_{0}}^s E_{\alpha}(\tau) d\tau,
\end{gather}
where  $C>0$ is a constant independent of $s_{0}$, $T$, and $\alpha$.
This enables us to obtain the a priori estimate for  the case $T \leq T_{0}$.
To treat another case $T > T_{0}$, multiplying \eqref{eqLem77} with $\tilde{s}_{0} \in [0,T-T_{0}]$ and $s \in [\tilde{s}_{0}+T_{0},T]$ by $\alpha^{2}$, adding the resultant inequality to \eqref{eqLem78} with $s_{0}=\tilde{s}_{0}$ and $s \in [\tilde{s}_{0}+T_{0},T]$, and letting $N_\sigma(T) \leq \alpha^{3} \leq \alpha \ll 1$, we deduce that
for $\tilde{s}_{0} \in [0,T-T_{0}]$ and $s \in [\tilde{s}_{0}+T_{0},T]$,
\begin{gather}\label{eq794}
(1+c)E_{\alpha}(s)
\le
E_{\alpha}({\tilde{s}_{0}}) 
+ C (N_\sigma(T) +\alpha) \int_{{\tilde{s}_{0}}}^s E_{\alpha}(\tau) d\tau,
\end{gather}
where $c,C>0$ are constants independent of $\tilde{s}_{0}$, $T$,  and $\alpha$.

We complete the proof by showing \eqref{eqPro70}. 
It follows from the application of Gronwall's inequality to \eqref{eq793} that 
\begin{gather}\label{eq795}
E_{\alpha}(s) \leq E_{\alpha}(s_{0})e^{C_{1}(N_\sigma(T)+\alpha)(s-s_{0})}  \quad \text{for} \  s \in [s_{0},T],
\end{gather}
where  $C_{1}>0$ is a constant independent of $s_{0}$ $T$, and $\alpha$.
Applying Gronwall's inequality to \eqref{eq794}, we have
\begin{multline}
E_{\alpha} (s) \le \frac{1}{ 1 + c_1 } \Bigl( E_{\alpha} (\tilde{s}_{0}) + C_{1}(N_\sigma(T)+\alpha)  \int_{\tilde{s}_{0}}^{\tilde{s}_{0}+T_0}  E_{\alpha} (\tau) d\tau \Bigr)  
\\
  \times e^{C_{1}(N_\sigma(T)+\alpha)(s-\tilde{s}_{0}-T_{0})}
\quad \text{for} \ s \in [\tilde{s}_{0}+T_0, T],
\label{eq796}
\end{multline}
where $C_{1}>0$ can be the same constant as in \eqref{eq795}, and $c_1>0$ is a constant  independent of $\tilde{s}_{0}$, $T$, and $\alpha$.
We note that \eqref{eq795} and \eqref{eq796} are parallel to \eqref{eq355} and \eqref{eq356}, respectively.
It also follows from \eqref{ajs72} and \eqref{ajs72*} that $E_{\alpha}(0) \leq C(\|f^{0}_{1}\|_{H^{2}(I_{1})}^{2}+\|f^{0}_{2}\|_{H^{2}(I_{2})}^{2}+|\sigma^{0}|^{2})$.
Therefore, repeating the proof of Proposition \ref{Pro31} with taking both $N_\sigma(T)$ and $\alpha$ small enough, we can conclude \eqref{eqPro70}.
\end{proof}

\begin{appendix}

\section{General IBVPs with nonlinear boundary conditions}\label{SA}
In this section, we study the following general initial--boundary value problem with ODEs and nonlinear boundary conditions:
\begin{subequations}\label{gIBVP0}
\begin{gather}
\begin{aligned}
A^0(t,x,u,\sigma,\dot{\sigma})\D_t u + A^1(t,x,u,\sigma,\dot{\sigma}) \D_{x} u & + B(t,x,u,\sigma,\dot{\sigma}) u 
\\
& =  F^{0}(t,x,\sigma,\dot{\sigma}) + F^{1}(t,x), \quad t>0, \ x \in  I,
\end{aligned}
\label{geq0}\\
u(0,x)=u_0(x), \quad x \in I, 
\label{gID0}\\
Q_{i}({\rm id}-M_{i}(t,P_{i} u(t,L_{i}),\sigma_{i}) P_{i}) u (t,L_{i}) =0, \quad t>0, \ i=1,2
\label{gBD0}
\end{gather}
with ODEs
\begin{gather}\label{gODE0}
\dot{\sigma}_{i} = \alpha_{i}(t,P_{i} u(t,L_{i}),\sigma_{i}) \sigma_{i} + \beta_{i}(t,P_{i} u(t,L_{i})), \quad \sigma_{i}(0)=\sigma_{i0},
\quad t>0, \ i=1,2,
\end{gather}
\end{subequations}
where $I:=(L_{1},L_{2})$ is a bounded open interval; 
$u=u(t,x)$ and $\sigma=(\sigma_{1},\sigma_{2})=(\sigma_{1},\sigma_{2})(t)$ are unknown vector functions with $l$ and $2$ components, respectively; 
${\rm id}$ is an identity matrix of size $l$;
$P_{i}$ and $Q_{i}$ are $l \times l$ diagonal matrices whose components are either 0 or 1;
These matrices serve as orthogonal projectors  on $\mathbb R^{l}$ and satisfy the relation $P_{i} ={\rm id} - Q_{i}$;
$M_{i}$ is a $l \times l$ matrix.
Let ${\cal O}_{u,\sigma,\dot{\sigma}} \subset \mathbb {R}^{l+2+2}$ be an open set containing the origin $(u,\sigma,\dot{\sigma})=(0,0,0)$, and fix  $\ve_{0} \in (0,1)$ so that $\overline{B_{2\ve_{0}}(0)} \subset {\cal O}_{u,\sigma,\dot{\sigma}}$, 
where $B_{2\ve_{0}}(0)$ is a ball centered at $(u,\sigma,\dot{\sigma})=(0,0,0)$ with radius $2\ve_{0}$.
For notational convenience, $\langle u,v \rangle:=u^{\top} v$ denotes the inner product for vectors $u,v$ with $l$ components.

The $l \times l$ matrices $A^0$, $A^1$, $B$, ${M}_{1}$, ${M}_{2}$, the $l$-dimensional vector $F$,
and the functions $\alpha_{1}$, $\alpha_{2}$, $\beta_{1}$, $\beta_{2}$ 
are assumed to satisfy the following conditions on regularities and structures:
\begin{asp}\label{ABasp1} The following hold for some positive constants $c_{0}$, $c_{1}$, and $T_{0}${\rm:} 
\begin{enumerate}[\rm (i)]
\item $A^0, A^1, B, F^{0} \in {\cal B}^2([0,T_0]\times \overline{I} \times {\cal O}_{u,\sigma,\dot{\sigma}})$,
$F^{1} \in H^{2}((0,T_{0})\times I) $, and $M_{1}, M_{2} \in {\cal B}^2([0,T_0]  \times {\cal O}_{u,\sigma,\dot{\sigma}})${\rm;}
\item $\alpha_{1}, \alpha_{2}, \beta_{1}, \beta_{2} \in {\cal B}^2([0,T_0]  \times {\cal O}_{u,\sigma})${\rm;}
\item $A^0$ and $A^1$ are symmetric{\rm;}
\item $\left\langle A^0(t,x,u,\sigma,\dot{\sigma}) \phi, \phi \right\rangle \geq c_0 |\phi|^2$ holds for all $\phi\in{\mathbb R}^l$ and $ (t,x,u,\sigma,\dot{\sigma}) \in [0,T_0] \times \overline{I} \times \overline{{\cal O}_{u,\sigma,\dot{\sigma}}}${\rm;}
\item $|(\det A^1)(t,x,u,\sigma,\dot{\sigma})| \geq c_1$  holds  for all $(t,x,u,\sigma,\dot{\sigma}) \in [0,T_0]\times \{L_{1},L_{2}\} \times \overline{{\cal O}_{u,\sigma,\dot{\sigma}}}${\rm;} 
\item \footnote{We remark that $\dim (\ker Q_{i}({\rm id}-M_{i}(t,P_{i} u,\sigma_{i}) P_{i})) = \dim (\ker Q_{i})$ holds.}$\dim (\ker Q_{i}({\rm id}-M_{i}(t,P_{i} u,\sigma_{i}) P_{i}))$  is equal to the number of nonnegative eigenvalues of $(-1)^{i}A^1(t,L_{i},u,\sigma,\dot{\sigma})$ for all $(t,u,\sigma,\dot{\sigma}) \in [0,T_0] \times \overline{{\cal O}_{u,\sigma,\dot{\sigma}}}$ and $i=1,2${\rm;}
\item $\left\langle (-1)^{i}A^1(t,L_{i},u,\sigma,\dot{\sigma}) \phi, \phi \right\rangle >0$ holds for all $\phi\in \ker Q_{i}({\rm id}-M_{i}(t,P_{i} u,\sigma_{i}) P_{i})$ with $|\phi|=1$, $ (t,u,\sigma,\dot{\sigma}) \in [0,T_0] \times \overline{{\cal O}_{u,\sigma,\dot{\sigma}}}$, and $i=1,2${\rm;}
\item $\beta_{i} (0,P_{i} 0)=0$ holds for $i=1,2$.
\end{enumerate}
Here the condition (vii) is equivalent to the following conditions{\rm:}
\begin{enumerate}
\item[\rm (vii)'] There exists a positive constant $c_{2}$ such that
$\left\langle (-1)^{i}A^1(t,L_{i},u,\sigma,\dot{\sigma}) \phi, \phi \right\rangle \geq c_2 |P_{i} \phi|^2$ holds for all $\phi\in \ker Q_{i}({\rm id}-M_{i}(t,P_{i} u,\sigma_{i}) P_{i})$, $ (t,u,\sigma,\dot{\sigma}) \in [0,T_0] \times \overline{{\cal O}_{u,\sigma,\dot{\sigma}}}$, and $i=1,2${\rm;}
\item[\rm  (vii)\mbox{''}] $\displaystyle \inf_{\substack{\phi\in \ker Q_{i}({\rm id}-M_{i}(t,P_{i} u,\sigma_{i}) P_{i}), \\ |\phi |=1, \\ (t,u,\sigma,\dot{\sigma}) \in [0,T_0] \times \overline{{\cal O}_{u,\sigma,\dot{\sigma}}}}} \left\langle (-1)^{i}A^1(t,L_{i},u,\sigma,\dot{\sigma}) \phi, \phi \right\rangle >0$ holds for $i=1,2$.
\end{enumerate}
\end{asp}

We will construct a time-local solution in the neighborhood of the origin $(u,\sigma,\dot{\sigma})=(0,0,0)$.
Generally speaking, we need to assume the conditions (i)--(v) and (viii) even for linear boundary conditions, 
where we use the condition (viii) to guarantee the smallness of $\dot{\sigma}$.
On the other hand, the \footnote{The condition (vi) can be relaxed by $\left\langle (-1)^{i}A^1(t,L_{i},u,\sigma,\dot{\sigma}) \phi, \phi \right\rangle \geq 0$ for linear boundary conditions.}condition (vii) is assumed for the study of the nonlinear boundary conditions \eqref{gBD0}.

The following are the compatibility conditions of the zeroth and first orders:
\begin{subequations}\label{compa0}
\begin{gather}
Q_{i}({\rm id}-M_{i} (0,P_{i} u_{0}(L_{i}), \sigma_{i0}) P_{i})u_{0} (L_{i}) =0, 
\label{0th}
\\
\begin{aligned}
& Q_{i}({\rm id}-M_{i} (0,P_{i} u_{0}(L_{i}),\sigma_{i0}) P_{i})\mbox{``}\D_{t} u (L_{i}) {\it \mbox{''}} 
- Q_{i} \D_{t} M_{i} (0,P_{i} u_{0}(L_{i}), \sigma_{i0}) P_{i} u_{0} (L_{i}) 
\\ 
&- Q_{i} \{\nabla_{\mathbb P_{i} u} M_{i} (0,P_{i} u_{0}(L_{i}), \sigma_{i0})\cdot P_{i} \mbox{``}\D_{t} u (L_{i})\mbox{''}\} P_{i} u_{0} (L_{i}) 
\\
& - Q_{i} \{\D_{\sigma_{i}} M_{i} (0,P_{i} u_{0} (L_{i}),\sigma_{i0})\mbox{``}\dot{\sigma}_{i}\mbox{''} \} P_{i} u_{0} (L_{i})
=0
\end{aligned}
\label{1st}
\end{gather}
\end{subequations}
for $i=1,2$, where
\begin{align*}
\mbox{``}\dot{\sigma}\mbox{''}&:=(\mbox{``}\dot{\sigma}_{1}\mbox{''},\mbox{``}\dot{\sigma}_{2}\mbox{''}),
\\
\mbox{``}\dot{\sigma}_{i}\mbox{''} &:= \alpha_{i}(0,P_{i} u_{0}(L_{i}),\sigma_{i0}) \sigma_{i0} +\beta_{i}(0,P_{i} u_{0}(L_{i})),
\\
\mbox{``}\D_{t} u (L_{i})\mbox{''}&:= (A^{0}(0,L_{i},u_{0}(L_{i}),\sigma_{0},\mbox{``}\dot{\sigma}\mbox{''}))^{-1} G_{i},
\\
G_{i}&:=-A^{1}(0,L_{i},u_{0}(L_{i}),\sigma_{0},\mbox{``}\dot{\sigma}\mbox{''})\D_{x}u_{0}(L_{i})
-B(0,L_{i}, u_{0}(L_{i}),\sigma_{0},\mbox{``}\dot{\sigma}\mbox{''})u_{0}(L_{i})
\\
& \quad +F^{0}(0,L_{i}, u_{0}(L_{i}),\sigma_{0},\mbox{``}\dot{\sigma}\mbox{''}) + F^{1}(0,L_{i}).
\end{align*}

We denote by $X_{2}(T,\ve,N_{1},N_{2},N_{3},N_{4})$ a set of functions $(u,\sigma)$ which satisfies the following conditions:
\begin{subequations}\label{X2}
\begin{gather}
(u,\sigma) \in \left[\bigcap_{k=0}^{2} C^{k}([0,T];H^{2-k}(I)) \right]^{l} \times \left[H^{3}(0,T)\right]^{2},
\quad 
u(\cdot,L_{i}) \in \left[H^{2}(0,T) \right]^{l}, 
\label{X2r}\\
u(0,x)=u_{0}(x), \quad \sigma(0)=\sigma_{0}, \quad
\D_{t} u(0,L_{i}) =\mbox{``}\D_{t} u(L_{i})\mbox{''}, \quad
\D_{t}\sigma(0)=\mbox{``}\dot{\sigma}\mbox{''},
\label{X2i}\\
\sup_{(t,x)\in [0,T]\times \overline{I}} | u (t,x)|
+\sup_{t\in [0,T] } |(\sigma,\dot{\sigma})(t)|  \leq \ve,
\label{X2ep}\\
\begin{aligned}
\sup_{t \in [0,T]}|\D_{t}^{2}\sigma(t)|^{2}  \leq N_{1},
\end{aligned}
\label{X2N1}\\
\sum_{k+j \leq 1} \sup_{t \in [0,T]} \|\D_{t}^{k}\D_{x}^{j}u(t)\|^{2}_{L^{2}(I)}
+ \sum_{i=1}^{2}\int_{0}^{T} |(\D_{t} P_{i}u) (\tau,L_{i})|^{2} d\tau 
\leq N_{2}, 
\label{X2N2}\\
\sum_{k+j = 2} \sup_{t \in [0,T]} \|\D_{t}^{k}\D_{x}^{j}u(t)\|^{2}_{L^{2}(I)}
+ \sum_{i=1}^{2}\int_{0}^{T} |(\D_{t}^{2} P_{i}u) (\tau,L_{i})|^{2} d\tau 
\leq N_{3}, 
\label{X2N3}\\
\int_{0}^{T} |\D_{t}^{3}\sigma(\tau)|^{2} d\tau \leq N_{4},
\label{X2N4}
\end{gather}
\end{subequations}
where $T\leq \min\{1,T_{0}\}$, $\ve \leq \min\{1,\ve_{0}\}$, $N_{1} \geq 1$, $N_{2} \geq 1$, $N_{3} \geq 1$, and $N_{4} \geq 1$ are some positive constants. 

We are now in a position to state the time-local solvability of the initial--boundary value problem \eqref{gIBVP0}, which is the main theorem in Appendix \ref{SA}.
To this end, we introduce $\tilde{C}_{s}:=\max\{1,C_{s}\}$, where
$C_{s}$ is the best constant of Sobolve's inequality $\|\cdot\|_{L^\infty(I)} \leq C_{s} \|\cdot\|_{H^{1}(I)}$.
We also note that $|\mbox{``}\dot{\sigma}\mbox{''}|$ can be taken arbitrarily small by using the condition (viii) in Assumption \ref{ABasp1} and letting $\sup_{x\in I}|u_{0}(x)|+|\sigma_{0}|$ suitably small.

\begin{thm}\label{local} Let Assumption \ref{ABasp1} hold.
There exists positive constants $N_{1}^{*}, N_{2}^{*}, N_{3}^{*}, N_{4}^{*} \geq 1$ and $\ve^{*}, T^{*} \leq 1$ such that if 
the initial data $(u_{0},\sigma_{0}) \in  [H^{2}(I)]^{l} \times \mathbb R^{2}$
satisfies the compatibility conditions \eqref{compa0} and
\begin{gather}\label{inismall}
\|u_{0}\|_{H^{2}(I)}+|(\sigma_{0},\mbox{``}\dot{\sigma}\mbox{''})| < \ve^{*}/4\tilde{C}_{s},
\end{gather}
then the problem \eqref{gIBVP0} has a unique solution $(u,\sigma) \in X_2(T^{*}/3,\ve^{*},N_{1}^{*},$ $N_{2}^{*},N_{3}^{*},N_{4}^{*})$.
\end{thm}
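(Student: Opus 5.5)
We prove Theorem~\ref{local} by a contraction (or Picard iteration) argument on the closed set $X_2(T,\ve,N_1,N_2,N_3,N_4)$, equipped with a weaker metric. In that metric we control $u$ and $\D_t u$ in $L^\infty(0,T;L^2(I))$, the boundary traces $P_i u(\cdot,L_i)$ in $L^2(0,T)$, and $\sigma,\dot\sigma$ in $L^\infty(0,T)$.

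\textbf{The map.} Given $(\tilde u,\tilde\sigma)\in X_2$, we define $(u,\sigma)=\Phi(\tilde u,\tilde\sigma)$ in two stages. First, $\sigma_i$ is the solution of the linear ODE
\[
\dot\sigma_i=\alpha_i(t,P_i\tilde u(t,L_i),\sigma_i)\,\sigma_i+\beta_i(t,P_i\tilde u(t,L_i)),\qquad \sigma_i(0)=\sigma_{i0}.
\]
This ODE is solvable by Cauchy--Lipschitz, and its regularity $\sigma\in H^3$ follows from $\tilde u(\cdot,L_i)\in H^2(0,T)$. Second, $u$ solves the \emph{linear} symmetric hyperbolic problem: the coefficients $A^0,A^1,B$ are frozen at $(\tilde u,\sigma,\dot\sigma)$, the right-hand side is $F^0(t,x,\sigma,\dot\sigma)+F^1$, and the boundary condition is
\[
Q_i\bigl(\mathrm{id}-M_i(t,P_i\tilde u(t,L_i),\sigma_i)P_i\bigr)u(t,L_i)=0 .
\]
This boundary condition is linear in $u$, and by (vi)--(vii) it is maximally strictly dissipative. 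Condition (v) makes the boundary non-characteristic, so classical theory applies to this linear IBVP: Friedrichs, or Rauch--Massey \cite{RM1974} for $H^2$ regularity with the compatibility conditions \eqref{compa0}. It yields a unique solution in $\bigcap_k C^k([0,T];H^{2-k})$ whose boundary traces of $\D_t^k u$, $k\le2$, lie in $L^2(0,T)$.

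\textbf{Self-mapping.} The energy estimates are carried out in the following order. For the $L^2$ and $\D_t$ levels, we multiply by $2u^\top$ and by $2\D_t u^\top$, respectively. Condition (vii)$'$ gives the boundary terms the bound $\ge c_2|P_iu|^2$, up to a lower-order error from $\D_t M_i$, which involves $\D_t P_i\tilde u$. Consequently $\int_0^T|P_i\D_t^k u(\tau,L_i)|^2d\tau$ is controlled; this is exactly the trace part of $N_2$ and $N_3$. The $x$-derivatives are then recovered from the equation, since $A^1$ is invertible near $x=L_i$ by (v) and $A^0$ is positive definite in the interior. For the $H^2$ level we use mollification in $t$. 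The resulting estimate takes the form
\[
\text{LHS of }\eqref{X2N3}\le C\|u_0\|_{H^2}^2 \;+\; T^{1/2}C(N_1,\dots,N_4),
\]
after which we choose the constants in order: $N_2^*$ and $N_3^*$ large relative to the data, then $N_1^*$ and $N_4^*$ from the ODE bounds on $\ddot\sigma$ and $\dddot\sigma$, and finally $T^*$ small. The condition \eqref{X2ep} then follows from Sobolev's inequality together with \eqref{inismall}. Condition (viii) keeps $|\dot\sigma|$ small, through $|\beta_i|\le C|P_i\tilde u|$.

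\textbf{Contraction and main obstacle.} We take the difference of two iterates and estimate it at the $L^2$ level with the same dissipative boundary energy. The nonlinearity of the boundary condition produces the error term
\[
Q_i\bigl(M_i(\tilde u^1)-M_i(\tilde u^2)\bigr)P_i u^2 ,
\]
and this is the main obstacle. It is bounded by $C|P_i(\tilde u^1-\tilde u^2)(L_i)|\,|u^2|_{L^\infty}$, which is a \emph{trace} of the previous difference, not an interior norm. The strict dissipativity (vii)$'$ supplies control of $\int_0^T|P_i(u^1-u^2)(L_i)|^2$ on the left-hand side. Since $|u^2|_{L^\infty}\le\ve$ is small, this term is absorbed as a contraction in the trace norm; the choice of $\ve^*$ small is made precisely here. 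The $\sigma$-differences are handled by Gronwall's inequality on the ODE, with constants $\lesssim T$. The map $\Phi$ is then a contraction for $T^*$ small. Because $X_2$ is closed under the weak metric (the bounds persist by weak-$*$ lower semicontinuity), the fixed point exists and is unique, and time continuity at the top order follows from the standard weak-to-strong argument. The restriction of the final time to $T^*/3$ leaves room for the mollification and extension steps.
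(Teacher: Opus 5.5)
Your architecture matches the paper's. You freeze the coefficients and the boundary matrix $M_i$ at the previous iterate and solve the linear strictly dissipative problem. You then prove invariance of a set of type $X_2$, and obtain convergence in a low norm, absorbing the boundary coupling $Q_i(M_i^{k-1}-M_i^{k-2})P_iu^{k-1}$ by the smallness of $\ve^*$; your contraction is the paper's successive approximation. The gap is in the self-mapping step. Differentiating the boundary condition twice in $t$ gives
\[
Q_i(\mathrm{id}-M_iP_i)\partial_t^2u(t,L_i)=Q_i\bigl(2(\partial_tM_i)P_i\partial_tu+(\partial_t^2M_i)P_iu\bigr)(t,L_i).
\]
Here $\partial_t^2M_i$ contains $\nabla_{P_iu}M_i\cdot\partial_t^2P_i\tilde u(t,L_i)$, the top-order trace of the previous iterate. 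The only control on it is $\int_0^T|\partial_t^2P_i\tilde u(\tau,L_i)|^2d\tau\le N_3$, with no factor of $T$. Hence the second-order boundary inequality carries the error $CN_3\sup_t|P_iu(t,L_i)|^2$.

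Your bound $C\|u_0\|_{H^2}^2+T^{1/2}C(N_1,\dots,N_4)$ is therefore false, and the $N_3$-estimate is circular unless the boundary values of $u$ are small. The paper closes it using $\sup_t|P_iu(t,L_i)|\le|P_iu_0(L_i)|+C\sqrt T$ together with the hypothesis $|P_iu_0(L_i)|\le N_3^{-1/2}$ of its top-order lemma. That hypothesis is built into the smallness condition on the data, and hence into $\ve^*$. So smallness is already needed for invariance, not only for contraction, and $\ve^*$ is fixed after $N_3^*$. In addition, the top-order bound genuinely depends on $N_1,N_2$. Both $\|\partial_t^2u(0)\|_{L^2}$ and the recovery of $\chi\partial_{xt}u$, $\chi\partial_{xx}u$ near $L_i$ involve $\partial_t\tilde u$ and $\ddot\sigma$ through $\partial_tA^0,\partial_tA^1$. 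This is why the paper takes $N_3^*=D(N_1^*+N_2^*+1)^3$ and fixes the constants in the order $N_1^*,N_2^*$, then $N_3^*$, then $N_4^*$, then $\ve^*$, then $T^*$, rather than the order you propose.

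A smaller defect is in your contraction metric. The difference $\Sigma$ of two $\sigma$-iterates obeys $|\dot\Sigma_i(t)|\le C\bigl(|\Sigma_i(t)|+|P_i\tilde U(t,L_i)|\bigr)$, where $\tilde U$ is the difference of the previous $u$-iterates. At the $L^2$ level, which is all you estimate, that trace is controlled only in $L^2(0,T)$, so $\dot\sigma$ cannot be measured in $L^\infty(0,T)$. Use $\int_0^T|\dot\Sigma|^2$ with a small weight instead. Since $\int_0^T|\dot\Sigma^k|^2\le K^2\sum_i\int_0^T|P_iU^{k-1}(\tau,L_i)|^2d\tau+\dots$ carries no factor of $T$, the paper puts $\frac{1}{8K^2}\int_0^T|\dot\Sigma^k|^2$ into the contracting functional. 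Then $\dot\Sigma$ enters the interior estimate only through $\overline F^k$, with a factor $\sqrt T$. Likewise, drop $\partial_tu$ from the metric: controlling it would require differences of $\ddot\sigma$ and of $\partial_t$-traces. The $L^2$-level functional suffices once the $X_2$ bounds are passed to the limit by weak-$*$ compactness.
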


Appendix \ref{SA} is organized as follows.
We begin with the study of the linearized problem associated with \eqref{gIBVP0} in subsection \ref{SecA1}.
Then Theorem \ref{local} is shown in subsection \ref{SecA2}.
In subsection \ref{SecA3}, we explain how to apply Theorem \ref{local} to the initial--boundary value  problems of the quasi-one-dimensional model \eqref{eq101}.

\subsection{Linearized problem}\label{SecA1}
To solve the nonlinear problem \eqref{gIBVP0}, we consider the following linearized problem:
\begin{subequations}\label{lIBVP0}
\begin{gather}
A^0(t,x,u,\sigma,\dot{\sigma})\D_t \hat{u} + A^1(t,x,u,\sigma,\dot{\sigma}) \D_{x} \hat{u} + B(t,x,u,\sigma,\dot{\sigma}) \hat{u} = F^{0}(t,x,\sigma,\dot{\sigma}) + F^{1}(t,x), 
\label{leq0}\\
\hat{u}(0,x)=u_0(x), 
\label{lID0}\\
Q_{i}({\rm id}-M_{i}(t,P_{i} u (t,L_{i}),\sigma_{i}) P_{i}) \hat{u} (t,L_{i}) =0, \quad  \ i=1,2
\label{lBD0}
\end{gather}
with ODEs
\begin{gather}\label{lODE0}
\dot{\hat{\sigma}}_{i} = \alpha_{i}(t,P_{i} u(t, L_{i}),\sigma_{i}) \hat{\sigma}_{i} + \beta_{i}(t,P_{i} u(t, L_{i})), \quad \hat{\sigma}_{i}(0)=\sigma_{i0},
\quad i=1,2.
\end{gather}
\end{subequations}
We suppose that $(u,\sigma)$ belongs to $X_{2}(T,\ve,N_{1},N_{2},N_{3},N_{4})$, and then see that the compatibility conditions of the zeroth and first order are the same as in \eqref{compa0}. 

In this section, we sometimes abbreviate $A^0(t,x,u,\sigma,\dot{\sigma})$,  $A^1(t,x,u,\sigma,\dot{\sigma})$,  $B(t,x,u,\sigma,\dot{\sigma})$,  
$F^{0}(t,x,\sigma,\dot{\sigma})$, $F^{1}(t,x)$,  $M_{i}(t,P_{i} u(t,L_{i}),\sigma_{i})$, $\alpha_{i}(t,P_{i} u(t,L_{i}),\sigma_{i})$,  and $\beta_{i}(t,P_{i} u(t,L_{i}))$ as $A^{0}$, $A^{1}$, $B$, $F^{0}$, $F^{1}$, $M_{i}$, $\alpha_{i}$, and $\beta_{i}$, respectively.

\subsubsection{Solvability}\label{solv}
In this section, we show the following lemma on the solvability of the linearized problem \eqref{lIBVP0}.
\begin{lem}\label{lexist1}
Suppose that $(u,\sigma) \in X_{2}(T,\ve,N_{1},N_{2},N_{3},N_{4})$ and $(u_{0},\sigma_{0}) \in [H^{2}(I)]^{l} \times \mathbb R^{2}$ satisfies the compatibility conditions \eqref{compa0}. Then the linearized problem \eqref{lIBVP0} has a unique solution $(\hat{u},\hat{\sigma})$ which satisfies 
\begin{gather*}
(\hat{u},\hat{\sigma}) \in \left[\bigcap_{k=0}^{2} C^{k}([0,T-\eta];H^{2-k}(I)) \right]^{l} \times \left[H^{3}(0,T)\right]^{2}, \quad
\hat{u}(\cdot,L_{i}) \in \left[H^{2}(0,T-\eta) \right]^{l}
\end{gather*}
for any $\eta \in (0,T)$ and $i=1,2$. 
\end{lem}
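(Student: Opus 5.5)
The plan is to decouple the problem: the ODEs \eqref{lODE0} for $\hat\sigma$ and the linear hyperbolic IBVP \eqref{leq0}--\eqref{lBD0} for $\hat u$ share no unknowns. The coefficients $\alpha_i,\beta_i$ and the boundary matrices $M_i$ depend only on the given pair $(u,\sigma)$, never on $(\hat u,\hat\sigma)$.

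\textbf{Step 1: the ODEs.} Equation \eqref{lODE0} is a scalar linear ODE with coefficients $a_i(t):=\alpha_i(t,P_iu(t,L_i),\sigma_i(t))$ and $b_i(t):=\beta_i(t,P_iu(t,L_i))$. It is solved explicitly by
$$\hat\sigma_i(t)=e^{\int_0^t a_i}\sigma_{i0}+\int_0^t e^{\int_\tau^t a_i}\,b_i(\tau)\,d\tau .$$
By Assumption \ref{ABasp1}(ii) and $u(\cdot,L_i)\in H^2(0,T)$, $\sigma\in H^3(0,T)$, the chain rule in one variable gives $a_i,b_i\in H^2(0,T)$. Hence $\dot{\hat\sigma}_i=a_i\hat\sigma_i+b_i\in H^2$, so $\hat\sigma_i\in H^3(0,T)$. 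Uniqueness is immediate.

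\textbf{Step 2: the $\hat u$-problem as a classical linear IBVP.} With $(u,\sigma)$ fixed, \eqref{leq0}--\eqref{lBD0} is a linear symmetric hyperbolic system in the sense of Friedrichs:
\begin{itemize}
\item coefficients in $C^0([0,T];H^2)\cap C^1([0,T];H^1)$ (from \eqref{X2r} and (i));
\item source $F^0(t,x,\sigma,\dot\sigma)+F^1\in H^2((0,T)\times I)$;
\item a noncharacteristic boundary, by (v);
\item linear boundary conditions $Q_i(\mathrm{id}-M_i(t)P_i)\hat u(t,L_i)=0$, with $M_i(t)=M_i(t,P_iu(t,L_i),\sigma_i(t))\in H^2(0,T)\subset C^1$.
\end{itemize}
By (vi)--(vii)$'$ the kernel of the boundary operator has the right dimension and is strictly dissipative, i.e.\ the boundary condition is maximally (strictly) dissipative. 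I would invoke the standard theory (Rauch--Massey \cite{RM1974}) for such problems. It gives existence and uniqueness in $\bigcap_k C^k([0,T'];H^{2-k})$ under the compatibility conditions \eqref{compa0}, together with $L^2$ boundary-trace control of $\partial_t^k \hat u$. Concretely:
\begin{enumerate}
\item Build the $L^2$ solution by duality or regularization.
\item Differentiate in $t$ to get $\partial_t\hat u,\partial_t^2\hat u$, which solve systems of the same form with boundary conditions inhomogeneous through $\partial_t M_i$.
\item Use the first-order compatibility condition \eqref{1st} to give $\partial_t\hat u(0)$ the correct value.
\item Recover $x$-derivatives from the equation via invertibility of $A^1$ near the boundary (v), combined with interior energy estimates.
\end{enumerate}
The strict dissipativity (vii)$'$ yields the trace bound $\int|\partial_t^k P_i\hat u(\tau,L_i)|^2\,d\tau$, hence $\hat u(\cdot,L_i)\in H^2$.

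\textbf{Main obstacle: limited regularity.} The coefficients have only the regularity of $X_2$: $\partial_t^2$ of the coefficients lies only in $L^2$-type spaces, and $M_i$ is only $H^2$ in time. The $H^2$ theory of \cite{RM1974} is designed for smooth coefficients, so I would proceed as follows:
\begin{enumerate}
\item Mollify $(u,\sigma)$ and $F^1$ in $t$.
\item Correct the initial data so that compatibility survives the mollification.
\item Derive uniform energy estimates for $\partial_t^k\hat u$, $k\le2$, using the boundary dissipativity to absorb the boundary terms produced by $\partial_t^kM_i$. These terms involve only lower-order traces $P_i\partial_t^{k-1}\hat u$, which are already controlled.
\item Pass to the limit.
\end{enumerate}
The highest-order time derivative is only weakly continuous in the limit. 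This is why the conclusion is stated on $[0,T-\eta]$: strong continuity $C^2([0,T-\eta];L^2)$ is obtained by an energy-equality argument on a slightly shorter interval. That argument uses the equation to express the energy identity, with a cutoff near $t=T$ to handle the trace term. I expect this regularization and limiting step, especially getting the boundary terms right, to be the main technical point.
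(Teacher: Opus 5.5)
Your proposal is correct and follows essentially the same route as the paper. You decouple the ODEs, so that $\hat\sigma\in[H^3(0,T)]^2$ by standard ODE theory. You take $\hat u\in\bigcap_{k=0}^2C^k([0,T-\eta];H^{2-k}(I))$ from the linear theory of \cite{KST2013,RM1974,Sc1986} after smoothly approximating the coefficients and the boundary matrices $M_i$ while preserving (vi)--(vii). You get the boundary regularity from the strictly dissipative boundary term (vii)$'$.

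The remaining differences are technical. The paper proves the trace regularity a posteriori: it shows that $P_i\partial_t^2J_\delta\hat u(\cdot,L_i)$ is Cauchy in $L^2(0,T-\eta)$ for the one-sided time mollifier \eqref{mol1}, and this mollifier is what forces the shorter interval $[0,T-\eta]$, not an energy-equality argument. It then recovers the remaining components from $Q_i\hat u(\cdot,L_i)=Q_iM_iP_i\hat u(\cdot,L_i)$ with $M_i\in H^2(0,T)$, a step you leave implicit.
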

\begin{proof}
We first notice that \eqref{lODE0} and \eqref{leq0}--\eqref{lBD0} are decoupled in the problem \eqref{lIBVP0}.
A standard theory of ODEs ensures that \eqref{lODE0} admits a unique solution $\hat{\sigma} \in H^{3}(0,T)$. 
\footnote{It is straightforward to find smooth approximation of the matrices $M_{i}(t,P_{i} u(t,L_{i}),\sigma_{i})$ with the conditions (vi) and (vii)\mbox{''} in Assumption \ref{ABasp1}. Therefore, the methods in \cite{KST2013,Sc1986} are applicable.}In much the same way as in Appendix in \cite{KST2013} (see also \cite{RM1974,Sc1986}), 
we see that the initial--boundary value problem \eqref{leq0}--\eqref{lBD0} has a unique solution
\begin{gather}\label{reg-1}
\hat{u} \in \left[\bigcap_{k=0}^{2} C^{k}([0,T-\eta];H^{2-k}(I)) \right]^{l}
\end{gather}
for any $\eta \in (0,T)$.

What is left is to prove that
\begin{gather}\label{reg0}
\hat{u}(\cdot,L_{i}) \in \left[H^{2}(0,T-\eta) \right]^{l}, \quad i=1,2.
\end{gather}
To this end, we use the time mollifier
\begin{gather}\label{mol1}
J_\delta f (t,x):= \int_{0}^{\infty}  \frac{1}{\delta} \vphi \left(\frac{t-\tau}{\delta}+1\right) f(\tau,x)d\tau,
\end{gather}
where $\vphi(t)$ is a non-negative $C^\infty_0$ function
whose support lies compactly in $\{|t| \leq 1\}$, and  $\|\vphi\|_{L^1(\mathbb R)}=1$.
Let us mention several properties of this mollifier. We know that for any $k \in \mathbb N$ and $\delta \in (0,T_{0}/2)$,
\begin{align*}
\D_t^k J_\delta &: H^1(0,T_0) \to H^1(0,T_0-2\delta),
\\
\D_t^k J_\delta &: \bigcap_{k=0}^{2} C^{k}([0,T_{0}];H^{2-k}(I))  \to \bigcap_{k=0}^{2} C^{k}([0,T_{0}-2\delta];H^{2-k}(I)),
\\
\D_t^k J_\delta &: H^2((0,T_0)\times I) \to H^2((0,T_0-2\delta)\times I).
\end{align*}
Furthermore, the mollifier $J_\delta$ commutes with the differential operators,
i.e. $J_\delta \D_t^k  f =\D_t^k   J_\delta f$ 
for $f\in H^p(0,T_0)$ and $k \in \mathbb N$ as well as 
$J_\delta \D_t^k \D_x^l f =\D_t^k \D_x^l  J_\delta f$ 
for $f\in H^p((0,T_0)\times I)$ and $k+l \le p \in \mathbb N$.
For any $\eta \in (0,T_{0})$, the following convergences also hold as $\delta$ tends to zero:
\begin{subequations}\label{mp0}
\begin{align}
&\|J_\delta f - f  \|_{L^2(0,T_0-\eta)} + \left\| \D_t^k \left( [ J_\delta , a ] f \right) \right \|_{L^2(0,T_0-\eta)} \to 0
\quad \text{if} \  f \in L^2(0,T_0),
\label{mp5} \\
&\|J_\delta f - f  \|_{L^2((0,T_0-\eta)\times I)} + \left\| \D_t^k \left( [ J_\delta , b ] f \right) \right \|_{L^2((0,T_0-\eta)\times I)} \to 0
\quad \text{if} \  f \in L^2((0,T_0)\times I),
\label{mp4} \\
&\|J_\delta f(0) \! - \! f_0  \|_{L^2(I)} \to 0 
\quad \text{if} \  f \! \in \! L^\infty (0,T_0;L^2(I)) \text{ with }   \| f(t)\! -\! f_0\|_{L^2(I)} \to 0  \text{ as } t \to 0+,
\label{mp2} 
\end{align}
\end{subequations}
where $a \in L^\infty(0,T_0) \cap Lip([0,T_0])$, 
$b \in L^\infty((0,T_0)\times I) \cap Lip([0,T_0] \times \overline{I})$,  $k=0,1$, and $[\cal T,\cal S]:=\cal T\cal S-\cal S\cal T$ is a commutator.

From now on we show \eqref{reg0}.
Differentiate \eqref{leq0} in $t$, apply the mollifier above, and then differentiate it in $t$ again to obtain
\begin{gather}
A^0\D_{t}(\D_{tt} J_{\delta} \hat{u}) 
+ A^1\D_{x}(\D_{tt} J_{\delta} \hat{u})  
=G_{\delta}^{0}+G_{\delta}^{1}+J_{\delta} G,
\label{meq1}\\
\begin{aligned}
G_{\delta}^{0}:=&\D_{t} \left( [A^0,J_\delta ] \D_{tt} \hat{u} \right)
+\D_{t} \left( [A^1,J_\delta ] \D_{tx} \hat{u} \right),
\\
G_{\delta}^{1}:=&  [A^0,\D_{t} ]  J_\delta \D_{tt} \hat{u}
+ [A^1, \D_{t} ]  J_\delta \D_{tx} \hat{u},
\\
G:=&\D_{t} \left([A^0 ,\D_{t} ] \D_{t} \hat{u} \right)
+\D_{t} \left([A^1,\D_{t} ] \D_{x} \hat{u} \right)
+\D_{tt}\left\{ -B\hat{u} + F^{0} + F^{1} \right\}.
\end{aligned}
\notag
\end{gather}
Similarly, it follows from \eqref{lBD0} that 
\begin{gather}
Q_{i}({\rm id}-M_{i}P_{i}) \D_{tt} J_{\delta}  \hat{u} (t,L_{i}) =H_{i\delta}^{0} + H_{i\delta}^{1} + J_{\delta} H_{i},
\label{mBD1}
\\
\begin{aligned}
H_{i\delta}^{0}&:= \D_{t} \left( [J_\delta, Q_{i}M_{i}P_{i} ] \D_{t} \hat{u} \right)(t,L_{i}),
\quad
H_{i\delta}^{1}:= [\D_{t}, Q_{i}M_{i}P_{i} ] J_\delta \D_{t} \hat{u}(t,L_{i}),
\\
H_{i}&:=\D_{t} \left([\D_{t}, Q_{i}M_{i}P_{i} ] \hat{u} \right)(t,L_{i}).
\end{aligned}
\notag
\end{gather}
Taking a difference between \eqref{meq1} with $\delta=\delta'$ and $\delta=\delta''$,
multiplying the result by $2v:=2(\D_{tt} J_{\delta'}\hat{u}-\D_{tt} J_{\delta''}\hat{u})$, 
and integrating it over $(0,T-\eta)\times I$, we obtain 
\begin{align}
&\int_{I} \left\langle A^0 v, v \right\rangle (T-\eta,x) dx
+\int_{0}^{T-\eta} \left\langle A^1  v, v \right\rangle (t,L_{2}) dt 
-\int_{0}^{T-\eta} \left\langle A^1 v, v \right\rangle (t,L_{1}) dt 
\notag \\
&=\int_{I} \left\langle A^0 v, v \right\rangle (0,x) dx
+ \int_{0}^{T-\eta} \!\! \int_{I} \left\langle (\D_{t}  A^0 ) v, v \right\rangle (t,x) dxdt
+  \int_{0}^{T-\eta} \!\! \int_{I} \left\langle (\D_{x} A^1 ) v, v \right\rangle (t,x) dxdt
\notag \\
&\quad +2\int_{0}^{T-\eta} \!\! \int_{I}  \left\langle  G_{\delta'}^{0}+G_{\delta'}^{1}+J_{\delta'} G - G_{\delta''}^{0} - G_{\delta''}^{1} - J_{\delta''} G, v \right\rangle (t,x) dxdt
\notag \\
& \to 0 \quad \text{as $\delta',\delta'' \to 0$,} 
\end{align}
where we have used \eqref{reg-1}, \eqref{mp0}, and $(u,\sigma) \in X_{2}(T,\ve,N_{1},N_{2},N_{3},N_{4})$ in showing the convergence. 
The first term on the left hand side is nonnegative, since $A^{0}$ is positive definite.
By using the boundary condition \eqref{mBD1} and setting
$\overline{H}_{2}:=(H_{2\delta'}^{0}+H_{2\delta'}^{1}+J_{\delta'} H_{2}) - (H_{2\delta''}^{0}+H_{2\delta''}^{1}+J_{\delta''} H_{2})$, the second term is estimated from below as follows: 
\begin{align*}
&\int_{0}^{T-\eta} \left\langle A^1 v, v \right\rangle (t,L_{2}) dt 
\\
&=\int_{0}^{T-\eta} \left\langle A^1(P_{2} +Q_{2} )  v, (P_{2} +Q_{2}) v \right\rangle (t,L_{2}) dt 
\\
&=\int_{0}^{T-\eta} \left\langle A^1 (P_{2} +Q_{2}M_{2} P_{2})   v, (P_{2} + Q_{2}M_{2} P_{2}) v \right\rangle (t,L_{2}) dt 
\\
&\quad  + 2\int_{0}^{T-\eta} \left\langle A^1 \overline{H}_{2}, (P_{2} + Q_{2}M_{2}P_{2}) v \right\rangle (t,L_{2}) dt 
+ \int_{0}^{T-\eta} \left\langle A^1 \overline{H}_{2}, \overline{H}_{2} \right\rangle (t,L_{2}) dt 
\\
& \geq \frac{c_{2}}{2} \int_{0}^{T-\eta} |P_{2} v |^{2 } (t,L_{2})  dt 
- C(c_{2}) \int_{0}^{T-\eta} |\overline{H}_{2} |^{2}(t,L_{2})  dt,
\end{align*}
where we have used $(P_{2} +Q_{2}M_{2} P_{2})   v \in \ker Q_{2}({\rm id}-M_{2}P_{2})$, the condition (vii)' in Assumption \ref{ABasp1}, and Schwarz's inequality in deriving the last inequality. 
We also notice that the rightmost converges to zero as $\delta',\delta'' \to 0$.
Similarly, setting
$\overline{H}_{1}:=(H_{1\delta'}^{0}+H_{1\delta'}^{1}+J_{\delta'} H_{1}) - (H_{1\delta''}^{0}+H_{1\delta''}^{1}+J_{\delta''} H_{1})$, we have
\begin{align*}
-\int_{0}^{T-\eta} \left\langle A^1v, v \right\rangle (t,L_{1}) dt 
\geq  \frac{c_{2}}{2} \int_{0}^{T-\eta} |P_{1} v |^{2} (t,L_{1}) dt 
 - C(c_{2}) \int_{0}^{T-\eta} |\overline{H}_{1}|^{2} (t,L_{1}) dt,
\end{align*}
where the rightmost converges to zero as $\delta',\delta'' \to 0$.
From these, we arrive at
\begin{gather*}
\| P_{i} (\D_{tt} J_{\delta'}\hat{u}-\D_{tt} J_{\delta''}\hat{u}) (\cdot,L_{i}) \|_{L^{2}(0,T-\eta)} \to 0 \quad \text{as $\delta',\delta'' \to 0$},
\end{gather*}
where $i=1,2$.
Furthermore, it is seen from \eqref{reg-1} and \eqref{mp0} that 
\begin{gather*}
\| P_{i} ( J_{\delta}\hat{u}- \hat{u}) (\cdot,L_{i}) \|_{H^{1}(0,T-\eta)} \to 0 \quad \text{as $\delta \to 0$}.
\end{gather*}
These two convergences imply that $P_{i} \hat{u}(\cdot, L_{i}) \in H^{2}(0,T-\eta)$.
Now \eqref{reg0} follows from \eqref{lBD0} and $Q_{i} = {\rm id} - P_{i}$. The proof is complete.
\end{proof}

\subsubsection{Invariant set}

From now on we show the following property 
that the set $X_{2}(T,\ve,N_{1},N_{2},N_{3},N_{4})$ is invariant under the mapping 
$A:(u,\sigma)\mapsto(\hat{u},\hat{\sigma})$, i.e.
$A(X_{2}(T,\ve,N_{1},N_{2},N_{3},N_{4})) \subset X_{2}(T,\ve,N_{1},N_{2},N_{3},$ $N_{4})$.
For this purpose, let us define the constants $K$ and $\tilde{K}$ by
\begin{align}
K&:=1+|L_{2}-L_{1}| + c_{0}^{-1} + c_{1}^{-1} + c_{2}^{-1} + \| (A^{0},(A^{0})^{-1},A^{1},B,F^{0})\|_{{\cal B}^2([0,T_0]\times \overline{I} \times \overline{B_{2\ve_{0}}(0)})} 
\notag \\
& \quad + \|F^{1}\|_{H^{2}((0,T_{0})\times I)}+ \sum_{i=1}^{2} \| (M_{i},\alpha_{i},\beta_{i})\|_{{\cal B}^2([0,T_0]\times \overline{B_{2\ve_{0}}(0)})},
\label{K1} \\
\tilde{K}&:=1+ \| (\D_{t} r ,  \D_{tx} r) \|^{2}_{L^{2}(I)}, \ \  r=r(t,x):= \left(\! \mbox{``}\D_{t} u (L_{1})\mbox{''} \frac{L_{2}-x}{L_{2}-L_{1}} + \mbox{``}\D_{t} u (L_{2})\mbox{''} \frac{x-L_{1}}{L_{2}-L_{1}} \! \right)  t ,
\label{tK1}
\end{align}
where $c_{0}$, $c_{1}$, $c_{2}$ are the same constants as in Assumption \ref{ABasp1},
and $B_{2\ve_{0}}(0)$ is defined just above Assumption \ref{ABasp1}.
We remark that $K$ and $\tilde{K}$ are independent of $T$, $\ve$, $N_{1}$, $N_{2}$, $N_{3}$, and $N_{4}$.
In addition, $\tilde{K}$ is bounded by some positive constant $C_{K,N}$  depending only on $K$ and $N$ if $\|u_{0}\|_{H^{2}}+|\sigma_{0}| \leq N$, where we have used the fact \footnote{We have used $\|F^{1}\|_{{\cal X}_{1}(T_{0})} \leq C \|F^{1}\|_{H^{2}((0,T_{0})\times I)}$, where ${\cal X}_{1}(T_{0}):=C([0,T_{0}];H^{1}(I)) \cap C^{1}([0,T_{0}];L^{2}(I))$.}that
 \begin{gather}\label{compa1}
 | (\mbox{``}\D_{t} u(L_{1})\mbox{''},\mbox{``}\D_{t} u(L_{2})\mbox{''}) | + |\mbox{``}\dot{\sigma}\mbox{''}| \leq C_{K,N}.
 \end{gather}
The main result in this subsection is summarized in the next proposition.

\begin{pro}\label{invariant1}
For any $N\geq 1$ and $\ve \in (0,\ve_{0}]$, there exist positive constants $N_{1}^{*}=N_{1}^{*}(N)$, \footnote{We use $N_{2}^{*} \geq \tilde{K}$ and $N_{3}^{*} \geq \tilde{K}$ to solve the nonlinear problem \eqref{gIBVP0}.}$N_{2}^{*}=N_{2}^{*}(N) \geq \tilde{K}$, $N_{3}^{*}=N_{3}^{*}(N)\geq \tilde{K}$, $N_{4}^{*}=N_{4}^{*}(N)$, $\tilde{T}=\tilde{T}(\ve,N)$ such that 
if the initial data $(u_{0},\sigma_{0}) \in  [H^{2}(I)]^{l} \times {\mathbb R}^{2}$
satisfies the compatibility conditions \eqref{compa0}, $\|u_{0}\|_{H^{2}(I)}+|\sigma_{0}| \leq N$, and 
\begin{align}
\sup_{x \in I} |u_{0}(x)|
+|(\sigma_{0},\mbox{``}\dot{\sigma}\mbox{''})|
 < \frac14 \min\left\{\ve, \sqrt{N_{3}^{*}} \right\},
\label{asmp_ini1}
\end{align}
and the functions $(u,\sigma)$ belongs to $X_{2}(T,\ve,N_{1}^{*},N_{2}^{*},N_{3}^{*},N_{4}^{*})$ for some $T  \in (0,\tilde{T}]$, 
then the problem \eqref{lIBVP0} has a unique solution 
$(\hat{u},\hat{\sigma}) \in X_{2}(T-\eta,\ve,N_{1}^{*},N_{2}^{*},N_{3}^{*},N_{4}^{*})$
for any $\eta \in (0,T)$.
\end{pro}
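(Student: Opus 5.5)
Given $(u,\sigma)\in X_2(T,\ve,N_1^*,N_2^*,N_3^*,N_4^*)$, Lemma \ref{lexist1} already provides the unique solution $(\hat u,\hat\sigma)$ of \eqref{lIBVP0} with the regularity \eqref{X2r} on $[0,T-\eta]$. The initial conditions \eqref{X2i} hold by construction. Since $u$ and $\hat u$ share the data and the compatibility conditions \eqref{compa0}, the traces $\partial_t\hat u(0,L_i)$ coincide with ``$\partial_t u(L_i)$''. Likewise $\dot{\hat\sigma}(0)$ equals ``$\dot\sigma$''. So the whole task is to fix the constants $N_1^*,\dots,N_4^*$ first and then choose $\tilde T$ small, so that the bounds \eqref{X2ep}--\eqref{X2N4} are reproduced for $(\hat u,\hat\sigma)$.

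\textbf{Step 1: the ODE.} Since $\beta_i(0,P_i0)=0$ and $|u|\le\ve$, Gronwall applied to \eqref{lODE0} gives $|\hat\sigma(t)|\le(|\sigma_0|+CKt\ve)e^{CKt}$. Using \eqref{lODE0} again, $|\dot{\hat\sigma}-``\dot\sigma"|\le CK(|\hat\sigma-\sigma_0|+|\partial_t u(\cdot,L_i)|\,t^{1/2}N_2^{1/2}+t)$, and by \eqref{asmp_ini1} this is below $\ve/2$ once $\tilde T$ is small. Differentiating the ODE once shows that $\ddot{\hat\sigma}$ is controlled by $|``\dot\sigma"|$ plus $\partial_tP_iu(t,L_i)$. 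For the latter I use the pointwise bound $|\partial_t u(t,L_i)|\le|``\partial_tu(L_i)"|+t^{1/2}N_3^{1/2}$, which follows from \eqref{X2N3}. This fixes $N_1^*=C(K,N)$. Differentiating twice, $\int|\dddot{\hat\sigma}|^2\le C(K,N)(1+N_1^*)+C\int|\partial_t^2P_iu(\cdot,L_i)|^2$, which is bounded by $C(1+N_3^*)$. This forces the order of choices: $N_4^*$ must be taken after $N_3^*$.

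\textbf{Step 2: energy estimates for $\hat u$.} I multiply \eqref{leq0}, and its $t$-derivatives of order one and two (mollified as in the proof of Lemma \ref{lexist1}), by $2\partial_t^k\hat u$ and integrate over $(0,t)\times I$. The boundary terms are handled exactly as in that proof: by (vii)$'$ they give $\frac{c_2}{2}\int|P_i\partial_t^k\hat u(\cdot,L_i)|^2$, minus the $L^2$ norms of the inhomogeneities $H_i$ produced by differentiating the nonlinear boundary condition. For $k=2$ these inhomogeneities involve $\partial_tM_i$, $\nabla M_i\cdot\partial_tP_iu$ and $\partial_\sigma M_i\,\dot\sigma$. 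Their time integrals over $[0,T]$ are bounded by $C(K)(N_2+N_1)T+C\ve^2\int|\partial_t^2P_iu|^2$, possibly after Sobolev-in-time embeddings. The interior commutator terms are bounded by $C(K,N_1,N_2,N_3)T^{1/2}$ times the energy. The initial energies $\|\partial_t^k\hat u(0)\|^2$ are computed from the equation and bounded by $C(K,N)$; here $\tilde K$ enters for $k=1$. Spatial derivatives are then recovered from time derivatives by inverting $A^1$ in the interior. Where $A^1$ is not invertible in the interior, I instead estimate $\partial_x^j\hat u$ directly by the differentiated energy identity, as in \eqref{eq561}, and integrate it against the same weights.

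This yields $\text{LHS of \eqref{X2N2}}\le C_0(K,N)+C(N_1,\dots,N_4)T$, and similarly for \eqref{X2N3}. I take $N_2^*,N_3^*\ge\max\{2C_0,\tilde K\}$ and then shrink $\tilde T$. Finally \eqref{X2ep} for $\hat u$ follows from $|\hat u(t)-u_0|\le C_s\|\hat u(t)-u_0\|_{H^1}\le C_s t^{1/2}\sup\|\partial_t\hat u\|_{H^1}$, together with \eqref{asmp_ini1}.

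\textbf{Main obstacle.} The hardest point is the top-order boundary term for $k=2$. There the boundary inhomogeneity contains $\partial_t^2P_iu(\cdot,L_i)$ itself, so it is not small in $T$. The plan is to absorb it by the smallness of $\ve$: its coefficient is $|P_iu|\,|\nabla^2M_i|$ or $\nabla M_i\cdot P_i\partial_t^2u\,P_iu$, which is $O(\ve)$. This gives a contribution $C K\ve N_3^*$, which is at most $N_3^*/4$ once $\ve\le\ve_0$ is small; if necessary I would shrink $\ve_0$ depending only on $K$. I would try this first, checking carefully that every such term carries a factor $P_iu$ or $\sigma_i$, so that it is indeed $O(\ve)$.
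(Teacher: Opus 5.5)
Your overall scheme matches the paper's: ODE bounds with $N_4^*$ fixed after $N_3^*$, $\D_t^k$-energy estimates with boundary terms controlled by (vii)$'$, initial energies from the equation, recovery of spatial derivatives, and the $L^\infty$ bound by time integration. But the step you single out as the main obstacle rests on confusing $u$ with $\hat u$. In \eqref{lBD0}, $M_i$ is evaluated at the given $P_iu$ but acts on the unknown $P_i\hat u$. So at $x=L_i$
\[
Q_i(\mathrm{id}-M_iP_i)\D_t^2\hat u = Q_i\bigl(2(\D_tM_i)P_i\D_t\hat u+(\D_t^2M_i)P_i\hat u\bigr),
\]
and the top-order piece is $(\nabla_{P_iu}M_i\cdot\D_t^2P_iu)\,P_i\hat u$. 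Its coefficient $\nabla_{P_iu}M_i$ is of size $K$, not $O(\ve$); the only small factor is $P_i\hat u(t,L_i)$. The same confusion appears in your bound $C(K)(N_1+N_2)T$ for $(\D_tM_i)P_i\D_t\hat u$, which involves the trace of $\D_t\hat u$, not of $\D_t u$.

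In your plan, the only $L^\infty$ control of $\hat u$ comes at the very end, from $\sup_t\|\D_t\hat u\|_{H^1}$. That is part of the second-order bound you are trying to close, so the claimed $O(\ve)$ smallness is circular as organized. The missing step is the paper's. First close the first-order estimate \eqref{ues4}, which contains $\int_0^T|\D_tP_i\hat u(t,L_i)|^2dt\le D^\dagger_{K,N}$. This gives $\sup_t|P_i\hat u(t,L_i)|\le|P_iu_0(L_i)|+C_{K,N}\sqrt T$, with a constant independent of $N_3$.

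Once this is in place, the two routes genuinely differ. The paper assumes $|P_iu_0(L_i)|\le(N_3^*)^{-1/2}$. That is the purpose of the $N_3^*$-entry in \eqref{asmp_ini1}, which should be read as $1/\sqrt{N_3^*}$; as printed, $\min\{\ve,\sqrt{N_3^*}\}=\ve$. With this, the boundary contribution $C_{K,N}N_3\sup_t|P_i\hat u(t,L_i)|^2$ in \eqref{ues2} is at most $C_{K,N}+\tilde C T$, an $O(1)$ quantity. Hence $N_3^*=D^\ddagger_{K,N}(N_1^*+N_2^*+1)^3$ is fixed with no restriction on $\ve$.

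You instead use $|P_i\hat u|\le\ve/2$ and absorb a term of size $C\ve^2N_3^*$ into $N_3^*$. This works only for $\ve\le c(K)$, so it proves the proposition for $\ve\in(0,\min\{\ve_0,c(K)\}]$, not for every $\ve\in(0,\ve_0]$. That is harmless for Theorem \ref{local}, where $\ve^*\le(2K)^{-4}$ anyway. You must then check that recovering $\D_{xt}\hat u$ and $\D_{xx}\hat u$ near the boundary multiplies $\ve^2N_3^*$ only by $K$-dependent constants; otherwise $\ve$ would have to depend on $N$. For example, the term $(\|\D_xu\|^2+1)\|\chi\D_t\hat u\|_{L^\infty}^2$ in \eqref{ues12} must be split by Gagliardo--Nirenberg and Young so that $N_2^*$ does not end up in front of $\ve^2N_3^*$.

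Two smaller corrections:
\begin{itemize}
\item $\|\D_t^2\hat u(0)\|_{L^2}^2$ is bounded by $C_{K,N}(N_1+N_2+1)$, not $C(K,N)$, so $N_3^*$ must be chosen after $N_1^*$ and $N_2^*$.
\item $A^1$ is only guaranteed invertible near $L_i$ (condition (v) of Assumption \ref{ABasp1}). So the direct $\D_x^j$ energy estimate is what you need in the interior, and it must carry a cutoff vanishing at the boundary, such as $(1-\chi)^2$ with $\chi$ from \eqref{chi1}, because $\D_x^j\hat u$ satisfies no boundary condition.
\end{itemize}
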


We first derive the estimates of $\hat{\sigma}$ in order to find suitable $N_{1}^{*}$ and $N_{4}^{*}$.

\begin{lem}
Let $(\hat{u},\hat{\sigma})$ be the solution in Lemma \ref{lexist1}. For any $N \geq 1$, 
there exists a positive constant $T_{1}=T_{1}(\ve,N,N_{1},N_{2},N_{3},N_{4})$ such that if 
$\|u_{0}\|_{H^{2}(I)}+|\sigma_{0}| \leq N$ and $0<T \leq \min\{1,T_{1}\}$, then
\begin{gather}
\sum_{k=0}^{1}\sum_{i=1}^{2} \sup_{t \in [0,T]} |\D_{t}^{k} \hat{\sigma}_{i} (t)|^{2} 
\leq |\sigma_{0}|^{2}+ |\mbox{``}\dot{\sigma}\mbox{''}|^{2} +\frac{\ve^{2}}{16},
\label{essig1}\\
\sum_{i=1}^{2} \sup_{t \in [0,T]} |\D_{t}^{2} \hat{\sigma}_{i} (t)|^{2}
\leq D_{K,N}^{\natural},
\label{essig2}\\
\sum_{i=1}^{2} \int_{0}^{T} |\D_{t}^{3} \hat{\sigma}_{i} (t)|^{2} ds \leq D_{K,N}^{\#} N_{3},
\label{essig3}
\end{gather}
where $D_{K,N}^{\natural},D_{K,N}^{\#} \geq 1$ are constants depending only on $K$ and $N$. 
\end{lem}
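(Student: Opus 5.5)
The ODE \eqref{lODE0} for $\hat\sigma_i$ is linear and scalar, and its coefficients $a_i(t):=\alpha_i(t,P_iu(t,L_i),\sigma_i(t))$ and $b_i(t):=\beta_i(t,P_iu(t,L_i))$ are given functions of $t$. I would solve it explicitly by Duhamel's formula and then read off each estimate by differentiating the equation and bounding the coefficients through the membership $(u,\sigma)\in X_2(T,\ve,N_1,N_2,N_3,N_4)$ and the constant $K$ in \eqref{K1}.

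\emph{Bound \eqref{essig1}.} Since $|(u,\sigma,\dot\sigma)|\le\ve\le\ve_0$, \eqref{X2ep} shows that the arguments of $\alpha_i,\beta_i$ lie in $\overline{B_{2\ve_0}(0)}$. Hence $|a_i|,|b_i|\le K$. From Duhamel's formula I get
\[
|\hat\sigma_i(t)-\sigma_{i0}|\le (e^{KT}-1)|\sigma_{i0}|+TKe^{KT}\le C_K N T.
\]
For the first derivative I would write
\[
\dot{\hat\sigma}_i(t)-\mbox{``}\dot\sigma_i\mbox{''}=\int_0^t \frac{d}{d\tau}\dot{\hat\sigma}_i\,d\tau .
\]
This is legitimate because \eqref{X2i} gives $\dot{\hat\sigma}_i(0)=\mbox{``}\dot\sigma_i\mbox{''}$ exactly, by the definition of $\mbox{``}\dot\sigma_i\mbox{''}$ and by $P_iu(0,L_i)=P_iu_0(L_i)$. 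Here $\ddot{\hat\sigma}_i=\dot a_i\hat\sigma_i+a_i\dot{\hat\sigma}_i+\dot b_i$, and by the chain rule
\[
\dot a_i=\partial_t\alpha_i+\nabla\alpha_i\cdot(P_i\partial_tu(t,L_i),\dot\sigma_i).
\]
The factor $\partial_tu(\cdot,L_i)$ is controlled only in $L^2(0,T)$, through \eqref{X2N2}. So I would estimate
\[
\int_0^t|\ddot{\hat\sigma}_i|\le C_K\bigl(T+\sqrt{T}\sqrt{N_2}\bigr)(1+\sup|\hat\sigma_i|+\sup|\dot{\hat\sigma}_i|).
\]
Using $(a+b)^2\le a^2+b^2+2ab$ and choosing $T_1$ small depending on $\ve,N,N_2$, the cross and remainder terms are absorbed into $\ve^2/16$.

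\emph{Bound \eqref{essig2}.} This is the main obstacle, because a pointwise bound on $\ddot{\hat\sigma}$ seems to need $\partial_t u(t,L_i)$ pointwise. I would handle it as follows. Note $\partial_tu\in C([0,T];H^1(I))$ by \eqref{X2r}, so Sobolev's inequality gives
\[
\sup_t|\partial_tu(t,L_i)|\le C_s\sup_t\|\partial_tu\|_{H^1}.
\]
That is bounded by $C(N_2+N_3)$, which is not uniform in the $N$'s. To get a constant depending only on $K,N$, I would instead use the Newton–Leibniz argument again:
\[
\partial_tu(t,L_i)=\mbox{``}\partial_tu(L_i)\mbox{''}+\int_0^t\partial_t^2u(\tau,L_i)\,d\tau .
\]
The $P_i$ component of the integrand is bounded by $\sqrt{T N_3}$ via \eqref{X2N3}. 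Together with \eqref{compa1} this gives $|P_i\partial_tu(t,L_i)|\le C_{K,N}+\sqrt{TN_3}\le C_{K,N}+1$ after shrinking $T_1$. Since $\alpha_i,\beta_i$ depend only on $P_iu$, this suffices, and it yields $|\ddot{\hat\sigma}_i|\le D^\natural_{K,N}$.

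\emph{Bound \eqref{essig3}.} Differentiating once more,
\[
\dddot{\hat\sigma}_i=\ddot a_i\hat\sigma_i+2\dot a_i\dot{\hat\sigma}_i+a_i\ddot{\hat\sigma}_i+\ddot b_i .
\]
The terms $\ddot a_i$ and $\ddot b_i$ contain $\nabla\alpha_i\cdot P_i\partial_t^2u(t,L_i)$, which is square-integrable with bound $N_3$ by \eqref{X2N3}. They also contain quadratic terms in $(P_i\partial_tu,\dot\sigma)$, bounded pointwise as in the previous step, and $\ddot\sigma_i$, bounded by $N_1$, times $T$. Choosing $T_1$ so that $TN_1\le1$ makes these lower order. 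Using $N_3\ge1$, this gives $\int_0^T|\dddot{\hat\sigma}_i|^2\le D^\#_{K,N}N_3$.
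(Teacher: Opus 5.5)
Your proof is correct, but for the key $N$-independent bound \eqref{essig2} it takes a different route from the paper's.

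The paper uses the identity $|\partial_t^k\hat\sigma_i(t)|^2=|\partial_t^k\hat\sigma_i(0)|^2+2\int_0^t\partial_t^{k+1}\hat\sigma_i\,\partial_t^k\hat\sigma_i\,d\tau$. For $k=0,1$ it bounds the integrand by a constant $\tilde C$ that may depend on all the $N_j$, so a pointwise Sobolev bound on $\partial_tu(t,L_i)$ costs nothing there. For $k=2$ the integrand contains $\partial_t^2P_iu(\cdot,L_i)$, which Cauchy--Schwarz and \eqref{X2N3} turn into $\tilde C\sqrt T$. The only $O(1)$ term left is then $|\ddot{\hat\sigma}_i(0)|^2\le C_{K,N}$, computed from the ODE at $t=0$ via \eqref{compa1}.

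You instead apply the fundamental theorem of calculus one level lower, to the input trace $P_i\partial_tu(\cdot,L_i)$, anchored at its compatibility value from \eqref{X2i}. This gives the uniform pointwise bound $|P_i\partial_tu(t,L_i)|\le C_{K,N}+1$, and \eqref{essig2} then follows directly from the once-differentiated ODE.

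Both arguments rest on the same two ingredients: the data at $t=0$, and the $L^2$-trace bound in \eqref{X2N3} paid for with a factor $\sqrt T$. Your version has two side benefits:
\begin{itemize}
\item the lower-order terms in \eqref{essig3} carry constants depending only on $K$ and $N$, together with $TN_1\le 1$, instead of being absorbed as $\tilde C T$;
\item your proof of \eqref{essig1} needs only the $L^2$ trace bound \eqref{X2N2}.
\end{itemize}
The paper's version, in turn, never has to establish an $N$-independent pointwise bound on $P_i\partial_tu(t,L_i)$.
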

\begin{proof}
Applying $\D_{t}^{k}$ to \eqref{lODE0} for $k\leq 1$, multiplying it by $2\D_{t}^{k} \hat{\sigma}_{i}$, 
and integrating it over $[0,T]$, we have
\begin{align*}
|\D_{t}^{k}\hat{\sigma}_{i} (t)|^{2} &= |\D_{t}^{k}\hat{\sigma}_{i} (0)|^{2}
+ 2 \int_{0}^{t} \left[ \D_{t}^{k} \left\{\alpha_{i}(\tau,P_{i} u(\tau, L_{i}),\sigma_{i}) \hat{\sigma}_{i} + \beta_{i}(\tau,P_{i} u(\tau,L_{i})) \right\}\right]\D_{t}^{k}\hat{\sigma}_{i} (\tau) d\tau
\\
& \leq  |\D_{t}^{k}\hat{\sigma}_{i} (0)|^{2} + \tilde{C}T 
+\tilde{C}T \sum_{k=0,1} \sup_{t \in [0,T]}   |\D_{t}^{k}\hat{\sigma}_{i} (t)|^{2}
\end{align*}
for $t \in [0,T]$,
where we have also used \eqref{K1} and $(u,\sigma) \in X_{2}(T,\ve,N_{1},N_{2},N_{3},N_{4})$, and the positive constant $\tilde{C}$ depends on $K$, $N$, $N_{1}$, $N_{2}$, $N_{3}$, and $N_{4}$ but independent of $\ve$ and $T$.
The first term of the right hand side can be written as
\begin{gather*}
|\hat{\sigma}_{i} (0)|^{2} = |\sigma_{i0}|^{2}, 
\quad 
|\D_{t}\hat{\sigma}_{i} (0)|^{2} = |\alpha_{i} (0,P_{i} u_{0}(L_{i}),\sigma_{i0}) {\sigma}_{i0} + \beta_{i} (0,P_{i} u_{0}(L_{i}))|^{2} = |\mbox{``}\dot{\sigma}_{i}\mbox{''}|^{2}.
\end{gather*}
Using these and taking $T_{1}$ small,  we conclude \eqref{essig1}.

Similarly as above, we observe that
\begin{align*}
|\D_{t}^{2}\hat{\sigma}_{i} (t)|^{2} &= |\D_{t}^{2}\hat{\sigma}_{i} (0)|^{2}
+ 2\int_{0}^{t} \left[\D_{t}^{2} \left\{ \alpha_{i}(\tau,P_{i} u(\tau, L_{i}),\sigma_{i}) \hat{\sigma}_{i} + \beta_{i}(\tau,P_{i} u(\tau,L_{i})) \right\} \right]\D_{t}^{2}\hat{\sigma}_{i} (\tau) d\tau
\\
& \leq  |\D_{t}^{2} \hat{\sigma}_{i} (0)|^{2} + \tilde{C}\sqrt{T} 
+\tilde{C}\sqrt{T} \sum_{k=0}^{2} \ \sup_{t \in [0,T]}   |\D_{t}^{k}\hat{\sigma}_{i} (t)|^{2}.
\end{align*}
The first term of the right hand side can be estimated from above as
\begin{align*}
|\D_{t}^{2} \hat{\sigma}_{i} (0)|^{2} 
&= |\D_{t} \{ \alpha_{i} (t,P_{i} u(t,L_{i}),\sigma_{i}) \hat{\sigma}_{i} + \beta_{i}(t,P_{i} u(t,L_{i}))\} |_{t=0} |^{2}
\\
&\leq C_{K,N} \left\{1+ | (\mbox{``}\D_{t} u(L_{1})\mbox{''},\mbox{``}\D_{t} u(L_{2})\mbox{''}) | +|\mbox{``}\dot{\sigma}\mbox{''}| \right\}^{2} |\sigma_{0}|^{2} 
+C_{K,N} |\mbox{``}\dot{\sigma}\mbox{''}|^{2} 
\\
&\quad +C_{K,N} \{1+ | (\mbox{``}\D_{t} u(L_{1})\mbox{''},\mbox{``}\D_{t} u(L_{2})\mbox{''} ) | \}^{2}
\\
& \leq C_{K,N},
\end{align*}
where we have also used \eqref{compa1}, and $C_{K,N}$ is a positive constant depending only on $K$ and $N$. 
Consequently, we obtain \eqref{essig2} by taking $T_{1}$ small.

Applying $\D_{t}^{2}$ to \eqref{lODE0} and taking the $L^{2}$ norm, we have
\begin{align*}
\int_{0}^{T} |\D_{t}^{3} \hat{\sigma}_{i}(t)|^{2} dt 
&\leq C_{K,N} \sup_{t \in [0,T]} |\hat{\sigma}_{i} (t)|^{2} \int_{0}^{T} |\D_{t}^{2}  P_{i} u(t,L_{i})|^{2} dt
+\tilde{C}T\sum_{k=0}^{2} \sup_{t \in [0,T]} |\D_{t}^{k} \hat{\sigma}_{i} (t)|^{2} 
\\
&\quad +C_{K,N} \int_{0}^{T} |\D_{t}^{2}  P_{i} u(t,L_{i})|^{2} dt +\tilde{C}T
\\
& \leq C_{K,N}N_{3}+\tilde{C}T,
\end{align*}
where we have also used \eqref{compa1} and \eqref{essig1}--\eqref{essig2} with $\ve \leq \ve_{0} \leq 1$ in deriving the last inequality.
Consequently, we obtain \eqref{essig3} by taking $T_{1}$ small.
\end{proof}


Next we derive the estimates of $\hat{u}$ in order to find suitable $N_{2}^{*}$ and $N_{3}^{*}$.
Owing to the condition (v) in Assumption \ref{ABasp1}, we can find some $\theta>0$ such that
\begin{gather}\label{detA1}
|\det A^1| \geq \frac{c_1}{2}, \quad  (t,x,u,\sigma,\dot{\sigma}) \in [0,T_0]\times ([L_{1},L_{1}+\theta] \cup [L_{2}-\theta,L_{2}] ) \times  \overline{B_{2\ve_{0}}(0)},
\end{gather}
which means that the inverse of $A^{1}$ exists around the boundaries.
We first consider the interior estimates.
To do so, we employ a cut-off function $\chi \in C_{0}^{\infty}(I)$ with $0 \leq \chi(x) \leq 1$ and
\begin{gather}\label{chi1}
\chi(x) = \left\{
\begin{array}{ll}
1 & \text{if } L_{1} \leq x \leq  L_{1}+\theta/2,
\\
0 & \text{if } L_{1}+\theta \leq x \leq L_{2}-\theta,
\\
1 &  \text{if } L_{2}-\theta/2 \leq x \leq L_{2}.
\end{array}
\right.
\end{gather}

\begin{lem}
Let $N \geq 1$. If $\|u_{0}\|_{H^{2}(I)}+|\sigma_{0}| \leq N$,
the solution $(\hat{u},\hat{\sigma})$ in Lemma \ref{lexist1} satisfies the following inequality:
\begin{align}
&\sup_{t \in [0,T]} \|(1-\chi)\D_{x}^{j}\hat{u}(t)\|^{2}_{L^{2}(I)}
\notag \\
&\leq C_{K, N} + \tilde{C} \sqrt{T} \sum_{m=0}^{j} \sup_{t \in [0,T]}\|\D_{t}^{m}\hat{u}(t)\|_{H^{j-m}(I)}^{2}
+\tilde{C} \sqrt{T}, \quad j=0,1,2,
\label{ues0}
\end{align}
where the positive constant $C_{K,N}$ depends only on $K$ and $N$;
the positive constant $\tilde{C}$ depends on $K$, $N$, $N_{1}$, $N_{2}$, $N_{3}$, and $N_{4}$ but independent of $\ve$ and $T$.
\end{lem}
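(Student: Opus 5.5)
Since $1-\chi$ vanishes near both endpoints of $I$, the estimate \eqref{ues0} is a purely interior energy estimate: no boundary condition enters. I will use the symmetric structure of \eqref{leq0} together with the positive definiteness of $A^0$.

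\emph{Case $j=0$.} Multiply \eqref{leq0} by $2(1-\chi)^2\hat u$ and integrate over $(0,t)\times I$. Since $A^0,A^1$ are symmetric and $1-\chi$ has compact support in $I$, integration by parts gives
\begin{gather*}
\int_I (1-\chi)^2\langle A^0\hat u,\hat u\rangle(t)\,dx=\int_I(1-\chi)^2\langle A^0 u_0,u_0\rangle dx+\int_0^t\!\!\int_I \Big[(1-\chi)^2\langle(\D_tA^0+\D_xA^1-2B)\hat u,\hat u\rangle
\\
-2(1-\chi)\chi_x\langle A^1\hat u,\hat u\rangle+2(1-\chi)^2\langle F^0+F^1,\hat u\rangle\Big]dx\,d\tau .
\end{gather*}
The first term on the right is bounded by $C_{K,N}$. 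The coefficients $\D_tA^0$, $\D_xA^1$ and $B$ are evaluated along $(u,\sigma,\dot\sigma)\in X_2$. By the chain rule and Sobolev's inequality they are bounded in $L^\infty$ by a constant $\tilde C$ depending on $K$ and the $N_i$. The space-time integral is therefore at most $\tilde C T\sup_t\|\hat u\|_{L^2}^2+\tilde C T$. Since $T\le1$, this is bounded by $\tilde C\sqrt T(\cdots)$. Coercivity, with $c_0$ from Assumption \ref{ABasp1}(iv), then gives \eqref{ues0} for $j=0$.

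\emph{Cases $j=1,2$.} I will not estimate $\D_x^j\hat u$ directly, since $A^1$ may degenerate in the interior. Instead I apply $\D_t^m$ to \eqref{leq0} and repeat the weighted energy argument for $\D_t^m\hat u$, $m\le j$. The commutator terms contain at most $j$ derivatives of $\hat u$ multiplied by derivatives of the coefficients. I bound these using the norms in \eqref{X2N2}--\eqref{X2N4}. Terms such as $(\D_t^2A^0)\D_t\hat u$ involve $\D_t^2u\in L^2_tL^2_x$ or $\D_t^3\sigma\in L^2_t$, so they must be handled with Hölder in time. This is where the factor $\sqrt T$ arises rather than $T$, for example $\int_0^T\|\D_t^2u\|\,\|\D_t\hat u\|_{L^\infty}\|\cdot\|\le\sqrt T\,(\cdots)$.

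The initial values $\D_t^m\hat u(0)$ are given through the equation in terms of $u_0$ and the compatibility data. By \eqref{compa1} they are bounded by $C_{K,N}$. The remaining task is to convert $(1-\chi)\D_x^j\hat u$ into the quantities on the right-hand side. A rough route is the fundamental theorem of calculus in time: $\D_x^j\hat u(t)=\D_x^ju_0+\int_0^t\D_t\D_x^j\hat u\,d\tau$. This gives
\[
\|(1-\chi)\D_x^j\hat u(t)\|_{L^2}^2\le 2\|u_0\|_{H^2}^2+2T\int_0^T\|\D_t\hat u\|_{H^{j}}^2.
\]
Here $\|\D_t\hat u\|_{H^j}$ has total order $j+1$, which exceeds the order controlled on the right-hand side of \eqref{ues0} when $j=2$. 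So for $j=2$ I will instead use the direct weighted energy identity for $\D_x^2\hat u$ obtained by applying $\D_x^2$ to \eqref{leq0}. The cutoff keeps all boundary terms away.

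In that identity the commutator $[\D_x^2,A^0\D_t]\hat u$ produces $\D_xA^0\,\D_x\D_t\hat u$, which is of order $j$. It is controlled by $\sum_m\sup_t\|\D_t^m\hat u\|_{H^{j-m}}^2$ with a factor $T$. The terms from $A^1$ produce $\D_x^2 A^1\D_x\hat u$, again of order at most $j$, plus the $\chi_x$ term. After time integration every error term carries at least $\sqrt T$, leaving only the initial value $\|\D_x^ju_0\|^2\le C_{K,N}$ unmultiplied.

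\textbf{Main obstacle.} The hardest step is $j=2$. The coefficients then involve $\D_x^2u$ and $\D_xD_tu$, which are only $L^2$ in space. Their products with $\D_x^2\hat u$ must be bounded via $L^\infty$ of the lower-order factor (Sobolev inequality, $\tilde C_s$) times $L^2$ norms of the top-order factor. The bookkeeping must ensure every such product carries $\sqrt T$ or $T$ and no constant depending on the $N_i$ appears without a small factor.
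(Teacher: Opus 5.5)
\textbf{The gap is in the case $j=1$.} Your $j=0$ computation, and the direct identity for $\partial_x^2\hat u$ that you fall back on for $j=2$, are exactly the paper's proof. The paper applies $\partial_x^j$ to \eqref{leq0}, writes the right-hand side as $G_j=\partial_x^j(F^0+F^1-B\hat u)+[A^0,\partial_x^j]\partial_t\hat u+[A^1,\partial_x^j]\partial_x\hat u$, multiplies by $2(1-\chi)^2\partial_x^j\hat u$, and uses $1-\chi=0$ near $L_1,L_2$ together with the coercivity of $A^0$. For $j=1$, however, what you propose does not yield \eqref{ues0}.

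Your reason for avoiding the direct estimate is backwards. The identity for $\partial_x^j\hat u$ never inverts $A^1$. It uses only the symmetry of $A^0,A^1$ and the coercivity in Assumption \ref{ABasp1}(iv). $A^1$ enters only through the bounded terms $\partial_x\bigl((1-\chi)^2A^1\bigr)$ and $[A^1,\partial_x^j]\partial_x\hat u$. Interior degeneracy of $A^1$ is instead what makes your $\partial_t^m$ detour useless. Energy bounds for $\partial_t^m\hat u$ cannot be converted into bounds for $\partial_x^j\hat u$ where $A^1$ is singular. That conversion through $(A^1)^{-1}$ is done only near the boundary, via \eqref{detA1}, which is why the cutoff $\chi$ is introduced at all.

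The fundamental-theorem-of-calculus fallback fails for $j=1$ as well as for $j=2$. It leaves $T^2\sup_t\|\partial_t\hat u\|_{H^1}^2$, a second-order quantity. But \eqref{ues0} with $j=1$ allows only $\sup_t\|\hat u\|_{H^1}^2+\sup_t\|\partial_t\hat u\|_{L^2}^2$ on the right. This matters downstream: the $j=1$ case feeds \eqref{ues4}, where the first-order bound must close with a constant independent of $N_1,\dots,N_4$.

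Your claim that $\partial_t^m\hat u(0)$ is bounded by $C_{K,N}$ is also false for $m=2$. The proof of Lemma \ref{DK1} only gives $\|\partial_t^2\hat u(0)\|_{L^2}^2\le C_{K,N}(N_1+N_2+1)$. That route would therefore produce an $N_i$-dependent constant not multiplied by $\sqrt T$, which \eqref{ues0} forbids.

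The fix is to drop the $\partial_t^m$ detour and run the direct $\partial_x$-identity for $j=1$ as well. All coefficient factors in $G_j$ and in $\partial_tA^0$ are bounded uniformly in time. For $j=2$, pair $\partial_x^2u\in C([0,T];L^2)$ with Sobolev bounds on $\partial_t\hat u$ and $\partial_x\hat u$. These terms contribute a factor $T$. The $\sqrt T$ comes only from H\"older in time on the $\partial_x^jF^1$ term, which for $j=2$ is merely $L^2$ in $(t,x)$. The initial term is bounded by $K\|u_0\|_{H^2}^2\le KN^2$.
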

\begin{proof}
Applying $\D_{x}^{j}$ to \eqref{leq0} for $j=0,1,2$, we have the equation
\begin{align}
A^0\D_{t} \D_{x}^{j} \hat{u}
+A^1 \D_{x} \D_{x}^{j} \hat{u}
=G_{j}:=\D_{x}^{j} (F^{0}+F^{1}-B\hat{u})
+[A^{0},\D_{x}^{j}] \D_{t} \hat{u}
+[A^1,\D_{x}^{j}]\D_x \hat{u}.
\label{ueq1}
\end{align}
Then we multiply \eqref{ueq1} by $2(1-\chi)^{2} \D_{x}^{j} \hat{u}$,
integrate it over $[0,t]\times I$ for $t \in [0,T]$, and use $(1-\chi)(L_{i})=0$ for $i=1,2$ to obtain
\begin{align}
&\int_{I} \left\langle A^{0} \left((1-\chi) \D_{x}^{j} \hat{u}\right), (1-\chi) \D_{x}^{j} \hat{u}\right\rangle (t,x) dx
\notag \\
&=
\int_{I} \left\langle A^{0} \left((1-\chi) \D_{x}^{j} \hat{u}\right), (1-\chi) \D_{x}^{j} \hat{u}\right\rangle (0,x) dx
\notag \\
&\quad + \int_{0}^{t} \!\! \int_{I}  \left\langle  (1-\chi)^{2} (\D_{t} A^{0}) \D_{x}^{j} \hat{u} + \D_{x} ((1-\chi)^{2} A^1) \D_{x}^{j} \hat{u}+2 (1-\chi)^{2}G_{j}, \D_{x}^{j} \hat{u}\right\rangle (\tau,x)  dxd\tau 
\notag \\
&\leq K N^{2} +\tilde{C} \sqrt{T} \sum_{m=0}^{j} \sup_{t \in [0,T]}\|\D_{t}^{m}\hat{u}(t)\|_{H^{j-m}(I)}^{2} +\tilde{C}\sqrt{T},
\label{ues1}
\end{align}
where we have also used \eqref{K1}, $(u,\sigma) \in X_{2}(T,\ve,N_{1},N_{2},N_{3},N_{4})$, and $\|u_{0}\|_{H^{2}(I)} \leq N$ in deriving the last inequality.
The first term on the left hand side is estimated by using the condition (iv) in Assumption \ref{ABasp1} as follows:
\begin{align*}
\int_{I} \left\langle A^{0} \left( (1-\chi) \D_{x}^{j} \hat{u} \right), (1-\chi) \D_{x}^{j} \hat{u}\right\rangle (t,x) dx
&\geq c_{0} \|(1-\chi) \D_{x}^{j} \hat{u}(t)\|^{2}_{L^{2}(I)}
\\
&\geq \frac1K \|(1-\chi) \D_{x}^{j} \hat{u}(t)\|^{2}_{L^{2}(I)}.
\end{align*}
Substituting this into \eqref{ues1} yields \eqref{ues0}.
\end{proof}

Next we consider the estimates of time-derivatives.

\begin{lem}\label{DK1}
Let $N \geq 1$. If $\|u_{0}\|_{H^{2}}+|\sigma_{0}| \leq N$, the solution $(\hat{u},\hat{\sigma})$  in Lemma \ref{lexist1} satisfies the following inequality:
\begin{align}
&\sup_{t \in [0,T]} \| \D_{t}^{k}\hat{u}(t)\|^{2}_{L^{2}(I)} 
+ \sum_{i=1}^{2} \int_{0}^{T} | \D_{t}^{k} P_{i} \hat{u}(t,L_{i}) |^{2} dt 
\notag \\
& \leq C_{K,N}
+\max\{0,k-1\} \left(C_{K,N}(N_{1}+N_{2}) + C_{K,N} N_{3} \sum_{i=1}^{2} \sup_{t \in [0,T]}|P_{i} \hat{u}|^{2}(t,L_{i}) \right)
\notag \\
& \quad +\tilde{C} \sqrt{T} \sum_{m=0}^{k} \sup_{t \in [0,T]}\|\D_{t}^{m}\hat{u}(t)\|_{H^{k-m}(I)}^{2} + \tilde{C} \sqrt{T}, \quad k=0,1,2,
\label{ues2}
\end{align}
where the positive constant $C_{K,N}$ depends only on $K$ and $N$, and
the positive constant $\tilde{C}$ depends on $K$, $N$, $N_{1}$, $N_{2}$, $N_{3}$, and $N_{4}$ but independent of $\ve$ and $T$.
\end{lem}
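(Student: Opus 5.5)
We prove \eqref{ues2} by a standard energy estimate on the $t$-differentiated linearized equation. The difference from the preceding lemma is that there is no cut-off now: the boundary terms at $x=L_1,L_2$ must be controlled through the nonlinear boundary condition \eqref{lBD0} together with Assumption \ref{ABasp1}\,(vii)'. For $k=0,1$ we work with $\D_t^k\hat u$ directly. For $k=2$ we work with $\D_{tt}J_\delta\hat u$ via \eqref{meq1} and \eqref{mBD1}, and afterwards let $\delta\to0$ using \eqref{mp0} and Lemma \ref{lexist1}, which justifies the trace of $P_i\D_t^2\hat u$.

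Concretely, we apply $\D_t^k$ to \eqref{leq0} to get
\[
A^0\D_t(\D_t^k\hat u)+A^1\D_x(\D_t^k\hat u)=\tilde G_k ,
\]
where $\tilde G_k$ collects commutators $[A^0,\D_t^k]\D_t\hat u$, $[A^1,\D_t^k]\D_x\hat u$ and $\D_t^k(F^0+F^1-B\hat u)$. We multiply by $2\D_t^k\hat u$ and integrate over $(0,t)\times I$. The initial term $\|\D_t^k\hat u(0)\|_{L^2}^2$ is expressed through $u_0$ and the equation, as in the definition of ``$\D_t u$''; it is bounded by $C_{K,N}$, using \eqref{compa1} and, for $k=2$, one more differentiation of the equation at $t=0$. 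The interior terms carry $\D_tA^0,\D_xA^1$ and $\tilde G_k$. They are bounded by $\tilde C\sqrt T(\sup\|\D_t^m\hat u\|_{H^{k-m}}^2+1)$, using $(u,\sigma)\in X_2$, Sobolev embedding and Hölder in time. For $k=2$ the factor $\D_t^2u\in L^\infty_tL^2_x$ enters only multiplied by $\D_t\hat u$, so we still get a $\sqrt T$ smallness after Hölder.

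The main work is the boundary terms. We decompose $\D_t^k\hat u=(P_i+Q_iM_iP_i)\D_t^k\hat u+h_i^k$ at $L_i$. Here $h_i^k$ comes from differentiating \eqref{lBD0}: $h_i^0=0$, and $h_i^1=Q_i(\D_tM_i)P_i\hat u$, which is bounded by $C_{K,N}(1+|\D_tP_iu|+|\dot\sigma|)|P_i\hat u|$. For $k=2$, $h_i^2$ contains $Q_i(\D_t^2M_i)P_i\hat u$, whose worst parts are $(\nabla M_i)\D_t^2P_iu\,P_i\hat u$ and $\D_\sigma M_i\,\ddot\sigma\,P_i\hat u$. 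There are also cross terms $(\D_tM_i)P_i\D_t\hat u$.

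By (vii)' the principal part gives $\ge c_2|P_i\D_t^k\hat u|^2$. The cross terms are handled with Schwarz, and $|h_i^k|^2$ remains. For $k=1$, $\int|h_i^1|^2$ is bounded by $C_{K,N}\sup|P_i\hat u|^2(N_2+T)$. This is acceptable once we bound $\sup|P_i\hat u(\cdot,L_i)|$ by the $k=0$ case plus the $\sqrt T$ terms through Sobolev embedding; it should produce at most $C_{K,N}$ plus a small remainder. For $k=2$, $\int|h_i^2|^2$ produces exactly $C_{K,N}N_3\sup|P_i\hat u|^2$ from $\D_t^2P_iu$, a term $C_{K,N}N_1$ from $\ddot\sigma$, and $C_{K,N}N_2$ from products of first derivatives. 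We must also absorb $\int|(\D_tM_i)P_i\D_t\hat u|^2$. This term involves $|\D_tP_iu|^2|P_i\D_t\hat u|^2$. We bound $|P_i\D_t\hat u|$ in $L^\infty_t$ by $\|\D_t\hat u\|_{H^1}$, which is either absorbed with a $\sqrt T$ via interpolation or bounded using the $k=1$ estimate times $N_2$.

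This bookkeeping, namely making every boundary remainder land either in the $\max\{0,k-1\}(\cdots)$ term or in a $\sqrt T$-small term, is the expected obstacle. Collecting all the bounds gives \eqref{ues2}.
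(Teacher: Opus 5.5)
Your overall scheme is the paper's. You differentiate \eqref{leq0} $k$ times in $t$, take the $L^2$ energy identity, split the boundary value as $(P_i+Q_iM_iP_i)\partial_t^k\hat u+Q_i[\partial_t^k,M_i]P_i\hat u$, and then use (vii)' and Schwarz. The gap is in the boundary bookkeeping, and it already occurs at $k=1$.

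You obtain $\int_0^T|Q_i(\partial_tM_i)P_i\hat u|^2(t,L_i)\,dt\le C_{K,N}(N_2+T)\sup_t|P_i\hat u(t,L_i)|^2$ from $\int_0^T|\partial_tP_iu(t,L_i)|^2dt\le N_2$. This bound is true but too weak. For $k=1$ the right-hand side of \eqref{ues2} contains only $C_{K,N}$ and $\sqrt T$-small terms, and $C_{K,N}N_2\sup_t|P_i\hat u(t,L_i)|^2$ is neither. Your remedy does not fix this, for three reasons. The $k=0$ estimate controls $\|\hat u\|_{L^2}$ and $\int_0^T|P_i\hat u(t,L_i)|^2dt$, not the pointwise trace. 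Sobolev embedding needs $\|\partial_x\hat u\|_{L^2}$, which enters \eqref{ues2} only with a factor $\sqrt T$. Writing $P_i\hat u(t,L_i)=P_iu_0(L_i)+\int_0^t\partial_tP_i\hat u\,d\tau$ leaves $N_2|P_iu_0(L_i)|^2$, a constant depending on $N_2$.

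This is not cosmetic. Proposition \ref{invariant1} sets $N_2^*:=D^\dagger_{K,N}$, so the $k\le1$ bounds must not depend on $N_2$. The same defect runs through your $k=2$ accounting. The boundary contributions you label $C_{K,N}N_1$ and $C_{K,N}N_2$ really carry a factor $\sup_t|P_i\hat u(t,L_i)|^2$, and they are admissible only if they also carry a factor $T$. Bounding the cross term $(\partial_tM_i)P_i\partial_t\hat u$ ``by the $k=1$ estimate times $N_2$'' also fails, because the $k=1$ estimate does not control $\|\partial_t\hat u\|_{H^1}$.

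\textbf{The missing observation.} For $u\in X_2$, \eqref{X2N2}--\eqref{X2N3} give $\sup_t\|\partial_tu(t)\|_{H^1}^2\le N_2+N_3$, hence $\sup_t|\partial_tP_iu(t,L_i)|\le C\sqrt{N_2+N_3}$. Together with $|\dot\sigma|\le1$ and $|\ddot\sigma|^2\le N_1$, this makes $\partial_tM_i$ bounded by $\tilde C$ pointwise in $t$. The same holds for every part of $\partial_t^2M_i$ except $\nabla_{P_iu}M_i\cdot\partial_t^2P_iu$. Applying Sobolev to the traces of $\hat u$ and $\partial_t\hat u$, all boundary remainders are then $\le\tilde CT\sum_m\sup_t\|\partial_t^m\hat u\|_{H^{k-m}}^2$. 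The only exception is $Q_i(\nabla_{P_iu}M_i\cdot\partial_t^2P_iu)P_i\hat u$, where $\partial_t^2P_iu(\cdot,L_i)$ lies only in $L^2(0,T)$; it produces exactly $C_{K,N}N_3\sup_t|P_i\hat u(t,L_i)|^2$. This is what the paper does.

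\textbf{The initial value at $k=2$.} The term $C_{K,N}(N_1+N_2)$ in \eqref{ues2} comes from the initial value, not from the boundary. Your claim $\|\partial_t^2\hat u(0)\|_{L^2}^2\le C_{K,N}$ is false. Differentiating the equation at $t=0$ produces $\partial_tA^0,\partial_tA^1,\partial_tB,\partial_tF^0$ at $t=0$. These contain $\partial_tu(0,\cdot)$, which is prescribed for elements of $X_2$ only at $x=L_i$ and is otherwise controlled only through $N_2$. They also contain $\ddot\sigma(0)$, which is controlled only through $N_1$. The correct bound is $C_{K,N}(N_1+N_2+1)$, which the right-hand side of \eqref{ues2} accommodates for $k=2$.
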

\begin{proof}
Applying $\D_{t}^{k}$  to \eqref{leq0} and \eqref{lBD0} for $k=0,1,2$,
we have the equation
\begin{align}
&A^0\D_{t} \D_{t}^{k} \hat{u} + A^1 \D_{x} \D_{t}^{k} \hat{u} 
=H_{k}:=\D_{t}^{k} (F^{0} + F^{1} -B\hat{u}) +[A^{0},\D_{t}^{k}]\D_{t}\hat{u}
+[A^1,\D_{t}^{k}]\D_x\hat{u}
\label{ueq2}
\end{align}
and the boundary conditions
\begin{gather}
Q_{i}({\rm id}-M_{i}P_{i}) \D_{t}^{k} \hat{u} (t,L_{i}) =Q_{i} [\D_{t}^{k},M_{i} ]P_{i} \hat{u} (t,L_{i}), \quad  \ i=1,2.
\label{ubc1}
\end{gather}
Then we multiply \eqref{ueq2} by $2\D_{t}^{k} \hat{u}$
and integrate it over $[0,t]\times I$ for $t \in [0,T]$ to obtain
\begin{align}
&\int_{I} \left\langle A^0  \D_{t}^{k} \hat{u}, \D_{t}^{k} \hat{u}\right\rangle (t,x) dx
+ \int_{0}^{t}  \left\langle A^1 \D_{t}^{k} \hat{u},  \D_{t}^{k} \hat{u}\right\rangle  (t,L_{2})  d\tau
- \int_{0}^{t}  \left\langle A^1 \D_{t}^{k} \hat{u},  \D_{t}^{k} \hat{u}\right\rangle  (t,L_{1}) d\tau
\notag \\
&=
\int_{I} \left\langle A^0  \D_{t}^{k} \hat{u}, \D_{t}^{k} \hat{u}\right\rangle (0,x) dx
+ \int_{0}^{t}\!\! \int_{I}  \left\langle (\D_{t} A^{0}) \D_{t}^{k} \hat{u} + (\D_{x} A^1)  \D_{t}^{k} \hat{u} + 2H_{k},  \D_{t}^{k} \hat{u}\right\rangle (\tau,x) dxd\tau 
\notag \\
&\leq K \|\D_{t}^{k} \hat{u}(0)\|^{2}_{L^{2}(I)}
+\tilde{C}\sqrt{T} \sum_{m=0}^{k} \sup_{t \in [0,T]}\|\D_{t}^{m}\hat{u}(t)\|_{H^{k-m}(I)}^{2}
+\tilde{C}\sqrt{T}, 
\label{ues3}
\end{align}
where we have also used \eqref{K1} and $(u,\sigma) \in X_{2}(T,\ve,N_{1},N_{2},N_{3},N_{4})$ in deriving the last inequality.
The first term on the left hand side is estimated by using the condition (iv) in Assumption \ref{ABasp1} as follows:
\begin{gather*}
\int_{I} \left\langle A^0  \D_{t}^{k} \hat{u}, \D_{t}^{k} \hat{u}\right\rangle (t,x) dx
\geq c_{0} \|\D_{t}^{k} \hat{u}(t)\|^{2}_{L^{2}(I)}
\geq \frac1K \|\D_{t}^{k} \hat{u}(t)\|^{2}_{L^{2}(I)}.
\end{gather*}
For the second term on the left hand side, we observe by using \eqref{ubc1},  $(P_{2} +Q_{2}M_{2} P_{2}) \D_{t}^{k} \hat{u} \in \ker Q_{2}({\rm id}-M_{2} P_{2})$,  and the condition (vii)' in Assumption \ref{ABasp1} that
\begin{align*}
&\int_{0}^{t} \left\langle A^{1} \D_{t}^{k} \hat{u}, \D_{t}^{k} \hat{u}\right\rangle (\tau,L_{2}) d\tau
\\
&=\int_{0}^{t} \left\langle A^{1} (P_{2}+Q_{2}) \D_{t}^{k} \hat{u}, (P_{2}+Q_{2}) \D_{t}^{k} \hat{u}\right\rangle (\tau,L_{2}) d\tau
\\
&=\int_{0}^{t} \left\langle A^{1} (P_{2}+Q_{2}M_{2}P_{2}) \D_{t}^{k} \hat{u}, (P_{2}+Q_{2}M_{2}P_{2})\D_{t}^{k} \hat{u}\right\rangle (\tau,L_{2}) d\tau
\\
&\quad +2\int_{0}^{t} \left\langle A^{1} Q_{2}[\D_{t}^{k},M_{2}]P_{2}  \hat{u}, (P_{2}+Q_{2}M_{2}P_{2}) \D_{t}^{k} \hat{u}\right\rangle (\tau,L_{2})  d\tau
\\
&\quad +\int_{0}^{t} \left\langle A^{1}Q_{2}[\D_{t}^{k},M_{2}]P_{2} \hat{u}, Q_{2}[\D_{t}^{k},M_{2}]P_{2} \hat{u}\right\rangle (\tau,L_{2})  d\tau
\\
&\geq \frac{1}{2K} \int_{0}^{t} |P_{2}  \D_{t}^{k} \hat{u}(\tau,L_{2}) |^{2} d\tau
-\tilde{C} T \sum_{m=0}^{k} \sup_{t \in [0,T]}\|\D_{t}^{m}\hat{u}(t)\|_{H^{k-m}(I)}^{2}
\\
& \quad -\max\{0,k-1\} C_{K,N} \int_{0}^{t} |P_{2}  \D_{t}^{2} u|^{2} |P_{2}  \hat{u}|^{2}(\tau,L_{2})d\tau
\\
&\geq \frac{1}{2K} \int_{0}^{t} |P_{2}  \D_{t}^{k} \hat{u}(\tau,L_{2}) |^{2} d\tau
-\tilde{C} T \sum_{m=0}^{k} \sup_{t \in [0,T]}\|\D_{t}^{m}\hat{u}(t)\|_{H^{k-m}(I)}^{2}
\\
&\quad -\max\{0,k-1\} C_{K,N} N_{3} \sup_{t \in [0,T]}|P_{2} \hat{u}|^{2}(t,L_{2}),
\end{align*}
where we have also used Schwarz's inequality in deriving the first inequality, and the positive constant $C_{K,N}$ depends only on $K$.
Similarly, 
\begin{align*}
-\int_{0}^{t} \left\langle A^{1} \D_{t}^{k} \hat{u}, \D_{t}^{k} \hat{u}\right\rangle (\tau,L_{1}) d\tau
&\geq \frac{1}{2K} \int_{0}^{t} |P_{1}  \D_{t}^{k} \hat{u}(\tau,L_{1}) |^{2} d\tau
-\tilde{C} T \sum_{m=0}^{k} \sup_{t \in [0,T]}\|\D_{t}^{m}\hat{u}(t)\|_{H^{k-m}(I)}^{2}
\\
&\quad -\max\{0,k-1\} C_{K,N} N_{3} \sup_{t \in [0,T]}|P_{1} \hat{u}|^{2}(t,L_{1}).
\end{align*}

Let us handle $\|\D_{t}^{k} \hat{u}(0)\|^{2}_{L^{2}(I)}$.
Recall that ${\cal X}_{1}(T_{0})=C([0,T_{0}];H^{1}(I)) \cap C^{1}([0,T_{0}];L^{2}(I))$.
We multiply \eqref{ueq1} with $j=0,1$ by $(A^{0})^{-1}$ and estimate those $L^{2}$-norm by using $\| F^{1} \|_{{\cal X}_{1}(T_{0})} \leq C \| F^{1} \|_{H^{2}((0,T_{0})\times I)}$ and Sobolev's inequality to obtain
\begin{align*}
\|\D_{t} \hat{u}(0)\|^{2}_{L^{2}(I)}+\|\D_{tx} \hat{u}(0)\|^{2}_{L^{2}(I)}
\leq C_{K,N},
\end{align*}
where the positive constant $C_{K,N}$ depends only on $K$ and $N$.
We also multiply \eqref{ueq2} with $k=1$ by $(A^{0})^{-1}$ and then estimate the $L^{2}$-norm by using Sobolev's inequality and  $(u,\sigma) \in X_{2}(T,\ve,N_{1},N_{2},N_{3},N_{4})$ to obtain
\begin{align*}
&\|\D_{tt} \hat{u}(0)\|^{2}_{L^{2}(I)}
\\
&\leq C_{K,N}(N_{1}+1) +C_{K,N}\left(\|\D_{t} {u}(0)\|^{2}_{L^{2}(I)}+N_{1}+1\right)\| u_{0} \|_{L^{\infty}(I)}^{2} +C_{K,N}\|\D_{t} \hat{u}(0)\|^{2}_{L^{2}(I)}
\\
&\quad +C_{K,N}\left(\|\D_{t} {u}(0)\|^{2}_{L^{2}(I)}+N_{1}+1\right) \left(\| \D_{t} \hat{u} (0) \|_{L^{\infty}(I)}^{2} + \| \D_{x} u_{0} \|_{L^{\infty}(I)}^{2}\right)
 +C_{K,N}\|\D_{tx} \hat{u}(0)\|^{2}_{L^{2}(I)}
\\
&\leq C_{K,N}(N_{1}+N_{2}+1).
\end{align*}
Substituting these into \eqref{ues3} yields \eqref{ues2}.
\end{proof}

Now we complete the estimates of $\hat{u}$.

\begin{lem}
Let $(\hat{u},\hat{\sigma})$ be the solution in Lemma \ref{lexist1}.
For any $N \geq 1$, there exists a positive constant $T_{2}=T_{2}(N,N_{1},N_{2},N_{3},N_{4})$ 
such that if $\|u_{0}\|_{H^{2}(I)}+|\sigma_{0}| \leq N$, 
$|P_{i} u_{0}(L_{i})| \leq  N_{3}^{-1/2}$, and
$0<T\leq \min\{1,T_{2}\}$, then
\begin{gather}
\sum_{k+j\leq 1} \sup_{t \in [0,T]} \| \D_{t}^{k}\D_{x}^{j}\hat{u}(t)\|^{2}_{L^{2}(I)} 
+ \sum_{i=1}^{2}\int_{0}^{T} |(\D_{t} P_{i}\hat{u}) (t,L_{i}) |^{2} dt
\leq D_{K,N}^{\dagger},
\label{ues4} \\
\sum_{k+j=2} \sup_{t \in [0,T]} \| \D_{t}^{k}\D_{x}^{j}\hat{u}(t)\|^{2}_{L^{2}(I)} 
+ \sum_{i=1}^{2} \int_{0}^{T} | \D_{t}^{2} P_{i} \hat{u}(t,L_{i}) |^{2} dt
\leq D_{K,N}^{\ddagger}(N_{1}+N_{2}+1)^{3},
\label{ues5}
\end{gather}
where $D_{K,N}^{\dagger},D_{K,N}^{\ddagger} \geq \tilde{K}$ are constants depending only on $K$ and $N$.
\end{lem}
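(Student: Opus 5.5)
The plan is to combine the interior estimate \eqref{ues0}, the time-derivative estimate \eqref{ues2}, and an elliptic-type recovery of $x$-derivatives near the boundaries from the equation, using \eqref{detA1}. Near $x=L_i$ the matrix $A^1$ is invertible on the support of $\chi$, so from \eqref{leq0} we write
\[
\D_x \hat u = (A^1)^{-1}\bigl(F^0+F^1 - B\hat u - A^0\D_t\hat u\bigr).
\]
Applying $\D_x^{j-1}\D_t^{k}$ and multiplying by $\chi$, we bound $\|\chi \D_t^k\D_x^{j}\hat u\|_{L^2}$ by $C_{K,N}$ times lower-order norms plus $\|\chi\D_t^{k+1}\D_x^{j-1}\hat u\|_{L^2}$. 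Iterating trades every $x$-derivative near the boundary for a $t$-derivative. Mixed terms such as $\D_x A^1$ and $\D_x A^0$ involve $\D_x u$ and $\D_t u$, which are controlled by $N_2$ or $N_3$. We then split them as $\|g(t)\|\le\|g(0)\|+\sqrt{T}\,\|\D_t g\|_{L^2_tL^2_x}$ so that every $N_i$-dependent contribution carries a factor $\sqrt T$.

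\textbf{Step 1 (first order).} For \eqref{ues4}, sum \eqref{ues2} for $k=0,1$; the $\max\{0,k-1\}$ term vanishes. Add \eqref{ues0} with $j=0,1$ and the boundary recovery above for $\chi\D_x\hat u$. The result is
\[
\sum_{k+j\le1}\sup_t\|\D_t^k\D_x^j\hat u\|^2+\sum_i\int_0^T|\D_tP_i\hat u|^2\le C_{K,N}+\tilde C\sqrt T\,(\text{same LHS})+\tilde C\sqrt T.
\]
Choosing $T_2$ with $\tilde C\sqrt{T_2}\le 1/2$ absorbs the $\sqrt T$ terms and gives $D^\dagger_{K,N}$. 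We enlarge this constant to be $\ge\tilde K$, which is allowed since $\tilde K\le C_{K,N}$.

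\textbf{Step 2 (second order).} This step is the main obstacle. Use \eqref{ues2} with $k=2$, whose right-hand side contains $C_{K,N}N_3\sup_t|P_i\hat u|^2(t,L_i)$. The factor $N_3$ here is the one that threatens to destroy invariance, and it must be neutralised.

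Write $P_i\hat u(t,L_i)=P_iu_0(L_i)+\int_0^t\D_tP_i\hat u(\tau,L_i)\,d\tau$. Using the hypothesis $|P_iu_0(L_i)|\le N_3^{-1/2}$, the first piece contributes at most $C_{K,N}$. By Cauchy–Schwarz, the second piece is bounded by $T\int_0^T|\D_tP_i\hat u|^2\le T D^\dagger_{K,N}$ from Step 1, so after multiplying by $N_3$ it is absorbed once $T$ is small depending on $N_3$.

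It remains to bound the $C_{K,N}(N_1+N_2)$ term. With $k=2$, the quantities $\D_{tx}\hat u$ and $\D_x^2\hat u$ are recovered through the boundary inversion and \eqref{ues0} with $j=2$. Each inversion multiplies by factors such as $(1+|\D_xu|+|\D_tu|)$, whose $L^\infty$ norms are bounded via Sobolev by $N_2$-type quantities plus $\sqrt T$ corrections. Tracking these factors produces at most a cubic power of $(N_1+N_2+1)$.

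Absorbing the $\tilde C\sqrt T$ terms as in Step 1 then yields \eqref{ues5} with some $D^\ddagger_{K,N}\ge\tilde K$. We take $T_2$ to be the minimum of the smallness thresholds from both steps; it depends on $N, N_1,\dots,N_4$ as claimed.
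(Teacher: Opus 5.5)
Your Step 1 matches the paper. So does your treatment of the term $C_{K,N}N_3\sup_t|P_i\hat u|^2(t,L_i)$ via $|P_iu_0(L_i)|\le N_3^{-1/2}$.

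The gap is in the rest of Step 2, in how you estimate products of derivatives of the given $(u,\sigma)$ with derivatives of $\hat u$. You put the $L^\infty$ norm on the coefficient factors $\D_xu$, $\D_tu$ and assert that Sobolev bounds them by $N_2$-type quantities plus $\sqrt T$ corrections. For $\D_tu$ this fails when $(u,\sigma)$ is a general element of $X_2$. Sobolev needs $\|\D_{tx}u(t)\|_{L^2}$, which is controlled only by $N_3$. The splitting $\|g(t)\|\le\|g(0)\|+\sqrt T\|\D_tg\|_{L^2_tL^2_x}$ does not help either: \eqref{X2i} prescribes $\D_tu(0,\cdot)$ only at $x=L_i$, and $\D_t\D_{tx}u$ is third order, so it is not controlled in $X_2$.

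Consequently, when you recover $\chi\D_{tx}\hat u$ from $\chi\D_{tt}\hat u$, the term $\D_t[(A^1)^{-1}A^0]\,\D_t\hat u$ costs about $C_{K,N}(N_2+N_3)\|\D_t\hat u\|_{L^2}^2$ with no small factor. You therefore cannot reach the form $D^{\ddagger}_{K,N}(N_1+N_2+1)^3$, and the choice $N_3^*=D^{\ddagger}_{K,N}(N_1^*+N_2^*+1)^3$ in Proposition \ref{invariant1} could no longer be made consistent.

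For the same reason, your opening claim that every $N_i$-dependent contribution acquires a factor $\sqrt T$ cannot hold. Neither $\D_tu(0)$ in the interior nor $\ddot\sigma(0)$ is fixed by the data, which is exactly why $N_1+N_2$ appear in \eqref{ues5}. Already $\|\D_{tt}\hat u(0)\|_{L^2}^2$ is only bounded by $C_{K,N}(N_1+N_2+1)$.

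The missing idea is the reverse bookkeeping. Keep derivatives of $u$ in $L^2$ (bounded by $N_2$, with $|\ddot\sigma|^2\le N_1$) and put $L^\infty$ on the $\hat u$ factor, as in $\|\D_tu\|_{L^2}\|\chi\D_t\hat u\|_{L^\infty}$. Plain Sobolev on $\chi\D_t\hat u$ would reproduce $\|\chi\D_{xt}\hat u\|^2$ with the non-small coefficient $C_{K,N}(N_1+N_2+1)$. Instead use Gagliardo--Nirenberg, $\|\chi\D_t\hat u\|_{L^\infty}^2\le C(\|\chi\D_t\hat u\|_{L^2}\|\D_x(\chi\D_t\hat u)\|_{L^2}+\|\chi\D_t\hat u\|_{L^2}^2)$, followed by Young's inequality to absorb $\frac12\|\chi\D_{xt}\hat u\|_{L^2}^2$. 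This yields $C_{K,N}(N_1+N_2+1)^2$. The further factor $\|\D_xu\|_{L^2}^2+1\le N_2+1$ in the subsequent recovery of $\chi\D_{xx}\hat u$ then produces the cube.

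Separately, you never estimate the interior mixed derivative $(1-\chi)\D_{tx}\hat u$. \eqref{ues0} gives only pure $x$-derivatives, and your inversion of $A^1$ is available only on the support of $\chi$. Assumption \ref{ABasp1}(v) concerns only $x=L_i$, and in the transonic application $A^1$ degenerates at the sonic point. The paper inverts $A^0$ instead: it writes $\D_t\hat u=(A^0)^{-1}(F^0+F^1-A^1\D_x\hat u-B\hat u)$ and differentiates in $x$. Then $(1-\chi)\D_{tx}\hat u$ is controlled by $(1-\chi)\D_{xx}\hat u$ from \eqref{ues0} with $j=2$, using the same $L^2$-on-$u$, $L^\infty$-on-$\hat u$ rule, at cost $C_{K,N}(N_2+1)$.
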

\begin{proof}

We recall \eqref{detA1}, multiply \eqref{leq0} by $\chi(A^1)^{-1}$, and then take the $L^{2}$-norm to obtain
\begin{align}
\|\chi\D_{x} \hat{u} (t)\|^{2}_{L^{2}(I)}
&\leq C_{K,N}\|\chi\D_{t} \hat{u} (t)\|^{2}_{L^{2}(I)}+C_{K,N}\|\chi \hat{u} (t)\|^{2}_{L^{2}(I)} +C_{K,N}
\notag\\
&\leq C_{K,N}
+\tilde{C} \sqrt{T} \sum_{m=0}^{1} \sup_{t \in [0,T]}\|\D_{t}^{m}\hat{u}(t)\|_{H^{1-m}(I)}^{2} + \tilde{C} \sqrt{T},
\label{ues7}
\end{align}
where we have used \eqref{ues2} with $k=1$ in deriving the inequality.
Adding \eqref{ues2}  with $k=0$, \eqref{ues0} with $j=1$, \eqref{ues2}  with $k=1$, and \eqref{ues7},
and then taking $T_{1}$ small enough, we conclude \eqref{ues4}. 

Let us show \eqref{ues5}. First it is seen by using \eqref{ues4} that 
\begin{gather*}
\sup_{t \in [0,T]}|P_{i} \hat{u}(t,L_{i})|
\leq |P_{i} u_{0}(L_{i})| + \sqrt{T} \left( \int_{0}^{T} |\D_{t} P_{i} \hat{u}(t,L_{i}) |^{2} dt \right)^{1/2}
\leq |P_{i} u_{0}(L_{i})| + C_{K,N}\sqrt{T}.
\end{gather*}
Plugging this into \eqref{ues2} with $k=2$ and using the assumption $|P_{i} u_{0}(L_{i})| \leq N_{3}^{-1/2}$, we see that
\begin{align}
&\sup_{t \in [0,T]} \| \D_{t}^{2}\hat{u}(t)\|^{2}_{L^{2}(I)} 
+ \sum_{i=1}^{2} \int_{0}^{T} | \D_{t}^{2} P_{i} \hat{u}(t,L_{i}) |^{2} dt 
\notag \\
& \leq C_{K,N} (N_{1}+N_{2}+1)
+\tilde{C} \sqrt{T} \sum_{m=0}^{2} \sup_{t \in [0,T]}\|\D_{t}^{m}\hat{u}(t)\|_{H^{2-m}(I)}^{2} + \tilde{C} \sqrt{T}.
\label{ues8}
\end{align}
We multiply \eqref{leq0} by $(A^0)^{-1}$, differentiate the result in $x$, multiply it by $(1-\chi)$, and take the $L^{2}$-norm.
Then it follows from $\|F^{1}\|_{{\cal X}_{1}(T_{0})} \leq C \|F^{1}\|_{H^{2}((0,T_{0})\times I)}$, Sobolev's inequality, \eqref{ues0}, \eqref{ues4}, and $(u,\sigma) \in X_{2}(T,\ve,N_{1},N_{2},N_{3},N_{4})$ that 
\begin{align}
&\|(1-\chi)\D_{tx} \hat{u}(t)\|^{2}_{L^{2}(I)}
\notag\\
&\leq C_{K,N} \|(1-\chi)\D_{xx} \hat{u}(t)\|^{2}_{L^{2}(I)} + C_{K,N}\left(\|\D_{x} u(t)\|^{2}_{L^{2}(I)}+1\right)  \|(1-\chi)\D_{x} \hat{u}(t)\|_{L^{\infty}(I)}^{2} 
\notag\\
&\quad  +C_{K,N} \|\D_{x} \hat{u}(t)\|^{2}_{L^{2}(I)} + C_{K,N}\left(\|\D_{x} u(t)\|^{2}_{L^{2}(I)}+1\right)  \|\hat{u}(t)\|_{L^{\infty}(I)}^{2}+C_{K,N}\left(\|\D_{x} u(t)\|^{2}_{L^{2}(I)}+1\right) 
\notag\\
&\leq C_{K,N}(N_{2}+1)
+\tilde{C}\sqrt{T}  \sum_{m=0}^{2} \sup_{t \in [0,T]}\|\D_{t}^{m}\hat{u}(t)\|_{H^{2-m}(I)}^{2} + \tilde{C}\sqrt{T}.
\label{ues9}
\end{align}
Furthermore, we recall \eqref{detA1}, multiply \eqref{leq0} by $\chi (A^1)^{-1}$, 
differentiate the result in $t$, and take the $L^{2}$-norm.
Then estimating the norm by using Gagliardo--Nirenberg's inequality, i.e. $\|\cdot\|_{L^{\infty}(I)} \leq C (\|\cdot\|^{1/2}_{L^{2}(I)}\|\D_{x}\cdot\|^{1/2}_{L^{2}(I)} + \|\cdot\|_{L^{2}(I)})$, Schwarz's inequality, \eqref{ues4}, \eqref{ues8},
and $(u,\sigma) \in X_{2}(T,\ve,N_{1},N_{2},N_{3},N_{4})$, we deduce that
\begin{align}
&\|\chi\D_{xt} \hat{u}(t)\|^{2}_{L^{2}(I)}
\notag\\
&\leq C_{K,N} \|\chi\D_{tt} \hat{u}(t)\|^{2}_{L^{2}(I)}
+C_{K,N}\left(\|\D_{t} {u}(t)\|^{2}_{L^{2}(I)}+N_{1}+1\right) \|\chi\D_{t} \hat{u}(t)\|_{L^{\infty}(I)}^{2}
\notag\\
&\quad + C_{K,N} \|\D_{t} \hat{u}(t)\|^{2}_{L^{2}(I)}
+C_{K,N}\left(\|\D_{t} {u}(t)\|^{2}_{L^{2}(I)}+N_{1}+1\right) \|\hat{u}(t)\|_{L^{\infty}(I)}^{2}
\notag\\
&\quad +C_{K,N}\left(\|\D_{t} {u}(t)\|^{2}_{L^{2}(I)}+N_{1}+1\right)
\notag\\
&\leq \frac{1}{2} \|\chi\D_{xt} \hat{u}(t)\|^{2}_{L^{2}(I)} 
+ C_{K,N}(N_{1} + N_{2}+1)^{2}
 +\tilde{C} \sqrt{T} \sum_{m=0}^{2} \sup_{t \in [0,T]}\|\D_{t}^{m}\hat{u}(t)\|_{H^{2-m}(I)}^{2} + \tilde{C} \sqrt{T}.
\label{ues11}
\end{align}
We multiply \eqref{leq0} by $\chi (A^1)^{-1}$, 
differentiate the result in $x$,  and then estimate the $L^{2}$-norm by using \eqref{ues4}, \eqref{ues11}, and $(u,\sigma) \in X_{2}(T,\ve,N_{1},N_{2},N_{3},N_{4})$ to obtain
\begin{align}
&\|\chi\D_{xx} \hat{u}(t)\|^{2}_{L^{2}(I)}
\notag\\
&\leq  C_{K,N} \|\chi\D_{xt} \hat{u}(t)\|^{2}_{L^{2}(I)} + C_{K,N}\left(\|\D_{x} u(t)\|^{2}_{L^{2}(I)}+1\right) \|\chi\D_{t} \hat{u}(t)\|_{L^{\infty}(I)}^{2} + C_{K,N} \|\D_{t} \hat{u}(t)\|^{2}_{L^{2}(I)} 
\notag\\
&\quad + C_{K,N} \|\D_{x} \hat{u}(t)\|^{2}_{L^{2}(I)} + C_{K,N}\left(\|\D_{x} u(t)\|^{2}_{L^{2}(I)}+1\right) \|\hat{u}(t)\|_{L^{\infty}(I)}^{2} + C_{K,N}\left(\|\D_{x} u(t)\|^{2}_{L^{2}(I)}+1\right)
\notag\\
&\leq C_{K,N}(N_{1} + N_{2}+1)^{3} 
+\tilde{C} \sqrt{T} \sum_{m=0}^{2} \sup_{t \in [0,T]}\|\D_{t}^{m}\hat{u}(t)\|_{H^{2-m}(I)}^{2} + \tilde{C} \sqrt{T}.
\label{ues12}
\end{align}
Adding \eqref{ues0} with $j=2$ and  \eqref{ues8}--\eqref{ues12},
and then taking $T_{2}$ small enough, we conclude \eqref{ues5}. 
\end{proof}

We are now in a position to prove Proposition \ref{invariant1}.
\begin{proof}[Proof of Proposition \ref{invariant1}]
First we take $T_{3}$ so small that
\begin{align}
\sup_{x \in I}|\hat{u}(t,x)| 
&\leq \sup_{x \in I}|{u}_{0}(x)| 
+C\int_{0}^{T_{3}}\|\D_{t}\hat{u}(t)\|_{H^{1}(I)} dt
\notag\\
&\leq \sup_{x \in I}|{u}_{0}(x)|  +\tilde{C}T_{3}
\leq \sup_{x \in I}|{u}_{0}(x)|  +\frac{\ve}{4},
\label{ues13}
\end{align}
and set $N_{1}^{*}$, $N_{2}^{*}$, $N_{3}^{*}$, $N_{4}^{*}$, and $\tilde{T}$ as follows:
\begin{gather*} 
N_{1}^{*}:=D_{K,N}^{\natural}, \quad
N_{2}^{*}:=D_{K,N}^{\dagger}, \quad
N_{3}^{*}:=D_{K,N}^{\ddagger}(N_{1}^{*}+N_{2}^{*}+1)^{3}, \quad
N_{4}^{*}:=D_{K,N}^{\#} N_{3}^{*}, 
\\
\tilde{T}:=\min\{1,T_{0},T_{1},T_{2},T_{3}\}.
\end{gather*}
We complete the proof by showing that $(\hat{u},\hat{\sigma}) \in X_{2}(T-\eta,\ve,N_{1}^{*},N_{2}^{*},N_{3}^{*},N_{4}^{*})$ under the conditions $({u},{\sigma}) \in X_{2}(T,\ve,N_{1}^{*},N_{2}^{*},N_{3}^{*},N_{4}^{*})$, and $T \leq \tilde{T}$.
It is clear that \eqref{X2r}--\eqref{X2i} in the definition of the set $X_{2}$ hold.
Using \eqref{asmp_ini1}, \eqref{essig1}, and \eqref{ues13} yields \eqref{X2ep}.
Furthermore, \eqref{X2N1}, \eqref{X2N2}, \eqref{X2N3}, and \eqref{X2N4} follow from \eqref{essig2}, \eqref{ues4}, \eqref{ues5}, and \eqref{essig3}, respectively.
The proof is complete.
\end{proof}

\subsection{Nonlinear problem}\label{SecA2}
This section is devoted to the proof of Theorem \ref{local}.
\begin{proof}[Proof of Theorem \ref{local}]
Let $N_{1}^{*}$, $N_{2}^{*}$, $N_{3}^{*}$, $N_{4}^{*}$, and $\tilde{T}$ be the same constants as in Proposition \ref{invariant1} with $N=1$ and $\ve=\ve^{*}:=\min\{\ve_{0}/2,\sqrt{N_{3}^{*}},(2K)^{-4}\}$.
Suppose that \eqref{inismall} holds, i.e. $\|u_{0}\|_{H^{2}(I)}+|(\sigma_{0},\mbox{``}\dot{\sigma}\mbox{''})| \leq \ve^{*}/(4 \tilde{C}_{s})$.
We define a successive approximation $\{(u^{k},\sigma^{k})\}_{k=0}^\infty$ by for $k=0$, 
\begin{gather*}
u^{0}(t,x)= u_{0}(x) +r(t,x), \quad 
\sigma^{0}(t)=\sigma_{0}+\mbox{``}\dot{\sigma}\mbox{''}t
\end{gather*}
and for $k \geq 1$,
\begin{subequations}\label{kIBVP0}
\begin{gather}
A^{0,k-1} \D_{t} u^{k} + A^{1,k-1} \D_{x} u^{k} + B^{k-1} u^{k} = F^{0,k-1} + F^{1},
\lb{keq0}\\
u^{k}(0,x)=u_{0}(x), 
\lb{kID0}\\  
Q_{i}({\rm id}-M_{i}^{k-1} P_{i}) u^{k} (t,L_{i}) =0, \quad i=1,2,
\lb{kBD0}\\
\dot{\sigma}_{i}^{k} = \alpha_{i}^{k-1} \sigma_{i}^{k} + \beta_{i}^{k-1}, \quad \sigma_{i}^{k}(0)=\sigma_{i0}, \quad i=1,2,
\label{kODE}
\end{gather}
\end{subequations}
where $r$ is defined in \eqref{tK1} and
\begin{gather*}
\begin{array}{lll}
A^{j,k}:=A^j(t,x,u^{k},\sigma^{k},\dot{\sigma}^{k}), \quad 
&B^{k}:=B(t,x,u^{k},\sigma^{k},\dot{\sigma}^{k}), \quad 
& F^{0,k}:=F^{0}(t,x,\sigma^{k},\dot{\sigma}^{k}), \quad 
\\
M_{i}^{k}:=M_{i}(t,P_{i} u^{k}(t, L_{i}),\sigma_{i}^{k}), \quad
&\alpha_{i}^{k}:=\alpha_{i}(t,P_{i} u^{k}(t, L_{i}),\sigma_{i}^{k}), \quad
&\beta_{i}^{k}:=\beta_{i}(t,P_{i} u^{k}(t, L_{i})). 
\end{array}
\end{gather*}
It follows from $N_{2}^{*},N_{3}^{*} \geq \tilde{K}$ and \eqref{tK1}
that $(u^{0},\sigma^{0})$ belongs to $X_2(T^{*},\ve^{*},N_{1}^{*},N_{2}^{*},N_{3}^{*},N_{4}^{*})$ for some $T^{*} \in (0, \tilde{T}]$.
Now Proposition~\ref{invariant1} together with \eqref{inismall} ensures that
$(u^{k},\sigma^{k}) \in X_2(T^{*}-\eta_k,\ve^{*},N_{1}^{*},N_{2}^{*},N_{3}^{*},N_{4}^{*})$ is well-defined for all $k \geq 1$, 
where $\eta_k:=T^{*}\sum_{i=1}^k 2^{-(i+1)}$.

We claim that there exists
$(u,\sigma) \in [\mathfrak{X}_{1}(T^{*}/2)]^{l} \times [H^{2}(0,T^{*}/2)]^{2}$
such that $(u^{k},\sigma^{k})$ converges to 
$(u,\sigma) \in [\mathfrak{X}_{1}(T^{*}/2)]^{l} \times [H^{2}(0,T^{*}/2)]^{2}$
by taking $T^{*}$ appropriately small if necessary.
Using this claim, the Banach--Alaoglu theorem, and $(u^{k},\sigma^{k}) \in X_2(T^{*}/2,\ve^{*},N_{1}^{*},N_{2}^{*},N_{3}^{*},N_{4}^{*})$, we conclude that
\begin{subequations}\label{unique1}
\begin{gather}
(u,\sigma) \in  \left[\bigcap_{j=0}^2 W^{j,\infty}(0,T^{*}/2;H^{2-j}(I))\right]^{l}\times [H^{3}(0,T^{*}/2)]^{2},
\quad
u(\cdot,L_{i}) \in \left[H^{2}(0,T^{*}/2) \right]^{l}, 
\\
\sup_{(t,x)\in [0,T]\times \overline{I}} | u (t,x)| +\sup_{t\in [0,T] } |(\sigma,\dot{\sigma})(t)|  \leq  \ve^{*}.
\end{gather}
\end{subequations}
It is easy to see that $(u,\sigma)$ is a \footnote{We can show the uniqueness by the energy method used in the proof of \eqref{des1} with some modification.}unique solution of the problem \eqref{gIBVP0} with \eqref{unique1}.
Furthermore, $(u,\sigma) \in X_2(T^{*}/3,\ve^{*},N_{1}^{*},N_{2}^{*},N_{3}^{*},N_{4}^{*})$ follows from the same method using mollifiers as in the proof of Theorem A.4 in \cite{KST2013}.

What is left is to show the above claim. It suffices to show that
\begin{gather}\label{des1}
E^{k}(T^{*}/2) \leq \frac12 E^{k-1}(T^{*}/2),
\end{gather}
where
\begin{gather*}
U^{k}:=u^{k}-u^{k-1}, \quad
\Sigma^{k}:=\sigma^{k}-\sigma^{k-1},
\\
E^{k}(T):=\sup_{t \in [0,T]} \|U^{k}(t)\|^{2}_{L^{2}(I)}
+ \sup_{t \in [0,T]}  |\Sigma^{k}(t)|^{2}
+ \sum_{i=1}^{2} \int_{0}^{T}  |P_{i} U^{k}(t,L_{i}) |^{2} dt + \frac{1}{8K^{2}} \int_{0}^{T} |\dot{\Sigma}^{k}(t)|^{2}dt.
\end{gather*}
First we see from \eqref{kIBVP0} that $(U^{k},\Sigma^{k})$ for $k\geq 2$ solves
\begin{subequations}
\begin{gather}
A^{0,k-1} \D_{t} U^{k} + A^{1,k-1} \D_{x} U^{k} + B^{k-1} U^{k} = \overline{F}^{k},
\lb{kkeq0}\\
U^{k}(0,x)=0, 
\lb{kkID0}\\
Q_{i}({\rm id}-M_{i}^{k-1} P_{i}) U^{k} (t,L_{i})=\overline{M}_{i}^{k}, \quad i=1,2,
\lb{kkBD0}\\
\dot{\Sigma}_{i}^{k} = \alpha_{i}^{k-1} \Sigma_{i}^{k} + \overline{\beta}_{i}^{k} , \quad  \Sigma^{k}(0)=0, \quad i=1,2,
\lb{kkODE}
\end{gather}
\end{subequations}
where 
\begin{align*}
\overline{F}^{k}&:= -(A^{0,k-1}-A^{0,k-2}) \partial_{t} u^{k-1}  - (A^{1,k-1}-A^{1,k-2}) \partial_{x} u^{k-1}  
\\
&\quad - (B^{k-1}-B^{k-2}) u^{k-1}  + (F^{0,k-1} - F^{0,k-2}),
\\
\overline{M}_{i}^{k}&:=Q_{i}(M_{i}^{k-1} -M_{i}^{k-2}) P_{i}  u^{k-1} (t,L_{i}),
\\
\overline{\beta}_{i}^{k}&:= ({\alpha}_{i}^{k-1}-{\alpha}_{i}^{k-2}) \sigma_{i}^{k-1} +({\beta}_{i}^{k-1}-{\beta}_{i}^{k-2}).
\end{align*}
It is straightforward to show by using $(u^{k},\sigma^{k}) \in X_2(T^{*}/2,\ve^{*},N_{1}^{*},N_{2}^{*},N_{3}^{*},N_{4}^{*})$ that  
\begin{align}
|\overline{F}^{k}| & \leq \tilde{C} (|U^{k-1}|+|\Sigma^{k-1}|+|\dot{\Sigma}^{k-1}|),
\label{kes1}\\
|\overline{M}_{i}^{k}| & \leq K \ve^{*} (  |P_{i} U^{k-1}(t,L_{i})|+|\Sigma_{i}^{k-1}(t)|) ,
\label{kes2}\\
|\overline{\beta}_{i}^{k}| & \leq K (  |P_{i} U^{k-1}(t,L_{i})|+|\Sigma_{i}^{k-1}(t)|),
\label{kes3}
\end{align}
where the positive constant $\tilde{C}$ depends on $K$, $N_{1}^{*}$, $N_{2}^{*}$, $N_{3}^{*}$, and $N_{4}^{*}$ but independent of $T$.

Multiplying \eqref{kkODE} by $2\Sigma^{k}_{i}$, integrating it over $[0,t]$, and then using \eqref{kes3} and Schwarz's  inequality, we have
\begin{align}
|\Sigma^{k}(t)|^{2} 
&\leq  \tilde{C} \sqrt{T}  \sup_{t \in [0,T]}   |({\Sigma^{k}},{\Sigma^{k-1}})(t)|^{2} 
+ \tilde{C} \sqrt{T} \sum_{i=1}^{2} \int_{0}^{T}  |P_{i} U^{k-1}(t,L_{i})|^{2} dt
\notag \\
&\leq \tilde{C} \sqrt{T} \{E^{k}(T) + E^{k-1}(T)\}
\label{kes5}
\end{align}
for any $T\in (0, T_{*}]$ and $t \in [0,T]$.
Furthermore, we take the $L^{2}$ norm of \eqref{kkODE} and use \eqref{kes3} and Schwarz's inequality to obtain
\begin{align}
\int_{0}^{T} |\dot{\Sigma}^{k}(t)|^{2} dt
&\leq   \tilde{C} \sqrt{T} \sup_{t \in [0,T]}   |({\Sigma^{k}},{\Sigma^{k-1}})(t)|^{2} 
+ (K^{2} + \tilde{C} \sqrt{T} )  \sum_{i=1}^{2} \int_{0}^{T} |P_{i} U^{k-1}(t,L_{i})|^{2} dt
\notag \\
&\leq K^{2} \sum_{i=1}^{2} \int_{0}^{T}  |P_{i} U^{k-1}(t,L_{i})|^{2} dt + \tilde{C} \sqrt{T} \{E^{k}(T) + E^{k-1}(T)\}.
\label{kes6}
\end{align}
Multiplying \eqref{kkeq0} by $2 U^{k}$, integrating the result over $[0,t]\times I$, and using \eqref{kkID0}, we obtain
\begin{align}
&\int_{I} \left\langle A^{0,k-1}   U^{k},  U^{k}\right\rangle (t,x) dx
+ \int_{0}^{t} \left\langle A^{1,k-1}  U^{k},   U^{k}\right\rangle  (\tau,L_{2}) d\tau
- \int_{0}^{t} \left\langle A^{1,k-1}  U^{k},   U^{k}\right\rangle  (\tau,L_{1}) d\tau
\notag \\
&=
 \int_{0}^{t}\!\! \int_{I}  \left\langle (\D_{t} A^{0,k-1})  U^{k} + (\D_{x} A^{1,k-1})   U^{k} - 2 B^{k-1} U^{k} + 2\overline{F}^{k},   U^{k}\right\rangle (\tau,x) dxd\tau 
\notag \\
&\leq \tilde{C}\sqrt{T} \left(\sup_{t \in [0,T]} \|(U^{k},U^{k-1})(t)\|^{2}_{L^{2}(I)}  + \sup_{t \in [0,T]}  |{\Sigma}^{k-1}(t)|^{2} + \int_{0}^{T} |\dot{\Sigma}^{k-1}(t)|^{2}dt \right)
\notag \\
&\leq \tilde{C} \sqrt{T} \{E^{k}(T) + E^{k-1}(T)\},
\label{kes7}
\end{align}
where we have also used \eqref{kes1} and $(u^{k},\sigma^{k}) \in X_2(T^{*}/2,\ve^{*},N_{1}^{*},N_{2}^{*},N_{3}^{*},N_{4}^{*})$ in deriving the inequalities.
The first term on the left hand side is estimated by using the condition (iv) in Assumption \ref{ABasp1} as follows:
\begin{gather*}
\int_{I} \left\langle A^0_{k}   U^{k},  U^{k}\right\rangle (t,x) dx
\geq c_{0} \| U^{k}(t)\|^{2}_{L^{2}(I)}
\geq \frac1K \| U^{k}(t)\|^{2}_{L^{2}(I)}.
\end{gather*}
For the second and third terms, we observe by using \eqref{kkBD0},  $(P_{i} +Q_{i} M_{i}^{k-1} P_{i} )  U^{k} \in \ker Q_{i}({\rm id}-M_{i}^{k-1} P_{i})$,  the condition (vii)' in Assumption \ref{ABasp1} that
\begin{align*}
&\int_{0}^{t} \left\langle (-1)^{i} A^{1,k-1}  U^{k},  U^{k}\right\rangle (\tau,L_{i}) d\tau
\\
&=\int_{0}^{t} \left\langle (-1)^{i} A^{1,k-1} (P_{i}+Q_{i})  U^{k}, (P_{i}+Q_{i})  U^{k}\right\rangle (\tau,L_{i}) d\tau
\\
&=\int_{0}^{t} \left\langle (-1)^{i} A^{1,k-1} (P_{i}+Q_{i} M_{i}^{k-1} P_{i})  U^{k}, (P_{i} +Q_{i} M_{i}^{k-1} P_{i}) U^{k}\right\rangle (\tau,L_{i}) d\tau
\\
&\quad +2\int_{0}^{t} \left\langle (-1)^{i} A^{1,k-1} \overline{M}_{i}^{k}, (P_{i} + Q_{i} M_{i}^{k-1}P_{i})  U^{k}\right\rangle (\tau,L_{i})  d\tau
+\int_{0}^{t} \left\langle (-1)^{i} A^{1} \overline{M}_{i}^{k}, \overline{M}_{i}^{k} \right\rangle (\tau,L_{i})  d\tau
\\
&\geq \frac{1}{2K} \int_{0}^{t} |P_{i}   U^{k}(\tau,L_{i}) |^{2} d\tau
- 2^{5}(\ve^{*})^{2}K^{6} \int_{0}^{T} |P_{i} U^{k-1}(t,L_{i})|^{2}+|\Sigma_{i}^{k-1}(t)|^{2}  dt,
\end{align*}
where we have also used \eqref{K1}, \eqref{kes2}, and Schwarz's inequality in deriving the inequality. 
Plugging these into \eqref{kes7}, multiplying it by $2K$, and using $2^{6}(\ve^{*})^{2}K^{7}\leq 1/4$, we have
\begin{align}
& 2\| U^{k}(t)\|^{2}_{L^{2}(I)}+ \sum_{i=1}^{2} \int_{0}^{t}  |P_{i}   U^{k}(\tau,L_{i}) |^{2} d\tau
\notag\\
&\leq \frac{1}{4} \sum_{i=1}^{2}  \int_{0}^{t} |P_{i}   U^{k-1}(\tau,L_{i}) |^{2} d\tau
+ \tilde{C} \sqrt{T} \{E^{k}(T) + E^{k-1}(T)\}.
\label{kes8}
\end{align}
We multiply \eqref{kes6} by $(8K^{2})^{-1}$ and sum up the result, $\eqref{kes5}$, and $\eqref{kes8}$.
Then taking $T^{*}$ small enough leads to \eqref{des1}.
\end{proof}

\subsection{Application to the IBVPs of the quasi-one-dimensional model}\label{SecA3}

This subsection provides the proofs of the time-local solvability of the initial--boundary value problems \eqref{eq311*},  \eqref{eq411*}, and \eqref{eq701*},  which are stated in Lemmas \ref{local3}, \ref{local5}, and \ref{local7}.
We remark that classical theorems such as \cite[Theorem A2]{Sc1986} are enough to show the solvability of the problems \eqref{eq311*} and \eqref{eq411*}, since the boundary conditions are linear. We need Theorem \ref{local} only to show the solvability of the problem \eqref{eq701*} in Lemma \ref{local7}.
To make this paper self-contained as possible, we show the solvability of all problems above using only Theorem \ref{local}.

\begin{proof}[Proof of the time-local solvability in Lemmas \ref{local3}]
To prove the time-local solvability of the initial--boundary value problem \eqref{eq311*} using Theorem \ref{local},
we use the fundamental values $(\rho,u)$ and adopt the system \eqref{eq512} 
for the unknown functions $f_{p}=(\phi,\psi)^{\top}=(\rho - \bar{\rho}, u - \bar{u})^{\top}$.
Namely,  we consider the following problem equivalent to \eqref{eq311*}:
\begin{gather*}
A^0_p \partial_t f_{p} + A^1_p \partial_x f_{p} + B_p f_{p} = 0, \quad t>0,  \ x \in I=(x_{1},x_{2}),
\\
f_{p}(0,x) = f_{p}^0(x), 
\\
\phi(t, x_1) = -  \frac{\bar{\rho}}{\bar{u}+\psi} (t,x_1) \psi(t, x_1), \quad \phi(t,x_{2})=0, \quad t>0,
\end{gather*}
where
\begin{align*} 
A^0_p=A^0_p(x,\rho,u) &=
A(x) \begin{bmatrix}
\dfrac{ p'(\rho)}{\rho}   & 0
\\[10pt]
0 & \rho
\end{bmatrix},
\quad
A^1_p=A^1_p(x,\rho,u) = 
A(x)\begin{bmatrix}
\dfrac{ p'(\rho)u}{\rho}   &  p'(\rho)
\\[10pt]
 p'(\rho) & \rho u
\end{bmatrix},
\\
{B}_p={B}_p(x,\rho,u) &= A^0_p
\begin{bmatrix}
\bar{u}_x(x) + \dfrac{A_x(x)}{A(x)} u   & \bar{\rho}_x(x) +  \dfrac{A_x(x)}{A(x)}\bar{\rho}(x)
\\  
b_{p} & \bar{u}_x(x)
\end{bmatrix},
\\
b_{p}=b_{p}(x,\rho,u)  &= \bar{\rho}_x(x)
\int_0^1 \left(\dfrac{d}{d \rho} \dfrac{p'(\rho)}{\rho} \right)\Big|_{\rho = \bar{\rho}(x) + \theta (\rho-\bar{\rho}(x)) } d \theta.
\notag
\end{align*}

To apply Theorem \ref{local} to this problem, we set
\begin{gather*}
L_{1}=x_{1}, \quad L_{2}=x_{2}, \quad {\cal O}_{u,\sigma,\dot{\sigma}}=B_{\delta}(0),
\quad u=u(t,x)=f_{p}(t,x), \quad \sigma=\sigma(t)=(0,0), 
\end{gather*}
where $B_{\delta}(0) \subset \mathbb R^{6}$ is a ball centered at the origin with radius $\delta>0$.
We also choose $A^0$, $A^1$, $B$, $P_{1}$, $Q_{1}$, ${M}_{1}$, $P_{2}$, $Q_{2}$, ${M}_{2}$, $F$, $\alpha_{1}$, $\alpha_{2}$, $\beta_{1}$, $\beta_{2}$ in Theorem \ref{local} as
\begin{align*}
A^0 &= A_{p}^{0}, \quad A^1 =A_{p}^{1}, \quad  B = B_{p}, \quad F=\alpha_{1}=\alpha_{2}=\beta_{1}=\beta_{2}=0,
\\
P_{1}&=P_{2}=
\begin{bmatrix}
0 & 0 \\
0 & 1
\end{bmatrix},
\quad 
Q_{1}=Q_{2}=
\begin{bmatrix}
1 & 0 \\
0 & 0
\end{bmatrix},
\quad 
M_{1}=
\begin{bmatrix}
0 & \displaystyle -  \frac{\bar{\rho}}{\bar{u}+\psi} (t,x_1)  \\
0 & 0
\end{bmatrix},
\quad 
M_{2}=
\begin{bmatrix}
0 & 0 \\
0 & 0
\end{bmatrix}.
\end{align*}
Note that this choice does not include $\sigma$, and so the hyperbolic system is not coupled with $\sigma$.
In addition, $\sigma=(0,0)$ obviously solves \eqref{lODE0}.
Thus we can ignore $\sigma$ in the application of Theorem \ref{local}.
Recall that $(\bar{\rho},\bar{m})$ satisfies \eqref{spro1}--\eqref{spro2}.
By taking $\delta$ small enough, it is easy to see that the above choice satisfies Conditions (i)--(vi) and (viii) in Assumption \ref{ABasp1}.
What is left is to verify Condition (vii). 
We observe at the boundary $x=x_{1}$ that for all $\eta=(\eta_{1},\eta_{2})\in \ker Q_{1}({\rm id}-M_{1}P_{1})$ with $|\eta|=1$,
\begin{align*}
& -\langle A^1 \eta, \eta \rangle
=A(x_{1})\left( (p'(\bar{\rho})- \bar{u}^{2}) \frac{\bar{u}}{\bar{\rho}}(x_{1}) +O(\delta) \right) \eta_{1}^{2}>0,
\end{align*}
where we have used $\eta_{1} \neq 0$. 
Similarly, it is seen at the other boundary $x=x_{2}$ that for all $\eta=(\eta_{1},\eta_{2})\in \ker Q_{2}({\rm id}-M_{2}P_{2})$ with $|\eta|=1$,
\begin{align*}
& \langle A^1 \eta, \eta \rangle
=A(x_{2})\left( \bar{\rho}\bar{u}(x_{2}) +O(\delta) \right) \eta_{2}^{2}>0,
\end{align*}
where we have used $(\eta_{1},\eta_{2})=(0,1)$.
From these, Condition (vii) holds.
Thus Theorem \ref{local} ensures the time-local solvability of the problem \eqref{eq311*}.
\end{proof}

\begin{proof}[Proof of the time-local solvability in Lemma \ref{local5}]
The solvability in Lemmas \ref{local5} follows from the same manner as in the proof of Lemma \ref{local3}, but with the matrices $P_{2}$, $Q_{2}$, and $M_{2}$ replaced by 
\begin{gather*}
P_{2}={\rm id}, \quad  Q_{2}=M_{2}=
\begin{bmatrix}
0 & 0 \\
0 & 0
\end{bmatrix},
\end{gather*}
and \eqref{spro51}--\eqref{spro52} instead of \eqref{spro1}--\eqref{spro2}.
\end{proof}

\begin{proof}[Proof of the time-local solvability in Lemma \ref{local7}]
To prove the time-local solvability of the initial--boundary value problem \eqref{eq701*} using Theorem \ref{local},
we use the fundamental values $(\rho_{1},u_{1})$ and adopt the system \eqref{eq704} for the unknown function $f_{1p}=(\phi_{1},\psi_{1})^{\top}=(\rho_{1} - \bar{\rho}_{1}, u_{1} - \bar{u}_{1})^{\top}$ in $I_{1}$, 
and use the fundamental values $(\rho_{2},m_{2})$ and adopt the system \eqref{eq703} for the unknown function $f_{2}=(\phi_{2},\omega_{2})^{\top}=(\rho_{2} - \bar{\rho}_{2}, m_{2} - \bar{m}_{2})^{\top}$ in $I_{2}$.
Namely,  we consider the following problem equivalent to \eqref{eq701*}:
\begin{subequations}\label{eqA}
\begin{gather}
A^0_ {1p} \partial_s f_{1p} + A^1_{1p} \partial_y f_{1p} + B_{1p} f_{1p} = 0, \quad s>0, \ y \in I_{1}=(x_{1},\bar{d}),
\\
A^0_ {2} \partial_s f_{2} + A^1_{2} \partial_y f_{2} + B_{2} f_{2} = 0, \quad s>0, \ y \in I_{2}=(\bar{d},x_{2}),
\\
 \dot{\sigma}(s) \jump{ \rho }(s,\bar{d}) 
 = \jump{m}(s,\bar{d}), \quad  s>0,
 \label{eqA06a}
\end{gather}
with
\begin{gather} 
f_{1p}(0,y)=f_{1p}^{0}(y)=
\begin{bmatrix}
\phi_{1p}^{0}(y)
\\[3pt]
\psi_{1p}^{0}(y)
\end{bmatrix},  
\quad 
f_{2}(0,y)=f_{2}^{0}(y):=
\begin{bmatrix}
\phi_{2}^{0}(y)
\\[3pt]
\omega_{2}^{0}(y)
\end{bmatrix},  
\quad 
\sigma(0)=\sigma^{0},
\\
\phi_{1}(s, x_1) = -  \frac{\bar{\rho}_{1}}{\bar{u}_{1}+\psi_{1}} (s,x_1) \psi_{1}(s, x_1), \quad \phi_{2}(s,x_{2})=0, \quad s>0,
\\
\jump{ \rho } (s, \bar{d}) \Bigl[\!\Bigl[ \dfrac{m^2}{\rho} + p(\rho) \Bigr]\!\Bigr] (s, \bar{d})
= \jump{ m } (s, \bar{d})^2, \quad s>0,
\label{eqA06c}
\end{gather}
\end{subequations}
where we have used $\bar{m}_{1}(\bar{d})=\bar{m}_{2}(\bar{d})$. Here 
\begin{align*}
\hat{x}_{1}&=\hat{x}_{1}(\sigma,y) = y + \sigma \dfrac{y - x_1}{\bar{d} - x_1}, \quad
A^0_{1p}=A^0_{1p}(y,\rho_{1},u_{1},\sigma) =
A(\hat{x}_{1})\begin{bmatrix}
\dfrac{ p'(\rho_{1})}{\rho_{1}}   & 0
\\[10pt]
0 & \rho_{1}
\end{bmatrix},
\\
A^1_{1p}&=A^1_{1p}(y,\rho_{1},u_{1},\sigma,\dot{\sigma}) = 
A(\hat{x}_{1}) \frac{\bar{d} - x_1}{\bar{d}+\sigma - x_1} 
 \begin{bmatrix}
\dfrac{ p'(\rho_{1})u_{1}}{\rho_{1}}   &  p'(\rho_{1})
\\[10pt]
 p'(\rho_{1}) & \rho_{1} u_{1}
\end{bmatrix}
- \dot{\sigma} \frac{y - x_1}{\bar{d}+\sigma - x_1} A^0_{1p},
\\
{B}_{1p}&={B}_{1p}(y,\rho_{1},u_{1},\sigma) = A^0_{1p}
\begin{bmatrix}
\bar{u}_{1x}(\hat{x}_{1}) + \dfrac{A_x(\hat{x}_{1})}{A(\hat{x}_{1})} u_{1}   & \bar{\rho}_{1x}(\hat{x}_{1}) +  \dfrac{A_x(\hat{x}_{1})}{A(\hat{x}_{1})}\bar{\rho}_{1}(\hat{x}_{1})
\\  
b_{1p} & \bar{u}_{1x}(\hat{x}_{1})
\end{bmatrix},
\\
b_{1p}&:=b_{p}(\hat{x}_{1},\rho_{1},u_{1})  = \bar{\rho}_{1x}(\hat{x}_{1})
\int_0^1 \left(\dfrac{d}{d \rho} \dfrac{p'(\rho)}{\rho} \right)\Big|_{\rho = \bar{\rho}_{1}(\hat{x}_{1}) + \theta (\rho-\bar{\rho}_{1}(\hat{x}_{1})) } d \theta
\end{align*}
and
\begin{align*}
\hat{x}_{2}&=\hat{x}_{2}(\sigma,y) = y + \sigma \dfrac{y - x_2}{\bar{d} - x_2}, \quad
A^0_{2}=A^0_{2}(y,\rho_{2},m_{2},\sigma) =
\frac{A(\hat{x}_{2})}{\bar{\rho}_{2}(\hat{x}_{2})}
\begin{bmatrix}
S_{2}  & 0
\\
0 & 1
\end{bmatrix},
\\
A^1_{2}&=A^1_{2}(y,\rho_{2},m_{2},\sigma,\dot{\sigma}) = 
\frac{A(\hat{x}_{2})}{\bar{\rho}_{2}(\hat{x}_{2})}
\frac{\bar{d} - x_2}{\bar{d}+\sigma - x_2} 
\begin{bmatrix}
0  &  S_{2}
\\[5pt]
S_{2} & 2\dfrac{m_{2}}{\rho_{2}}
\end{bmatrix}
- \dot{\sigma} \frac{y - x_2}{\bar{d}+\sigma - x_2} A^0_{2},
\\
S_{2}&:=S(\rho_{2},m_{2}) = p'(\rho_{2}) - \frac{m_{2}^{2}}{\rho_{2}^{2}}, 
\quad
{B}_{2}={B}_{2}(y,\rho_{2},m_{2},\sigma) = A^0_{2}
\begin{bmatrix}
0 & \dfrac{A_x(\hat{x}_{2})}{A(\hat{x}_{2})}
\\  
b_{221} & b_{222}
\end{bmatrix},
\\
b_{221}&:=b_{21}(\hat{x}_{2},\rho_{2},m_{2})=
\frac{\bar{m}_{2}(\hat{x}_{2})^{2}}{\rho_{2} \bar{\rho}_{2}(\hat{x}_{2})} \frac{A_x(\hat{x}_{2})}{A(\hat{x}_{2})}
+ \frac{\bar{m}_{2}(\hat{x}_{2})^2}{\rho_{2}^2 \bar{\rho}_{2}(\hat{x}_{2})^2} ( \rho_{2} + \bar{\rho}_{2}(\hat{x}_{2}) ) \bar{\rho}_{2x}(\hat{x}_{2})
\\
& \qquad \qquad \qquad \qquad \quad + \bar{\rho}_{2x}(\hat{x}_{2}) \int_0^1 p'' ( \bar{\rho}_{2}(\hat{x}_{2}) + \theta (\rho_{2}-\bar{\rho}_{2}(\hat{x}_{2})) ) d \theta,
\\
b_{222}&:=b_{22}(\hat{x}_{2},\rho_{2},m_{2})=
\frac{m_{2}-\bar{m}_{2}(\hat{x}_{2})}{\rho_{2}} \frac{A_x(\hat{x}_{2})}{A(\hat{x}_{2})}
- \frac{ m_{2} + \bar{m}_{2}(\hat{x}_{2}) }{\rho_{2}^2} \bar{\rho}_{2x}(\hat{x}_{2}).
\end{align*}

If the perturbation $f_{1p}$, $f_{2}$, $\sigma$ are small enough, 
the boundary value \eqref{eqA06c} can be written in an abstract form by using the implicit function theorem as
\begin{align*}
\phi_{2}(s,\bar{d})&= \sA(\phi_{1}(s,\bar{d}),\psi_{1}(s,\bar{d}),\omega_{2}(s,\bar{d}),\sigma(s)) 
\\
&= \sB^{\phi_{1}} \phi_{1}(s,\bar{d}) +  \sB^{\psi_{1}} \psi_{1}(s,\bar{d}) + \sB^{\omega_{2}} \omega_{2}(s,\bar{d}) + \sA(0,0,0,\sigma(s)), 
\end{align*}
where
\begin{align*}
\sB^{g}&=\sB^{g}(\phi_{1}(s,\bar{d}),\psi_{1}(s,\bar{d}),\omega_{2}(s,\bar{d}),\sigma(s)) 
\\
&:= \int_{0}^{1} \partial_{g} \sA (\theta\phi_{1}(s,\bar{d}),\theta\psi_{1}(s,\bar{d}),\theta\omega_{2}(s,\bar{d}),\sigma(s)) d\theta \quad \text{for $g=\phi_{1},\psi_{1},\omega_{2}$}.
\end{align*}
We see from \eqref{eq732} that 
\begin{gather}\label{Bomega2}
\sA(0,0,0,0)=0, \quad 
\sB^{\omega_{2}}(0,0,0,0)=-2\frac{\bar{u}_{2}}{\bar{S}_{2}}(\bar{d}). 
\end{gather}
Similarly, the ODE \eqref{eqA06a} can be written in an abstract form as
\begin{align*}
\dot{\sigma}(s) &= \sC (\phi_{1}(s,\bar{d}),\psi_{1}(s,\bar{d}),\omega_{2}(s,\bar{d}),\sigma(s)) 
\\
&= \sB^{\sigma} \sigma(s) + \sC(\phi_{1}(s,\bar{d}),\psi_{1}(s,\bar{d}),\omega_{2}(s,\bar{d}),0),
\end{align*}
where
\begin{align*}
\sB^{\sigma} &=\sB^{\sigma} (\phi_{1}(s,\bar{d}),\psi_{1}(s,\bar{d}),\omega_{2}(s,\bar{d}),\sigma(s))
\\
&:= \int_{0}^{1} \partial_{\sigma} \sC (\phi_{1}(s,\bar{d}),\psi_{1}(s,\bar{d}),\omega_{2}(s,\bar{d}),\theta\sigma(s)) d\theta.
\end{align*}
From \eqref{eq732}, it is seen that
\begin{gather}\label{Asigma1}
\sC(0,0,0,0)=0.
\end{gather}

To rewrite the problem \eqref{eqA} to an initial--boundary value problem only in $I_{1}$,
we perform a change of variables
\begin{align*}
z= y \quad \text{for $y \in I_{1}$},
\quad
z= \frac{x_{1}-\bar{d}}{x_{2}-\bar{d}} (y-\bar{d}) +\bar{d} \quad \text{for $y \in I_{2}$}.
\end{align*}
The latter one maps $I_{2}$ onto $I_{1}$. Conversely, the original variable is represented as
\begin{align*}
y= z \quad \text{for $z \in I_{1}$}, \quad
y= \frac{x_{2}-\bar{d}}{x_{1}-\bar{d}}(z-\bar{d}) + \bar{d} \quad \text{for $z \in I_{1}$}.
\end{align*}
Using this, we set
\begin{gather*}
L_{1}=x_{1}, \quad L_{2}=\bar{d}, \quad {\cal O}_{u,\sigma,\dot{\sigma}}=B_{\delta^{2}}(0), \quad 
(\sigma_{1},\sigma_{2})=(\sigma_{1}(s),\sigma_{2}(s))=(0,\sigma(s)),
\\
u=u(s,z)=\begin{bmatrix}
\phi_{1}(s,y(z))
\\
\psi_{1}(s,y(z))
\\
\delta\{\phi_{2}(s,y(z))-\chi(z,\sigma_{2}(s))\}
\\
\delta \omega_{2}(s,y(z))
\end{bmatrix}, \quad
\chi=\chi(z,\sigma_{2})=\frac{z-L_{1}}{L_{2}-L_{1}} \sA(0,0,0,\sigma_{2}),
\end{gather*}
where $\delta \in (0,1)$ is a constant to be determined later, and $B_{\delta^{2}}(0) \subset \mathbb R^{8}$ is a ball centered at the origin with radius $\delta^{2}$.
Then we see that $(u,\sigma_{1},\sigma_{2})$ satisfies the following problem equivalent to \eqref{eqA}:
\begin{subequations}\label{eqAAA}
\begin{gather}
A^{0}\partial_{s} u + A^{1} \partial_{z} u + B u = \delta F, \quad s>0, \ z \in (L_{1},L_{2}),
\\
u(0,z)=u^{0}(z):=\begin{bmatrix}
\phi_{1}^{0}(y(z))
\\[3pt]
\psi_{1}^{0}(y(z))
\\
\delta\{\phi_{2}^{0}(y(z))-\chi(z,\sigma^{0})\}
\\
\delta \omega_{2}^{0}(y(z))
\end{bmatrix},
\\
Q_{i}({\rm id}-M_{i} P_{i}) u (s,L_{i}) =0, \quad s>0, \ i=1,2
\end{gather}
with
\begin{gather}
\dot{\sigma}_{1} = 0, \quad \sigma_{1}(0)=0,
\\
\dot{\sigma}_{2} = \alpha_{2}\sigma_{2} + \beta_{2}, \quad {\sigma}_{2}(0)=\sigma^{0},
\end{gather}
\end{subequations}
where
\begin{align*}
A^{0}&=
\begin{bmatrix}
A^{0}_{1p} & O
\\
O & A^{0}_{2}
\end{bmatrix},
\quad 
A^{1}=
\begin{bmatrix}
A^{1}_{1p} & O
\\
O & \dfrac{x_{1}-\bar{d}}{x_{2}-\bar{d}} \, A^{1}_{2}
\end{bmatrix},
\quad 
B=
\begin{bmatrix}
B_{1p} & O
\\
O & B_{2}+\tilde{B}_{2}
\end{bmatrix},
\\
\tilde{B}_{2}&= 
\int_{0}^{1}  \nabla_{(\rho_{2},m_{2})} A^0_{2}\bigl(y(z),\theta \rho_{2} + (1-\theta) (\bar{\rho}_{2}+\chi),\theta m_{2} + (1-\theta)\bar{m}_{2},\sigma_{2}\bigr) d\theta
\begin{bmatrix}
(\partial_{\sigma_{2}}\chi) \dot{\sigma}_{2}
\\
0
\end{bmatrix}
\\
&\quad + \dfrac{x_{1}-\bar{d}}{x_{2}-\bar{d}}  \int_{0}^{1}  \nabla_{(\rho_{2},m_{2})} A^{1}_{2}\bigl(y(z),\theta \rho_{2} + (1-\theta) (\bar{\rho}_{2}+\chi),\theta m_{2} + (1-\theta)\bar{m}_{2},\sigma_{2}\bigr)d\theta
\begin{bmatrix}
\partial_{z}\chi
\\
0
\end{bmatrix}
\\
&\quad + \int_{0}^{1}  \nabla_{(\rho_{2},m_{2})} B_{2}\bigl(y(z),\theta \rho_{2} + (1-\theta) (\bar{\rho}_{2}+\chi),\theta m_{2} + (1-\theta)\bar{m}_{2},\sigma_{2}\bigr)d\theta
\begin{bmatrix}
\chi
\\
0
\end{bmatrix},
\\
F&=
-\begin{bmatrix}
O & O
\\
O & A^0_{2}(y(z),\bar{\rho}_{2}+\chi,\bar{u}_{2},\sigma_{2})
\end{bmatrix}\!\!
\begin{bmatrix}
0
\\
0
\\
(\partial_{\sigma_{2}} \chi)  \dot{\sigma}_{2}
\\
0
\end{bmatrix}
\\
& \quad  -
\begin{bmatrix}
O & O
\\
O & \dfrac{x_{1}-\bar{d}}{x_{2}-\bar{d}} A^{1}_{2}(y(z),\bar{\rho}_{2}+\chi,\bar{m}_{2},\sigma_{2})
\end{bmatrix}\!\!
\begin{bmatrix}
0
\\
0
\\
\partial_{z} \chi
\\
0
\end{bmatrix}
-
\begin{bmatrix}
O & O
\\
O & B_{2}(y(z),\bar{\rho}_{2}+\chi,\bar{m}_{2},\sigma_{2})
\end{bmatrix}\!\!
\begin{bmatrix}
0
\\
0
\\
\chi
\\
0
\end{bmatrix},
\\
P_{1}&={\rm diag}(0,1,0,1), \quad P_{2}={\rm diag}(1,1,0,1), \quad
Q_{1}={\rm diag}(1,0,1,0), \quad Q_{2}={\rm diag}(0,0,1,0),
\\
M_{1}&=
\begin{bmatrix}
0 & \displaystyle -  \frac{\bar{\rho}_{1}}{\bar{u}_{1}+\psi_{1}} (s,L_1) & 0 & 0
\\
0 & 0 & 0 & 0
\\
0 & 0 & 0 & 0
\\
0 & 0 & 0 & 0
\end{bmatrix},
\quad
M_{2}=
\begin{bmatrix}
0 & 0 & 0 & 0
\\
0 & 0 & 0 & 0
\\
\delta\sB^{\phi_{1}}  & \delta\sB^{\psi_{1}} & 0 & \sB^{\omega_{2}}
\\
0 & 0 & 0 & 0
\end{bmatrix},
\\
\alpha_{2}&=\sB^{\sigma}, \quad \beta_{2}=\sC(\phi_{1}(s,\bar{d}),\psi_{1}(s,\bar{d}),\omega_{2}(s,\bar{d}),0).
\end{align*}

Now we can apply Theorem \ref{local} to the problem \eqref{eqAAA} as follows.
Recall that $(\bar{\rho},\bar{m})$ satisfies \eqref{spro71}--\eqref{spro72} and that
$(\bar{\rho}_1, \bar{m}_1)$ and $(\bar{\rho}_2, \bar{m}_2)$ denote
the restriction of the stationary solution {$(\bar{\rho},\bar{m})$}
on $[x_1, \bar{d})$ and $(\bar{d}, x_2]$, respectively (see also the opening of Section \ref{Sec7}). 
By taking $\delta$ small enough and using \eqref{Bomega2} and \eqref{Asigma1}, it is easy to see that Conditions (i)--(vi) and (viii) in Assumption \ref{ABasp1} are verified for the problem \eqref{eqAAA}.

What is left is to verify Condition (vii). 
We observe at the boundary $z=L_{1}$ that for all $\eta=(\eta_{1},\eta_{2},\eta_{3},\eta_{4})\in \ker Q_{1}({\rm id}-M_{1}P_{1})$ with $|\eta|=1$,
\begin{align*}
& -\langle A^1 \eta, \eta\rangle
=\left( A(p'(\bar{\rho}_{1})- \bar{u}_{1}^{2}) \frac{\bar{u}_{1}}{\bar{\rho}_{1}}(x_{1}) +O(\delta) \right)\eta_{1}^{2}
+ \dfrac{\bar{d}-x_{1}}{x_{2}-\bar{d}} \left( 2 \frac{A\bar{m}_{2}}{\bar{\rho}_{2}^{2}}(x_{2}) +O(\delta) \right)\eta_{4}^{2} >0,
\end{align*}
where we have used $(\eta_{1},\eta_{2},\eta_{3},\eta_{4})=(\eta_{1},-\frac{\bar{u}_{1}+\psi_{1}}{\bar{\rho}_{1}} \eta_{1}, 0,\eta_{4})$ and $(\eta_{1},\eta_{4}) \neq (0,0)$. 
Similarly, it is seen at another boundary $z=L_{2}$ that for all $\eta=(\eta_{1},\eta_{2},\eta_{3},\eta_{4})\in \ker Q_{2}({\rm id}-M_{2}P_{2})$ with $|\eta|=1$,
\begin{align*}
\langle A^1 \eta, \eta\rangle &= \left\{ A(\bar{d}) 
\left(
\frac{p'(\bar{\rho}_{1})\bar{u}_{1}}{\bar{\rho}_{1}}(\bar{d}) \eta_{1}^{2}  + 2 p'(\bar{\rho}_{1})(\bar{d}) \eta_{1}\eta_{2}
+ \bar{\rho}_{1} \bar{u}_{1}(\bar{d})\eta_{2}^{2}
 \right) 
 +O(\delta)|(\eta_{1},\eta_{2})|^{2}  \right\}
\\
& \quad + \dfrac{\bar{d}-x_{1}}{x_{2}-\bar{d}} \left\{ 
2\frac{A\bar{m}_{2}}{\bar{\rho}_{2}^{2}}(\bar{d})  \eta_{4}^{2}
 +O(\delta)|(\eta_{1},\eta_{2},\eta_{4})|^{2}  \right\}
\\
& \geq c |(\eta_{1},\eta_{2},\eta_{4})|^{2}>0,
\end{align*}
where $c$ is a positive constant, and we have used \eqref{Bomega2} and $(\eta_{1},\eta_{2},\eta_{4}) \neq (0,0,0)$. 
From these, Condition (vii) holds.
Thus Theorem \ref{local} ensures the time-local solvability of the problem \eqref{eqAAA},
which is equivalent to the problem \eqref{eq701*}.
\end{proof}

\section{Trace of solutions of hyperbolic system}

In this section, we study the trace of the solution $u=u(t,x)$ of the following linear hyperbolic system:
\begin{gather}\label{eqB1}
\D_t u + A^1(t,x) \D_{x} u  =  F(t,x), \quad t \in [0,T], \ x \in  [\ell_{1},\ell_{2}],
\end{gather}
Here $u$ and $F$ are vector functions with $l$ components,
and $A^1$ is a $l \times l$ matrix. 
They are assumed to satisfy the following conditions on conditions and regularities:
\begin{asp}\label{aspB1} The following conditions hold.
\begin{enumerate}[\rm (i)]
\item The $l \times l$ matrix $A^1$ is regular and diagonalizable. So there exists a regular matrix $P$ such that 
\begin{gather*}
P^{-1}A^{1}P=\Lambda:= {\rm diag}(\lambda_{1}^{-},\cdots,\lambda_{l_{-}}^{-},\lambda_{1}^{+},\cdots,\lambda_{l_{+}}^{+}),
\end{gather*}
where $\lambda_{m}^{-}<0$ and $\lambda_{n}^{+}>0$ hold for all $(t,x) \in [0,T] \times  [\ell_{1},\ell_{2}]$, $m=1,\ldots,l_{-}$, and  $n=1,\ldots,l_{+}${\rm;} 
\item $A^{1}, (A^{1})^{-1}, P, P^{-1}, \Lambda \in C^{1}([0,T] \times  [\ell_{1},\ell_{2}])${\rm;} 
\item $A^{1}, P^{-1}, F \in H^{2}((0,T) \times  (\ell_{1},\ell_{2}))$.
\end{enumerate}
\end{asp}

We show Theorem \ref{thmB1} on the trace of the solutions $u$. Its application to the initial--boundary value  problems of the quasi-one-dimensional model \eqref{eq101} will be discussed in subsection \ref{secB1}.

\begin{thm}\label{thmB1}
Let Assumption \ref{aspB1} hold.
Suppose that $u$ solves the system \eqref{eqB1} and satisfies
\begin{gather*}
u \in \left[\bigcap_{k=0}^{2} C^{k}([0,T];H^{2-k}((\ell_{1},\ell_{2}))) \right]^{l}.
\end{gather*}
For any $y \in [\ell_{1},\ell_{2}]$ and $j,k=0,1,2$ with $j+k=2$, the trace $\partial_{t}^{j}\partial_{x}^{k} u(\cdot, y)$ at $x=y$ belongs to $L^{2}(0,T)$.

\begin{proof}
We may suppose $y \in [\ell_{1},\ell_{2})$, since the other case $y=\ell_{2}$ can be treated in a similar way.
We differentiate \eqref{eqB1} in $t$, apply the mollifier $J_{\delta}$ in \eqref{mol1},
and then differentiate the resultant equality in $t$ again to obtain 
\begin{gather*}
\D_{t}(\D_{tt} J_{\delta } u) + A^1 \D_{x} (\D_{tt} J_{\delta } u)  = G_{\delta}^{0} +  G_{\delta}^{1} + J_{\delta }  G, 
\\
G_{\delta}^{0}:=\D_{t} \left([A^{1},J_{\delta}] \D_{xt} u \right),
\quad
G_{\delta}^{1}:= (\D_{t} A^{1}) J_{\delta} \D_{xt} u,
\quad
G:=\D_{t} \{ (\D_{t} A^1) \D_{x} u + \D_{t} F \},
\notag
\end{gather*}
where $[\cal T,\cal S]:=\cal T\cal S-\cal S\cal T$ is a commutator.
Multiplying this by $P^{-1}$ from the left, we arrive at 
\begin{gather}\label{eqB2}
\D_{t} r_{\delta} + \Lambda \D_{x} r_{\delta}  
= I_{\delta} := (\D_{t} P^{-1})\D_{tt} J_{\delta } u + \Lambda (\D_{x} P^{-1})\D_{tt} J_{\delta } u + P^{-1}(G_{\delta}^{0} +  G_{\delta}^{1} + J_{\delta }  G),
\end{gather}
where
\begin{gather*}
r_{\delta}=
\begin{bmatrix}
r_{\delta}^{-} \\ r_{\delta}^{+}
\end{bmatrix}
:=P^{-1}\D_{tt} J_{\delta } u,
\quad
r_{\delta}^{-}:=
\begin{bmatrix}
r_{1\delta}^{-}
\\
\vdots
\\
r_{l_{-}\delta}^{-}
\end{bmatrix},
\quad
r_{\delta}^{-}:=
\begin{bmatrix}
r_{1\delta}^{+}
\\
\vdots
\\
r_{l_{+}\delta}^{+}
\end{bmatrix}.
\end{gather*}
Let us set
\begin{gather*}
\lambda_{*}^{-}:=  \min_{m=1,\ldots,l_{-}} \left( \min_{(t,x)\in [0,T]\times[\ell_{1},\ell_{2}]}\lambda_{m}^{-}(t,x) \right) <0, \quad
\lambda_{*}^{+}:= \max_{n=1,\ldots,l_{+}} \left( \max_{(t,x)\in [0,T]\times[\ell_{1},\ell_{2}]}\lambda_{n}^{+}(t,x) \right) >0
\end{gather*}
and 
\begin{align*}
\Omega_{T_{1},T_{2}}^{-}&:=\{ (t,x) \, ; \, T_{1} \leq t \leq T_{2}, 
\ y \leq x\leq y+\lambda_{*}^{-}(t-T_{2})\},
\\
\Omega_{T_{1},T_{2}}^{+}&:=\{ (t,x) \, ; \, T_{1} \leq t \leq T_{2}, 
\ y \leq x \leq y+\lambda_{*}^{+}(t-T_{1})\}
\end{align*}
for any $T_{1},T_{2}\geq 0$ with $0<T_{2}-T_{1}\leq\min\{T-\eta,|y-\ell_{2}|/|\lambda_{*}^{-}|,|y-\ell_{2}|/\lambda_{*}^{+} \}$.

Now we take a difference between \eqref{eqB2} with $\delta=\delta'$ and $\delta=\delta''$,
multiply the result by $2(r_{\delta'}^{-}-r_{\delta''}^{-},0)$ from the left, 
integrate it over $\Omega_{T_{1},T_{2}}^{-}$, and apply Green's theorem to obtain
\begin{align*}
& \sum_{m=1}^{l_{-}} \int_{T_{1}}^{T_{2}} |\lambda_{m}^{-}| |r_{m\delta'}^{-}-r_{m\delta''}^{-}|^{2}(t,y) dt
+ \sum_{m=1}^{l_{-}}  \int_{T_{1}}^{T_{2}} ( |\lambda_{*}^{-}| - |\lambda_{m}^{-}| )|r_{m\delta'}^{-}-r_{m\delta''}^{-}|^{2}(t,y+\lambda_{*}^{-}(t-T_{2})) dt
\\
&= \int_{y}^{y+\lambda_{*}^{-}(T_{1}-T_{2})} |r_{\delta'}^{-}-r_{\delta''}^{-}|^{2} (T_{1},x) dx
+ \sum_{m=1}^{l_{-}}  \int\!\!\int_{\Omega_{T_{1},T_{2}}} 
  (\D_{x} \lambda_{m}^{-}) |r_{m\delta'}^{-}-r_{m\delta''}^{-}|^{2} (t,x) dtdx
  \\
  & \quad
+ 2  \int\!\!\int_{\Omega_{T_{1},T_{2}}}  \left\langle \begin{bmatrix} r_{\delta'}^{-}-r_{\delta''}^{-} \\ 0 \end{bmatrix},
 I_{\delta'}-I_{\delta''}\right\rangle (t,x) dtdx
\\
& \to 0 \quad \text{as $\delta',\delta'' \to 0$,}
\end{align*}
where we have used \eqref{mp0} in showing the convergence.
We note that the second term on the left hand side is nonnegative owing to the definition of $\lambda_{*}^{-}$. 
Therefore using arbitrariness of $T_{1}$ and $T_{2}$, we deduce that for $\eta \in (0,T/2)$,
\begin{gather}\label{cvB1}
\int_{0}^{T-\eta} |r_{\delta'}^{-}-r_{\delta''}^{-}|^{2} (t,y) dt
\to 0 \quad \text{as $\delta',\delta'' \to 0$.}
\end{gather}

Similarly, we take a difference between \eqref{eqB2} with $\delta=\delta'$ and $\delta=\delta''$,
multiply the result by $2(0,r_{\delta'}^{+}-r_{\delta''}^{+})$ from the left, integrate it over $\Omega_{T_{1},T_{2}}$, and apply  Green's theorem to obtain
\begin{align*}
& \sum_{n=1}^{l_{+}} \int_{T_{1}}^{T_{2}} \lambda_{n}^{+} |r_{n\delta'}^{+}-r_{n\delta''}^{+}|^{2}(t,y) dt
+ \sum_{n=1}^{l_{+}}  \int_{T_{1}}^{T_{2}} ( \lambda_{*}^{+} - \lambda_{n}^{+} )|r_{n\delta'}^{+}-r_{n\delta''}^{+}|^{2}(t,y+\lambda_{*}^{+}(t-T_{1})) dt
\\
&= \int_{y}^{y+\lambda_{*}^{+}(T_{2}-T_{1})} |r_{\delta'}^{+}-r_{\delta''}^{+}|^{2} (T_{2},x) dx
+ \sum_{n=1}^{l_{+}}  \int\!\!\int_{\Omega_{T_{1},T_{2}}} 
  (\D_{x} \lambda_{n}^{+}) |r_{n\delta'}^{+}-r_{n\delta''}^{+}|^{2} (t,x) dtdx
  \\
  & \quad
+2  \int\!\!\int_{\Omega_{T_{1},T_{2}}}  \left\langle \begin{bmatrix} 0 \\ r_{\delta'}^{+}-r_{\delta''}^{+} \end{bmatrix},
 I_{\delta'}-I_{\delta''}\right\rangle (t,x) dtdx
\\
& \to 0 \quad \text{as $\delta',\delta'' \to 0$.}
\end{align*}
Note that the second term on the left hand side is nonnegative.
Then it follows from arbitrariness of $T_{1}$ and $T_{2}$ that for $\eta \in (0,T/2)$,
\begin{gather}\label{cvB2}
\int_{0}^{T-\eta} |r_{\delta'}^{+}-r_{\delta''}^{+}|^{2} (t,y) dt
\to 0 \quad \text{as $\delta',\delta'' \to 0$.}
\end{gather}

From \eqref{mp0}, \eqref{cvB1}, and \eqref{cvB2}, it is seen that
$\{J_{\delta}u(\cdot,y)\}$ is a Cauchy sequence in $[H^{2}(0,T-\eta)]^{l}$.
Furthermore, $J_{\delta}u(\cdot,y) \to u(\cdot,y)$ in $[H^{1}(0,T-\eta)]^{l}$.
From these, it follows that $\partial_{t}^{2} u(\cdot, y) \in L^{2}(0,T-\eta)$.
Multiplying \eqref{eqB1} by $(A^{1})^{-1}$ from the left gives $\D_{x} u  = (A^1)^{-1}(F-\D_t u)$.
From this, we see that 
$\partial_{t}^{j}\partial_{x}^{k} u(\cdot, y) \in L^{2}(0,T-\eta)$ for $\eta \in (0,T/2)$ and $j,k=0,1,2$ with $j+k=2$.

Using another mollifier which is replaced $(t-\tau)/\delta + 1$ by $(t-\tau)/\delta - 1$ in \eqref{mol1},
and then repeating the above method,  we can also conclude that 
$\partial_{t}^{j}\partial_{x}^{k} u(\cdot, y) \in L^{2}(\eta,T)$ for $\eta \in (0,T/2)$ and  $j,k=0,1,2$ with $j+k=2$. 
Thus $\partial_{t}^{j}\partial_{x}^{k} u(\cdot, y) \in L^{2}(0,T)$ holds for $j,k=0,1,2$ with $j+k=2$.
\end{proof}

\end{thm}

\subsection{Application to the IBVPs of the quasi-one-dimensional model}\label{secB1}

In this subsection, we discuss the trace of the solutions of the initial--boundary value problems of the quasi-one-dimensional model, which are stated in Lemmas \ref{local3}, \ref{local5}, and \ref{local7}.

\begin{proof}[Proof of the property of the trace in Lemma \ref{local3}]
We claim that if the solution $f \in \mathfrak{X} (T)$ solves the problem \eqref{eq311*} and the flow is subsonic at $x=x_{2}$ for all $t \in [0,T]$, then the following holds that for any $j,k=0,1,2$ with $j+k=2$:
\begin{gather}\label{Btrace1}
\partial_{t}^{j}\partial_{x}^{k} f(\cdot, x_{2}) \in L^{2}(0,T).
\end{gather}
Recall that $f$ satisfies the system \eqref{eq317}, i.e.
\begin{gather}\label{eqB3}
\partial_t f + \hat{A}^1 \partial_x f + \hat{B} f = 0,
\end{gather}
where
\begin{align*}
\hat{A}^{1}&=\hat{A}^{1}  (\rho,m) =
\begin{bmatrix}
0  &  1
\\[5pt]
S & \dfrac{2m}{\rho}
\end{bmatrix},
\quad
\hat{B}=\hat{B}(x,\rho,m) =
\begin{bmatrix}
0 & \dfrac{A_x(x)}{A(x)}
\\  
b_{21} & b_{22}
\end{bmatrix},
\\
S&=S(\rho,m) = p'(\rho) - \frac{m^{2}}{\rho^{2}}, 
\quad
b_{22}=b_{22}(x,\rho,m)=
\frac{m-\bar{m}(x)}{\rho} \frac{A_x(x)}{A(x)}
- \frac{ m + \bar{m}(x) }{\rho^2} \bar{\rho}_x(x),
\\
b_{21}&=b_{21}(x,\rho,m)=
\frac{\bar{m}(x)^{2}}{\rho \bar{\rho}(x)} \frac{A_x(x)}{A(x)}
+ \frac{\bar{m}(x)^2}{\rho^2 \bar{\rho}(x)^2} ( \rho + \bar{\rho}(x) ) \bar{\rho}_x(x)
\\
& \qquad \qquad \qquad \quad + \bar{\rho}_x(x) \int_0^1 p'' ( \bar{\rho}(x) + \theta (\rho-\bar{\rho}(x)) ) d \theta.
\end{align*}

We apply Theorem \ref{thmB1} to the solution $f \in \mathfrak{X} (T)$ as follows.
Since the flow is subsonic at $x=x_{2}$,
it is seen that $(m/\rho - \sqrt{p'(\rho)})(t,x_{2})<0<(m/\rho + \sqrt{p'(\rho)})(t,x_{2})$ for all $t \in [0,T]$. 
Hence there exists $\delta>0$ such that $(m/\rho - \sqrt{p'(\rho)})(t,x)<0<(m/\rho + \sqrt{p'(\rho)})(t,x)$ for $(t,x) \in [0,T] \times [x_{2}-\delta,x_{2}]$.
Now we set
\begin{gather*}
\ell_{1}=x_{2}-\delta, \quad \ell_{2}=x_{2}, \quad u=f.
\end{gather*}
We also choose $A^1$, $F$, $P$, and $\Lambda$ in Theorem \ref{thmB1} as
\begin{align*}
A^1 = \hat{A}^{1}, \ \
F =-\hat{B}f, \ \
P = \begin{bmatrix}
1 &  1 
\\
\dfrac{m}{\rho} - \sqrt{p'(\rho)} & \dfrac{m}{\rho} + \sqrt{p'(\rho)}
\end{bmatrix}, \ \
\Lambda= 
\begin{bmatrix}
\dfrac{m}{\rho} - \sqrt{p'(\rho)} & 0
\\
0 & \dfrac{m}{\rho} + \sqrt{p'(\rho)}
\end{bmatrix},
\end{align*}
which we regard as given functions.
It is straightforward to see that this choice satisfies all Conditions (i)--(iii) in Assumption \ref{aspB1}.
Thus Theorem \ref{thmB1} concludes \eqref{Btrace1}.
\end{proof}

\begin{proof}[Proof of  the property of the trace in Lemma \ref{local5}]
We suppose that $f \in \mathfrak{X} (T)$ is a solution of \eqref{eq411*}, and there exists some $x \in \bar{I}=[x_{1},x_{2}]$ such that none of $(u \pm \sqrt{p'(\rho)})(t,x)$, the characteristic speed of the system \eqref{eq417}, is zero for all $t \in [0,T]$.
Note that the system \eqref{eq417} is the same as \eqref{eqB3}. 
In much the same way as in the proof of \eqref{Btrace1}, we can conclude $\partial_{t}^{j}\partial_{x}^{k} f(\cdot, x) \in L^{2}(0,T)$ for any $j,k=0,1,2$ with $j+k=2$. The proof is complete.
\end{proof}

\begin{proof}[Proof of the property of the trace in Lemma \ref{local7}]
We suppose that $(f_{1},f_{2},\sigma) \in \mathfrak{X}_{1} (T) \times \mathfrak{X}_{2} (T) \times H^{3}(0,T)$ is a solution of  the problem \eqref{eq701*}, and there exists some $y_{i} \in \bar{I}_{i}$ such that none of $(\frac{\bar{d} - x_i}{\bar{d}+\sigma - x_i} (u_{i} \pm \sqrt{p'(\rho_{i})})-\dot{\sigma} \frac{y - x_i}{\bar{d} - x_i})(s,y_{i})$, the characteristic speed of the system \eqref{eq701}, is zero for all $s \in [0,T]$.
Then it suffices to show that for any $j,k=0,1,2$ with $j+k=2$,
\begin{gather}\label{Btrace3}
\partial_{s}^{j}\partial_{y}^{k} f_{i}(\cdot, y_{i}) \in L^{2}(0,T).
\end{gather}
Recall that $f_{i}$ satisfies the system \eqref{eq701}, i.e.
\begin{gather*}
\partial_s f_{i} + \hat{A}_{i}^1 \partial_y f_{i} + \hat{B}_{i} f_{i} = 0,
\end{gather*}
where
\begin{align*}
\hat{x}_{i}&=\hat{x}_{i}(\sigma,y) = y + \sigma \dfrac{y - x_i}{\bar{d} - x_i}, 
\\
\hat{A}^1_{i}&=\hat{A}^1_{i}(y,\rho_{i}, m_{i},\sigma,\dot{\sigma}) = 
\frac{\bar{d} - x_i}{\bar{d}+\sigma - x_i} 
\begin{bmatrix}
0  &  1
\\[5pt]
S_{i} & 2\dfrac{m_{i}}{\rho_{i}}
\end{bmatrix}
- \dot{\sigma} \frac{y - x_i}{\bar{d}+\sigma - x_i} {\rm id},
\\
S_{i}&:= S(\rho_{i}, m_{i}) = p'(\rho_{i}) - \frac{m_{i}^{2}}{\rho_{i}^{2}}, 
\quad
\hat{B}_{i}=\hat{B}_{i} (y,\rho_{i}, m_{i},\sigma) =
\begin{bmatrix}
0 & \dfrac{A_x(\hat{x}_{i})}{A(\hat{x}_{i})}
\\  
b_{i21} & b_{i22}
\end{bmatrix},
\\
b_{i22}&:= b_{22}(\hat{x}_{i},\rho_{i}, m_{i})=
\frac{m_{i}-\bar{m}_{i} (\hat{x}_{i})}{\rho_{i}} \frac{A_x(\hat{x}_{i})}{A(\hat{x}_{i})}
- \frac{ m_{i} + \bar{m}_{i} (\hat{x}_{i}) }{\rho_{i}^2} \bar{\rho}_{ix} (\hat{x}_{i}),
\\
b_{i21}&:= b_{21}(\hat{x}_{i},\rho_{i}, m_{i})=
\frac{\bar{m}_{i} (\hat{x}_{i})^{2}}{\rho_{i} \bar{\rho}_{i} (\hat{x}_{i})} \frac{A_x(\hat{x}_{i})}{A(\hat{x}_{i})}
+ \frac{\bar{m}_{i} (\hat{x}_{i})^2}{\rho_{i}^2 \bar{\rho}_{i} (\hat{x}_{i})^2} ( \rho_{i} + \bar{\rho}_{i} (\hat{x}_{i}) ) \bar{\rho}_{i x}(\hat{x}_{i})
\\
& \qquad \qquad \qquad \qquad \quad + \bar{\rho}_{ix} (\hat{x}_{i}) \int_0^1 p'' ( \bar{\rho}_{i} (\hat{x}_{i}) + \theta (\rho_{i}-\bar{\rho}_{i} (\hat{x}_{i})) ) d \theta.
\end{align*}

We apply Theorem \ref{thmB1} to the solution $(f_{1},f_{2},\sigma) \in \mathfrak{X}_{1} (T) \times \mathfrak{X}_{2} (T) \times H^{3}(0,T)$ as follows.
It is seen by choosing $\delta>0$ small  that $(\frac{\bar{d} - x_i}{\bar{d}+\sigma - x_i} (u_{i} \pm \sqrt{p'(\rho_{i})})-\dot{\sigma} \frac{y - x_i}{\bar{d} - x_i})(s,y)\neq 0$ holds for any $(s,y) \in
[0,T] \times [y_{i}-\delta, y_{i}]$ or $(s,y) \in [0,T] \times [y_{i}, y_{i}+\delta]$.
Let us only consider the first case $(s,y) \in [0,T] \times [y_{i}-\delta, y_{i}]$, since the other case can be treated similarly. 
We set
\begin{gather*}
\ell_{1}=y_{i} -\delta, \quad \ell_{2}=y_{i}, \quad u=f_{i}.
\end{gather*}
We also choose $A^1$, $F$, $P$, and $\Lambda$ in Theorem \ref{thmB1} as
\begin{gather*}
A^1 = \hat{A}^{1}_{i}, \ \
F =-\hat{B}_{i} f_{i}, 
\\
P = \begin{bmatrix}
1 &  1 
\\
\dfrac{\bar{d} - x_i}{\bar{d}+\sigma - x_i} (u_{i} - \sqrt{p'(\rho_{i})})-\dot{\sigma} \dfrac{y - x_i}{\bar{d} - x_i} &\dfrac{\bar{d} - x_i}{\bar{d}+\sigma - x_i} (u_{i} + \sqrt{p'(\rho_{i})})-\dot{\sigma} \dfrac{y - x_i}{\bar{d} - x_i}
\end{bmatrix}, 
\\[3pt]
\Lambda= 
\begin{bmatrix}
\dfrac{\bar{d} - x_i}{\bar{d}+\sigma - x_i} (u_{i} - \sqrt{p'(\rho_{i})})-\dot{\sigma} \dfrac{y - x_i}{\bar{d} - x_i} & 0
\\
0 & \dfrac{\bar{d} - x_i}{\bar{d}+\sigma - x_i} (u_{i} + \sqrt{p'(\rho_{i})}) -\dot{\sigma} \dfrac{y - x_i}{\bar{d} - x_i}
\end{bmatrix},
\end{gather*}
which we regard as given functions.
It is straightforward to see that this choice satisfies all Conditions (i)--(iii) in Assumption \ref{aspB1}.
Thus Theorem \ref{thmB1} concludes \eqref{Btrace3}.
\end{proof}

\end{appendix}

\section*{Statements and Declarations}

\subsection*{Competing Interests}
The authors have no relevant financial or non-financial interests to disclose.

\subsection*{Funding}
MO and MS were supported in part by JSPS KAKENHI Grant Number 17K05313 and 21K03305.
MO and MS are currently supported in part by JSPS KAKENHI Grant Number 25K07079 and 25K00915, respectively.

\subsection*{Data Availability}
No datasets were generated or analyzed in the present study.

\end{document}